\documentclass[11pt,a4paper]{amsart}
\usepackage[margin = 1in]{geometry}
\usepackage[utf8]{inputenc}
\usepackage{setspace}
\usepackage{amssymb}
\usepackage{amsmath,amsthm,amssymb,latexsym,amsfonts, mathtools,bbm, mathrsfs}
\usepackage{ stmaryrd }
\usepackage{faktor}
\usepackage{layout}
\usepackage{textcomp}
\usepackage{mathrsfs}
\usepackage{imakeidx}
\makeindex[columns=2, title=Index of Notation, name = notation]
\usepackage{bold-extra}
\usepackage[shortlabels]{enumitem}

\usepackage{hyperref}
\usepackage{cleveref}

\usepackage[dvipsnames]{xcolor}

\hypersetup{
  colorlinks   = true, 
  urlcolor     = blue, 
  linkcolor    = RedViolet, 
  citecolor   =  Emerald 
}

\usepackage{tikz-cd}

\theoremstyle{definition}
\newtheorem{definition}{Definition}[subsection]
\theoremstyle{plain}
\newtheorem{proposition}[definition]{Proposition}
\theoremstyle{plain}
\newtheorem{theorem}[definition]{Theorem}
\theoremstyle{plain}
\newtheorem{lemma}[definition]{Lemma}
\theoremstyle{plain}
\newtheorem{corollary}[definition]{Corollary}
\theoremstyle{plain}

\theoremstyle{definition}
\newtheorem{lemma*}{Lemma}
\theoremstyle{definition}
\newtheorem{remark}[definition]{Remark}
\theoremstyle{definition}

\theoremstyle{definition}

\theoremstyle{definition}

\theoremstyle{definition}
\newtheorem{notation}[definition]{Notation}
\theoremstyle{definition}
\newtheorem{convention}[definition]{Convention}
\theoremstyle{plain}
\newtheorem*{theorem*}{Theorem}

\theoremstyle{plain}
\newtheorem{thm-intro}{Theorem}

\newcommand{\C}{\mathbb{C}}
\newcommand{\T}{\mathbb{T}}
\newcommand{\Cat}{\mathcal{C}}

\newcommand{\afr}{\mathfrak{a}}
\newcommand{\Afr}{\mathfrak{A}}
\newcommand{\bfr}{\mathfrak{b}}
\newcommand{\cfr}{\mathfrak{c}}
\newcommand{\ffr}{\mathfrak{f}}

\newcommand{\nfr}{\mathfrak{n}}

\newcommand{\D}{\mathcal{D}}
\newcommand{\dfr}{\mathfrak{d}}
\newcommand{\A}{\mathcal{A}}

\newcommand{\B}{\mathcal{B}}

\newcommand{\Q}{\mathbb{Q}}
\newcommand{\R}{\mathbb{R}}
\newcommand{\Z}{\mathbb{Z}}
\newcommand{\G}{\mathbb{G}}

\newcommand{\Mcal}{\mathcal{M}}

\newcommand{\Po}{\mathscr{P}}

\newcommand{\Fscr}{\mathscr{F}}
\newcommand{\Gscr}{\mathscr{G}}
\newcommand{\Hscr}{\mathscr{H}}
\newcommand{\Iscr}{\mathscr{I}}

\newcommand{\Ucal}{\mathcal{U}}

\newcommand{\Hcal}{\mathcal{H}}

\newcommand{\Ocal}{\mathcal{O}}
\newcommand{\Ofr}{\mathfrak{o}}
\newcommand{\Hom}{\textnormal{Hom}}
\newcommand{\Homsh}{\underline{\textnormal{Hom}}}

\newcommand{\TSym}{\textnormal{TSym}}

\newcommand{\SL}{\textnormal{SL}}
\newcommand{\ts}{\textnormal{t}}
\newcommand{\st}{\textnormal{s}}
\newcommand{\id}{\textnormal{id}}
\newcommand{\Spec}{\textnormal{Spec}}
\newcommand{\Spf}{\textnormal{Spf}}
\newcommand{\arith}{\textnormal{arith}}

\newcommand{\mom}[1]{\prescript{}{#1}{\textnormal{mom}}}
\newcommand{\coPo}{\widehat{\mathscr{P}}}

\title[Eisenstein-Kronecker classes on Hilbert moduli spaces]{Eisenstein-Kronecker classes on Hilbert moduli spaces and a new construction of Katz's $p$-adic measure}
\author{Guillermo Gamarra Segovia}

\numberwithin{equation}{section}

\begin{document}

\begin{abstract}

	In this work, we extend the construction of the Eisenstein-Kronecker classes of Kings-Sprang to the universal abelian schemes lying over Hilbert moduli spaces parametrizing objects with $(\Gamma(l), \Gamma_{00}(p^\infty))$-level. We then compare the associated ($p$-adic) Hilbert modular forms to the Eisenstein series constructed by Katz. Similarly to the procedure of Kings-Sprang, we produce a $p$-adic measure which interpolates the Eisenstein-Kronecker classes, and use the mentioned comparison results to show that this recovers essentially the $p$-adic Eisenstein measure constructed by Katz.

\end{abstract}

\maketitle

\begin{footnotesize}
\noindent
2020 \textit{Mathematics Subject classification.} Primary 11G18, 14K22; Secondary 11F41, 11F75, 11G15.\\
\textit{Keywords:} Eisenstein-Kronecker classes; Hilbert modular forms; Hilbert modular schemes; $p$-adic measures.
\end{footnotesize}

\tableofcontents
\section*{Introduction}

\subsubsection*{Critical values of Hecke \textit{L}-functions}

Let $\chi$ be a Hecke character associated to a number field $F$. There is an extense volume of work on the study of the special values of its $L$-function, $L(\chi, s)$. In particular, let us discuss the \emph{critical} case, in which the $\Gamma$-factors in the functional equation of this function have no pole at $s = 0$. It is well-known that in this situation, $F$ must either be a totally real field or contain a CM field $L$. Klingen \cite{Klingen19611962} and Siegel \cite{Siegel1970} showed that whenever $\chi$ is the product of the field norm and a generalized Dirichlet character, these critical $L$-values lie in the number field generated by the values of this latter character, and are thus algebraic. 

In the case where $F = L$ is a quadratic imaginary field, the first results in this direction are due to Eisenstein \cite{Weil1999} and Damerell \cite{Damerell1970}. As it turns out, one may express the values $L(\chi, 0)$ as a finite sum of \emph{Eisenstein series}:
$$E_{a,b}(z;\Lambda) := \sideset{}{'}\sum_{\lambda \in \Lambda} \frac{\overline{(\lambda + z)^a}}{(\lambda + z)^b},$$
where $\Lambda \subset \C$ is a lattice and $b > a \geq 0$ are integers. In fact, the Eisenstein series appearing in this expression are evaluated at lattices associated to elliptic curves with \emph{complex multiplication by $L$}, from which one can recover the algebraicity of these $L$-values.

Assuming that $F = L$ is a general CM field, Shimura \cite{Shimura1975} showed that the critical values $L(\chi, 0)$ are algebraic up to taking the quotient by certain periods. Later on, Katz \cite{Katz1978} constructed a $p$-adic measure on the Galois group $\text{Gal}(L(p^\infty)/L)$ which interpolated these values up to some explicit constants (here $L(p^\infty)$ stands for the maximal unramified outside of $p$ extension of $L$). For certain extensions of $L$ quadratic imaginary, algebraicity was shown by Colmez \cite{Colmez1989}, and proven for all finite extensions by Bergeron-Charollois-García \cite{Bergeron2023}.

Algebraicity in the general case in which $F$ is a finite extension of a CM field $L$ was proved by Kings-Sprang \cite{Kings2025}, generalizing the results of Shimura and Katz. Due to their relevance to the present work, let us recall briefly the mentioned work of Katz and Kings-Sprang. 

\subsubsection*{Katz's Eisenstein measure}

Another interesting area in connection with the study of $L$-functions is the construction of $p$-adic analogues of them. This started with the work of Kubota-Leopoldt \cite{Kubota1964}, who constructed $p$-adic $L$-functions associated to Dirichlet characters, and was furthered by Barsky \cite{Barsky19771978} and Cassou-Nougès \cite{CassouNogues1979} to Hecke $L$-functions for totally real fields, based on a formula of Shintani \cite{Shintani1976} for the decomposition of these $L$-functions. 

Ideally, one can construct such $p$-adic $L$-functions via \emph{$p$-adic measures}, which can encode the algebraicity and congruence properties of the $L$-values they interpolate. Such a construction of a $p$-adic measure was done in the work of Deligne-Ribet \cite{Deligne1980}, which was shown by Ribet \cite{Ribet1979} to agree with that of Barsky and Cassou-Nougès. It is this approach that Katz takes in \cite{Katz1978}, where he constructs $p$-adic measures interpolating a generalization of the classical Eisenstein series $E_{a,b}(z;\Lambda)$ as \emph{Hilbert modular forms}.

More concretely, fix a totally real number field $F$, and write $\Ofr$ for its ring of integers. For certain locally constant functions $H \colon (\Ofr \otimes_\Z \Z_p)^2 \to \overline{\Q}$, Katz defines Hilbert modular forms $G_{k, H}$ of parallel weight $k$ such that over $\C$, their transcendental expresion is of the form
$$\sideset{}{'}\sum_{\lambda \in \Ofr^\times \backslash PV_p(\Lambda) \cap \Lambda[p^{-1}]} \frac{PH([\lambda])}{\lambda^k|N(\lambda)|^{2s}}\Bigg|_{s=0},$$
where $PV_p(\Lambda)$ is a certain submodule of $\Lambda \otimes_\Z \Q_p$ and $PH$ is the partial Fourier transform of $H$ (which can be seen as a map on $PV_p(\Lambda)$). Katz then takes the limit of these modular forms to obtain a $p$-adic version $\widehat{G}_{k, H}$.

From these Hilbert modular forms, Katz constructs a measure 
$$\mu_{KE} \in \text{Meas}((\Ofr \otimes_\Z \Z_p)^{\times, 2}/\overline{\Ofr^\times}, V),$$
where $V$ is the ring of $p$-adic Hilbert modular forms over $\Ofr_{\C_p}$, the valuation ring of $\C_p$. One may do this by the formula
$$\int_{(\Ofr \otimes_\Z \Z_p)^{\times, 2}/\overline{\Ofr^\times}} F d\mu_{KE} = \widehat{G}_{1, H},$$
where $F$ is a continuous function on the profinite group and $H(x,y) := N(x)^{-1}F(x^{-1}, y)$.

We note that it is quite straightforward from these definitions to extend the measure $\mu_{KE}$ to measures $\mu_{\Gamma, KE}$ on the quotients $(\Ofr \otimes_\Z \Z_p)^{\times, 2}/\overline{\Gamma}$ for any finite index subgroup $\Gamma \leq \Ofr^\times$. In fact, if $l$ is a positive integer coprime to $p$ and such that $\Gamma$ acts trivially on $\Ofr/l\Ofr$, then the same construction of Katz gives place to a measure on the profinite group
$$((\Ofr \otimes_\Z \Z_p)^{\times, 2}/\overline{\Gamma}) \times (\Ofr/l\Ofr)^2.$$
In particular, for any map of sets $f \colon (\Ofr/l\Ofr)^2 \to \Ofr_{\C_p}$, we obtain a measure 
$$\mu_{\Gamma, KE}(f) \in \text{Meas}((\Ofr \otimes_\Z \Z_p)^{\times, 2}/\overline{\Gamma}, V).$$
In this work, we provide a different construction for these measures through the machinery of Eisenstein-Kronecker classes, which arise from the work of Kings-Sprang.

\subsubsection*{Eisenstein-Kronecker classes}

In \cite{Bannai2010}, Bannai-Kobayashi construct a generating function for the Eisenstein-Kronecker numbers associated to a lattice $\Lambda \in \C$ by using theta functions associated to the Poincaré bundle $\Po$ of the elliptic curve $E$ corresponding to $\Lambda$. Since these numbers are related to critical Hecke $L$-values associated to quadratic imaginary fields, Bannai-Kobayashi are able to show their algebraicity whenever $E$ has CM by such a quadratic imaginary field. This approach was used later on by Sprang \cite{Sprang2019} in order to show that certain algebraic cohomology classes of $\Po$ recover the real analytic Eisenstein series after applying the Hodge decomposition. Building further on this work, Kings-Sprang construct \emph{Eisenstein-Kronecker classes} in the equivariant cohomology of abelian schemes with complex multiplication in order to show the integrality of the values
$$\frac{(\chi(\cfr)N\cfr - 1)L(\chi, 0)}{\Omega^\chi},$$
where $\chi$ is a general critical Hecke character associated to a totally imaginary field $L$, $\cfr$ is an integral ideal of $L$ coprime to the conductor of $\chi$, and $\Omega^\chi$ is an explicit product of powers of $2\pi i$ and of periods of abelian varieties with CM by $L$. 

These Eisenstein-Kronecker classes are certain cohomology classes in the equivariant cohomology of an abelian scheme $\A/S$:
$$EK_{\Gamma, \A}(f) \in H^{g-1}(\A \setminus \D, \Gamma ; \widehat{\Po} \otimes \Omega^g_{\A/S}),$$
where $g$ is the relative dimension of $\A$, $\Gamma$ is a group acting on $\A$, $\D$ is a closed, $\Gamma$-equivariant subscheme of $\A$, $f$ is a $\Gamma$-equivariant function on $\D$, and $\coPo$ is the formal completion of the Poincaré bundle along the inclusion map $\A \to \A \times_S \A^\vee$. These classes can be further specialized by assuming that $S$ is affine and that $\A$ has CM by a totally imaginary field $L$, as well as by choosing a torsion point $x \colon S \to \A \setminus \D$. In this case, one obtains
$$EK^{\beta, \alpha}_{\Gamma, \A}(f,x) \in H^0(S, \TSym^{\alpha + \underline{1}}(\omega_{\A/S}) \otimes \TSym^\beta(H^1_{dR}(\A^\vee/S))).$$
Here $\alpha, \beta \in \prod_{\sigma} \Z$ are certain tuples of integers indexed by the complex embeddings of $L$ such that $\alpha + \underline{1} - \beta$ defines a critical infinity type for $L$. 

In relation to the main results of this work, we place particular emphasis in the following two facts, proven by Kings-Sprang, regarding these Eisenstein-Kronecker classes.  

\begin{enumerate}

	\item If $S = \Spec(\C)$, the $EK^{\beta, \alpha}_{\Gamma, \A}(f,x)$ can be computed to be equal to sums of real analytic Eisenstein series of the form
	$$\sum_{\lambda \in \Gamma \backslash \Lambda} \frac{\overline{(\lambda + x)}^\beta}{(\lambda + x)^{\alpha + \underline{1}}|N(\lambda + x)|^{2s}} \Bigg|_{s=0},$$
	where $\Lambda$ is the complex lattice associated to $\A(\C)$.

	\item If $S = \Spec(\Ofr_{\C_p})$, and if we impose an infinite level structure on $\A$, starting from the classes $EK_{\Gamma, \A}(f)$ one can construct a $p$-adic measure
	$$\mu_{\Gamma, \A}(f,x) \in \text{Meas}((\Ofr \otimes_\Z \Z_p)^2, \Ofr_{\C_p})_\Gamma$$
	whose moments interpolate the $EK^{\beta, \alpha}_{\Gamma, \A}(f,x)$.

\end{enumerate}

\subsubsection*{Main results}

Consider again a totally real field $F$. In this work, we give a generalization of the results of Kings-Sprang to the Hilbert modular case. First, we show that we can construct the Eisenstein-Kronecker classes $EK^{\beta, \alpha}_{\Gamma, \A}(f,x)$ in the case where $S$ is not affine and $\A$ only has real multiplication by $F$, which can be done without changing the approach of \cite{Kings2025} very significantly.

Our particular interest is the case when $S$ is equal to $\Mcal(\cfr, \Gamma(l), \Gamma_{00}(p^\infty))$, the Hilbert moduli space parametrizing abelian schemes with RM by $F$, a $\cfr$-polarization and $(\Gamma(l), \Gamma_{00}(p^\infty))$-level structure. In such a situation, we have a universal abelian scheme $\A \to \Mcal$, over which we can construct the $EK^{\beta, \alpha}_{\Gamma, \A}(f,x)$. By taking advantage of the Hodge decomposition, respectively the unit root decomposition, we define \emph{specialization maps}
$$\begin{tikzcd}[column sep = 0.005cm]
& \{ \Cat^\infty\text{-Hilbert modular forms over } \C \} \\
H^0(\Mcal, \TSym^{\alpha + \underline{1}}(\omega_{\A/\Mcal}) \otimes \TSym^\beta(H^1_{dR}(\A/\Mcal))) \arrow{ur}{\varphi_\infty} \arrow[swap]{dr}{\varphi_p} & \\
 & \{ p\text{-adic Hilbert modular forms over } \Ofr_{\C_p} \}
\end{tikzcd}$$
In other words, we can understand our algebraic Eisenstein-Kronecker classes as either $\cfr$-Hilbert modular forms over the complex numbers which are smooth (so not necessarily holomorphic), or as $p$-adic $\cfr$-Hilbert modular forms over $\Ofr_{\C_p}$.

By using the mentioned result (1) of Kings-Sprang, we can relate the complex specialization $\varphi_\infty(EK^{\beta, \alpha}_{\Gamma, \A}(f,x))$ to the Eisenstein series $G_{k, H}$ constructed by Katz. Moreover, we show that the construction (2) of such a $p$-adic measure still holds in the universal case, and now will define a measure which takes values in the ring $V(\cfr, \Gamma(l), \Ofr_{\C_p})$ of $p$-adic $\cfr$-Hilbert modular forms over $\Ofr_{\C_p}$ with $\Gamma(l)$-level.

\begin{thm-intro}[\Cref{measures agree}]\label{intro:main thm 1}

	Let $\Gamma$ be a finite index subgroup of $\Ofr^\times$, and write $G := (\Ofr \otimes_\Z \Z_p)^{\times, 2}$ as well as $H(\Gamma)$ for the closure of $\Gamma$ inside $G$, where we embed $\Gamma$ via $\gamma \mapsto (\gamma^{-1}, \gamma)$. Assume that $\Gamma$ acts trivially on $\Ofr/l\Ofr$ for some positive integer $l$ coprime to $p$. Then, if $f \colon (\Ofr/l\Ofr)^2 \to \overline{\Q}$ is a function, and $x$ is a $\Gamma$-equivariant torsion point of $\A$ for some level coprime to $l$ and $p$, there exists a $p$-adic measure
	$$\mu_{\Gamma, \A}(f,x) \in \textnormal{Meas}(G/H(\Gamma), V(\cfr, \Gamma(l), \Ofr_{\C_p}))$$
	such that evaluating it at CM points recovers the $p$-adic measure of Kings-Sprang up to a twist by the norm. Moreover, we have an equality
	$$\frac{(-1)^{\frac{g(g+1)}{2}}}{\sqrt{d_F}} \cdot \mu_{\Gamma, \A}(f, e) = (\textnormal{inv} \times \textnormal{id})_*\mu_{\Gamma, KE}(f),$$
	where $d_F$ is the discriminant of $F$, $e$ is the zero section of $\A$, and $\textnormal{inv} \colon (\Ofr \otimes_\Z \Z_p)^\times \to (\Ofr \otimes_\Z \Z_p)^\times$ is the inversion map $x \mapsto x^{-1}$.

\end{thm-intro}

Still, this result does not provide a priori a comparison to the measure initially considered in \cite{Katz1978}, since that measure is of the form $\mu_{\Ofr^\times, KE}(f)$ with $f$ the characteristic function of $([0], [0])$, which clearly does not satisfy the zero sum property. As it turns out, in order to find such a comparison we may drop the $\Gamma(l)$-level assumption and consider instead a specific choice of $f$.

\begin{thm-intro}[\Cref{measures agree no full}]\label{intro:main thm 2}

	Set $\Gamma$, $G$ and $H(\Gamma)$ as above, and assume that $l = 1$. Consider two positive integers $b_1$, $b_2$ which are coprime to each other and to $p$, and such that $b_1\Ofr$ and $b_2\Ofr$ are not equal to $\Ofr$. Then, there exists a function $\widetilde{f}_{b_1, b_2}$ on $\A[b_1b_2] \setminus \{ e \}$ such that
	$$\frac{(-1)^{\frac{g(g+1)}{2}}}{\sqrt{d_F}} \cdot \mu_{\Gamma, \A}(\widetilde{f}_{b_1, b_2},e) = b_1^g(1 - [b_1]_*)(b_2^g[b_2]_* - 1)(\textnormal{inv} \times \textnormal{id})_*\mu_{\Gamma, KE},$$
	where $[m] \colon (\Ofr \otimes_\Z \Z_p)^2 \to (\Ofr \otimes_\Z \Z_p)^2$ is the map $(x,y) \mapsto (mx, y/m)$ for any integer $m$ coprime to $p$.

\end{thm-intro}

The importance behind these alternative constructions is that it is of an \emph{algebraic} nature, as we start with cohomology classes associated to certain abelian varieties. This is in contrast with the approach by Katz, which requires first the construction of the Eisenstein series $G_{k, H}$ in order to define $\mu_{KE}$ from its moments.  

\subsubsection*{Overview} 

In the first section, we set some notation and recall useful facts regarding abelian schemes with real multiplication as well as Hilbert moduli spaces and modular forms. More importantly, we also recall the construction of Katz of his Eisenstein measure, which we present here in its slightly more general version.

The second section starts with a retelling of the steps done by Kings-Sprang to construct the Eisenstein-Kronecker classes, as well as a discussion about how this procedure works in the case where the base scheme is non-affine and $\A$ only has real multiplication. We then discuss how the smooth splitting of the Hodge decomposition as well as the unit root decomposition give way to the complex and $p$-adic specialization respectively, and how to compare the specializations of the Eisenstein-Kronecker classes to the Eisenstein series of Katz.

Finally, in the third section we describe how the construction of a $p$-adic measure from the Eisenstein-Kronecker classes of Sprang can be used to define a universal measure taking values in $p$-adic Hilbert modular forms, and how the previous comparison results show that this measure agrees with the Eisenstein measure of Katz as described in the above \Cref{intro:main thm 1} and \Cref{intro:main thm 2}.

\subsubsection*{Acknowledgements}

The author would like to foremost thank Johannes Sprang for his advice and helpful insights through all of the making of this work. Thanks are due as well to Guido Kings for fruitful discussions during the author's visits to Regensburg. The majority of this work was part of the author's PhD thesis, and he thus wishes to thank the departments at the University of Duisburg-Essen and of Regensburg, who provided a great environment for working in. During this period, the author was supported by the SFB 1085 and wishes to extend his thanks to the DFG. The final details of this work were written as the author was in Keio University as a JSPS Postdoctoral fellow, and he thanks the JSPS as well as Kenichi Bannai for this opportunity. 

\section{Hilbert modular forms and the Eisenstein measure of Katz}

In this section, we recall the main facts about abelian schemes with real multiplication and their moduli spaces (the \emph{Hilbert moduli spaces}), as well as define Hilbert modular forms. Moreover, we discuss Katz's construction from \cite{Katz1978} of a $p$-adic measure which takes values in $p$-adic Hilbert modular forms. 

\subsection{Totally real fields and tensor symmetric products}

We start by fixing some notation. Let $F$ be a number field of degree $g$ with ring of integers $\Ofr$, and consider the set $J_F := \Hom_\Q(F, \R)$ of real embeddings of $F$. We fix an ordering of these embeddings, such that we can write $J_F = \{ \sigma_1, \ldots, \sigma_g\}$. An element $r \in F$ is said to be \emph{totally positive}, if $\sigma_i(r) > 0$ for all $i$ such that $\sigma_i$ is a real embedding. In this case, we write $r \gg 0$. For any fractional ideal $\afr$ of $F$, $\afr^+$\index[notation]{$a plus$@$\afr^+$} is the subset of totally positive elements of $\afr$.

Write as well $I_F := \Z[J_F]$\index[notation]{$I_F$}, the free abelian group on the real embeddings of $F$. Similarly, for any subset $\Sigma \subset J_F$, we write $I_\Sigma := \Z[\Sigma]$. If $\chi = \sum_{i = 1}^g \chi_i \sigma_i \in I_F$, we define
$$|\chi| := \sum_{i = 1}^g \chi_i,$$
which is always an integer.

\begin{definition}

	Let $\alpha = \sum_{i = 1}^g \alpha_i \sigma_i, \beta = \sum_{i = 1}^g \beta_i \sigma_i \in I_F$. 
	
	\begin{enumerate}[(i)]
	
		\item We say that $\alpha \geq \beta$ if $\alpha_i \geq \beta_i$ for all $i = 1, \ldots, g$. We write $I_F^+$ for the subset of all $\chi \in I_F$ such that $\chi \geq 0$, and similarly for $I_\Sigma^+$.
		
		\item We say that $\alpha$ is of \emph{norm type} if there exists a fixed $k \in \Z$ such that $\alpha_i = k$ for all $i$, and we write $\alpha = \underline{k}$.
		
	\end{enumerate}

\end{definition}

If we are considering modules over a ring which admit an action by the ring of integers $\Ofr_{F^{\text{Gal}}}$ of a Galois closure $F^{\text{Gal}}$ of $F$, and over which the discriminant $d_F$ of $F$ is invertible, then we have the following isomorphism between duals.

\begin{lemma}\label{duality of modules with F action}

	Let $R$ be an $\Ofr_{F^{\textnormal{Gal}}}[d_F^{-1}]$-algebra, and let $M$ be an $\Ofr \otimes_\Z R$-module. Then, the trace map $\textnormal{Tr}_{F/\Q}$ induces an isomorphism of $\Ofr \otimes_\Z R$-modules:
	$$\Hom_{\Ofr \otimes_\Z R}(M, \Ofr \otimes_\Z R) \cong \Hom_R(M,R), \quad \varphi \mapsto (\textnormal{Tr}_{F/\Q} \otimes \textnormal{id}) \circ \varphi$$

\end{lemma}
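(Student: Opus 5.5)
The plan is to reduce to the case $M = \Ofr \otimes_\Z R$ and then prove that case with the trace pairing. First, set $\Phi_M(\varphi) := (\textnormal{Tr}_{F/\Q}\otimes\id)\circ\varphi$. This map is $\Ofr\otimes_\Z R$-linear, where $\Hom_R(M,R)$ carries the action $(a\cdot\psi)(m)=\psi(am)$; this holds because $\varphi$ is $\Ofr\otimes R$-linear. It is also natural in $M$. Both sides send colimits in $M$ (direct sums and cokernels) to limits, i.e. they are left exact contravariant. So, presenting $M$ as a cokernel of a map between free $\Ofr\otimes_\Z R$-modules $\bigoplus (\Ofr\otimes R)\to\bigoplus(\Ofr\otimes R)\to M\to 0$, the five lemma reduces us to $M=\Ofr\otimes_\Z R$. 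Alternatively, one can reduce directly via the tensor–hom adjunction $\Hom_{\Ofr\otimes R}(M,\Hom_R(\Ofr\otimes R,R))\cong\Hom_R(M,R)$, which shows it suffices that $\Ofr\otimes R\to \Hom_R(\Ofr\otimes R,R)$, $a\mapsto \textnormal{Tr}(a\,\cdot)$, is an isomorphism.

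For this key case, I would show that the trace form $\Ofr\times\Ofr\to\Z$, $(a,b)\mapsto\textnormal{Tr}_{F/\Q}(ab)$, becomes perfect after base change to $R$. Choose a $\Z$-basis $e_1,\dots,e_g$ of $\Ofr$. The Gram matrix $(\textnormal{Tr}(e_ie_j))$ has determinant $d_F$ by definition of the discriminant. Since $d_F$ is invertible in $R$, the induced map $\Ofr\otimes R\to\Hom_R(\Ofr\otimes R,R)$ is represented by an invertible matrix over $R$, hence is an isomorphism. This only uses that $R$ is a $\Z[d_F^{-1}]$-algebra. The $\Ofr_{F^{\textnormal{Gal}}}$-structure is not needed, although it gives the alternative viewpoint $\Ofr\otimes R\cong\prod_{\sigma}R$ (étale splitting), under which the trace becomes the sum of coordinates.

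The main obstacle is bookkeeping rather than mathematics. One must check that the map obtained via the adjunction coincides with $\varphi\mapsto(\textnormal{Tr}\otimes\id)\circ\varphi$. Concretely, $\varphi$ corresponds to $m\mapsto \textnormal{Tr}(\varphi(m)\cdot -)$, and evaluating at $1$ gives $\textnormal{Tr}(\varphi(m))$. One must also check the $\Ofr\otimes R$-linearity of the inverse. To describe the inverse explicitly: given $\psi\in\Hom_R(M,R)$, set $\varphi(m)=\sum_i \psi(e_i^\vee m)\, e_i$, where $\{e_i^\vee\}$ is the dual basis of $\{e_i\}$ under the trace form (which exists in $\Ofr\otimes R$ because $d_F$ is invertible). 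This formula can then be verified directly to be $\Ofr\otimes R$-linear and inverse to $\Phi_M$.
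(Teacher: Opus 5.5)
The paper states this lemma without proof, so there is no argument to match line by line. Your argument is correct and complete. The coinduction adjunction $\Hom_R(M,R)\cong\Hom_{\Ofr\otimes_\Z R}\bigl(M,\Hom_R(\Ofr\otimes_\Z R,R)\bigr)$ reduces everything to perfectness of the trace form, and the Gram determinant $d_F\in R^\times$ gives that. You are also right that the composite is $\varphi\mapsto(\textnormal{Tr}_{F/\Q}\otimes\id)\circ\varphi$, and your explicit inverse checks out via $1=\sum_i\textnormal{Tr}_{F/\Q}(e_i)\,e_i^\vee$ and $ae_j=\sum_i\textnormal{Tr}_{F/\Q}(ae_je_i^\vee)\,e_i$.

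The stated hypothesis that $R$ be an $\Ofr_{F^{\textnormal{Gal}}}[d_F^{-1}]$-algebra suggests a different implicit route. It uses the splitting $\Ofr\otimes_\Z R\cong\prod_{i=1}^g R$, $a\otimes r\mapsto(\sigma_i(a)r)_i$, which the paper records later when showing $\T_{F,S}$ is split. Under this splitting $\textnormal{Tr}_{F/\Q}\otimes\id$ becomes the sum of coordinates, the idempotents decompose $M=\bigoplus_i M_i$, and both sides are visibly $\prod_i\Hom_R(M_i,R)$. That route fits the eigenspace decompositions used throughout the paper.

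Your route needs only $d_F\in R^\times$, and in fact proves more. For arbitrary $R$ it gives $\Hom_R(M,R)\cong\Hom_{\Ofr\otimes_\Z R}(M,\dfr^{-1}\otimes_\Z R)$, since your dual basis $\{e_i^\vee\}$ is a $\Z$-basis of $\dfr^{-1}$. Inverting $d_F$ is then needed only to identify $\dfr^{-1}\otimes_\Z R=\Ofr\otimes_\Z R$. This also explains why $\dfr^{-1}\otimes_\Z\Ocal_S$ appears in the paper's remark relating bases of $\omega_{\A/S}$ to trivializations of $\textnormal{Lie}(\A/S)$.
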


More generally, for any element $z = (z_1, \ldots, z_g) \in \C^g$, we define its \emph{norm} and \emph{trace} as
$$N(z) := \prod_{i = 1}^g z_i, \quad \text{Tr}(z) := \sum_{i = 1}^g z_i.$$
These clearly agree with the usual field norm and trace of $F$ under the embedding $F \hookrightarrow F \otimes_\Q \C \cong \C^g$ given by the elements of $J_F$.

For any fractional ideal, we want to discuss when it is ``prime'' to a ring in the following sense.

\begin{definition}\label{prime to R def}

	Let $\afr$ be a fractional ideal of $F$, and $R$ a ring. We say that $\afr$ is \emph{coprime to $R$} if $\afr$ is prime to every prime $\ell \in \Z$ which is not invertible in $R$.

\end{definition}

Note that if $\afr$ is coprime to $R$, one has an \emph{equality} $\afr \otimes_\Z R = \Ofr \otimes_\Z R$. 

\begin{notation}

	In what follows, $F$ will be a totally real field of degree $g \geq 2$ over $\Q$. We also write $\dfr^{-1}$ for the inverse different of $F$. For any fractional ideal $\cfr$, $\cfr^\vee := \cfr^{-1}\dfr^{-1}$ is its dual with respect to the field trace.

\end{notation}

Next, let us introduce an algebraic torus associated to the field $F$. We follow the exposition from \cite[Section 1]{Kings2025}.

\begin{definition}\label{torus def}\index[notation]{$T$@$\T_{F,S}$}

	Consider the algebraic torus $\T_F := \text{Res}_{\Ofr / \Z} \G_m$ given by Weil restriction. In other words, for any $\Z$-algebra $R$, $\T_F(R) := \G_m(R \otimes_\Z \Ofr) = (R \otimes_\Z \Ofr)^\times$. For any noetherian scheme $S$, we take the base change
	$$\T_{F,S} := \T_F \times_\Z S.$$
	We also consider the \emph{character group of $\T_{F,S}$} as the group of morphisms of $S$-group schemes
	$$I_{F,S} := \Hom_S(\T_{F,S}, \G_{m,S}).$$
	
\end{definition}

It is clear that whenever $S$ is a Spec$(\Ofr_{F^{\text{Gal}}}[d_F^{-1}])$-scheme, the torus $\T_{F,S}$ is split: if Spec$(R)$ is an $S$-scheme, then we have an isomorphism of $R$-algebras
$$\Ofr \otimes_\Z R \xrightarrow{\sim} \prod_{i = 1}^g R, \quad a \otimes 1 \mapsto (\sigma_1(a), \ldots, \sigma_g(a)).$$
In particular, we see that $I_{F,S} = I_F$. For the remainder of this section, we place ourselves in this situation.

\begin{definition}[{\cite[Exposé I, Definition 4.7.1]{SGA3}}]

	Let $\Fscr$ be a quasi-coherent $\Ocal_S$-module. We say that $\Fscr$ is an \emph{algebraic $\T_{F,S}$-module} if it has a $\Ocal_S[I_F]$-comodule structure. 

\end{definition}

The following result shows how we can decompose these algebraic $\T_{F,S}$-modules in terms of the characters of the torus.

\begin{proposition}[{\cite[Exposé I, Proposition 4.7.3]{SGA3}}]

	Let $\Fscr$ be a quasi-coherent algebraic $\T_{F,S}$-module. Then, there exists an isomorphism
	$$\Fscr \cong \bigoplus_{\chi \in I_F} \Fscr(\chi),$$
	where $\T_{F,S}$ acts on $\Fscr(\chi)$ via the character $\chi : \T_{F,S} \to \G_m$.

\end{proposition}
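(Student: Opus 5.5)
The statement is the SGA3 decomposition of a comodule over the group algebra $\Ocal_S[I_F]$. I plan to reduce to the affine case and then build the decomposition directly from the comodule structure map.

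\emph{Reduction to the affine case.} Let $S=\Spec(A)$, and write $\Fscr=\widetilde M$ for an $A$-module $M$. Since $\T_{F,S}$ is split, its coordinate ring is $A[I_F]=\bigoplus_{\chi\in I_F}A\cdot\chi$, the group algebra with comultiplication $\chi\mapsto\chi\otimes\chi$ and counit $\chi\mapsto 1$. The comodule structure is then an $A$-linear map
$$\rho\colon M\to M\otimes_A A[I_F]=\bigoplus_{\chi} M\cdot\chi,$$
and we can write $\rho(m)=\sum_\chi p_\chi(m)\,\chi$. The sum is finite for each $m$, and each $p_\chi\colon M\to M$ is $A$-linear.

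\emph{Key steps.} We translate the comodule axioms into conditions on the $p_\chi$.
\begin{enumerate}[(i)]
\item The counit axiom $(\id\otimes\epsilon)\circ\rho=\id$ gives $\sum_\chi p_\chi(m)=m$.
\item Coassociativity $(\rho\otimes\id)\circ\rho=(\id\otimes\Delta)\circ\rho$ reads $\sum_{\chi,\psi}p_\psi p_\chi(m)\,\psi\otimes\chi=\sum_\chi p_\chi(m)\,\chi\otimes\chi$. Comparing coefficients in the free basis $\{\psi\otimes\chi\}$ gives $p_\psi p_\chi=\delta_{\psi\chi}p_\chi$.
\end{enumerate}
Thus the $p_\chi$ are orthogonal idempotents summing to the identity, with only finitely many nonzero on each element. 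Setting $M(\chi):=p_\chi(M)$, we get $M=\bigoplus_\chi M(\chi)$. On $M(\chi)$ we have $\rho(m)=m\otimes\chi$, which is exactly the statement that $\T_{F,S}$ acts through the character $\chi$. Conversely, $M(\chi)=\{m:\rho(m)=m\otimes\chi\}$, so this decomposition is intrinsic.

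\emph{Globalization.} Because the eigenspaces are characterized intrinsically, and $\rho$ commutes with flat base change $A\to A_f$, the formation of $M(\chi)$ is compatible with localization. Indeed, $p_\chi$ is defined by the sheaf map $\rho$. The subsheaves $\Fscr(\chi)$ therefore glue to quasi-coherent subsheaves of $\Fscr$, and the isomorphism $\bigoplus_\chi\Fscr(\chi)\to\Fscr$ is a local statement, so it holds globally.

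The main point requiring care is the coefficient comparison in step (ii). It relies on $A[I_F]\otimes_A A[I_F]$ being free on the basis $\psi\otimes\chi$. It also relies on the identification $I_{F,S}=I_F$ holding on every affine open, which is why $S$ is taken over $\Ofr_{F^{\textnormal{Gal}}}[d_F^{-1}]$ so that the torus is split. If $S$ is not connected, the character group could a priori be locally constant rather than constant, but the splitting over the base ring rules this out.
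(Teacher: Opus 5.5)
Your proof is correct and is essentially the standard argument behind the SGA3 result the paper cites (the paper gives no proof of its own): the coefficient maps $p_\chi$ of the coaction are orthogonal idempotents summing to the identity, and their images are the weight spaces. SGA3 runs this directly on sheaves using $\Fscr\otimes_{\Ocal_S}\Ocal_S[I_F]\cong\bigoplus_{\chi}\Fscr\cdot\chi$, so your reduction to affines and regluing is harmless but not needed.

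Your closing caveat is also unnecessary and slightly inaccurate. On a disconnected $S$ the group $I_{F,S}=\Hom_S(\T_{F,S},\G_{m,S})$ consists of locally constant $I_F$-valued functions even when the torus is split. Nothing in your argument uses $I_{F,S}=I_F$, however; it only uses that the comodule structure is over $\Ocal_S[I_F]$ for the abstract group $I_F$, which is the definition.
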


\begin{definition}

	The \emph{type} of an algebraic $\T_{F,S}$-module $\Fscr$ is the set of $\chi \in I_F$ such that $\Fscr(\chi) \neq 0$. 

\end{definition}

Next, we will adopt the following notation from \cite[Definition 1.6]{Kings2025} when considering tensor symmetric products of algebraic $\T_{F,S}$-modules.

\begin{notation}\label{TSym notation}

	Let $\Fscr$ be an algebraic $\T_{F,S}$-module. For any subset $\Sigma \subset J_F$ of the real embeddings of $F$, we write
	$$\Fscr(\Sigma) := \bigoplus_{\sigma \in \Sigma} \Fscr(\sigma),$$
	and for any character $\chi := \sum_{\sigma \in \Sigma} \chi_\sigma \cdot \sigma \in I^+_\Sigma$, we define 
	$$\TSym^\chi_{\Ocal_S}(\Fscr(\Sigma)) := \bigotimes_{\sigma \in \Sigma} \TSym^{\chi_\sigma}_{\Ocal_S}(\Fscr(\sigma)).$$
	By abuse of notation, we will also use these definitions for modules which are of the form $\bigoplus_{\sigma \in \Sigma} \Fscr(-\sigma)$. Lastly, for any tuple $s = (s_\sigma)_{\sigma \in \Sigma} \in \Fscr(\Sigma)$, we define
	$$s^{[\chi]} := \bigotimes_{\sigma \in \Sigma} s_\sigma^{[\chi_\sigma]},$$
	where $s_\sigma^{[\chi_\sigma]}$ is the tensor product of $s_\sigma$ with itself $\chi_\sigma$-times.

\end{notation}

\begin{remark}

	It is clear from these definitions that one has an isomorphism
	\begin{equation}\label{eq:TSym decomposition}
	\TSym^k_{\Ocal_S}(\Fscr(\Sigma)) \cong \bigoplus_{\substack{\chi \in I^+_\Sigma \\ |\chi| = k}} \TSym^\chi_{\Ocal_S}(\Fscr(\Sigma)).
	\end{equation}

\end{remark}

\begin{remark}\label{TSym char 0}

	In general, for any commutative ring $R$ and any \emph{invertible} $R$-module $M$, we have injective maps of $R$-modules 
	$$\TSym^n_R(M) \otimes_R \TSym^m_R(M) \hookrightarrow \TSym^{n+m}_R(M)$$
	for all $n, m \geq 0$ induced by the $R$-algebra structure of $\TSym^\bullet_R(M)$. In particular, using our previous notation, if $\Fscr$ is an algebraic $\T_{F,S}$-module and $\Sigma \subset J_F$ is such that the $\Fscr(\sigma)$ are invertible $\Ocal_S$-modules for all $\sigma \in \Sigma$, we obtain for any $\chi_1, \chi_2 \in I_\Sigma^+$ an injective map
	$$\TSym^{\chi_1}_{\Ocal_S}(\Fscr(\Sigma)) \otimes_{\Ocal_S} \TSym^{\chi_2}_{\Ocal_S}(\Fscr(\Sigma)) \hookrightarrow \TSym^{\chi_1 + \chi_2}_{\Ocal_S}(\Fscr(\Sigma)).$$
	Note that both of these maps become isomorphisms when working over a field of characteristic zero.
	
\end{remark}

Later on, we will study the action of finite index subgroups $\Gamma \leq \Ofr^\times$ on algebraic $\T_{F,S}$-modules. In particular, if $S$ is a flat scheme over Spec$(\Z)$, we have an embedding $\Ofr \subset \Ofr \otimes_\Z \Ocal_S$, and thus of the units $\Ofr^\times$ into $\T_{F,S}(R)$ for any algebra $R$ with a map $\text{Spec}(R) \to S$. Therefore, for any algebraic $\T_{F,S}$-module $\Fscr$ and any $\chi \in I_F$, the units $\gamma \in \Ofr^\times$ act on $\Fscr(\chi)$ by multiplication with 
$$\prod_{i = 1}^g \sigma_i(\gamma)^{\chi_i} \in R^\times.$$ 
The well-known fact that the infinity type of a Hecke character over a totally real field is always of norm type implies the following result.

\begin{lemma}\label{norm type implies vanishing}

	Let $\chi = \sum_{i = 1}^g \chi_i \sigma_i \in I_F$ and $\Gamma \leq \Ofr^\times$ a finite index subgroup. We have that
	$$\prod_{i = 1}^g \sigma_i(\gamma)^{\chi_i} \in \{ \pm 1 \}$$
	for all $\gamma \in \Gamma$ if and only if $\chi$ is of norm type.

\end{lemma}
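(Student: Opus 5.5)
The plan is to pass to logarithms and use Dirichlet's unit theorem. Since $F$ is totally real of degree $g$, the unit group $\Ofr^\times$ has rank $g-1$. Any finite index subgroup $\Gamma$ has the same rank, so it contains $g-1$ multiplicatively independent units.

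The easy direction comes first. If $\chi = \underline{k}$, then
$$\prod_{i=1}^g \sigma_i(\gamma)^{k} = N_{F/\Q}(\gamma)^k \in \{\pm 1\}$$
for every unit $\gamma$, because the norm of a unit is $\pm 1$.

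For the converse, I will take absolute values. The hypothesis then becomes
$$\sum_{i=1}^g \chi_i \log|\sigma_i(\gamma)| = 0 \quad \text{for all } \gamma \in \Gamma.$$
Let $\ell \colon \Ofr^\times \to \R^g$ be the logarithmic embedding $\gamma \mapsto (\log|\sigma_i(\gamma)|)_i$. The condition says that the vector $(\chi_1,\ldots,\chi_g)$ is orthogonal, for the standard inner product, to $\ell(\Gamma)$. By Dirichlet's unit theorem, $\ell(\Ofr^\times)$ is a lattice of full rank in the trace-zero hyperplane $H = \{x \in \R^g : \sum_i x_i = 0\}$. Since $\Gamma$ has finite index, $\ell(\Gamma)$ is also a full-rank lattice in $H$, so its $\R$-span is all of $H$. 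Hence $(\chi_i)_i \in H^\perp = \R\cdot(1,\ldots,1)$, which means all $\chi_i$ are equal. Because they are integers, we get $\chi = \underline{k}$ for some $k \in \Z$.

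The only substantive input is that the log-image of $\Gamma$ spans the whole hyperplane $H$, and not merely a subspace of it. This is exactly the full-rank statement of Dirichlet's theorem, combined with the observation that passing to a finite index subgroup does not lower the rank: if $[\Ofr^\times:\Gamma]=m$, then $\gamma^m \in \Gamma$ for every unit $\gamma$, so $\ell(\Gamma) \supseteq m\,\ell(\Ofr^\times)$. I expect no real obstacle beyond citing this theorem correctly.
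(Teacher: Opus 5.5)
Your argument is correct. It differs from the paper, which gives no proof at all and simply deduces the lemma from the ``well-known fact'' that the infinity type of a Hecke character of a totally real field is of norm type.

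Taken literally, the paper's route needs an existence step that your argument never has to face. The hypothesis only tells you that the algebraic character $\prod_i \sigma_i^{2\chi_i}$ is trivial on the finite-index subgroup $\Gamma^2$. To apply the cited fact, one must first produce an actual Hecke character whose infinity type is $2\chi$. That requires the existence half of Weil's theory: a congruence subgroup of units contained in $\Gamma^2$, which comes from Chevalley's theorem, and then extension of the resulting character to the ideal group.

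Your proof bypasses Hecke characters entirely, and it is in fact the standard proof of the cited fact itself. Its only input is Dirichlet's unit theorem, in the form that for totally real $F$ the logarithmic image of any finite-index subgroup of $\Ofr^\times$ spans the trace-zero hyperplane in $\R^g$. You justify this correctly via $\ell(\Gamma)\supseteq m\,\ell(\Ofr^\times)$.

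What the paper's phrasing buys is conceptual context: it links the lemma to infinity types of Hecke characters, which motivates the later notion of critical type. Yours is self-contained and elementary.
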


\begin{remark}

	In particular, note that if $\chi = \underline{k}$ with $k$ an even integer, then the action of $\Gamma$ on $\Fscr(\underline{k})$ is always trivial for any choice of $\Gamma$.

\end{remark}

\begin{definition}

	We say that a character $\chi \in I_F$ is of \emph{critical type for $\Gamma$} if $\chi$ is such that $\prod_{i=1}^g \sigma_i(\gamma)^{\chi_i} = 1$ for all $\gamma \in \Gamma$. In particular, $\chi$ is always of norm type.

\end{definition}

In order to follow the notation and conventions of \cite{Kings2025}, we will make the following convention regarding duals of algebraic $\T_{F,S}$-modules.

\begin{convention}\label{signs of dual}

	Let $\Fscr$ be an algebraic $\T_{F,S}$-module. We consider the dual module $\Fscr^\vee := \Homsh_{\Ocal_S}(\Fscr, \Ocal_S)$ as an algebraic $\T_{F,S}$-module as well, by taking the \emph{contragradient representation}: for any sections $g$ of $\T_{F,S}$ and $f$ of $\Fscr^\vee$, $g \cdot f$ is defined by $s \mapsto f(g^{-1} \cdot s)$. In particular, this means that the type of $\Fscr^\vee$ is the set of characters $\mu$ such that $\mu^{-1}$ is in the type of $\Fscr$.

\end{convention}

\subsection{Abelian schemes}

Let $S$ be a noetherian scheme and $\A/S$ an abelian scheme, i.e.~a smooth, proper group scheme with geometrically connected fibers. Write
$$\pi \colon \A \to S, \quad e \colon S \to \A$$
for its structure map and its zero section, respectively. We assume that its relative dimension is $g$ (which we recall is also the degree of $F$ over $\Q$), and recall the following objects associated to $\A$. First, we write $\A^\vee/S$ for its dual abelian scheme (again of relative dimension $g$), and $\A^\natural/S$ for its universal vectorial extension. We set
$$\omega_{\A / S} := \pi_*\Omega^1_{\A/S} \cong e^*\Omega^1_{\A/S}$$
for the sheaf of translation invariant differential forms. Note that the Lie algebra of $\A$ is isomorphic to the dual of this sheaf: $\text{Lie}(\A/S) \cong \Homsh_{\Ocal_S}(\omega_{\A/S}, \Ocal_S)$. We will write, for any positive integer $i$,
$$\omega^i_{\A/S} := \bigwedge^i \omega_{\A/S} \quad \text{and} \quad \omega^{-i}_{\A/S} := \Homsh_{\Ocal_S}(\omega^i_{\A/S}, \Ocal_S) \cong \bigwedge^i \text{Lie}(\A / S)$$
for the exterior products (as $\Ocal_S$-modules) of these sheaves. 

We may also construct the first de Rham cohomology sheaf of $\A$, which is a quasi-coherent $\Ocal_S$-module $H^1_{dR}(\A/S)$. In particular, if we set $\Hscr_\A := H^1_{dR}(\A^\vee / S) \cong \Homsh_{\Ocal_S}(H^1_{dR}(\A/S), \Ocal_S)$\index[notation]{$H A$@$\Hscr_\A$}, the degeneration of the Hodge to de Rham spectral sequence gives the following short exact sequence:
\begin{equation}\label{eq:Hodge filtration ses}
\begin{tikzcd}
0 \arrow{r} & \omega_{\A^\vee / S} \arrow{r} & \Hscr_{\A} \arrow{r} & \text{Lie}(\A/S) \arrow{r} & 0.
\end{tikzcd}
\end{equation}

\subsubsection{Completions and the Poincaré bundle}

Let us recall several notions regarding the formal completion of group schemes, as well as the Poincaré bundle of an abelian scheme. We follow the notations and definitions of \cite[Section 2]{Kings2025} closely, and refer to it for more details. 

Fix an abelian scheme $\pi \colon \A \to S$ as above. Since the zero section $e$ is a closed immersion, we may take the formal completion of $\A$ along the image of $e$. The resulting formal scheme $\widehat{\A}/S$\index[notation]{$A compl$@$\widehat{\A}$} is then a formal group scheme obtained as the direct limit of the formal neighbourhoods
$$\A^{(n)} := \underline{\Spec}_{\Ocal_S}(\Ocal_\A / \Iscr^n),$$
where $\Iscr$ is the ideal sheaf associated to $e$. In particular, we write
$$\widehat{\pi} \colon \widehat{\A} \to S, \quad \pi^{(n)} \colon \A^{(n)} \to S$$
for the structure morphisms, as well as $\widehat{e}$, $e^{(n)}$ for the zero sections. We also remark that the coproduct in $\Ocal_{\widehat{\A}}$ induces a moment map
\begin{equation}\label{eq:moment map def}
\mom{}_{\widehat{\A}} \colon \Ocal_{\widehat{\A}} \to \widehat{\TSym}(\omega_{\A^\vee/S}).
\end{equation}

Recall the following constructions.

\begin{definition}\index[notation]{$Poincare bundle$@$\Po$, $\Po^\natural$}

	The \emph{Poincaré bundle} $\Po$ is the universal rigidified and algebraically equivalent to zero line bundle $\Po$ over $\A \times_S \A^\vee$. If we furthermore consider line bundles with this structure as well as an integrable $\Ocal_S$-connection, we obtain the \emph{Poincaré bundle with connection} $(\Po^\natural, \nabla)$, which is now a line bundle over $\A \times_S \A^\natural$.

\end{definition}

In particular, we are interested in studying the Poincaré bundle after taking the completion of $\A \times_S \A^\vee$ along the second coordinate. 

\begin{definition}\index[notation]{$Poincare bundle completed$@$\coPo$, $\coPo^\natural$}

	The \emph{completion of the Poincaré bundle} is defined as
	$$\coPo := (\id_{\A} \times \widehat{\pi}^\vee)_* \iota_{\A \times_S \widehat{\A}^\vee}^*\Po,$$
	where $\iota_{\A \times_S \widehat{\A}^\vee} \colon \A \times_S \widehat{\A}^\vee \to \A \times_S \A^\vee$ is the map induced by the completion. Similarly, we have the version with connection:
	$$\coPo^\natural := (\id_{\A} \times \widehat{\pi}^\natural)_* \iota_{\A \times_S \widehat{\A}^\natural}^*\Po,$$
	which is again a sheaf on $\A$.

\end{definition}

One of the crucial main properties of this Poincaré bundle is the following \emph{splitting principle} (see \cite[Corollary 2.9]{Kings2025}): if $\varphi \colon \A \to \B$ is an isogeny of abelian schemes over $S$ such that $\varphi^\vee$ is étale, then for any $\varphi$-torsion section $x \colon S \to \ker(\varphi) \subset \A$, we have an isomorphism of $\Ocal_S$-modules
$$x^*\coPo_\A \cong \Ocal_{\widehat{\A}^\vee},$$
where $\coPo_\A$ is the completion of the Poincaré bundle of $\A$. In fact, we can combine this result with the moment map \eqref{eq:moment map def} as follows: for any isogeny $\varphi \colon \A \to \B$ with étale dual, and any $\varphi$-torsion section $x$, we may take the composition
$$\mom{\varrho}_x \colon x^*\coPo_\A \cong \Ocal_{\widehat{\A}^\vee} \to \widehat{\TSym}(\omega_{\A^\vee/S}).$$
If we instead assume that $\varphi^\natural$ is étale, the splitting principle still holds for $\coPo_\A^\natural$, and we obtain
$$\mom{\varrho^\natural}_x \colon x^*\coPo_\A^\natural \to \widehat{\TSym}(\omega_{\A^\natural/S}).$$
Note that $\omega_{\A^\natural/S} \cong \Hscr_\A$. In both cases, we can take the following projections for any positive integer $b$:
\begin{equation}\label{eq:moment map proj}\index[notation]{$mom b x$@$\mom{\varrho}_x^b$, $\mom{\varrho^\natural}_x^b$}
\mom{\varrho}_x^b \colon x^*\coPo_\A \to \TSym^b(\omega_{\A^\vee/S}), \quad \mom{\varrho^\natural}_x^b \colon x^*\coPo_\A^\natural \to \TSym^b(\Hscr_\A).
\end{equation}

Lastly, we briefly comment on a variant of this setup of formal completions which will appear in the sequel. In particular, we will be working later over certain rings which are complete and separated in the topology induced by the ideal generated by $p$.

\begin{definition}

	We say that a ring $R$ is \emph{$p$-adic} if it is complete and separated with respect to its $p$-adic topology. In other words, the canonical map
	$$R \xrightarrow{\sim} \varprojlim_n R/p^nR$$
	is an isomorphism.

\end{definition}

Assume then that $S$ is an $R_0$-scheme for some $p$-adic ring $R_0$. We will then take the formal completion of our abelian scheme $\A$ not along $e$, but rather along the special fiber of the zero section:
$$e_p \colon S_{R_0/pR_0} \to \A_{R_0/pR_0},$$
which is still a closed immersion. Write $\widehat{\A}_p$\index[notation]{$A compl p$@$\widehat{\A}_p$} for this completion. The reason for this is that we also want to consider the formal completion $\widehat{S}$ of $S$ along $S_{R_0/pR_0}$, and by the functoriality of completions we obtain a map $\widehat{\A}_p \to \widehat{S}$. In fact, since all of the diagrams defining the group structure on $\A$ map $e_p(S_{R_0/pR_0})$ to $S_{R_0/pR_0}$, it follows that $\widehat{\A}_p/\widehat{S}$ is indeed a formal group scheme, and thus the construction of the moment map still holds. Although this completion gives a different formal scheme to $\widehat{\A}$, one can show that they have the same ring of global sections. In other words, the natural map $\widehat{\A}_p \to \widehat{\A}$ induces an isomorphism
\begin{equation}\label{eq:both formal compls agree}
H^0(\widehat{\A}_p, \Ocal_{\widehat{\A}_p}) \xrightarrow{\sim} H^0(\widehat{\A}, \Ocal_{\widehat{\A}})
\end{equation}
of rings \emph{without} topology.

\subsubsection{Real multiplication}

The abelian schemes with which we will work in what follows will all be equipped with an action by $\Ofr$, the ring of integers of $F$, as well as a polarization and some level structure. We now briefly recall the main definitions regarding these abelian schemes, and refer the reader to \cite{Rapoport1978} and \cite{Deligne1994} for further details. 

To state what these structures are, let us first recall the following construction.

\begin{definition}\index[notation]{$A a$@$\A(\afr)$, $\A \otimes_R \afr$}

	 Let $R$ be a ring with unity, possibly non-commutative, which is finite free over $\Z$. Let also $\A / S$ be an abelian scheme together with an injective ring homomorphism $R \hookrightarrow \text{End}_S(\A)$. Then, for any finitely presented projective right $R$-module $\afr$, we define the following functor on $S$-schemes, known as the \emph{Serre construction}:
	$$T \mapsto \A(\afr)(T) := \afr \otimes_R \A(T).$$
	The functor $\A(\afr)$ is represented by an abelian scheme over $S$, which we denote again by $\A(\afr)$, and satisfies that $\text{Lie}(\A(\afr)/S) \cong \afr \otimes_R \text{Lie}(\A/S)$. In particular, if $\afr$ is of rank 1, $\A(\afr)$ is also of relative dimension $g$. We will write as well $\afr \otimes_R \A := \A(\afr)$, or $\A(\afr) = \A \otimes_R \afr$ if $R$ is commutative.

\end{definition}

As we mentioned, we now introduce the main objects of study.

\begin{definition}

	Let $\cfr$ be a fractional ideal. A \emph{$\cfr$-polarized abelian scheme with RM (real multiplication) by $F$ over $S$} is a triple $(\A/S, m, \lambda)$ where
	\begin{enumerate}[(1)]
	
		\item $\A/S$ is an abelian scheme over $S$;
		
		\item $m \colon \Ofr \hookrightarrow \text{End}_S(\A)$ is an injective morphism of rings such that $\text{Lie}(\A/S)$ is, locally on $S$, an invertible $\Ofr \otimes_\Z \Ocal_S$-module (this is known as the \emph{Rapoport condition});
		
		\item $\lambda \colon \A(\cfr) \xrightarrow{\sim} \A^\vee$ is a \emph{$\cfr$-polarization}, meaning an isomorphism of abelian schemes respecting the $\Ofr$-structure and such that the map $\Hom_\Ofr(\A, \A(\cfr)) \to \Hom_\Ofr(\A, \A^\vee)$ induces bijections
		$$\cfr \cong \{\Ofr\text{-linear symmetric maps } \A \to \A^\vee\} \quad \text{and} \quad \cfr^+ \cong \{\Ofr\text{-linear polarizations}\}.$$
	
	\end{enumerate}
	In particular, an abelian scheme \emph{with RM by $F$} is a pair $(\A,m)$ with $m$ an embedding as in (2), and the Rapoport condition immediately implies that $\A$ has relative dimension $g$. Such a pair is also called in the literature a \emph{Hilbert-Blumenthal abelian scheme}. 

\end{definition}

\begin{remark}

	Observe that if $1 \in \cfr$ (which is equivalent to asking that $\cfr^{-1}$ is an integral ideal), then to any $\cfr$-polarization we can canonically associate a polarization $\A \to \A^\vee$ by choosing the image of 1 under the mentioned isomorphisms. Moreover, whenever $\A$ is equipped with a $\cfr$-polarization, we may also equip it with a $\xi\cfr$-polarization for all $\xi \in F^+$. In particular, we can always choose $\xi$ such that $1 \in \xi\cfr$.

\end{remark}

We may use the RM structure in order to generalize the notion of the ``multiplication by an integer'' maps. More concretely, if $\afr \subset \bfr$ are fractional ideals of $F$, the obvious homomorphism $\A(\afr) \to \A(\bfr)$ is an isogeny of degree $[\bfr : \afr]$. The following case will in particular appear several times in the sequel.

\begin{definition}\index[notation]{$a$@$[\afr]$}

	Let $\afr \subset \Ofr$ be an integral ideal. The \emph{multiplication by $\afr$ map} is the morphism
	$$[\afr] \colon \A \to \A(\afr^{-1})$$
	induced by the inclusion $\Ofr \subset \afr^{-1}$. In particular, it is an isogeny of degree $N\afr$. We also write $\A[\afr] := \ker([\afr])$.

\end{definition}

We now make certain observations regarding the induced $\T_{F,S}$-module structure. Consider $(\A/S,m)$ an abelian scheme with RM by $F$ such that $\Ocal_S$ is an $\Ofr_{F^{\text{Gal}}}[d_F^{-1}]$-algebra. The action by $\Ofr$ on $\A$ makes some of the sheaves which we have defined before into $\T_{F,S}$-modules. For example, the Rapoport condition implies that we have the following decomposition of $\text{Lie}(\A/S)$ as a $\T_{F,S}$-module:
$$\text{Lie}(\A/S) \cong \bigoplus_{i = 1}^g \text{Lie}(\A/S)(\sigma_i).$$
Following our \Cref{signs of dual}, taking the contragradient representation we get the following decomposition:
$$\omega_{\A/S} \cong \bigoplus_{i = 1}^g \omega_{\A/S}(-\sigma_i).$$
Note that this is precisely the \emph{inverse} of the action given on $\omega_{\A/S}$ by $m$, as now any $\gamma \in \Ofr^\times$ acts as $\gamma^{-1}$. Moreover, by our definition of RM on $\A^\vee$ as the dual $m^\vee$, we also adopt this inverse action convention on $\A^\vee$, which gives decompositions
$$\text{Lie}(\A^\vee/S) \cong \bigoplus_{i = 1}^g \text{Lie}(\A^\vee/S)(-\sigma_i), \quad \omega_{\A^\vee/S} \cong \bigoplus_{i = 1}^g \omega_{\A^\vee/S}(\sigma_i).$$
In particular, this means that if we have an $\Ofr$-linear polarization $\lambda \colon \A \to \A^\vee$ which is étale, the induced isomorphism $\omega_{\A^\vee/S} \cong \omega_{\A/S}$ is \emph{not} a map of algebraic $\T_{F,S}$-modules (despite being an $\Ofr$-linear isomorphism!). Instead, for any finite index subgroup $\Gamma \leq \Ofr^\times$, we obtain the following isomorphism which respects the $\T_{F,S}$-action:
\begin{equation}\label{dual colie algebra iso}
\omega_{\A^\vee / S} \otimes_{\Ocal_S[\Gamma], \varphi} \Ocal_S[\Gamma] \xrightarrow{\sim} \omega_{\A/S},
\end{equation}
where $\varphi : \Ocal_S[\Gamma] \to \Ocal_S[\Gamma]$ is the map given by $\gamma \mapsto \gamma^{-1}$.

\begin{convention}

	From now on, we consider the $\Gamma$-action on $\omega_{\A^\vee/S}$ and $\text{Lie}(\A^\vee/S)$ induced by the contragradient representation, and \emph{not} the one induced by a choice of $\cfr$-polarization.

\end{convention}

Some other $\T_{F,S}$-modules of relevance are the first de Rham cohomology sheaves $\Hscr_\A$ and $\Hscr_{\A^\vee}$, to which we apply similar conventions. In particular, the maps in the short exact sequence \eqref{eq:Hodge filtration ses} and its dual all respect our choices of $\T_{F,S}$-module structure, which leads, by an analogue of \cite[Corollary 1.12]{Kings2025}, to an isomorphism
$$\omega^g_{\A^\vee/S} \cong \omega^g_{\A/S}$$
of $\Gamma$-modules for any finite index subgroup $\Gamma \leq \Ofr^\times$.

Since the $\Ocal_S$-modules $\omega_{\A/S}(-\sigma_i)$ are locally free of rank 1, we introduce the following notation.

\begin{definition}\label{bases for sheaf inv diffs}\index[notation]{$omega A$@$\omega(\A)$}

	Let $(\A, m)$ be an abelian scheme with RM by $F$. A \emph{basis} $\omega(\A)$ of $\omega_{\A/S}$ will be a collection of global sections $\omega(\A)(-\sigma_i) \in H^0(S, \omega_{\A/S}(-\sigma_i))$ for $i = 1, \ldots, g$, which generate $\omega_{\A/S}$ as a $\Ofr \otimes_\Z \Ocal_S$-module. We define in the same way a basis $\omega(\A^\vee) = (\omega(\A^\vee)(\sigma_i))_i$ of $\omega_{\A^\vee/S}$.

\end{definition}

\begin{remark}\label{basis is equiv to iso}

	By duality and \Cref{duality of modules with F action}, if $\Ocal_S$ is an $\Ofr_{F^{\text{Gal}}}[d_F^{-1}]$-algebra, the datum of such a basis $\omega(\A)$ is equivalent to that of an $\Ofr \otimes_\Z \Ocal_S$-linear isomorphism 
	$$\text{Lie}(\A/S) \cong \dfr^{-1} \otimes_\Z \Ocal_S.$$

\end{remark}

\begin{remark}

	Note that such bases $\omega(\A)$, $\omega(\A^\vee)$ need not exist over general $S$, and one might require to change our base scheme in order to obtain them. 

\end{remark}

Whenever we have such a basis $\omega(\A)$, we use the following notation for the induced isomorphisms for the tensor symmetric products:
\begin{equation}\label{isom given by basis}
\omega(\chi) \colon \Ocal_S \xrightarrow{\sim} \TSym^\chi(\omega_{\A/S})
\end{equation}
for all $\chi \in I_F^+$. We write the image of such an isomorphism as $f \mapsto f(\omega(\A)^{[\chi]})$.

Lastly, we briefly discuss the case when the base scheme is $\Spec(\C)$. It is well known that abelian varieties over the complex numbers can be understood as polarizable complex tori, and when we introduce real multiplication into the situation, the lattices which appear can be described as follows.

\begin{definition}

	\begin{enumerate}[(1)]
	
		\item A \emph{lattice} $\Lambda$ in $F \otimes_\Q \C$ is a locally free $\Ofr$-submodule of rank 2 such that $\Lambda \otimes_\Z \R \cong F \otimes_\Q \C$. It is well-known that any such lattice admits an isomorphism of $\Ofr$-modules $\Lambda \cong \afr \oplus \Ofr$ for some fractional ideal $\afr$ (see \cite[Chapter II, Theorem 13]{Froehlich1993}).
		
		\item Let $\Lambda$ be a lattice in $F \otimes_\Q \C$, and $\cfr$ a fractional ideal of $F$. A \emph{$\cfr$-polarization on $\Lambda$} is the choice of an alternating $\Ofr$-bilinear form
		$$\langle -,- \rangle \colon \Lambda \wedge_\Ofr \Lambda \xrightarrow{\sim} \cfr^\vee,$$
		where recall that we defined $\cfr^\vee$ as the fractional ideal $\dfr^{-1}\cfr^{-1}$, and that the trace induces a canonical isomorphism $\cfr^\vee \cong \Hom_\Z(\cfr, \Z)$.
		
	\end{enumerate}

\end{definition}

One then has the following correspondences.

\begin{proposition}[{\cite[Section 1.4]{Katz1978}}]\label{correspondence lattices with compl AVs}

	We have a bijection:
	$$\{\text{lattices $\Lambda$ in } F \otimes_\Q \C\} \leftrightarrow \{ \text{triples } (\A/\C, m, \omega(\A)) \},$$
	where the triples in the right-hand side consist of complex abelian varieties with RM by $F$ and a basis of translation-invariant differentials. Moreover, if we fix a fractional ideal $\cfr$ we obtain the following correspondence:
	$$\{\text{$\cfr$-polarized lattices $(\Lambda, \langle -,- \rangle)$ in } F \otimes_\Q \C\} \leftrightarrow \{ \text{quadruples } (\A/\C, m, \lambda, \omega(\A)) \},$$
	where $\lambda$ on the right-hand side is a $\cfr$-polarization.

\end{proposition}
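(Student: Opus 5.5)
The plan is to follow the classical uniformization of complex abelian varieties and keep track of the extra data at each step.

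\emph{From lattices to triples.} Let $\Lambda \subset F \otimes_\Q \C \cong \C^g$ be a lattice. Since $\Lambda$ is an $\Ofr$-module and $\Lambda \otimes_\Z \R \cong F \otimes_\Q \C$, the quotient $\A := (F \otimes_\Q \C)/\Lambda$ is a complex torus of dimension $g$, and multiplication by $\Ofr$ on $F \otimes_\Q \C$ preserves $\Lambda$. This gives the embedding $m$.

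To see that $\A$ is algebraic, write $\Lambda \cong \afr \oplus \Ofr$ using the structure theorem quoted in the definition. After an $F \otimes \C$-linear change of coordinates, $\Lambda = \afr z + \Ofr$ with $z \in \mathfrak{H}^g$, where $\mathfrak{H}$ is the upper half-plane. Write $\xi \in F^+$ for a totally positive element, and let $\mathrm{Im}$ denote imaginary part taken componentwise in $\C^g$. On $\C^g$, with $z_j := \sigma_j(z)$, one writes down the Riemann form
$$E(u,v) = \mathrm{Tr}\bigl(\xi\, \mathrm{Im}(\bar u v)/\mathrm{Im}(z)\bigr).$$
This form is $\Z$-valued on $\Lambda$ after scaling $\xi$, and it is positive definite. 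So $\A$ is an abelian variety.

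\emph{Rapoport condition and the basis.} The tangent space is $\mathrm{Lie}(\A) = F \otimes_\Q \C$, which is free of rank one over $\Ofr \otimes_\Z \C$, so the Rapoport condition holds. The coordinate functions $u_i$ on $\C^g = \prod_{\sigma_i} \C$ give translation-invariant differentials $du_i$. These are eigenvectors for the $\Ofr$-action of type $\sigma_i$, up to the paper's sign convention, and they define the basis $\omega(\A)(-\sigma_i) := du_i$.

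\emph{From triples back to lattices.} Given a triple $(\A, m, \omega(\A))$, use \Cref{basis is equiv to iso}: the basis is the same as an $\Ofr \otimes \C$-isomorphism $\mathrm{Lie}(\A) \cong \dfr^{-1} \otimes_\Z \C = F \otimes_\Q \C$. Transporting $H_1(\A,\Z) \subset \mathrm{Lie}(\A)$ through this isomorphism gives an $\Ofr$-lattice $\Lambda$. It has rank $2$ because $H_1$ has $\Z$-rank $2g$ and is torsion-free over the Dedekind domain $\Ofr$, hence locally free. The two constructions are mutually inverse essentially by definition, since $\A \cong \mathrm{Lie}(\A)/H_1$ via the exponential map. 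Morphisms of triples must preserve the basis, so the isomorphisms on $\mathrm{Lie}$ are forced to be the identity, and the bijection holds on isomorphism classes.

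\emph{Polarized version.} Here I will use the standard dictionary: $\Hom(\A, \A^\vee)^{\mathrm{sym}} \leftrightarrow$ alternating $\Z$-forms $E$ on $\Lambda$ whose $\R$-extension satisfies $E(iu, iv) = E(u,v)$, with polarizations corresponding to the positive ones. The $\Ofr$-linear symmetric maps correspond to forms with $E(au, v) = E(u, av)$ for $a \in \Ofr$. Any such form can be written uniquely as $\mathrm{Tr} \circ \langle -,- \rangle$ with $\langle -,- \rangle \colon \Lambda \wedge_\Ofr \Lambda \to F$, by trace duality (\Cref{duality of modules with F action}, in the form $\cfr^\vee \cong \Hom_\Z(\cfr, \Z)$).

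The key computation is that $\Lambda \wedge_\Ofr \Lambda \cong \afr$ when $\Lambda \cong \afr \oplus \Ofr$. Once $\langle -,- \rangle$ is fixed, the $\Ofr$-linear symmetric maps form the module $\{c \in F : c\langle -,- \rangle \text{ is integral after trace}\}$. Requiring $\langle -,- \rangle$ to be an isomorphism onto $\cfr^\vee$ makes this module exactly $\cfr$. The positivity of $E_c$ corresponds to $c \in \cfr^+$, once the orientation of $\langle -,- \rangle$ is normalized by the choice $z \in \mathfrak{H}^g$. This recovers precisely the conditions in the definition of a $\cfr$-polarization $\lambda \colon \A(\cfr) \xrightarrow{\sim} \A^\vee$.

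\emph{Main obstacle.} I expect the hardest part to be this last identification: checking that the isomorphism condition $\Lambda \wedge_\Ofr \Lambda \xrightarrow{\sim} \cfr^\vee$ matches exactly the requirement that $\A(\cfr) \to \A^\vee$ is an isomorphism, not merely an isogeny. I would handle it by computing the dual lattice of $\Lambda$ with respect to $\mathrm{Tr}\langle -,- \rangle$ and showing that it equals $\cfr \otimes_\Ofr \Lambda$, since $\A^\vee$ corresponds to the dual lattice. A secondary point is fixing the sign and orientation conventions, which I would match against Katz.
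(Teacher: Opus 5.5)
The paper gives no proof of this proposition; it only cites Katz. Your outline is the standard argument: uniformization, transporting $H_1(\A,\Z)\subset \mathrm{Lie}(\A)$ through the trace-dual of $\omega(\A)$, Riemann forms, trace duality, and showing that the dual of $\Lambda$ for $\mathrm{Tr}\langle -,-\rangle$ is $\cfr\Lambda$. The unpolarized bijection, the identification of the $\Ofr$-linear symmetric maps with $\cfr$, and the isomorphism-versus-isogeny point all go through as you describe.

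Two small things should be made explicit:
\begin{itemize}
\item Reaching $z\in\mathfrak{H}^g$ may require replacing $\afr$ by $\epsilon\afr$ for some $\epsilon\in F^\times$ of suitable signs. This is harmless for algebraicity.
\item You use silently that every $\Ofr$-balanced alternating form $E$ on $\Lambda$ satisfies $E(iu,iv)=E(u,v)$. This holds because on each factor $\C$ of $F\otimes_\Q\C$ it is an alternating form on a real plane, and multiplication by $i$ has determinant $1$. It is why every $c\in\cfr$, not just some, gives a symmetric homomorphism.
\end{itemize}

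The step that fails is the positivity: you cannot ``normalize the orientation of $\langle -,-\rangle$ by the choice $z\in\mathfrak{H}^g$''.

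\emph{The sign condition.} For a $\cfr$-polarized lattice, the $\R$-linear extension of $\langle -,-\rangle$ at each $\sigma$ is $\langle u,v\rangle_\sigma = t_\sigma\,\mathrm{Im}(\bar u_\sigma v_\sigma)$ for some $t_\sigma\in\R^\times$. Hence $\mathrm{Tr}(c\langle -,-\rangle)=\sum_\sigma \sigma(c)\,t_\sigma\,\mathrm{Im}(\bar u_\sigma v_\sigma)$ is a Riemann form (with the sign convention of your $E$) if and only if $\sigma(c)t_\sigma>0$ for all $\sigma$. So $\cfr^+$ maps exactly onto the polarizations if and only if all $t_\sigma>0$.

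\emph{Why it cannot be normalized.} The signs of the $t_\sigma$ are invariants of the pair $(\Lambda,\langle -,-\rangle)$.
\begin{itemize}
\item Once $\omega(\A)$ is fixed, $\Lambda$ is a fixed subset, so no change of coordinates is allowed. Choosing a presentation $\Lambda=\afr z+\Ofr$ with $z\in\mathfrak{H}^g$ does not touch the given form.
\item Replacing $\langle -,-\rangle$ by $\epsilon\langle -,-\rangle$ with $\epsilon\in\Ofr^\times$ produces a different polarized lattice. In general no unit has the required signs; for example, $F=\Q(\sqrt 3)$ has no unit of mixed sign.
\end{itemize}

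\emph{Consequence.} The paper's definition of a $\cfr$-polarized lattice imposes no positivity. A pair with some $t_\sigma<0$ (for instance $(\Lambda,-\langle -,-\rangle)$ for a positively oriented $\langle -,-\rangle$) induces a $\lambda$ sending $\cfr^+$ to non-polarizations. So the natural map to quadruples is not defined on such pairs, and the stated correspondence is not a bijection.

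\emph{Fix.} Add to the lattice side the condition $t_\sigma>0$ for all $\sigma$, equivalently that $\mathrm{Tr}(c\langle -,-\rangle)$ is a Riemann form for one (hence every) $c\in\cfr^+$. With that condition, your argument proves the corrected statement.
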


\subsubsection{Level structures}

The last piece of data required to define the Hilbert moduli schemes is the level structure. This will help us rigidify the moduli problem in order for it to be representable by a scheme, and moreover will provide us with the existence of several sections which are required for our constructions further on. 

We first recall the following notation regarding locally free group schemes.

\begin{notation}

	Let $G$ be a locally free group scheme over $S$ of finite rank. We write $G^\circ$ for the subgroup scheme given by the connected component of the zero section, and $G^\text{ét}$ for the maximal étale quotient of $G$ (i.e.~the maximal quotient of $G$ which defines an étale group scheme).

\end{notation}

It is well-known (see, for example, \cite[Corollary 4.4]{Hida2004}), that for any abelian scheme $(\A/S, m)$ with RM by $F$, if $S^{\text{red}}$ is of characteristic $p$, we then have a short exact sequence
\begin{equation}\label{eq:torsion ses for ab schemes with RM}
\begin{tikzcd}
	0 \arrow{r} & \A[p^k]^\circ \arrow{r} & \A[p^k] \arrow{r} & \A[p^k]^{\textnormal{ét}} \arrow{r} & 0
\end{tikzcd}
\end{equation}
splitting over every geometric point of $S$, as well as isomorphisms
$$\A[p^k]^\circ \cong \mu_{p^k} \otimes_\Z \dfr^{-1}, \quad \A[p^k]^{\textnormal{ét}} \cong \underline{\Ofr / p^k\Ofr}$$
for all $k \geq 1$.

With this explicit description in mind, we give the following (standard) definitions for our level structures.

\begin{definition}

	Let $(\A/S, m, \lambda)$ be a $\cfr$-polarized abelian scheme with RM by $F$ for some fractional ideal $\cfr$ with $\cfr^{-1}$ being integral, and let $n$ be a positive integer.
	
	\begin{enumerate}[(1)]
	
		\item Assume that $\cfr$ is coprime to $n$. A \emph{$\Gamma_{\textnormal{arith}}(n)$-level structure} (also referred to in the literature as a \emph{symplectic level $n$ structure}) is the datum of an isomorphism of group schemes with an $\Ofr$-action:
		$$\phi \colon (\mu_n \otimes_\Z \dfr^{-1}) \times \underline{(\Ofr / n\Ofr)} \xrightarrow{\sim} \A[n],$$
		such that the composition of the Weil pairing $e_n : \A[n] \times \A^\vee[n] \to \mu_n$ together with $\lambda$ agrees (via $\phi$) with the following natural symplectic pairing:
		\begin{equation}\label{eq:arith level str pairing}
		\begin{aligned}
		(\mu_n \otimes_\Z \dfr^{-1}) \times \underline{(\Ofr / n\Ofr)} & \longrightarrow \mu_n \\
		(\zeta \otimes s, [r]) & \longmapsto \zeta^{\text{Tr}_{F/\Q}(rs)}.
		\end{aligned}
		\end{equation}
		Note that the pairing $\A[n] \times \A[n] \to \mu_n$ obtained by composing $e_n$ with $\lambda$ is alternating, and thus we can reduce its expression using $\phi$ to the above description.
		
		\item A \emph{$\Gamma(n)$-level structure} (also known as a \emph{full} or \emph{naive level $n$ structure}) is the datum of an isomorphism of group schemes with an $\Ofr$-action:
		$$\alpha \colon \underline{(\Ofr / n\Ofr)}^2 \xrightarrow{\sim} \A[n].$$
		
		\item A \emph{$\Gamma_{00}(n)$-level structure} (also called in the literature a \emph{$\mu_n$-level structure}) is the datum of an embedding of group schemes with an $\Ofr$-action:
		$$\beta \colon \mu_n \otimes_\Z \dfr^{-1} \hookrightarrow \A[n].$$
	
	\end{enumerate}

\end{definition}

\begin{remark}

	It is clear that the datum of a $\Gamma_{\text{arith}}(n)$-level structure on $\A$ gives canonically a $\Gamma_{00}(n)$-structure on the same abelian scheme via the obvious inclusion. On the other hand, a $\Gamma(n)$-level structure may only be defined over a scheme $S$ where $n$ is invertible.

\end{remark}

\begin{definition}\label{sympletic level str def}

	Let $(\A/S, m, \lambda, \alpha)$ be a $\cfr$-polarized abelian scheme with RM by $F$ and a $\Gamma(n)$-level structure. By the same reasoning as in \cite[1.21]{Rapoport1978} (see also \cite[2.4]{Chai1990}), $\alpha$ induces an isomorphism
	$$ \mu_n \otimes_\Z \dfr^{-1}/n\dfr^{-1} \cong \underline{\cfr/n\cfr}.$$
	In particular, if $\cfr$ is coprime to $n$, we can rewrite this as $\mu_n \otimes_\Z \dfr^{-1} \cong \underline{\Ofr/n\Ofr}$. If this map is induced by an isomorphism $\varphi \colon \mu_n \to \underline{\Z/n\Z}$ of group schemes over $S$, we say that $\alpha$ is \emph{symplectic} and that $\varphi$ is the \emph{determinant of $\alpha$}. 
	
\end{definition}

Consider a tuple $(\A/R, m, \lambda, \beta)$ consisting of a $\cfr$-polarized abelian scheme over a ring $R$ together with a $\Gamma_{00}(p^n)$-level structure. If $\cfr$ is prime to $p$, we can construct an embedding
\begin{equation}\label{eq:dual ab sch structure}
\beta^\vee \colon \mu_{p^n} \otimes_\Z \dfr^{-1} \hookrightarrow \A^\vee[p^n],
\end{equation}
giving a $\Gamma_{00}(p^n)$-level structure on $\A^\vee$ by following the procedure in \cite[Section 3.3, p.252--253]{Katz1978}, which we here recall. Indeed, the level structure on $\A$ induces an embedding
$$\beta \otimes \text{id} : (\mu_{p^n} \otimes_\Z \dfr^{-1} )\otimes_\Ofr \cfr \hookrightarrow \A \otimes_\Ofr \cfr.$$
Using the isomorphism $\A \otimes_\Ofr \cfr \cong \A^\vee$ given by the $\cfr$-polarization, it is then enough to show that 
$$(\mu_{p^n} \otimes_\Z \dfr^{-1} )\otimes_\Ofr \cfr \cong \mu_{p^n} \otimes_\Z \dfr^{-1},$$ 
which follows from the fact that $\mu_{p^n}$ is a $\Z_p$-module and $\Z_p \otimes_\Z \dfr^{-1}\cfr \cong \Z_p \otimes_\Z \dfr^{-1}$ by the coprimality of $p$ and $\cfr$. 

Since we will be working regularly with abelian schemes with real multiplication and other pieces of data, we define the following objects.

\begin{definition}\label{test objects def finite case}

	Let $S$ be a noetherian scheme, $\cfr$ a fractional ideal of $F$ with $\cfr^{-1}$ integral, and $n$ a positive integer. For $\Gamma = \Gamma_{\text{arith}}(n), \Gamma(n), \Gamma_{00}(n)$, we define a \emph{$\Gamma$-test object} as a quadruple 
	$$(\A/S, m, \lambda, *)$$ 
	where $(\A/S, m, \lambda)$ is a $\cfr$-polarized abelian scheme with RM by $F$ and $*$ is the corresponding level structure. This also extends to define $(\Gamma(n), \Gamma_{00}(n'))$-test objects and so on whenever $n$ and $n'$ are coprime.

If we have now two arbitrary positive integers $n$ and $n'$, a \emph{$(\Gamma_\arith(n), \Gamma_{00}(n'))$-test object} is a tuple $(\A/S, m, \lambda, \phi, \beta)$ such that the two level structures are \emph{compatible}, i.e. if $d = (n, n')$, then we ask the following diagram to commute:
$$\begin{tikzcd}
\mu_d \otimes \dfr^{-1} \arrow[hook]{dd} \arrow[hook]{dr}{\beta} & \\
& \A[d], \\
(\mu_d \otimes_\Z \dfr^{-1}) \times \underline{(\Ofr / d\Ofr)} \arrow["\phi"', "\sim"]{ur} & 
\end{tikzcd}$$
where the vertical map is the obvious inclusion.

\end{definition}

\subsubsection{\textit{p}-adic trivializations of abelian schemes}

Similarly to the case over the complex numbers, we want to obtain an explicit description of our abelian schemes when we are working over $p$-adic rings. As we will see, by imposing a level structure which is of \emph{infinite level}, we will automatically obtain a description of the formal completion of our abelian schemes. Fix then a $\cfr$-polarized abelian scheme $(\A/S, m, \lambda)$.

\begin{definition}\label{inf gamma_00 level str}

	Let $N$ be a supernatural number, i.e.~an inverse system of natural numbers $(N_i)_i$ with respect to division. The datum of a $\Gamma_{00}(N)$\emph{-level structure} on $\A$ is an $\Ofr$-equivariant closed immersion
	$$\mu_{N} \otimes_\Z \dfr^{-1} \lhook\joinrel\longrightarrow \A,$$
	where $\mu_N$ is the ind-scheme $\varinjlim_i \mu_{N_i}$. As in \Cref{test objects def finite case}, we define $\Gamma_{00}(N)$-test objects, $(\Gamma(n), \Gamma_{00}(N))$-test objects (with $n$ coprime to $N$), $(\Gamma_\arith(n), \Gamma_{00}(N))$-test objects (for arbitrary $n$) and so on in the obvious way.

\end{definition}

\begin{remark}

	For any such $N = (N_i)_i$, it is clear that an abelian scheme with $\Gamma_{00}(N)$-level structure also admits a $\Gamma_{00}(N_i)$-level structure by the canonical injective maps $\mu_{N_i} \hookrightarrow \mu_N$. In particular, any such $\A$ is also ordinary at every geometric point of characteristic dividing $N$.

\end{remark}

Consider now the case where $S = \text{Spec}(R)$ for some $p$-adic ring $R$, and assume that $d_F$ is invertible in $R$ (therefore, $\dfr^{-1}$ is coprime to $R$). The following result shows that a $\Gamma_{00}(p^\infty)$-structure on $\A$ gives us a $p$-adic \emph{trivialization} of the formal group scheme $\widehat{\A}_p$ over $\text{Spf}(R)$ (which was defined as the formal completion of $\A$ along the special fiber of the zero section $e$).

\begin{lemma}[{\cite[Lemma 1.10.1]{Katz1978}}] \label{mu level structure datum}

	Let $\A / R$ as above. Then, the datum of an embedding
	$$\mu_{p^\infty} \otimes_{\Z} \dfr^{-1} \lhook\joinrel\longrightarrow \A$$
	is equivalent to an isomorphism of formal groups over $\textnormal{Spf}(R)$
	$$\widehat{\G}_m \otimes_{\Z} \dfr^{-1} \xlongrightarrow{\sim} \widehat{\A}_p.$$

\end{lemma}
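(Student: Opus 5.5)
The approach is to reduce to finite level and then use Cartier duality and the Serre–Tate/Grothendieck correspondence between $p$-divisible subgroups and formal groups. Since $R$ is $p$-adic, the formal group $\widehat{\A}_p$ over $\Spf(R)$ is the inductive limit of the infinitesimal neighbourhoods over the rings $R/p^nR$. Likewise, $\widehat{\G}_m \otimes_\Z \dfr^{-1}$ is the formal group attached to the connected $p$-divisible group $\mu_{p^\infty} \otimes_\Z \dfr^{-1}$. We therefore work over each $R_n := R/p^nR$ and pass to the limit, using that all constructions are compatible with the transition maps $R_{n+1} \to R_n$.

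First, given an $\Ofr$-equivariant embedding $\beta \colon \mu_{p^\infty} \otimes_\Z \dfr^{-1} \hookrightarrow \A$, we note that over $R_n$ (where $p$ is nilpotent) each $\mu_{p^k} \otimes \dfr^{-1}$ is infinitesimal. Its image therefore lies in the connected part $\A[p^k]^\circ$ of the exact sequence \eqref{eq:torsion ses for ab schemes with RM}. Both sides are finite flat of the same order $p^{kg}$, so comparing orders fiberwise shows that $\beta$ induces isomorphisms $\mu_{p^k} \otimes \dfr^{-1} \cong \A[p^k]^\circ$. In the limit this gives an isomorphism of connected $p$-divisible groups $\mu_{p^\infty}\otimes\dfr^{-1} \cong \A[p^\infty]^\circ$ over each $R_n$.

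Next, we invoke the equivalence (Messing, or Tate over a complete base) between connected $p$-divisible groups over a ring in which $p$ is nilpotent and $p$-divisible formal Lie groups, $G \mapsto \varinjlim G[p^k]$. The formal completion of $\A_{R_n}$ along the zero section is the formal group of $\A[p^\infty]^\circ$: every section of the completion is killed by a power of $p$, since $[p]$ is topologically nilpotent on the formal group when $p$ is nilpotent on the base. This yields isomorphisms $\widehat{\G}_m \otimes \dfr^{-1} \cong \widehat{\A}_{R_n}$ that are compatible in $n$, and hence an isomorphism over $\Spf(R)$. The converse runs the same way: an isomorphism of formal groups restricts to the $p^k$-torsion, giving compatible closed immersions $\mu_{p^k}\otimes\dfr^{-1} \hookrightarrow \A$ over each $R_n$. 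These algebraize to $R$ because the $\mu_{p^k}\otimes\dfr^{-1}$ are finite over $R$ and $R$ is $p$-adically complete, by Grothendieck's existence theorem for finite schemes over $\Spec R$. The two constructions are visibly mutually inverse, and $\Ofr$-equivariance is preserved throughout, since every step is functorial.

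The main obstacle is the algebraization step in the converse direction together with its compatibility: the embeddings are defined a priori only on the formal completion over $\Spf(R)$, and one must produce closed immersions over $\Spec R$ itself. The fact that closed subgroup schemes of $\A$ which are finite flat over $R$ correspond to compatible systems over the $R_n$ follows from formal GAGA applied to the proper scheme $\A$ (or directly, because finite $R$-algebras are $p$-adically complete). The invertibility of $d_F$ is used only to identify $\Z_p\otimes\dfr^{-1}$ with $\Ofr\otimes\Z_p$, so that the Rapoport condition matches the ranks.
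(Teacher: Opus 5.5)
The paper gives no argument of its own here; it simply cites Katz, Lemma 1.10.1. Your proof (reduce modulo $p^n$, identify the formal group of $\A_{R/p^nR}$ with $\varinjlim_k \A[p^k]^\circ$ via $p$-power torsion, compare orders, and pass to the limit) is the standard argument behind that lemma and is correct. The one step to tighten is the algebraization: formal GAGA requires $R$ noetherian, which fails for $R=\Ofr_{\C_p}$ as used later, so rely on your direct route instead, noting that the maps modulo $p^n$ land in the finite locally free $\A[p^k]$, that the coordinate ring of $\mu_{p^k}\otimes_\Z\dfr^{-1}$ is finite free over $R$ and hence $p$-adically complete (a merely finite $R$-algebra need not be), and that the limiting map is a closed immersion by Nakayama, since $p$ lies in the Jacobson radical of $R$.
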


\begin{remark}

	If we choose our prime $p$ such that it is coprime to the discriminant of $F$, we may rewrite the formal group in this lemma as
	$$\widehat{\G}_m \otimes_{\Z} \dfr^{-1} \cong \widehat{\G}_m \otimes_{\Z_p} (\Z_p \otimes_\Z \dfr^{-1}) \cong \widehat{\G}_m \otimes_{\Z_p} (\Z_p \otimes_\Z \Ofr).$$
	
\end{remark}

Fix now a $\Gamma_{00}(p^\infty)$-test object $(\A/R, m, \lambda, \beta)$ for the remainder of this subsection. Directly from this result, we obtain a canonical choice of basis for the translation-invariant differentials $\omega_{\A/R}$. Indeed, the isomorphism 
$$\widehat{\G}_m \otimes_\Z \dfr^{-1} \xrightarrow{\sim} \widehat{\A}_p$$
of \Cref{mu level structure datum} induces an isomorphism of Lie algebras
$$\text{Lie}(\widehat{\A}_p/R) \cong \text{Lie}(\widehat{\G}_m/R) \otimes_\Z \dfr^{-1} \cong R \otimes_\Z \dfr^{-1} = R \otimes_\Z \Ofr,$$
where we are using the identification $\text{Lie}(\widehat{\G}_m/R) \cong R$ given by the canonical non-vanishing differential $(1+T)\frac{\partial}{\partial T}$. Using \Cref{basis is equiv to iso} we obtain our basis.

\begin{definition}\label{canonical basis affine def}

	The basis of $\omega_{\A/R}$ associated to the isomorphism 
	$$\text{Lie}(\A/R) \cong \text{Lie}(\widehat{\A}_p/R) \cong R \otimes_\Z \dfr^{-1} = R \otimes_\Z \Ofr$$ 
	just described above is the \emph{canonical basis of $\A$}, and we denote it by $\omega(\A)_{can}$. 
	
\end{definition}

Assume that $N\cfr^{-1}$ is invertible in $R$, or equivalently, that $\cfr$ is coprime to $R$. Since the $\cfr$-polarization induces a polarization $\lambda \colon \A \to \A^\vee$ which is étale, the isomorphism \eqref{dual colie algebra iso} gives a basis $\lambda^*\omega(\A)_{can}$ from the canonical basis $\omega(\A)_{can}$. On the other hand, we can also equip $\A^\vee$ with a $\Gamma_{00}(p^\infty)$-level structure by repeatedly using \eqref{eq:dual ab sch structure}, which by this definition gives us a canonical basis $\omega(\A^\vee)_{can}$. By these constructions, both bases $\lambda^*\omega(\A)_{can}$ and $\omega(\A^\vee)_{can}$ correspond to the following dashed map which makes the diagram commute:
\begin{equation}\label{eq:relation canonical bases}
\begin{tikzcd}[column sep = 2cm]
\text{Lie}(\A^\vee/R) \arrow["\sim", "\lambda"']{d} \arrow[dashed]{r} & R \otimes_\Z \dfr^{-1} \arrow[equals]{d} \\
\text{Lie}(\A/R) \otimes_\Ofr \cfr \arrow["\sim", "\omega(\A)_{can} \otimes \text{id}"']{r} & R \otimes_\Z \dfr^{-1}\cfr.
\end{tikzcd}
\end{equation}
In particular, $\lambda^*\omega(\A)_{can} = \omega(\A^\vee)_{can}$.

In what follows, we will consider $\A$ together with some arbitrary basis $\omega(\A)$, which will make it necessary to compare it to this canonical choice of basis.

\begin{definition}\label{p-adic periods def}\index[notation]{$Omega$@$\Omega_p$}

	Let $\omega(\A)$ be an arbitrary basis of $\omega_{\A/R}$. The \emph{$p$-adic period associated to $(\A, \omega(\A))$} is the element $\Omega_{p,\A} := \Omega_p \in \Ofr \otimes_\Z R$ such that
	$$\omega(\A) = \Omega_p \cdot \omega(\A)_{can}.$$
	It is clear that this period is unchanged under étale isogenies.

\end{definition}

This infinite level structure can also be used to obtain further results about the completion $\coPo$ of the Poincaré bundle. Firstly, we have the following trivialization, which is a slightly more general version of \cite[Proposition 5.9]{Kings2025} and can be proven in the same way.

\begin{proposition}\label{Poincare p-adic triv}

	There exists a canonical isomorphism
	$$\coPo|_{\widehat{\A}_p} \xlongrightarrow{\sim} \textnormal{pr}_{\widehat{\A}_p,*}\Ocal_{\widehat{\A}_p \times \widehat{\A}_p^\vee},$$
	where $\textnormal{pr}_{\widehat{\A}_p} \colon \widehat{\A}_p \times \widehat{\A}_p^\vee \to \widehat{\A}_p$ is the projection map.

\end{proposition}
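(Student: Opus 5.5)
The plan is to follow the strategy of \cite[Proposition 5.9]{Kings2025}: use the $\Gamma_{00}(p^\infty)$-level structure to trivialize both formal groups, then use the explicit description of the Poincaré bundle on $\widehat{\G}_m \otimes \dfr^{-1} \times \widehat{\G}_m\otimes\dfr^{-1}$ (biextension of formal tori) to produce a canonical trivialization.

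First, by \Cref{mu level structure datum} the level structure $\beta$ gives an isomorphism $\widehat{\G}_m \otimes_\Z \dfr^{-1} \xrightarrow{\sim} \widehat{\A}_p$, and the dual level structure $\beta^\vee$ from \eqref{eq:dual ab sch structure} gives $\widehat{\G}_m \otimes_\Z \dfr^{-1} \xrightarrow{\sim} \widehat{\A}^\vee_p$. Second, the restriction of $\Po$ to $\widehat{\A}_p \times \widehat{\A}_p^\vee$ is a biextension of these formal groups by $\widehat{\G}_m$ (rigidified along both zero sections). Over a $p$-adic base, biextensions of formal tori $\widehat{\G}_m\otimes M \times \widehat{\G}_m\otimes N$ by $\widehat{\G}_m$ are classified (via Cartier duality, $\widehat{\G}_m\otimes M$ being dual to the étale group $\underline{M^\vee}\otimes \Q_p/\Z_p$) by pairings that are trivial at the formal level: concretely, $\Po|_{\widehat{\A}_p\times\widehat{\A}^\vee_p}$ corresponds to an element of $\mathrm{Hom}(T_p\widehat{\A}_p \otimes T_p\widehat{\A}^\vee_p, \widehat{\G}_m)$-type data, and any biextension of connected formal tori by $\widehat{\G}_m$ is trivial because $\mathrm{Ext}^1$ and $\mathrm{Hom}$ between formal tori and $\widehat{\G}_m$ carry no nontrivial rigidified torsors over $\Spf(R)$ (one reduces modulo $p^n$ and uses that line bundles on the formal torus $\Spf R[[T_1,\dots,T_g]]$ are trivial, plus rigidity to make the trivialization unique). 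Uniqueness: two rigidified trivializations differ by a unit of $\Ocal(\widehat{\A}_p\times\widehat{\A}^\vee_p)$ compatible with the biextension structure and equal to $1$ along both zero sections, i.e. a bilinear map $\widehat{\A}_p\times\widehat{\A}_p^\vee\to\widehat{\G}_m$; such a map corresponds to a homomorphism $\widehat{\A}_p \to \widehat{\A}_p^{\vee,\,\text{Cartier dual}}$, an étale group, hence is trivial on the connected formal group. This gives canonicity.

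Third, push forward along $\mathrm{pr}_{\widehat{\A}_p}$: since $\coPo|_{\widehat{\A}_p}$ is by definition the pushforward of $\Po$ restricted to $\widehat{\A}_p\times\widehat{\A}^\vee$, and by \eqref{eq:both formal compls agree} sections over $\widehat{\A}^\vee$ and $\widehat{\A}^\vee_p$ agree, the trivialization above yields the isomorphism $\coPo|_{\widehat{\A}_p}\cong \mathrm{pr}_{\widehat{\A}_p,*}\Ocal_{\widehat{\A}_p\times\widehat{\A}_p^\vee}$. The main obstacle I expect is precisely this comparison of the two completions of the dual (along $e$ versus along $e_p$) inside the pushforward, together with checking that the generalization beyond the CM/affine setting of Kings--Sprang causes no new issue; since $S=\Spec(R)$ is affine here, the argument of \cite{Kings2025} applies essentially verbatim, using only the $\Gamma_{00}(p^\infty)$-trivializations rather than CM.
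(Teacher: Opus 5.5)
Your uniqueness step is sound: two trivializations that are \emph{both} compatible with the biextension structure differ by a bilinear map $\widehat{\A}_p\times\widehat{\A}^\vee_p\to\widehat{\G}_m$. Such a map is a homomorphism from a connected formal torus to the pro-étale Cartier dual $\underline{\Ofr\otimes_\Z\Z_p}$ of the other factor, so it vanishes.

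The gap is existence. What you actually justify is that line bundles on $\Spf R[[T_1,\ldots,T_{2g}]]$ are trivial and that the rigidifications normalize the trivialization. That only trivializes the underlying rigidified line bundle. Such trivializations are unique only up to units of $\Ocal(\widehat{\A}_p\times\widehat{\A}^\vee_p)$ equal to $1$ on both axes, which is a huge group. Nothing in your argument shows that one of them is compatible with the two partial group laws. That amounts to the triviality of the class of $\Po$ in $\mathrm{Biext}^1(\widehat{\G}_m\otimes_\Z\dfr^{-1},\widehat{\G}_m\otimes_\Z\dfr^{-1};\G_m)$, which you assert rather than prove. Describing the classifying data as ``$\mathrm{Hom}(T_p\widehat{\A}_p\otimes T_p\widehat{\A}^\vee_p,\widehat{\G}_m)$-type'' does not help either: it resembles Serre--Tate data, which is not zero in general. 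Without existence, your canonicity argument has nothing to act on, and you are left with a non-canonical isomorphism. That is not what the later constructions need: translation invariance, the theta function and the moment computations all use a specific trivialization.

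The missing idea is the one the paper relies on by deferring to \cite[Proposition 5.9]{Kings2025}, flagged there as ``another consequence of the invariance of $\Po$ under isogenies''. Use the level structure through the canonical subgroups rather than through biextension theory.
\begin{itemize}
\item For each $n$, $C_n=\beta(\mu_{p^n}\otimes_\Z\dfr^{-1})=\ker(F^n_\A)$, and $(F^n_\A)^\vee$ has kernel $C_n^t\cong\underline{\Ofr/p^n\Ofr}$, so it is étale.
\item Restrict the isomorphism $(F^n_\A\times\id)^*\Po_{\A_{(n)}}\cong(\id\times(F^n_\A)^\vee)^*\Po_\A$ to $C_n\times\A^\vee_{(n)}$. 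The left side is trivialized by the rigidification of $\Po_{\A_{(n)}}$ along $e\times\A^\vee_{(n)}$. The étale map $(F^n_\A)^\vee$ identifies the completions of $\A^\vee_{(n)}$ and $\A^\vee$ along their zero sections.
\item This is the splitting principle \cite[Corollary 2.9]{Kings2025} applied to the tautological point of $C_n$. It gives canonical isomorphisms $\coPo|_{C_n}\cong\Ocal_{C_n\times\widehat{\A}^\vee}$, compatible in $n$.
\item Since $\widehat{\G}_m=\varinjlim_n\mu_{p^n}$ as formal schemes over a $p$-adic base, $\widehat{\A}_p=\varinjlim_n C_n$, and passing to the limit gives the proposition.
\end{itemize}

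With this existence statement in hand, your uniqueness argument becomes a useful complement. If you also check that the limit trivialization respects the biextension structure, it characterizes that trivialization intrinsically. Your worry about $\widehat{\A}^\vee$ versus $\widehat{\A}^\vee_p$ is only the routine fact that $A[[T_1,\ldots,T_g]]$ is $(p,T)$-adically complete when $A$ is $p$-adic.
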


\begin{remark}

	We will write this isomorphism from now on as $\widehat{\Po}|_{\widehat{\A}_p} \cong \Ocal_{\widehat{\A}_p \times \widehat{\A}_p^\vee}$ by omitting the pushforward, directly seeing the sheaf $\Ocal_{\widehat{\A}_p \times \widehat{\A}_p^\vee}$ as an $\Ocal_{\widehat{\A}_p}$-module.

\end{remark}

As another consequence of the invariance of $\Po$ under isogenies, we also obtain that the completion of the Poincaré bundle is stable under translations.

\begin{lemma}[{\cite[Lemma 5.12]{Kings2025}}]\label{Poincare translation inv}

	Let $\varphi \colon \A \to \A'$ be an isogeny with étale dual, and $y \in \A(R)$ be a $\varphi$-torsion point. We can associate to it the translation by $y$ map $T_y \colon \A \to \A$. There is a canonical isomorphism
	$$T_y^*\coPo \cong \coPo.$$

\end{lemma}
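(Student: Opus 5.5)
The canonical isomorphism $T_y^*\coPo \cong \coPo$ will be built from the two characterizing properties of the Poincaré bundle: its compatibility with isogenies, and the theorem of the square. The splitting principle \cite[Corollary 2.9]{Kings2025} then lets us kill the error term on the completion.

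First, the theorem of the square relates the translated bundle to a tensor product. On $\A \times_S \A^\vee$, let $\textnormal{pr}_2$ be the second projection. The universal property of $\Po$ gives a canonical isomorphism of rigidified line bundles
$$(T_y \times \id)^*\Po \cong \Po \otimes \textnormal{pr}_2^*\bigl((y \times \id)^*\Po\bigr),$$
where $(y\times\id)^*\Po$ is the line bundle on $\A^\vee$ classified by $y$ under $\A \cong \A^{\vee\vee}$. To prove it, check that both sides restrict to the same rigidified line bundle on $\A \times \{a^\vee\}$ for every point $a^\vee$ of $\A^\vee$. Then fix the remaining line bundle pulled back from $\A^\vee$ by restricting along $e \times \id$ and using the rigidification. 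This is the standard see-saw argument.

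Next, pull back along $\id \times \iota$ to the completion $\A \times_S \widehat{\A}^\vee$ and push forward to $\A$. Because $T_y \times \id$ commutes with $\id \times \iota$ and with the pushforward along the second factor, we get
$$T_y^*\coPo \cong \coPo \otimes_{\Ocal_S} \bigl(y^*\coPo\bigr),$$
where $y^*\coPo$ is viewed as an $\Ocal_{\widehat{\A}^\vee}$-module and the tensor product is taken over $\Ocal_{\widehat{\A}^\vee}$. Concretely, $y^*\coPo$ is the completion along $e^\vee$ of the line bundle $(y\times\id)^*\Po$ on $\A^\vee$.

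Finally, apply the splitting principle. Since $y$ is $\varphi$-torsion and $\varphi^\vee$ is étale, \cite[Corollary 2.9]{Kings2025} gives a canonical trivialization $y^*\coPo \cong \Ocal_{\widehat{\A}^\vee}$. Substituting this into the previous isomorphism yields $T_y^*\coPo \cong \coPo$.

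The main obstacle is canonicity, in particular showing that the splitting isomorphism is really independent of choices. I would argue as follows. The line bundle $(y\times\id)^*\Po$ is isomorphic to $(\varphi^\vee)^*$ of the line bundle on $\A'^\vee$ classified by $\varphi(y)=0$, which is trivial. Its rigidification along $e^\vee$ then determines a unique trivialization over $\ker(\varphi^\vee)$. Because $\varphi^\vee$ is étale, $\widehat{\A}^\vee \to \widehat{\A'}^\vee$ is an isomorphism, so the formal neighbourhood of $e^\vee$ meets $\ker(\varphi^\vee)$ only in $e^\vee$. Hence the completed trivialization is pinned down by the rigidification alone and is canonical.

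I would also check that the isomorphism is compatible with the $\Ocal_{\widehat{\A}^\vee}$-module structures and with the $\Gamma$-actions whenever $y$ is $\Gamma$-invariant. Both should follow from functoriality of the see-saw argument.
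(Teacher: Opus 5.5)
Your argument is correct, but it takes a longer route than the one the paper relies on. The paper gives no proof of its own. It cites \cite[Lemma 5.12]{Kings2025} and presents the lemma as a consequence of isogeny invariance. Because $\varphi^\vee$ is étale, there is a canonical isomorphism $\coPo_{\A}\cong\varphi^*\coPo_{\A'}$; this is the same fact that underlies the splitting principle. Because $\varphi\circ T_y=\varphi$ for a $\varphi$-torsion point $y$, one gets directly
$$T_y^*\coPo_\A\cong T_y^*\varphi^*\coPo_{\A'}=(\varphi\circ T_y)^*\coPo_{\A'}=\varphi^*\coPo_{\A'}\cong\coPo_\A .$$
In that argument canonicity is automatic, and neither the theorem of the square nor a projection formula for the completed pushforward is needed.

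You instead use the biextension structure of $\Po$ to write $T_y^*\coPo\cong\coPo\otimes_{\Ocal_{\widehat{\A}^\vee}}y^*\coPo$. You then kill the error term with \cite[Corollary 2.9]{Kings2025}, which is itself derived from the isogeny invariance above, so your proof uses more input. What it buys is an explicit description of the translation isomorphism as a twist by the fibre $y^*\coPo$. This isolates the role of the torsion hypothesis: it supplies a canonical trivialization of $y^*\coPo$. Over an affine base such a trivialization exists for any section $y$, but it is only unique up to a unit of $\Ocal_{\widehat{\A}^\vee}$.

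Your canonicity paragraph is superfluous, since you already quote Corollary 2.9 as a canonical isomorphism, and it contains two errors.
\begin{itemize}
\item The directions are reversed. $\varphi^\vee$ goes from $(\A')^{\vee}$ to $\A^\vee$, so what is trivial is $(\varphi^\vee)^*\bigl((y\times\id)^*\Po_\A\bigr)\cong(\varphi(y)\times\id)^*\Po_{\A'}\cong\Ocal_{(\A')^{\vee}}$. Likewise, the isomorphism of completions is $\widehat{(\A')^{\vee}}\to\widehat{\A}^\vee$, not the other way round.
\item A rigidification at $e^\vee$ does not by itself pin down a trivialization on the formal neighbourhood. What does is the global trivialization on $(\A')^{\vee}$, which is unique once rigidified because its global units come from $\Ocal_S$, transported along the isomorphism of completions.
\end{itemize}
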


For any such $\varphi$ and $y$ as in this lemma, we can write the following isomorphism
\begin{equation}\label{eq:Poincare isom triv}
\widehat{\varrho}_y \colon (T_y^*\coPo)|_{\widehat{\A}_p} \xlongrightarrow{\sim} \coPo|_{\widehat{\A}_p} \xlongrightarrow{\sim} \Ocal_{\widehat{\A}_p \times \widehat{\A}_p^\vee}
\end{equation}
given by composing the maps in \Cref{Poincare p-adic triv} and \Cref{Poincare translation inv}.

\subsubsection{Relative Frobenius and Verschiebung}

Let $(\A / S, m, \lambda)$ be a $\cfr$-polarized abelian scheme defined over a noetherian scheme $S$ and such that $\cfr$ is coprime to $p$. Our objective now is to construct certain isogenies of abelian schemes which correspond to the classical lifts of Frobenius and Verschiebung when $S = \Spec(R)$ with $R$ a $p$-adic ring. We note that the following construction is a straight-forward generalization of the classical theory of canonical subgroups, in particular seen in \cite[Section 1.11]{Katz1978}, where Katz works over $p$-adic rings.

As it will be of relevance now and through the rest of this work, we set the following notation for Cartier duality.

\begin{notation}

	If $G$ is a locally group scheme over $S$ of finite rank, $G^t := \Hom_S(G, \G_{m,S})$ is its Cartier dual.

\end{notation}

\begin{definition}\label{quot by can subgps general ring def}\index[notation]{$C n$@$C_n$} \index[notation]{$A (n)$@$\A_{(n)}$} \index[notation]{$Frobenius$@$F^n_{\A/S}$} \index[notation]{$Verschiebung$@$V^n_{\A/S}$}

	Let $(\A / S, m, \lambda, \beta)$ be a $\Gamma_{00}(p^n)$-test object. We define its \emph{$n$-th canonical subgroup} as $C_n := \beta(\mu_{p^n} \otimes_\Z \dfr^{-1})$, and we obtain abelian schemes $\A_{(n)} := \A/C_n$ together with isogenies $F_{\A/S}^n \colon \A \to \A_{(n)}$, which we call the \emph{relative $n$-th Frobenius morphism}. If we take the projection
	$$\begin{tikzcd}
	V^n_\A \colon \A_{(n)} \arrow{r} & \A/\A[p^n] \arrow{r}{\sim} & \A
	\end{tikzcd}$$
	(where the last isomorphism is given by the isogeny $[p^n]$) we obtain the \emph{$n$-th relative Verschiebung}, an isogeny which satisfies that $V^n_{\A/S} \circ F^n_{\A/S} = [p^n]$. 

\end{definition}

\begin{remark}

	If $\A$ is equipped with a $\Gamma_{00}(p^\infty)$-level structure, we may equip $\A_{(n)}$ with one as well by the procedure seen in the proof of \cite[Lemma 1.11.6]{Katz1978}. More generally, if we assume that $\A$ is equipped with a $\Gamma_{00}(p^m)$-level structure for $m \geq 2n$, then we may equip $\A_{(n)}$ with a $\Gamma_{00}(p^n)$-level structure $\beta_{(n)}$ by requiring the following diagram to commute:
	$$\begin{tikzcd}
	0 \arrow{r} & \mu_{p^n} \otimes_\Z \dfr^{-1} \arrow{d}{\text{id}} \arrow{r} & \mu_{p^{2n}} \otimes_\Z \dfr^{-1} \arrow{d} \arrow{r}{\cdot p^n} & \mu_{p^n} \otimes_\Z \dfr^{-1} \arrow{d}{\beta_{(n)}} \arrow{r} & 0 \\
	0 \arrow{r} & \mu_{p^n} \otimes_\Z \dfr^{-1} \arrow{r} & \A \arrow[swap]{r}{F^n_\A} & \A_{(n)} \arrow{r} & 0.
	\end{tikzcd}$$

\end{remark}

\begin{remark}

	In general, the $\cfr$-polarization on $\A$ gives an $\Ofr$-linear polarization $\lambda_{(n)}$ on $\A_{(n)}$ as follows:
	$$\begin{tikzcd}
	\A \arrow{r}{\lambda} & \A^\vee \arrow{d}{(V^n_{\A})^\vee} \\
	\A_{(n)} \arrow{u}{V^n_\A} \arrow[dashed]{r} & (\A_{(n)})^\vee,
	\end{tikzcd}$$
	but this may not be a $\cfr$-polarization in general.

\end{remark}

One of the convenient properties of the quotient $\A_{(n)}$ is that the level structure on $\A$ induces on it a $\Gamma_\arith(p^n)$-level structure.

\begin{lemma}\label{splitting p-tors Frob twist}

	Let $(\A/S, m, \lambda, \beta)$ be a $\Gamma_{00}(p^{2n})$-test object. Then $\A_{(n)}$ can be equipped canonically with a $\Gamma_\arith(p^n)$-test object structure.

\end{lemma}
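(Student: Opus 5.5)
We need a $\Gamma_\arith(p^n)$-structure on $\A_{(n)}$: an $\Ofr$-linear isomorphism
$$\phi_{(n)} \colon (\mu_{p^n} \otimes_\Z \dfr^{-1}) \times \underline{\Ofr/p^n\Ofr} \xrightarrow{\sim} \A_{(n)}[p^n]$$
compatible with the Weil pairing via the induced polarization. The plan is to build the two pieces separately, the multiplicative one from $\beta$ and the étale one as a kernel, and then verify the pairing condition.

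\textbf{Multiplicative part.} Take the map $\beta_{(n)}$ of the preceding remark. It is defined using the $\Gamma_{00}(p^{2n})$-structure: $\beta(\mu_{p^{2n}}\otimes\dfr^{-1})/C_n \cong \mu_{p^n}\otimes\dfr^{-1}$ via multiplication by $p^n$. This is an $\Ofr$-equivariant closed immersion landing in $\A_{(n)}[p^n]$.

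\textbf{Étale part.} Consider $\ker(V^n_\A) = \A[p^n]/C_n \subset \A_{(n)}[p^n]$. Since $C_n = \beta(\mu_{p^n}\otimes\dfr^{-1})$ is a finite flat subgroup of rank $p^{ng}$ inside $\A[p^n]$, which has rank $p^{2ng}$, the quotient $\A[p^n]/C_n$ is finite flat of rank $p^{ng}$ with an $\Ofr$-action.

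We then identify $\A[p^n]/C_n$ with $\underline{\Ofr/p^n\Ofr}$ by Cartier duality. The Weil pairing on $\A[p^n]\times\A^\vee[p^n]$, composed with $\lambda$ (an isomorphism on $p$-parts because $\cfr$ is prime to $p$), should make $C_n$ its own exact annihilator; this uses that $\beta$ and $\beta^\vee$ of \eqref{eq:dual ab sch structure} have pairing trivial on $\mu$-parts. It follows that
$$\A[p^n]/C_n \cong C_n^{\vee,t} \cong (\mu_{p^n}\otimes\dfr^{-1})^t \cong \underline{\Hom(\dfr^{-1},\Z/p^n)} \cong \underline{\cfr^{-1}\dfr^{-1}\cdot\dfr/p^n} ,$$
and the last group identifies canonically with $\underline{\Ofr/p^n\Ofr}$ via the trace pairing and the coprimality of $\cfr$ and $\dfr$ with $p$.

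\textbf{Splitting.} The subgroups $\beta_{(n)}(\mu_{p^n}\otimes\dfr^{-1})$ and $\A[p^n]/C_n$ should intersect trivially. Their intersection is the image of $\beta(\mu_{p^{2n}}\otimes\dfr^{-1})\cap\A[p^n] = \beta(\mu_{p^n}\otimes\dfr^{-1}) = C_n$, hence it is zero. Comparing ranks, the sum map from their product to $\A_{(n)}[p^n]$ is a monomorphism between finite flat group schemes of equal rank $p^{2ng}$, and is therefore an isomorphism. This defines $\phi_{(n)}$, and it is $\Ofr$-linear and canonical.

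\textbf{Main obstacle.} The hard step is the compatibility of the Weil pairing on $\A_{(n)}$, taken via the polarization $\lambda_{(n)}$ of the preceding remark, with the standard pairing \eqref{eq:arith level str pairing}. The pairing is alternating, so it suffices to show two things:

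1. It is trivial on the $\mu$-part and on the étale part separately.
2. The cross pairing is $(\zeta\otimes s,[r])\mapsto\zeta^{\mathrm{Tr}(rs)}$.

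For the cross pairing, I would use the compatibility $e_{p^n}(F x, y) = e_{p^{2n}}$-type identities relating $e_{\A_{(n)},p^n}$ to $e_{\A,p^{2n}}$ through $F^n$ and $V^n$. Concretely, $e(Fx, y') = e(x, F^\vee y')$ and $F^\vee$ corresponds to $V$ under $\lambda$. This reduces everything to the pairing on $\A[p^{2n}]$, where the étale identification above was built to give exactly the trace formula.

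The étale-part triviality I would obtain from the rigidity of pairings between étale and étale groups in the ordinary situation. I would check this on geometric fibers, since over a connected base a pairing into $\mu$ of étale groups is locally constant, and fiberwise it is forced by the identification above.

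A secondary issue is that $\lambda_{(n)}$ may not be a $\cfr$-polarization. I would either note that it becomes one after scaling by an element of $F^+$ prime to $p$, or observe that only its $p$-primary part matters, and there it is an isomorphism.
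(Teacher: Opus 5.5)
Your architecture matches the paper's. $\beta_{(n)}$ gives the multiplicative factor and $\ker V^n_\A$ the étale one. Your trivial-intersection-plus-rank splitting is equivalent to the paper's observation that $V^n_\A$ maps $\beta_{(n)}(\mu_{p^n}\otimes_\Z\dfr^{-1})$ isomorphically onto $C_n=\ker F^n_\A$.

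You differ in the étale identification. The paper pairs $\ker V^n_\A$ against $\beta_{(n)}(\mu_{p^n}\otimes_\Z\dfr^{-1})$ inside $\A_{(n)}[p^n]$, so the cross pairing is standard by construction. You instead build the identification on $\A$ from $C_n^\perp=C_n$ and must then transport the pairing to $\A_{(n)}$. Both routes rest on the same input, since the paper's injectivity claim is equivalent to $C_n^\perp=C_n$.

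\textbf{The gap: the polarization.} The transport step fails as you set it up. Already $\ker V^n_\A\subset\ker\lambda_{(n)}$, so by skew-symmetry the cross pairing taken through $\lambda_{(n)}$ is trivial. More precisely, $(F^n_\A)^\vee\lambda_{(n)}F^n_\A=p^{2n}\lambda$. In fact $\lambda_{(n)}=p^n\lambda'$, where $\lambda'$ is the prime-to-$p$ polarization obtained by descending $p^n\lambda$ along $F^n_\A$, i.e.\ $(F^n_\A)^\vee\lambda'F^n_\A=p^n\lambda$. This descent is possible precisely because $C_n$ is isotropic.

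Consequences:
\begin{itemize}
\item $\lambda_{(n)}$ kills all of $\A_{(n)}[p^n]$, so its $p$-primary part is not an isomorphism.
\item No rescaling by a prime-to-$p$ element of $F^+$ repairs this.
\item Your relation ``$F^\vee$ corresponds to $V$ under $\lambda$'' holds for $\lambda'$, namely $(F^n_\A)^\vee\lambda'=\lambda V^n_\A$ and $\lambda'F^n_\A=(V^n_\A)^\vee\lambda$. It is off by a factor $p^n$ for $\lambda_{(n)}$.
\end{itemize}
The paper's proof also words the pairing via $\lambda_{(n)}$. However, the polarization it finally installs on $\A_{(n)}$ is the $\cfr$-polarization from Katz's Lemma 1.11.6, which plays the role of $\lambda'$.

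\textbf{The fix.} With $\lambda'$, your main obstacle becomes a short computation. Take $a,a'\in\A[p^n]$ and $b,b'\in\beta(\mu_{p^{2n}}\otimes_\Z\dfr^{-1})$. Then $F^n_\A a$, $F^n_\A a'$, $F^n_\A b$, $F^n_\A b'$ all lie in $\A_{(n)}[p^n]$, and
\[
e_{p^n}(F^n_\A a,\lambda'F^n_\A a')=e_{p^n}(a,\lambda(p^na'))=1,\qquad e_{p^n}(F^n_\A a,\lambda'F^n_\A b)=e_{p^n}(a,\lambda(p^nb)).
\]
\begin{itemize}
\item The first identity shows that $\ker V^n_\A=F^n_\A(\A[p^n])$ is isotropic. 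No fibrewise rigidity argument is needed, and such an argument would say nothing on characteristic-$0$ fibres anyway.
\item The second identity shows the cross pairing is exactly the pairing $\A[p^n]/C_n\times C_n\to\mu_{p^n}$ defining your étale identification. So it has the standard form by construction, and your identification agrees with the paper's.
\item For the multiplicative part, pick $x$ with $V^n_\A x=b$, so that $p^nx=F^n_\A b$. Then $e_{p^n}(F^n_\A b,\lambda'F^n_\A b')=e_{p^{2n}}(b,\lambda b')$, which reduces the question to isotropy of $\beta(\mu_{p^{2n}}\otimes_\Z\dfr^{-1})$ at level $p^{2n}$.
\end{itemize}

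\textbf{Remaining justification.} Isotropy of $\beta(\mu_{p^{2n}}\otimes_\Z\dfr^{-1})$, like $C_n^\perp=C_n$, still needs a genuine argument from the $\Ofr$-linearity of $\lambda$. One way is to check fibrewise: use connectedness of $\mu_{p^n}$ in characteristic $p$, and elsewhere the $\Ofr$-bilinear alternating refinement of the Weil pairing, which vanishes on $\Ofr$-cyclic subgroups. Your appeal to $\beta^\vee$ is circular, since $\beta^\vee$ is defined from $\beta$ through $\lambda$.
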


\begin{proof}

	First, note that $F^n_\A \circ V^n_\A$ is equal to multiplication by $p^n$ in $\A_{(n)}$. It thus follows that we have a short exact sequence
	$$\begin{tikzcd}
	0 \arrow{r} & \ker(V^n_\A) \arrow{r} & \A_{(n)}[p^n] \arrow{r}{V^n_\A} & \ker(F^n_\A) \arrow{r} & 0.
	\end{tikzcd}$$
	On the other hand, we claim that the Verschiebung map $V^n_\A$ carries $\beta_{(n)}(\mu_{p^n} \otimes_\Z \dfr^{-1})$ into $\ker(F^n_\A)$ isomorphically. Indeed, by construction of $\beta_{(n)}$ we have a commutative diagram
	$$\begin{tikzcd}
	\A \arrow{r}{F^n_\A} & \A_{(n)} \arrow{r}{V^n_\A} & \A, \\
	\mu_{p^{2n}} \otimes_\Z \dfr^{-1} \arrow{u}{\beta} \arrow[swap]{r}{\cdot p^n} & \mu_{p^n} \otimes_\Z \dfr^{-1} \arrow[swap]{u}{\beta_{(n)}} & 
	\end{tikzcd}$$
	and since the multiplication by $p^n$ map is surjective, we have that 
	$$V^n_\A(\beta_{(n)}(\mu_{p^n} \otimes_\Z \dfr^{-1})) =V^n_\A \circ F^n_\A \circ \beta(\mu_{p^{2n}} \otimes_\Z \dfr^{-1}) = $$
	$$ = [p^n](\beta(\mu_{p^{2n}} \otimes_\Z \dfr^{-1})) = \beta(\mu_{p^n} \otimes_\Z \dfr^{-1}) = \ker(F^n_\A).$$
	Thus, the inclusion $\beta_{(n)}(\mu_{p^n} \otimes_\Z \dfr^{-1}) \subset \A_{(n)}[p^n]$ provides us with a splitting of the previous short exact sequence. 
	
	Consider then the Weil pairing $\A_{(n)}[p^n] \times_S \A_{(n)}[p^n] \to \mu_{p^n}$ (obtained by using $\lambda_{(n)}$), which is perfect and alternating. In particular, this gives an injective morphism
	$$\ker(V^n_\A) \to \Hom(\mu_{p^n} \otimes_\Z \dfr^{-1}, \mu_{p^n}) \cong \underline{\Ofr/p^n\Ofr}.$$
	Since $V^n_\A$ is étale, this is an embedding of finite étale group schemes of the same rank, and thus an isomorphism. Our previous splitting is then of the form $\A_{(n)}[p^n] \cong (\mu_{p^n} \otimes_\Z \dfr^{-1}) \times (\underline{\Ofr/p^n\Ofr})$, which will give the $\Gamma_\arith(p^n)$-level structure.
	
	As for the desired $\cfr$-polarization, one may proceed as in the proof of \cite[Lemma 1.11.6]{Katz1978} by using the short exact sequence
	$$\begin{tikzcd}
	0 \arrow{r} & \underline{\Ofr/p^n\Ofr} \arrow{r} & \A_{(n)} \arrow{r}{V^n_\A} & \A \arrow{r} & 0.
	\end{tikzcd}$$

\end{proof}

\begin{remark}

	Note that the isogeny $V^n_\A$ is always étale: indeed, we may check this on the fibers of $\ker(V^n_\A) \to S$. For any point $s \in S$, we have two possibilities: if the characteristic of $k(s)$ is different from $p$, then the fiber of Verschiebung is étale (because multiplication by $p^n$ is étale). Otherwise, $k(s)$ is a $p$-adic ring, and one may reason using the short exact sequence \eqref{eq:torsion ses for ab schemes with RM} to show that $\ker(V^n_{\A_s/k(s)}) \cong \A_s[p^n]^{\text{ét}}$.

\end{remark}

\begin{definition}

	Let $n \geq 1$ be an integer. For any $\Gamma_{00}(p^{2n})$-test object $(\A/S, m, \lambda, \beta)$, we will write $(\A_{(n)}/S, m_{(n)}, \lambda_{(n)}, \phi_{(n)})$ for the induced $\Gamma_\arith(p^n)$-test object as described in \Cref{splitting p-tors Frob twist}. Note that if we further equip $\A$ with a $\Gamma_{00}(p^\infty)$-level structure extending $\beta$, then $\A_{(n)}$ is canonically a $(\Gamma_\arith(p^n), \Gamma_{00}(p^\infty))$-test object.

\end{definition}

We use this level structure to give the following notion of a ``partial'' Fourier transform for functions which are defined on the kernels of the Frobenius and Verschiebung maps. 

\begin{definition}[{\cite[Definition 5.24]{Kings2025}}]\label{partial Fourier alg def}\index[notation]{$P_\A H$}

	Let $R$ be a ring and $S$ a noetherian $R$-scheme together with a $\Gamma_{00}(p^\infty)$-test object $\A/S$. Note that Cartier duality provides us with a perfect pairing
	$$\langle -,- \rangle_{F^n} \colon \ker(F^n_\A) \times \ker(F^n_\A)^t \to \mu_{p^n, S}.$$ 
	Consider a map of sets $H \colon \ker(F^n_\A)^t(S) \times \ker(V^n_\A)(S) \to R$. The \emph{partial Fourier transform of $H$ relative to $\A$} is the map
	$$P_\A H \colon \beta_{(n)}(\mu_{p^n} \otimes_{\Z} \dfr^{-1})(S) \times \ker(V^n_\A)(S) \cong \A_{(n)}[p^n](S) \to H^0(S, \Ocal_S)$$
	defined as
	$$P_\A H(r,s) := \frac{1}{p^{ng}} \sum_{t \in \ker(F^n_\A)^t(S)} H(t, s) \langle V^n_\A(r), t\rangle_{F^n}^{-1}.$$
	
\end{definition}

\begin{remark}\label{partial Fourier alg rewriting}

	For later comparison, consider a function $H \colon (\Ofr/p^n\Ofr)^2 \to R$ and assume that $S$ is connected. In such a case, the $\Gamma_\arith(p^n)$-level structure gives us isomorphisms
	$$\ker(F^n_\A)^t \cong \underline{\Ofr/p^n\Ofr} \cong \ker(V^n_\A),$$
	allowing us to define the function $P_\A H$.

\end{remark}

\subsection{Hilbert moduli spaces and modular forms}

Next, we describe the moduli spaces parametrizing the test objects we previously introduced, as well as the automorphic forms related to them. 

\subsubsection{Hilbert moduli spaces}

As we will be working with abelian schemes with a $\Gamma(l)$-level structure from now on, we make the following technical assumption.

\begin{convention}

	Fix a positive integer $l$ and a primitive $l$-th root of unity $\zeta_l \in \overline{\Q}$. From now on, for every noetherian $\Z[\zeta_l]$-scheme $S$, every $\Gamma(l)$-level structure over $S$ will be assumed to be symplectic and with the determinant $\mu_l \xrightarrow{\sim} \Z/l\Z$ induced by $\zeta_l$.  

\end{convention}

We may now define the moduli space parametrizing $\Gamma(l)$-test objects.

\begin{definition}\label{Hilbert moduli scheme def}

	Let $\cfr$ be a fractional ideal of $F$ which is coprime to $l$ and such that $\cfr^{-1}$ is integral. We define the \emph{Hilbert moduli problem with $\Gamma(l)$-level structure} as the functor
	\begin{align*}
	\Mcal(\cfr, \Gamma(l)) \colon (\Z[l^{-1}, \zeta_l]\text{-schemes}) & \longrightarrow (\text{Sets})\\
	S & \longmapsto \{ \Gamma(l)\text{-test objects over } S\} / \cong.
	\end{align*}
	
\end{definition}

This moduli problem is indeed representable for $l$ big enough.

\begin{theorem}[{\cite[Lemme 1.23, Théorème 1.28]{Rapoport1978}}]

	Let $\cfr$ be as above. If $l \geq 3$, $\Mcal(\cfr, \Gamma(l))$ is representable by a smooth scheme over $\textnormal{Spec}(\Z[l^{-1}, \zeta_l])$ of relative dimension $g$ (in particular, it is quasi-compact and quasi-separated). Moreover, the base change of this scheme to $\Q(\zeta_l)$ is geometrically connected.

\end{theorem}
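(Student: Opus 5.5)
This theorem is the cited result of Rapoport. I would prove it with the standard representability argument for rigidified moduli problems of polarized abelian schemes. The plan has four steps: rigidity, relative representability, smoothness by deformation theory, and geometric connectedness.

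\emph{Rigidity.} First I show that for $l \geq 3$ a $\Gamma(l)$-test object over a connected $S$ has no nontrivial automorphisms. An automorphism $u$ of $(\A, m, \lambda, \alpha)$ is an $\Ofr$-linear automorphism of $\A$ preserving $\lambda$ and acting trivially on $\A[l]$. On geometric fibres, compatibility with the polarization makes $u$ an element of finite order. Serre's lemma then says that an automorphism of finite order of a polarized abelian variety which is trivial on $l$-torsion with $l \geq 3$ is the identity. Rigidity of abelian schemes spreads this from fibres to all of $S$. So the functor $\Mcal(\cfr, \Gamma(l))$, which classifies objects up to isomorphism, is a sheaf for the fppf topology with trivial automorphism groups.

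\emph{Representability.} Next I embed the problem into a Siegel-type moduli problem. The $\cfr$-polarization together with a choice $\xi \in \cfr^+$, or more canonically a polarization of degree prime to $l$ obtained from $\cfr$ with $\cfr^{-1}$ integral, gives a polarized abelian scheme of dimension $g$ with a level $l$ structure. Mumford's GIT gives a quasi-projective fine moduli scheme $\mathcal{A}_{g,d,l}$ over $\Z[l^{-1}, \zeta_l]$ for such objects. The extra data are then representable over this base:
\begin{itemize}
\item the $\Ofr$-action, by the representability of endomorphisms of abelian schemes;
\item the conditions that the polarization comes from a $\cfr$-polarization and that $\alpha$ is $\Ofr$-linear and symplectic with determinant $\zeta_l$;
\item the Rapoport condition.
\end{itemize}
The first is representable by a scheme that is unramified and a finite union of closed subschemes. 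The rest are open-and-closed or closed conditions, the Rapoport condition being an open condition on $\text{Lie}$. This gives $\Mcal(\cfr, \Gamma(l))$ as a scheme, quasi-projective, hence quasi-compact and quasi-separated.

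\emph{Smoothness and dimension.} I use the infinitesimal lifting criterion via Grothendieck--Messing (Serre--Tate) deformation theory. Deformations of $(\A, m, \lambda)$ over a square-zero thickening correspond to lifts of the Hodge filtration $\omega_{\A^\vee} \subset \Hscr_\A$ satisfying three conditions: they are $\Ofr$-stable, isotropic for the pairing induced by $\lambda$, and satisfy the Rapoport condition. The level structure lifts uniquely because $l$ is invertible. Under the Rapoport condition, $\Hscr_\A$ is locally free of rank $2$ over $\Ofr \otimes \Ocal_S$. The Hodge filtration is an invertible $\Ofr \otimes \Ocal_S$-submodule. Maximal isotropic such lifts form a torsor under $\Homsh_{\Ofr\otimes\Ocal_S}(\omega_{\A^\vee}, \text{Lie}(\A))^{\text{sym}}$, which is locally free of rank $g$. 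Hence the map to $\Spec \Z[l^{-1},\zeta_l]$ is formally smooth of relative dimension $g$.

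I expect this step to be the main obstacle at primes dividing $d_F$. There the local symmetric-isotropy computation requires the Rapoport condition rather than an eigenspace decomposition, and one must check that this condition is exactly what keeps the deformation space smooth. Over $\Ofr_{F^{\text{Gal}}}[d_F^{-1}]$ the computation splits into $g$ rank-one pieces and is immediate.

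\emph{Connectedness.} Finally, for geometric connectedness over $\Q(\zeta_l)$ I pass to $\C$. Complex uniformization, as in \Cref{correspondence lattices with compl AVs}, writes $\Mcal(\C)$ as a finite union of quotients $\Gamma \backslash \mathfrak{H}^g$. The components are indexed by the classes of $\cfr$-polarized lattices and by determinants of level structures. Fixing $\cfr$ and fixing the determinant to be $\zeta_l$ leaves a single orbit, using strong approximation for $\SL_2$ over $F$. Hence the fibre over $\Q(\zeta_l)\to\C$ is connected.
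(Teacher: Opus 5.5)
The paper gives no proof of this statement; it only cites Rapoport. Your four-step outline is essentially the standard argument behind that citation and is sound as a sketch: Serre's lemma for rigidity, relative representability over Mumford's $\mathcal{A}_{g,d,l}$ via Hom-schemes and open/closed conditions, Grothendieck--Messing deformation theory resting on $\Hscr_\A$ being locally free of rank $2$ over $\Ofr\otimes\Ocal_S$ under the Rapoport condition, and complex uniformization plus strong approximation for $\SL_2$ over $F$. Your worry about primes dividing $d_F$ is already resolved by your own setup: $\Ofr\otimes\Ocal_S$-linear maps between invertible $\Ofr\otimes\Ocal_S$-modules are automatically symmetric, so the symmetry constraint is vacuous and the relevant module is the full rank-$g$ module $\Homsh_{\Ofr\otimes\Ocal_S}(\omega_{\A^\vee},\textnormal{Lie}(\A))$, with no eigenspace decomposition needed.
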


Whenever this moduli problem is representable, we have a \emph{universal abelian scheme}
$$\pi \colon \A \to \Mcal(\cfr, \Gamma(l)),$$
which can also be considered as a $\Gamma(l)$-test object.

We are interested in the study of Hilbert moduli schemes not only with $\Gamma(l)$-level structure, but also with an additional level structure at $p$. For this we consider the following modifications of the previous moduli problem.

\begin{definition}

	Let $\cfr$ be a fractional ideal which is coprime to $l$ and such that $\cfr^{-1}$ is integral. Fix as well a prime $p$ which is coprime to $l$ and $d_F$. We then define the following moduli problems.
	
	\begin{enumerate}[(1)]
	
		\item The \emph{$p$-ordinary locus} of the Hilbert moduli problem with $\Gamma(l)$-level structure is given by the functor
		\begin{align*}
		\Mcal(\cfr, \Gamma(l))^{\text{ord}} \colon (\Z[l^{-1}, \zeta_l]\text{-schemes}) & \longrightarrow (\text{Sets})\\
		S & \longmapsto \{p\text{-ordinary } \Gamma(l)\text{-test objects over } S\} / \cong.
		\end{align*}
		Note that by the theory of Hasse invariants, this functor is represented by an open subscheme of $\Mcal(\cfr, \Gamma(l))$ whenever $l \geq 3$.
		
		\item For all $n \geq 1$, we consider the \emph{Hilbert moduli problem with $(\Gamma(l), \Gamma_{00}(p^n))$-level structure} as the functor
	\begin{align*}
	\Mcal(\cfr, \Gamma(l), \Gamma_{00}(p^n)) \colon (\Z[l^{-1}, \zeta_l]\text{-schemes}) & \longrightarrow (\text{Sets})\\
	S & \longmapsto \{ (\Gamma(l), \Gamma_{00}(p^n))\text{-test objects over } S\} / \cong.
	\end{align*}
			
	\end{enumerate}

\end{definition}

From this definition, it is clear that we have ``forgetting the level structure'' maps
$$\Mcal(\cfr, \Gamma(l), \Gamma_{00}(p^n)) \to \Mcal(\cfr, \Gamma(l))^{\text{ord}},$$
for all $n \geq 1$. In fact, each of these maps is a Galois covering with Galois group $\Ofr/p^n\Ofr$ (the inverse system of these covers is known as the \emph{Hilbert modular Igusa tower}). This can be used in order to give the following definition as the inverse limit of these schemes.

\begin{definition}

	Let $l$, $p$, $\cfr$ be as above. Then, the \emph{Hilbert moduli scheme of $(\Gamma(l), \Gamma_{00}(p^\infty))$-level} is the inverse limit
	$$\Mcal(\cfr, \Gamma(l), \Gamma_{00}(p^\infty)) := \varprojlim_n \Mcal(\cfr, \Gamma(l), \Gamma_{00}(p^n)).$$
	Clearly, this scheme represents the moduli functor of $(\Gamma(l), \Gamma_{00}(p^\infty))$-test objects.

\end{definition}

Let $\Gamma$ be a finite index subgroup of $\Ofr^\times$. We may then consider, for any $N \in \Z_{>0} \cup \{\infty\}$, an action of $\Gamma$ on any Hilbert moduli scheme $\Mcal := \Mcal(\cfr, \Gamma(l), \Gamma_{00}(p^N))$. This is done as follows: for any $\gamma \in \Gamma$, we define a map $f_\gamma \colon \Mcal \to \Mcal$ given on $S$-points by
$$(\B/S, m, \lambda, \alpha, \beta) \mapsto (\B/S, m, \lambda, \gamma\alpha, \gamma\beta).$$
If $\A/\Mcal$ is the universal abelian scheme, this action on $\Mcal$ gives a $\Gamma$-action on $\A$ by taking the fiber product:
\begin{equation}\label{eq:Gamma act on A M}
\begin{tikzcd}
\A \arrow{r}{\sim} \arrow{d} & \A \arrow{d} \\
\Mcal \arrow["\sim"', "f_\gamma"]{r} & \Mcal.
\end{tikzcd}
\end{equation}
Note that this action is \emph{not} the one induced by the real multiplication structure on $\A$, as these induced automorphisms $\A \xrightarrow{\sim} \A$ are morphisms over $\Mcal$, and therefore do not commute a priori with the maps $f_\gamma$. 

We fix the following notation for the rest of this work.

\begin{notation}\label{Hilb mod problem notation}

	We fix $l$ a positive integer, $p$ a prime and $\cfr$ a fractional ideal such that $l$, $p$, $\cfr$ are  pairwise coprime, $p$ and $d_F$ are coprime, and $\cfr^{-1}$ is integral. We will then work in the sequel with the Hilbert moduli problems 
	$$\Mcal := \Mcal(\cfr, \Gamma(l), \Gamma_{00}(p^N)),$$ 
	for $N$ possibly being infinite. Recall that we assume that the $\Gamma(l)$-level structures are symplectic and with fixed determinant, so that $\Mcal$ is (geometrically) connected. If this moduli problem is representable (which happens whenever $l$ or $N$ are big enough), we will work with the universal abelian scheme $\pi \colon \A \to \Mcal$, which is a $(\Gamma(l), \Gamma_{00}(p^\infty))$-test object. Note that if $N = \infty$, $\Mcal$ is representable even when $l = 1$.

\end{notation}

\subsubsection{Hilbert modular forms}

Following \cite{Katz1978}, we will now describe a certain type of automorphic forms which are associated to the Hilbert moduli problem $\Mcal = \Mcal(\cfr, \Gamma(l), \Gamma_{00}(p^N))$ as in \Cref{Hilb mod problem notation}. We fix a noetherian $\Z[l^{-1}, \zeta_l]$-algebra $R_0$, and recall that we defined the group $I_{F,R_0}$ of characters of the torus $\T_{F, R_0}$ as morphisms of group schemes $\T_{F, R_0} \to \G_{m, R_0}$. In particular, if $\chi \in I_{F, R_0}$, we obtain group homomorphisms
$$\chi(R) \colon (\Ofr \otimes_\Z R)^\times \to R^\times$$
for all $R_0$-algebras $R$, and these morphisms commute with extension of scalars. We also write $\chi_R \in I_{F,R}$ for the base change of $\chi$. 

For any abelian scheme $\B/R$ with RM by $F$, the Rapoport condition implies that $\omega_{\B/R}$ is a locally free $\Ofr \otimes_\Z R$-module of rank 1. Therefore, it can be seen as a cocycle in the first sheaf cohomology group $H^1(\text{Spec}(R), (\Ofr \otimes_\Z R)^\times)$. Since $\chi_R$ induces a map
$$H^1(\text{Spec}(R), (\Ofr \otimes_\Z R)^\times) \to H^1(\text{Spec}(R), R^\times),$$
we write $\omega_{\B/R}(\chi_R)$ for the line bundle on $R$ defined by the image of the cocycle associated to $\omega_{\B/R}$. Note that if $R$ is an $\Ofr_{F^{\text{Gal}}}[d_F^{-1}]$-algebra, then $\omega_{\B/R}(\chi_R) \cong \TSym^{\chi_R}(\omega_{\B/R})$.

\begin{definition}\label{Hilbert mod forms first def}\index[notation]{$M_{\chi}(\cfr, \Gamma(l), \Gamma_{00}(p^N), R_0)$}

	Let $R_0$ be a ring and $\chi \in I_{F, R_0}$. A \emph{$\cfr$-Hilbert modular form of weight $\chi$ and $(\Gamma(l), \Gamma_{00}(p^N))$-level over $R_0$} is a rule which associates to any $R_0$-algebra $R$ and any $(\Gamma(l), \Gamma_{00}(p^N))$-test object $(\B/R, m, \lambda, \alpha, \beta)$ over $R$ an element
	$$f(\B/R, m, \lambda, \alpha, \beta) \in H^0(\text{Spec}(R), \omega_{\B/R}(\chi_R))$$
	such that:
	\begin{enumerate}[(1)]
	
		\item $f(\B/R, m , \lambda, \alpha, \beta)$ depends only on the isomorphism class of the test object, and
		
		\item it commutes with base change by extension of scalars $R \to R'$ of $R_0$-algebras. 
	
	\end{enumerate}
	Write $M_{\chi}(\cfr, \Gamma(l), \Gamma_{00}(p^N), R_0)$ for the space of $\cfr$-Hilbert modular forms of weight $\chi$ and with $(\Gamma(l), \Gamma_{00}(p^N))$-level defined over $R_0$. When the level and polarization ideal are clear, we will also write $M_\chi(R_0)$.
	
\end{definition}

If we also consider our abelian schemes together with a non-vanishing differential, we may see our Hilbert modular forms as taking values in the ring $R$: for any $(\Gamma(l), \Gamma_{00}(p^N))$-test object $(\B/R, m, \lambda, \alpha, \beta)$ and any basis $\omega(\B)$ of $\omega_{\B/R}$, we write $f(\B/R, m , \lambda, \alpha, \beta, \omega(\B)) \in R$ for the element such that
$$f(\B/R, m, \lambda, \alpha, \beta) = f(\B/R, m, \lambda, \alpha, \beta, \omega(\B)) \cdot \omega(\B)^{[\chi]}.$$
From our definition, this gives a new rule which satisfies the above properties (1) and (2) as well as
\begin{enumerate}[(3)]
	
	\item for any such tuple $(\B/R, m , \lambda, \alpha, \beta, \omega(\B))$ and any $a \in (\Ofr \otimes_\Z R)^\times$, we have that 
	$$f(\B/R, m , \lambda, \alpha, \beta, a^{-1} \cdot \omega(\B)) = \chi_R(a) \cdot f(\B/R, m , \lambda, \alpha, \beta, \omega(\B)).$$

\end{enumerate}

Whenever our moduli problem $\Mcal$ is representable and $R_0$ is an $\Ofr_{F^{\text{Gal}}}[d_F^{-1}]$-algebra, it follows that
$$M_\chi(\cfr, \Gamma(l), \Gamma_{00}(p^N), R_0) \cong H^0(\Mcal_{R_0}, \TSym^\chi(\omega_{\A_{R_0}/\Mcal_{R_0}})).$$

We will not go in depth in the theory of $q$-expansions, although their theory underlies several of the results which we use, and we will make use of the $q$-expansion principle. We refer the interested reader to \cite[Section 1.1]{Katz1978} for the corresponding definitions and discussion. 

\subsubsection{\textit{p}-adic Hilbert modular forms}

Set now $\Mcal := \Mcal(\cfr, \Gamma(l), \Gamma_{00}(p^\infty))$, so we are working in the infinite level case. We now move to describing $p$-adic Hilbert modular forms by following the exposition of \cite[Section 1.9]{Katz1978}.

Fix a $p$-adic $\Z[l^{-1}, \zeta_l]$-algebra $R_0$, and consider $\A$, $\Mcal$ as schemes over $R_0$ by base change. Unlike in the case where our abelian schemes were defined directly over $R_0$, which is $p$-adically complete, the scheme $\Mcal$ isn't, and we will therefore work regularly with its formal completion.

\begin{notation}\label{formal compl Hilbert mod scheme notation}

	We write $\widehat{\Mcal}$ for the formal completion of $\Mcal$ along its special fiber $\Mcal_{R_0/pR_0}$. This formal scheme is then defined over Spf$(R_0)$.

\end{notation}

Similarly to our definitions of the classical Hilbert modular forms, their $p$-adic variants will be sections of the structure sheaf of $\widehat{\Mcal}$. We may then see them as certain rules on abelian schemes defined over $p$-adic rings which commute with \emph{continuous} base change.

\begin{definition}\label{p-adic mod forms def}\index[notation]{$V(\cfr, \Gamma(l), R_0)$}

	A \emph{$p$-adic $\cfr$-Hilbert modular form over $R_0$} is a rule which to each $p$-adic $R_0$-algebra $R$ and $(\Gamma(l), \Gamma_{00}(p^\infty))$-test object $(\B, m, \lambda, \alpha, \beta)$ associates an element 
	$$f(\B,m,\lambda, \alpha, \beta) \in R$$ 
	satisfying the following properties:
	\begin{enumerate}[(1)]
	
		\item $f(\B,m,\lambda, \alpha, \beta)$ depends only on the isomorphism class of the tuple, and
		
		\item it commutes with base change by continuous extension of scalars $R \to R'$ of $p$-adic $R_0$-algebras. 
	
	\end{enumerate}
	We write $V(\cfr, \Gamma(l), R_0)$ for the ring of all $p$-adic $\cfr$-Hilbert modular forms over $R_0$, or $V(R_0)$ if the polarization ideal and level are understood.

\end{definition}

\begin{remark}

	As it was the case with classical Hilbert modular forms, we obtain an isomorphism
	$$V(\cfr, \Gamma(l), R_0) \cong H^0(\widehat{\Mcal}, \Ocal_{\widehat{\Mcal}}).$$

\end{remark}

In the way that we have defined $p$-adic modular forms, we can see that a priori they are different objects from the classical modular forms defined over $R_0$, as the latter satisfy a more general base change condition. Still, we can relate both types of modular forms as follows, by using the canonical bases of \Cref{canonical basis affine def}.

\begin{theorem}[{\cite[Theorem 1.10.15]{Katz1978}}]\label{map from classical to p-adic mod forms}

	We have a ring homomorphism
	$$\bigoplus_{\chi \in I_{F,R}} M_\chi(\cfr,\Gamma(l), \Gamma_{00}(p^\infty), R_0) \to V(\cfr, \Gamma(l), R_0), \quad f \mapsto \widetilde{f}$$
	given by the formula $\widetilde{f}(\B, m, \lambda, \alpha, \beta) := f(\B, m, \lambda, \alpha, \beta, \omega(\B)_{can})$ for any $(\Gamma(l), \Gamma_{00}(p^\infty))$-test object $(\B, m, \lambda, \alpha, \beta)$. Moreover, this homomorphism preserves $q$-expansions.

\end{theorem}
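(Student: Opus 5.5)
To prove this I would check three things in turn: that $\widetilde{f}$ is a $p$-adic modular form, that $f\mapsto\widetilde f$ is a ring homomorphism, and that it preserves $q$-expansions. Fix $f \in M_\chi(\cfr,\Gamma(l),\Gamma_{00}(p^\infty),R_0)$ and a $(\Gamma(l),\Gamma_{00}(p^\infty))$-test object $(\B,m,\lambda,\alpha,\beta)$ over a $p$-adic $R_0$-algebra $R$. By \Cref{mu level structure datum}, $\beta$ gives an isomorphism $\widehat{\G}_m\otimes_\Z\dfr^{-1}\cong\widehat{\B}_p$. Through \Cref{canonical basis affine def}, this yields the basis $\omega(\B)_{can}$ of $\omega_{\B/R}$. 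Since $\omega(\B)_{can}$ is a genuine basis, the scalar $f(\B,m,\lambda,\alpha,\beta,\omega(\B)_{can})\in R$ is well defined, and so $\widetilde f$ is at least defined as a rule.

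\emph{Step 1: $\widetilde f$ satisfies \Cref{p-adic mod forms def}.} Property (1) follows from the naturality of the canonical basis. If $\psi\colon\B\xrightarrow{\sim}\B'$ is an isomorphism of test objects, it intertwines $\beta$ and $\beta'$, hence the trivializations of the formal groups, hence the canonical bases: $\psi^*\omega(\B')_{can}=\omega(\B)_{can}$. Property (1) of \Cref{Hilbert mod forms first def} then gives (1) for $\widetilde f$. For (2), take a continuous map $R\to R'$ of $p$-adic algebras. The trivialization of \Cref{mu level structure datum} is compatible with base change, because $\widehat{\B}_p$ and $\widehat{\G}_m$ are formed compatibly. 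So $\omega(\B_{R'})_{can}$ is the pullback of $\omega(\B)_{can}$, and the base-change property of $f$ for arbitrary ring maps applies. The one point to watch is that the formal completion $\widehat{\B}_p$ uses the $p$-adic topology, so only continuous base change is admissible. This is exactly the generality required of $p$-adic forms.

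\emph{Step 2: ring homomorphism.} The ring structure on $\bigoplus_\chi M_\chi$ comes from the multiplication maps $\omega(\chi_1)\otimes\omega(\chi_2)\to\omega(\chi_1+\chi_2)$, under which $\omega^{[\chi_1]}\cdot\omega^{[\chi_2]}=\omega^{[\chi_1+\chi_2]}$ for any basis (cf.\ \Cref{TSym char 0}). Evaluating at the single basis $\omega(\B)_{can}$ is therefore multiplicative in $f$. Additivity is clear, including across different weights, since the map is defined summand by summand.

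\emph{Step 3: $q$-expansions.} This is the main obstacle. I would evaluate on the Tate object $\mathrm{Tate}_{\afr,\bfr}(q)$ over the completed ring $R_0((q))$, or rather its $p$-adic completion. This object carries its canonical $\Gamma_{00}(p^\infty)$-structure coming from $\mu_{p^\infty}\otimes\dfr^{-1}\subset\G_m\otimes\afr^{-1}\dfr^{-1}$, together with its canonical differential $\omega_{can}$ coming from $dt/t$. The key check is that the basis of \Cref{canonical basis affine def} attached to this $\mu_{p^\infty}$-embedding coincides with $\omega_{can}$. This holds because the formal completion of the Tate object along $p$ is exactly $\widehat{\G}_m\otimes\afr^{-1}\dfr^{-1}$, and the Lie isomorphism is the one given by $(1+T)\partial/\partial T$. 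Granting this, the value of $\widetilde f$ on the Tate object equals $f(\mathrm{Tate},\omega_{can})$, which is the classical $q$-expansion. Since $p\nmid d_F$ and $\cfr$ is coprime to $p$, I expect the identification $\afr^{-1}\dfr^{-1}\otimes\Z_p\cong\dfr^{-1}\otimes\Z_p$ to cause no sign or unit ambiguity. Still, this is where I would be most careful.
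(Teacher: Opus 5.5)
Your proposal is correct and is essentially the standard argument behind \cite[Theorem 1.10.15]{Katz1978}, which the paper quotes without proof. Naturality and base-change compatibility of $\omega(\B)_{can}$ make $\widetilde f$ a $p$-adic form, multiplicativity is automatic once all weights are evaluated at the same basis, and $q$-expansion compatibility reduces to the canonical $\mu_{p^\infty}$-structure on the Tate object inducing $\omega_{can}$.

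Two things need fixing. First, the product on $\bigoplus_\chi M_\chi$ must be the one on the pushed-out line bundles $\omega_{\B/R}(\chi_R)$, i.e.\ pointwise multiplication of the values $f(\B,\omega)$. It is not the $\TSym^\bullet$ product behind \Cref{TSym char 0}, under which $\omega^{[n]}\cdot\omega^{[m]}=\binom{n+m}{n}\omega^{[n+m]}$, so $\widetilde{f_1f_2}$ would differ from $\widetilde{f_1}\widetilde{f_2}$ by a binomial factor. Second, in Step 3 the unit ambiguity is removed by restricting to cusps $(\afr,\bfr)$ with $\afr$ prime to $p$, not by the coprimality of $\cfr$ to $p$. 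For such $\afr$, $\afr^{-1}\dfr^{-1}\otimes\Z_p=\dfr^{-1}\otimes\Z_p$ is an equality inside $F\otimes\Q_p$, so the $\mu_{p^\infty}$-structure on $\mathrm{Tate}_{\afr,\bfr}(q)$ is canonical. For general $\afr$ it is only defined up to $(\Ofr\otimes\Z_p)^\times$, which rescales $\omega_{can}$.
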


Next, we want to describe how to obtain a $p$-adic trivialization for the universal abelian scheme $\A/\Mcal$, extending \Cref{mu level structure datum}. We fix the following notation regarding the formal completion of $\A$.

\begin{notation}\label{formal compl special fiber notation}

	Take the special fiber of the zero section,
	$$e_p \colon \Mcal_{R_0/pR_0} \to \A_{R_0/pR_0},$$
	and write $\widehat{\A}_p$ for the completion of $\A$ along the closed subscheme $e_p(\Mcal_{R_0/pR_0})$. By the functoriality of formal completions, this is a formal group scheme $\widehat{\A}_p \to \widehat{\Mcal}$.
	
\end{notation}

The following shows that the $p$-adic trivialization of \Cref{mu level structure datum} holds for the universal abelian variety.

\begin{lemma}\label{universal Poincare props}

	The $\Gamma_{00}(p^\infty)$-level structure on $\A$ induces an isomorphism of formal groups
	$$\widehat{\G}_{m, \widehat{\Mcal}} \otimes_{\Z_p} (\Ofr \otimes_\Z \Z_p) \xrightarrow{\sim} \widehat{\A}_p.$$

\end{lemma}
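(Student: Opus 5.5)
The plan is to reduce to the affine case of \Cref{mu level structure datum} and glue. By definition, $\Mcal = \varprojlim_n \Mcal(\cfr, \Gamma(l), \Gamma_{00}(p^n))$, and the universal abelian scheme $\A$ carries a universal $\Gamma_{00}(p^\infty)$-level structure
$$\beta \colon \mu_{p^\infty} \otimes_\Z \dfr^{-1} \hookrightarrow \A,$$
that is, a compatible system of closed immersions $\beta_n \colon \mu_{p^n} \otimes_\Z \dfr^{-1} \hookrightarrow \A[p^n]$. Since $p$ is coprime to $d_F$, we have $\dfr^{-1} \otimes_\Z \Z_p = \Ofr \otimes_\Z \Z_p$. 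Hence $\widehat{\G}_m \otimes_\Z \dfr^{-1} \cong \widehat{\G}_m \otimes_{\Z_p} (\Ofr \otimes_\Z \Z_p)$, exactly as in the remark following \Cref{mu level structure datum}. So the task is to produce an isomorphism $\widehat{\G}_{m,\widehat{\Mcal}} \otimes_\Z \dfr^{-1} \xrightarrow{\sim} \widehat{\A}_p$ of formal groups over $\widehat{\Mcal}$.

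First, I construct the map. Over $\widehat{\Mcal}$, the formal group $\widehat{\G}_m$ is the ind-limit of $\mu_{p^n}$ modulo $p^k$. More precisely, over $\Mcal_{R_0/p^kR_0}$ the $p$-divisible formal group $\widehat{\G}_m$ is $\varinjlim_n \mu_{p^n}$, since $p$ is nilpotent there. Each $\beta_n$ lands in $\A[p^n]$, and reduced modulo $p^k$ its image is supported set-theoretically on the zero section: $\mu_{p^n}$ is infinitesimal in characteristic $p$, and a morphism from it to $\A$ pointed at $e$ factors through a formal neighbourhood of $e_p$. Therefore the system $(\beta_n \bmod p^k)_{n,k}$ factors through $\widehat{\A}_p$, and we obtain an $\Ofr$-equivariant homomorphism of formal groups $\widehat{\beta} \colon \widehat{\G}_{m,\widehat{\Mcal}} \otimes_\Z \dfr^{-1} \to \widehat{\A}_p$.

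Second, I show that $\widehat{\beta}$ is an isomorphism. This question is local on $\widehat{\Mcal}$. Since $\Mcal$ is an inverse limit of schemes of finite type, I cover it by affine opens $\Spec(A)$. Their $p$-adic completions $\widehat{A}$ are $p$-adic $R_0$-algebras with $d_F$ invertible. Restricted to $\Spf(\widehat{A})$, the test object $\A_{\widehat{A}}$ with level structure $\beta$ is exactly as in \Cref{mu level structure datum}, and that lemma says that the induced map $\widehat{\G}_m \otimes_\Z \dfr^{-1} \to \widehat{(\A_{\widehat{A}})}_p$ is an isomorphism. I need to check two compatibilities: that this local map coincides with the restriction of $\widehat{\beta}$, and that the completion of $\A_{\widehat{A}}$ along its special-fibre zero section agrees with $\widehat{\A}_p \times_{\widehat{\Mcal}} \Spf(\widehat{A})$. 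Both follow from the functoriality of formal completions, since both constructions are the limit over $k$ of the same objects over $A/p^k$. Because the isomorphism property is local and the local maps glue, being restrictions of the single map $\widehat{\beta}$, the global map is an isomorphism.

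The main obstacle is the limit nature of $\Mcal$ when $N = \infty$: it is not noetherian, while \Cref{mu level structure datum} is stated for $\A/R$ with $R$ $p$-adic. The cleanest route is to work at each level $k$ over $\Mcal_{R_0/p^kR_0}$. There one checks that $\widehat{\beta}$ is an isomorphism of formal Lie groups by comparing Lie algebras (or the Cartier dual $p$-divisible groups). On the Lie algebras, $\widehat{\beta}$ induces $\Ofr \otimes \Ocal \xrightarrow{\sim} \mathrm{Lie}(\A)$, because the connected part $\A[p^n]^\circ$ is identified with $\mu_{p^n} \otimes_\Z \dfr^{-1}$ by the short exact sequence \eqref{eq:torsion ses for ab schemes with RM} and rank considerations. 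An isomorphism on the $p$-divisible connected parts then gives an isomorphism of formal groups, by Serre–Tate style rigidity. If this fibrewise argument proves delicate, I would fall back on the affine lemma applied to affine opens of the finite-level covers $\Mcal(\cfr, \Gamma(l), \Gamma_{00}(p^n))$, and then pass to the limit.
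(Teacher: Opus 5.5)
Your proposal is correct and follows the paper's route. The paper's proof takes a finite affine cover (with affine intersections) of the quasi-compact, quasi-separated scheme $\Mcal$ and applies \Cref{mu level structure datum} on each piece, which is exactly your second step; building the global map $\widehat{\beta}$ first just makes the gluing automatic.

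Your worry about non-noetherianity is unnecessary: Katz's lemma holds over any $p$-adic ring in which $d_F$ is invertible, and the $p$-adic completion of any ring is $p$-adically complete because $(p)$ is finitely generated. So the Lie-algebra fallback is not needed.
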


\begin{proof}

	After choosing a finite affine cover of $\Mcal$ with affine intersections (recall that $\Mcal$ is quasi-compact and quasi-separated, and see \cite[p.212-213]{Grothendieck1960}), this follows directly from \Cref{mu level structure datum}.
	
\end{proof}

Again mirroring the affine case, this $p$-adic trivialization provides us with a canonical choice of basis for $\omega_{\A/\Mcal}$.

\begin{definition}\label{canonical basis p-adic}

	Assume that $R_0$ is an $\Ofr_{F^{\text{Gal}}}[d_F^{-1}]$-algebra. The \emph{canonical basis of $\omega_{\A/\Mcal}$}, which we denote by $\omega(\A)_{can}$, is the basis given by taking Lie algebras in the isomorphism
	$$\widehat{\G}_{m, \widehat{\Mcal}} \otimes_\Z \dfr^{-1} \xlongrightarrow{\sim} \widehat{\A}_p$$
	of \Cref{universal Poincare props}. Recall as well that for any character $\chi \in I_F$, we also obtain a trivialization
	$$\omega(\chi)_{can} \colon \Ocal_{\widehat{\Mcal}} \xlongrightarrow{\sim} \TSym^\chi(\omega_{\widehat{\A}_p/\widehat{\Mcal}}).$$

\end{definition}

\subsection{Katz's Eisenstein measure}

We now recall how Katz constructs in \cite{Katz1978} certain Hilbert modular forms which generalize the classical Eisenstein series, as well as a $p$-adic measure taking values in $V(\cfr, R)$ for any $p$-adic ring $R$ which interpolates them. We also provide a slight generalization of this measure, which will make comparisons later on more straight-forward. 

\subsubsection{Katz's Eisenstein series}

Let $R$ be an arbitrary $\Z[l^{-1}, \zeta_l]$-algebra, and let $\cfr$ be a fractional ideal of $F$ with integral inverse and which is coprime to $p$, $l$ and $R$ (recall \Cref{prime to R def}). We also fix a finite index subgroup $\Gamma \leq \Ofr^\times$. We would like to associate certain Hilbert modular forms to locally constant functions on the product
$$(\Ofr \otimes_\Z \Z_p)^2 \times (\Ofr/l\Ofr)^2,$$
where we see $\Ofr/l\Ofr$ with the discrete topology. Moreover, as we want to also take into account the action by the group $\Gamma$, we will assume that these functions satisfy the following property with respect to the $\Gamma$-action.

\begin{definition}

	Let $f \colon M \to X$ be a map of sets, where $M$ is a $\Gamma$-module and $X$ is an abelian group, and let $k$ be an integer. Then, we say that $f$ is of \emph{parity} $k$ if $f(\gamma \cdot m) = N(\gamma)^kf(m)$ for all $m \in M$ and $\gamma \in \Gamma$.

\end{definition}

Since we are now working with products, we consider them as $\Gamma$-modules through the following convention.

\begin{convention}\label{convention Gamma action}

	Unless explicitly stated, whenever we consider a product of $\Ofr$-modules of the form
	$$(\Ofr \otimes_\Z \Z_p)^2 \quad \text{or} \quad (\Ofr/p^n\Ofr)^2,$$
	for some integral ideal $\nfr$, we see it as a $\Gamma$-module via the action $\gamma \cdot (x,y) := (\gamma^{-1}x, \gamma y)$. We will also assume that $\Gamma$ acts trivially on $\Ofr/l\Ofr$.

\end{convention} 

Let us then discuss the notion of a partial Fourier transform for these functions, which we will see later agrees with the one defined in \Cref{partial Fourier alg def} for some concrete cases.

\begin{definition}\label{partial Fourier transf}\index[notation]{$PH$}

	Let $n \geq 0$ be an integer and assume that there exists a primitive $p^n$-th root of unity $\zeta \in R$ (which we thus fix). If $H \colon (\Ofr/p^n\Ofr)^2 \to R$ is a function, its \emph{partial Fourier transform} is the function
	$$PH \colon (p^{-n}\dfr^{-1}/\dfr^{-1}) \times (\Ofr / p^n\Ofr) \to R$$
	given by the formula
	$$PH([d/p^n], [y]) := \frac{1}{p^{ng}} \sum_{[x] \in \Ofr/p^n\Ofr} H([x], [y]) \cdot \zeta^{-\text{Tr}_{F/\Q}(xd)}.$$
	We can then see $PH$ as a compactly supported function on $(\dfr^{-1} \otimes_\Z \Q_p/\Z_p) \times (\Ofr \otimes_\Z \Z_p)$.
	
\end{definition}

\begin{remark}

	A simple computation then verifies the following formula for partial Fourier inversion: 
	\begin{equation}\label{eq:partial Fourier inversion}
	H([x], [y]) = \sum_{[d/p^n] \in p^{-n}\dfr^{-1} / \dfr^{-1}} PH([d/p^n], [y]) \cdot  \zeta^{\text{Tr}_{F/\Q}(xd)}.
	\end{equation}

\end{remark}

\begin{remark}

	Note that if $H$ has parity $k$, then $PH$ has as well, where the $\Gamma$-action on the product $(p^{-n}\dfr^{-1}/\dfr^{-1}) \times (\Ofr / p^n\Ofr)$ is now the standard (diagonal) one.

\end{remark}

We want to interpret these functions as being defined on the $p^n$-torsion of certain test objects $\B/R$. If $\B$ were equipped with a $\Gamma_\arith(p^n)$-level structure, this would be straight-forward by using the isomorphism
$$(\mu_{p^n} \otimes_\Z \dfr^{-1}) \times \underline{(\Ofr/p^n\Ofr)} \cong \B[p^n].$$
In general, we will work with $(\Gamma(l), \Gamma_{00}(p^N))$-test objects, meaning that we do not have such an isomorphism for $\B$. On the other hand, $\B_{(n)}$ \emph{is} canonically equipped with a $\Gamma_\arith(p^n)$-level structure by \Cref{splitting p-tors Frob twist}. In fact, if we consider the partial Fourier transform $P_\B H$ from \Cref{partial Fourier alg def}, we have two functions defined on $\B_{(n)}[p^n]$, which we now will show agree.

\begin{lemma}\label{Fourier both types agree}

	Assume that $R$ has no non-trivial idempotents and that it has a primitive $p^n$-th root of unity $\zeta \in R$, which we now fix. Then, for any function $H \colon (\Ofr / p^n\Ofr)^2 \to R$, and any $\Gamma_{00}(p^\infty)$-test object $\B/R$, we have
	$$P_\B H = PH \quad \text{as functions} \quad \B_{(n)}[p^n] \to R.$$
	
\end{lemma}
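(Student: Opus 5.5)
The plan is to unwind both definitions through the identifications supplied by the $\Gamma_\arith(p^n)$-level structure on $\B_{(n)}$ from \Cref{splitting p-tors Frob twist}, and to compare the two resulting finite sums term by term. Since $R$ has no non-trivial idempotents, $\Spec(R)$ is connected. Hence, as in \Cref{partial Fourier alg rewriting}, the finite étale group schemes $\ker(F^n_\B)^t$ and $\ker(V^n_\B)$ are constant, and their $R$-points are identified with $\Ofr/p^n\Ofr$. This is what lets $H$ be read as a function on $\ker(F^n_\B)^t(R)\times\ker(V^n_\B)(R)$. Likewise $\beta_{(n)}(\mu_{p^n}\otimes_\Z\dfr^{-1})(R)\cong \mu_{p^n}(R)\otimes_\Z\dfr^{-1}$. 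The fixed primitive root $\zeta$ then identifies this with $p^{-n}\dfr^{-1}/\dfr^{-1}$ via $[d/p^n]\mapsto \zeta\otimes d$. Here connectedness is used once more to guarantee $\mu_{p^n}(R)=\langle\zeta\rangle$, since $R$ has no idempotents.

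The first step is to make the identification $\ker(F^n_\B)^t\cong\underline{\Ofr/p^n\Ofr}$ explicit. We have $\ker(F^n_\B)=C_n=\beta(\mu_{p^n}\otimes_\Z\dfr^{-1})$, and its Cartier dual is $\Hom(\mu_{p^n}\otimes\dfr^{-1},\mu_{p^n})\cong \underline{\Hom_\Z(\dfr^{-1}/p^n\dfr^{-1},\Z/p^n)}\cong\underline{\Ofr/p^n\Ofr}$. The last isomorphism comes from the trace pairing, exactly as in \eqref{eq:arith level str pairing}. Under this identification the Cartier pairing reads
\[
\langle \beta(\zeta^a\otimes d), [x]\rangle_{F^n}=\zeta^{a\,\mathrm{Tr}_{F/\Q}(xd)} .
\]

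The second step is to compute $V^n_\B(r)$ for $r=\beta_{(n)}(\zeta\otimes d)$. By the commutative diagram in the proof of \Cref{splitting p-tors Frob twist}, lift $\zeta\otimes d$ to $\zeta'\otimes d\in\mu_{p^{2n}}\otimes\dfr^{-1}$ with $\zeta'^{p^n}=\zeta$. Then $V^n_\B(r)=[p^n]\beta(\zeta'\otimes d)=\beta(\zeta\otimes d)$. Substituting this into the definition of $P_\B H$ gives
\[
P_\B H(r,s)=\frac1{p^{ng}}\sum_{[x]\in\Ofr/p^n\Ofr}H([x],s)\,\zeta^{-\mathrm{Tr}_{F/\Q}(xd)},
\]
which is precisely $PH([d/p^n],[s])$ from \Cref{partial Fourier transf}.

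The main obstacle is bookkeeping rather than substance. One must check that the identification of $\ker(F^n_\B)^t$ with $\Ofr/p^n\Ofr$ used to view $H$ on $\B$ is the one induced by the Weil pairing in the construction of the $\Gamma_\arith(p^n)$-structure. Equivalently, the isomorphism $\ker(V^n)\cong\underline{\Ofr/p^n\Ofr}$ and the Cartier dual must match the trace pairing \eqref{eq:arith level str pairing}, with no sign or inverse twist. I would verify this by the compatibility of the Weil pairing on $\B_{(n)}[p^n]$ with the Cartier pairing between $\ker(F^n)$ and $\ker(V^n)=\ker(F^n)^t$ under $V^n$. Any resulting sign ambiguity would be absorbed by the convention $x\mapsto -x$ in \Cref{partial Fourier alg rewriting}, which leaves the sum unchanged after reindexing.
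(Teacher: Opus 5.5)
Your proposal is correct and is essentially the paper's proof. Both arguments reduce the claim to the identity $\langle V^n_\B(\beta_{(n)}(\zeta\otimes d)),[x]\rangle_{F^n}=\zeta^{\mathrm{Tr}_{F/\Q}(xd)}$, obtained from the trace-dual identification of $\ker(F^n_\B)^t$ with $\underline{\Ofr/p^n\Ofr}$ and the compatibility $V^n_\B\circ\beta_{(n)}=\beta|_{\mu_{p^n}\otimes_\Z\dfr^{-1}}$ from the proof of \Cref{splitting p-tors Frob twist}.

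Two minor points:
\begin{enumerate}
\item A root $\zeta'$ with $\zeta'^{p^n}=\zeta$ need not exist in $R$. You should either lift fppf-locally or quote that compatibility directly as an identity of group-scheme morphisms, as the paper does.
\item Your closing sign hedge is unnecessary, because Step~1 already fixes the identification, so no sign arises. It is also not right as stated: changing the identification by $-1$ and reindexing would give $PH(-[d/p^n],[s])$, not leave the sum unchanged.
\end{enumerate}
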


\begin{proof}
	
	Since $\Spec(R)$ is connected, we have isomorphisms
	$$\psi \colon \ker(V^n_\B)(R) \cong \Ofr/p^n\Ofr, \quad \mu_{p^n}(R) \otimes_\Z \dfr^{-1} \cong p^{-n}\dfr^{-1}/\dfr^{-1},$$
	where the latter can be written explicitly as $\zeta \otimes d' \mapsto [d'/p^n]$. By our formulas for the partial Fourier transforms, for any point $(\zeta \otimes d, s) \in (\mu_{p^n}(R) \otimes_\Z \dfr^{-1}) \times \ker(V^n_\B)(R)$ we have:
	$$P_\B H(\zeta \otimes d, s) = \frac{1}{p^{ng}} \sum_{[x] \in \Ofr/p^n\Ofr} H([x], \psi(s)) \cdot \langle V^n_\B(\zeta \otimes d), [x] \rangle_{F^n}^{-1},$$
	$$PH(\zeta \otimes d, s) = \frac{1}{p^{ng}} \sum_{[x] \in \Ofr/p^n\Ofr} H([x], \psi(s)) \cdot \zeta^{-\text{Tr}_{F/\Q}(xd)}.$$
	It follows that in order to prove these expressions agree, it is enough to show that
	$$ \langle V^n_\B(\zeta \otimes d), [x] \rangle_{F^n} = \zeta^{\text{Tr}_{F/\Q}(xd)}$$
	for all $[d] \in \dfr^{-1}/p^n\dfr^{-1}$ and $x \in \Ofr/p^n\Ofr$. 
	
	By construction, the pairing $\langle -,- \rangle_{F^n}$ becomes
	\begin{align*}
	(\mu_{p^n} \otimes_\Z \dfr^{-1}) \times \underline{(\Ofr / p^n\Ofr)} & \longrightarrow \mu_{p^n} \\
	(\zeta' \otimes s, [r]) & \longmapsto (\zeta')^{\text{Tr}(rs)}.
	\end{align*} 
	after identifying $\ker(F^n_\B)$ with $\mu_{p^n} \otimes_\Z \dfr^{-1}$ (and analogously for its Cartier dual). Using our identification of $\mu_{p^n}$ with $\underline{p^{-n}\Z/\Z}$ via the $p^n$-th root of unity $\zeta$, this pairing becomes
	\begin{align*}
	(p^{-n}\dfr^{-1}/\dfr^{-1}) \times (\Ofr/p^n\Ofr) & \longrightarrow \mu_{p^n}(R)\\
	\left(\left[ \frac{d}{p^n} \right], [x]\right) & \longmapsto \zeta^{\text{Tr}_{F/\Q}(xd)}.
	\end{align*}
	In particular, by the proof of \Cref{splitting p-tors Frob twist}, we have a commutative diagram
	$$\begin{tikzcd}
	\mu_{p^n} \otimes_\Z \dfr^{-1} \arrow[hook, swap]{d}{\beta_{(n)}} \arrow{r}{\text{id}} & \mu_{p^n} \otimes_\Z \dfr^{-1} \arrow[hook]{d}{\beta} \\
	\B_{(n)} \arrow[swap]{r}{V^n_\B} & \B,
	\end{tikzcd}$$
	which finishes our proof.

\end{proof}

Let us consider the case $R = \C$, in which we fix $e^{2\pi i /p^n}$ as our primitive root of unity. In this situation, Katz takes a different approach to defining the partial Fourier transform $PH$ as a function on data associated to the abelian variety $\B/\C$, constructing the ``partial Tate module'' of $\B$ (see \cite[Definition 3.1.12]{Katz1978}). We recall here its construction. Let $(\Lambda, \langle -,- \rangle)$ be a $\cfr$-polarized lattice in $F \otimes_\Q \C$. By the correspondences of \Cref{correspondence lattices with compl AVs}, we can associate to this pair a complex abelian variety $(\B/\C, m, \lambda, \omega(\B))$ with RM by $F$, a $\cfr$-polarization and a basis. If we further assume that $\B$ is equipped with a $\Gamma_{00}(p^n)$-level structure $\beta_n$ for some $n$, it will induce an $\Ofr$-linear injection
$$\mu_{p^n}(\C) \otimes_\Z \dfr^{-1} \hookrightarrow p^{-n}\Lambda/\Lambda.$$
Taking the isomorphism $\mu_{p^n}(\C) \cong \Z/p^n\Z$ given by the primitive $p^n$-th root of unity $e^{2\pi i /p^n}$, we may rewrite this injection as
$$\beta_n \colon \dfr^{-1} \otimes_\Z \Z/p^n\Z \hookrightarrow \Lambda \otimes \Z/p^n\Z.$$
More generally, if $\B$ has a $\Gamma_{00}(p^\infty)$-level structure, taking the limit of all of these $\beta_n$ gives an injective $\Ofr \otimes_\Z \Z_p$-linear map
$$\beta \colon \dfr^{-1} \otimes_\Z \Z_p \hookrightarrow \Lambda \otimes_\Z \Z_p.$$

On the other hand, recall that our $\cfr$-polarization induced an isomorphism $\Lambda \wedge_\Ofr \Lambda \cong \cfr^\vee$. Taking the tensor product with $\Z_p$, this becomes an isomorphism
$$(\Lambda \otimes_\Z \Z_p) \wedge_{\Ofr \otimes_\Z \Z_p} (\Lambda \otimes_\Z \Z_p) \cong \dfr^{-1}\cfr^{-1} \otimes_\Z \Z_p = \dfr^{-1} \otimes_\Z \Z_p,$$
where we are using that $\cfr$ is coprime to $p$, and thus to $\Z_p$. Combining this with the injection $\beta$, we obtain a short exact sequence
$$\begin{tikzcd}
0 \arrow{r} & \dfr^{-1} \otimes_\Z \Z_p \arrow{r}{\beta} & \Lambda \otimes_\Z \Z_p \arrow{r} & \Ofr \otimes_\Z \Z_p \arrow{r} & 0.
\end{tikzcd}$$
If we base change to $\Q_p$, it becomes
$$\begin{tikzcd}
0 \arrow{r} & \dfr^{-1} \otimes_\Z \Q_p \arrow{r}{\beta} & \Lambda \otimes_\Z \Q_p \arrow{r} & \Ofr \otimes_\Z \Q_p \arrow{r} & 0.
\end{tikzcd}$$

\begin{definition}\label{partial Tate module def}

	Let $(\Lambda, \langle -,- \rangle)$ and $\beta \colon \dfr^{-1} \otimes_\Z \Z_p \hookrightarrow \Lambda \otimes_\Z \Z_p$ a $\Gamma_{00}(p^\infty)$-level structure as above. The \emph{partial Tate module of $(\Lambda, \langle -, - \rangle, \beta)$} is the following $\Ofr \otimes_\Z \Z_p$-submodule of $\Lambda \otimes_\Z \Q_p$:
	$$PV_p(\Lambda) := \Lambda \otimes_\Z \Z_p + \beta(\dfr^{-1} \otimes_\Z \Q_p).$$

\end{definition}

\begin{remark}

	By observing that the intersection of the submodules $\Lambda \otimes_\Z \Z_p$ and $\beta(\dfr^{-1} \otimes_\Z \Q_p)$ in $\Lambda \otimes_\Z \Q_p$ is precisely $\beta(\dfr^{-1} \otimes_\Z \Z_p)$, we obtain the following two short exact sequences:
	$$\begin{tikzcd}
	0 \arrow{r} & \Lambda \otimes_\Z \Z_p \arrow{r} & PV_p(\Lambda) \arrow{r}{\text{pr}_1}  & \dfr^{-1} \otimes_\Z \Q_p/\Z_p \arrow{r}& 0,
	\end{tikzcd}$$
	$$\begin{tikzcd}
	0 \arrow{r} & \dfr^{-1} \otimes_\Z \Q_p \arrow{r}{\beta} & PV_p(\Lambda) \arrow{r}{\text{pr}_2}  & \Ofr \otimes_\Z \Z_p \arrow{r} & 0.
	\end{tikzcd}$$

\end{remark}

In particular, it follows that if $PH$ is defined on $(p^{-n}\dfr^{-1}/\dfr^{-1}) \times (\Ofr/p^n\Ofr)$, it can be extended by zero (in the first coordinate) to a compactly supported function
$$PH \colon (\dfr^{-1} \otimes_\Z \Q_p/\Z_p) \times (\Ofr \otimes_\Z \Z_p) \to \C.$$
We may then see this function as defined on $PV_p(\Lambda)$ by $PH(\lambda) := PH(\text{pr}_1(\lambda), \text{pr}_2(\lambda))$. To define the transcendental expression of the Eisenstein series, Katz considers sums over the submodule
$$\Lambda[p^{-1}] \cap PV_p(\Lambda) \subset \Lambda \otimes_\Z \Q.$$
Then, we can see that under the right hypotheses on $H$, its partial Fourier transform is supported in a specific lattice within this submodule.

\begin{proposition}\label{partial Fourier defined on lattice}

	Let $H$ be a locally constant function $(\Ofr \otimes_\Z \Z_p)^2 \to \overline{\Q}$ which is constant in cosets modulo $p^n$ and supported in the units. Consider the restriction of its partial Fourier transform $PH$ to the submodule $\Lambda[p^{-1}] \cap PV_p(\Lambda) \subset \Lambda \otimes_\Z \Q$. Then, this restriction is supported on the lattice
	$$\Lambda_{(n)}':= \Lambda + \beta(\dfr^{-1} \otimes_\Z p^{-n}\Z) \subset p^{-n}\Lambda.$$	
	Moreover, $PH$ factors through the finite quotient $\Lambda_{(n)}'/p^n\Lambda_{(n)}'$.
	
\end{proposition}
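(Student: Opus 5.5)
Everything reduces to local computations inside $\Lambda \otimes_\Z \Q_p$ using the two short exact sequences for $PV_p(\Lambda)$ recorded in the remark after \Cref{partial Tate module def}. First I describe $PH$ concretely. Since $H$ is constant modulo $p^n$, it descends to a function on $(\Ofr/p^n\Ofr)^2$, and \Cref{partial Fourier transf} gives $PH$ on $(p^{-n}\dfr^{-1}/\dfr^{-1}) \times (\Ofr/p^n\Ofr)$. We extend it by zero in the first coordinate to $(\dfr^{-1}\otimes_\Z \Q_p/\Z_p) \times (\Ofr \otimes_\Z \Z_p)$. Then $PH(\lambda) = PH(\text{pr}_1(\lambda), \text{pr}_2(\lambda))$ on $PV_p(\Lambda)$.

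For the support statement, take $\lambda \in \Lambda[p^{-1}] \cap PV_p(\Lambda)$ with $PH(\lambda) \neq 0$. Then $\text{pr}_1(\lambda) \in p^{-n}\dfr^{-1}\otimes\Z_p/\dfr^{-1}\otimes \Z_p$, so we can write $\lambda = \mu + \beta(d)$ with $\mu \in \Lambda \otimes_\Z \Z_p$ and $d \in \dfr^{-1} \otimes_\Z p^{-n}\Z_p$. Choose a representative $d_0 \in \dfr^{-1} \otimes_\Z p^{-n}\Z = p^{-n}\dfr^{-1}$ of the same class modulo $\dfr^{-1}\otimes\Z_p$; the inclusion $p^{-n}\dfr^{-1} \to p^{-n}\dfr^{-1}\otimes\Z_p/\dfr^{-1}\otimes \Z_p$ is surjective. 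Here $\beta(d_0)$ makes sense in $\Lambda[p^{-1}]$: $\beta$ is obtained from $\beta_n$ on $p^{-n}\Lambda/\Lambda$, so I take for $\beta(\dfr^{-1}\otimes p^{-n}\Z)$ the lattice of lifts in $p^{-n}\Lambda$, which is how $\Lambda'_{(n)}$ should be read. Then $\lambda - \beta(d_0) \in \Lambda[p^{-1}] \cap (\Lambda \otimes_\Z \Z_p) = \Lambda$, since $\Lambda[p^{-1}]\cap \Lambda\otimes\Z_p = \Lambda$ inside $\Lambda\otimes \Q$. Hence $\lambda \in \Lambda'_{(n)}$. The inclusion $\Lambda'_{(n)} \subset p^{-n}\Lambda$ is clear, and $\Lambda'_{(n)} \subset PV_p(\Lambda)\cap\Lambda[p^{-1}]$ is immediate.

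For the factorization, I show that $\lambda \mapsto PH(\lambda)$ is invariant under $p^n\Lambda'_{(n)}$. Let $\lambda' = p^n(\mu + \beta(d_0))$ with $\mu \in \Lambda$ and $d_0 \in p^{-n}\dfr^{-1}$. Then $p^n\beta(d_0) = \beta(p^n d_0) \in \beta(\dfr^{-1}\otimes\Z_p)$, so $\lambda' \in \Lambda \otimes \Z_p$ and $\text{pr}_1(\lambda') = 0$. For the second coordinate, $\text{pr}_2$ kills $\beta(\dfr^{-1} \otimes \Q_p)$, so $\text{pr}_2(\lambda') = p^n \text{pr}_2(\mu) \in p^n(\Ofr \otimes \Z_p)$. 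Since $PH$ depends on the second coordinate only modulo $p^n$, we get $PH(\lambda + \lambda') = PH(\lambda)$. Because $PH$ vanishes off $\Lambda'_{(n)}$ and $\Lambda'_{(n)}$ is stable under translation by $p^n\Lambda'_{(n)}$, the restriction factors through $\Lambda'_{(n)}/p^n\Lambda'_{(n)}$. The first coordinate is also only seen modulo $\dfr^{-1}\otimes\Z_p$, which is consistent with this.

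The hypothesis "supported in the units" is not needed for these two claims, and is presumably there for later use. The main obstacle I expect is bookkeeping: making precise that $\beta$, a priori only $\Z_p$-linear into $\Lambda\otimes\Z_p$, yields a well-defined sublattice $\beta(\dfr^{-1}\otimes p^{-n}\Z)$ of $\Lambda[p^{-1}]$ modulo $\Lambda$. I would handle this by working with $\Lambda \otimes \Q_p/\Z_p \supset p^{-n}\Lambda/\Lambda$ and the isomorphism $\Lambda[p^{-1}]/\Lambda \cong \Lambda\otimes\Q_p/\Z_p$.
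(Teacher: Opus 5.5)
Your proposal is correct and follows essentially the paper's route: your step $\lambda-\beta(d_0)\in\Lambda[p^{-1}]\cap(\Lambda\otimes_\Z\Z_p)=\Lambda$ is the elementwise form of the paper's identity $\Lambda'_{(n)}[p^{-1}]\cap(\Lambda'_{(n)}\otimes_\Z\Z_p)=\Lambda'_{(n)}$. (Note that this intersection is taken in $\Lambda\otimes_\Z\Q_p$, not $\Lambda\otimes_\Z\Q$.) Likewise, your invariance under $p^n\Lambda'_{(n)}$ matches the paper's containment $p^n\Lambda'_{(n)}\subset \text{pr}_2^{-1}(\Ofr\otimes_\Z p^n\Z_p)$, and you additionally check that $\text{pr}_1$ vanishes there, which the paper leaves implicit.

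Your reading of $\beta(\dfr^{-1}\otimes_\Z p^{-n}\Z)$ as the lattice of lifts in $p^{-n}\Lambda$ is the one the paper implicitly uses (cf.\ \Cref{lattice for quot by can subgrp}). Since any such lift differs from the $\Q_p$-linear value $\beta(d_0)$ by an element of $\Lambda\otimes_\Z\Z_p$, your subtraction step goes through as written.
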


\begin{proof}

	Throughout this proof, we will use the following well-known properties of modules over a ring $R$: if $M, N, N'$ are $R$-modules which are contained in the same $R$-module, then
	
	\begin{enumerate}[(1)]
	
		\item if $M$ is flat as an $R$-module, $M \otimes_R (N \cap N') = (M \otimes_R N) \cap (M \otimes_R N')$ inside $M \otimes_R (N + N')$; and
		
		\item $(M \cap N) + (M \cap N') \subset M \cap (N + N')$. If $N \subset M$ or $N' \subset M$, this is an equality.
	
	\end{enumerate}
	Note as well that $\Lambda_{(n)}'$ is obviously a lattice in $F \otimes_\Q \C$, since we have inclusions $\Lambda \subset \Lambda_{(n)}' \subset p^{-n}\Lambda$.
	
	As an immediate consequence of the formula in \Cref{partial Fourier transf}, we know that $PH$ is supported on the space
	$$(\dfr^{-1} \otimes_\Z p^{-n}\Z_p/\Z_p) \times (\Ofr \otimes_\Z \Z_p).$$
	Seeing $PH$ as a map defined on $PV_p(\Lambda)$, this means that it is supported in the submodule $\text{pr}_1^{-1}(\dfr^{-1} \otimes_\Z p^{-n}\Z_p/\Z_p) = \Lambda \otimes_\Z \Z_p + \beta(\dfr^{-1} \otimes_\Z p^{-n}\Z_p)$. Therefore, by taking its restriction to $\Lambda[p^{-1}] \cap PV_p(\Lambda)$, it becomes a function supported in
	$$\Lambda[p^{-1}] \cap (\Lambda \otimes_\Z \Z_p + \beta(\dfr^{-1} \otimes_\Z p^{-n}\Z_p)) = \Lambda[p^{-1}] \cap (\Lambda_{(n)}' \otimes_\Z \Z_p).$$
	We claim that this intersection is equal to $\Lambda_{(n)}'$. In fact, it is enough to show that $\Lambda[p^{-1}] = \Lambda_{(n)}'[p^{-1}]$, as then the equality (1) will prove that
	$$\Lambda[p^{-1}] \cap (\Lambda_{(n)}' \otimes_\Z \Z_p) = \Lambda_{(n)}'[p^{-1}] \cap (\Lambda_{(n)}' \otimes_\Z \Z_p) = \Lambda_{(n)}' \otimes_\Z (\Z[p^{-1}] \cap \Z_p) = \Lambda_{(n)}'.$$
	Clearly, $\Lambda[p^{-1}] \subset \Lambda_{(n)}'[p^{-1}]$. On the other hand, since $\Lambda_{(n)}' \subset p^{-n}\Lambda$, we have
	$$\Lambda_{(n)}'[p^{-1}] \subset (p^{-n}\Lambda)[p^{-1}] = \Lambda[p^{-1}],$$
	showing that $\Lambda_{(n)}'[p^{-1}] \subset \Lambda[p^{-1}]$, and thus our desired equality.
	
	Lastly, recall that we know that this partial Fourier transform factors through the quotient
	$$(\dfr^{-1} \otimes_\Z p^{-n}\Z_p/\Z_p) \times (\Ofr/p^n\Ofr).$$ 
	When seen as a function in $\Lambda[p^{-1}] \cap PV_p(\Lambda)$, it factors through $\Lambda_{(n)}' / (\text{pr}_2^{-1}(\Ofr \otimes_\Z p^n\Z_p) \cap \Lambda_{(n)}')$. Explicitly, we have that
	$$\text{pr}_2^{-1}(\Ofr \otimes_\Z p^n\Z_p) = p^n\Lambda \otimes_\Z \Z_p + \beta(\dfr^{-1} \otimes_\Z \Q_p).$$
	If we express again $\Lambda_{(n)}' = \Lambda + \beta(\dfr^{-1} \otimes_\Z p^{-n}\Z)$, the fact that the submodule $\beta(\dfr^{-1} \otimes_\Z p^{-n}\Z)$ is contained in $\text{pr}_2^{-1}(\Ofr \otimes_\Z p^n\Z_p)$ allows us to use (2) to write
	$$\text{pr}_2^{-1}(\Ofr \otimes_\Z p^n\Z_p) \cap \Lambda_{(n)}' = \text{pr}_2^{-1}(\Ofr \otimes_\Z p^n\Z_p) \cap \Lambda + \text{pr}_2^{-1}(\Ofr \otimes_\Z p^n\Z_p) \cap \beta(\dfr^{-1} \otimes_\Z p^{-n}\Z).$$
	Now, if we look at each summand separately and apply the version of (2) which is just an inclusion, it follows that
	$$\text{pr}_2^{-1}(\Ofr \otimes_\Z p^n\Z_p) \cap \Lambda \supset p^n\Lambda + p^n\beta(\dfr^{-1} \otimes_\Z p^{-n}\Z) = p^n\Lambda_{(n)}'.$$
	Therefore, $\text{pr}_2^{-1}(\Ofr \otimes_\Z p^n\Z_p) \cap \Lambda_{(n)}'$ contains $p^n\Lambda_{(n)}'$, and thus $PH$ factors through $\Lambda_{(n)}'/p^n\Lambda_{(n)}'$.

\end{proof}

In fact, we can give a geometric argument for the appearance of this lattice. The lattice $\Lambda_{(n)}'$ which we have just defined corresponds to the quotient $\B_{(n)}$ together with the choice of a certain basis. For our later purposes, we are interested in a scalar multiple of it (and thus, of $\Lambda_{(n)}'$).

\begin{lemma}\label{lattice for quot by can subgrp}

	Let $(\B/\C, m, \lambda, \beta, \omega(\B))$ be a $\Gamma_{00}(p^\infty)$-test object with a basis, and $(\Lambda, \langle -,- \rangle)$ its associated $\cfr$-polarized lattice. Recall the (étale) isogeny
	$$V^n_\B \colon \B_{(n)} \to \B$$
	of \Cref{quot by can subgps general ring def}. Then, if $\omega(\B_{(n)})$ is the basis of $\omega_{\B_{(n)}/\C}$ defined as $(V^n_\B)^*(\omega(\B))$, the associated lattice to $(\B_{(n)}, m_{(n)}, \omega(\B_{(n)}))$ is $\Lambda_{(n)} := p^n\Lambda_{(n)}'$.

\end{lemma}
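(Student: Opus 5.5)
We work entirely with the complex uniformization. Via the correspondence of \Cref{correspondence lattices with compl AVs}, write $\B(\C) = (F\otimes_\Q\C)/\Lambda$, where the identification $\text{Lie}(\B/\C) \cong F\otimes_\Q\C$ is the one determined by $\omega(\B)$. The plan has three steps: describe $\B_{(n)}$ as a complex torus, identify $V^n_\B$ on that torus, and then compute the lattice attached to $\omega(\B_{(n)}) = (V^n_\B)^*\omega(\B)$.

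\emph{Step 1: $\B_{(n)}$ as a torus.} The canonical subgroup $C_n = \beta(\mu_{p^n}\otimes_\Z\dfr^{-1})$ has $\C$-points equal to the image of $\beta_n \colon \dfr^{-1}\otimes_\Z \Z/p^n\Z \hookrightarrow p^{-n}\Lambda/\Lambda$. After identifying $\mu_{p^n}(\C)$ with $\Z/p^n\Z$ through $e^{2\pi i/p^n}$, this image is exactly $\beta(\dfr^{-1}\otimes_\Z p^{-n}\Z)+\Lambda$ modulo $\Lambda$, that is, $\Lambda_{(n)}'/\Lambda$. Consequently
$$\B_{(n)}(\C) = \B(\C)/C_n = (F\otimes_\Q\C)/\Lambda_{(n)}',$$
and $F^n_\B$ is induced by the identity map of $F\otimes_\Q\C$. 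The $\Ofr$-action is the obvious one, so the RM structure $m_{(n)}$ matches.

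\emph{Step 2: identify $V^n_\B$.} By construction $V^n_\B \circ F^n_\B = [p^n]$. On uniformizations, $V^n_\B$ must therefore be the map $(F\otimes\C)/\Lambda_{(n)}' \to (F\otimes\C)/\Lambda$ induced by $z\mapsto p^n z$. This is well defined because $p^n\Lambda_{(n)}' \subset \Lambda$, which follows from $\Lambda_{(n)}' \subset p^{-n}\Lambda$. The equality holds because $F^n_\B$ is surjective, so $V^n_\B$ is determined by the composite. Equivalently, $V^n_\B$ is the quotient map $\B_{(n)} \to \B_{(n)}/\ker V^n_\B \cong \B$, which is the projection to $\B/\B[p^n]$ followed by inverting $[p^n]$.

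\emph{Step 3: the pulled-back basis and its lattice.} On $\text{Lie}$, the map $V^n_\B$ is multiplication by $p^n$ from $F\otimes\C$ (the Lie algebra of $\B_{(n)}$, in the coordinate given by Step 1) to $F\otimes\C$ (the Lie algebra of $\B$, in the $\omega(\B)$-coordinate). Hence the pulled-back basis $(V^n_\B)^*\omega(\B)$ corresponds to the coordinate $w = p^n z$ on $\text{Lie}(\B_{(n)})$. In this coordinate we get
$$\B_{(n)}(\C) = (F\otimes\C)/p^n\Lambda_{(n)}',$$
and the map $V^n_\B$ becomes $w\mapsto w$, going from $(F\otimes\C)/p^n\Lambda_{(n)}'$ to $(F\otimes\C)/\Lambda$. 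This exhibits the associated lattice as $\Lambda_{(n)} = p^n\Lambda_{(n)}'$. As a consistency check, the kernel of $V^n_\B$ is $\Lambda/p^n\Lambda_{(n)}'$, which is étale of order $p^{ng}$ with quotient $\Ofr/p^n\Ofr$, as required.

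\emph{Main obstacle.} The one delicate point is the normalization in Step 1: checking that the conventions identifying $\mu_{p^n}(\C)$ with $\Z/p^n\Z$, together with the lattice correspondence of \Cref{correspondence lattices with compl AVs}, really send $C_n(\C)$ to $\beta(\dfr^{-1}\otimes p^{-n}\Z)$ modulo $\Lambda$. I would do this by unwinding how $\beta_n$ was defined just before \Cref{partial Tate module def}, so the identification holds by construction. The polarization is not part of the statement, so it does not need to be tracked.
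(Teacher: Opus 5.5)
Your proof is correct and takes essentially the same approach as the paper. The paper introduces the auxiliary basis $\omega_2$ with $(F^n_\B)^*\omega_2 = \omega(\B)$, whose lattice is $\Lambda_{(n)}'$, the preimage of $C_n(\C)$; it then uses $F^n_\B\circ V^n_\B = [p^n]$ to get $(V^n_\B)^*\omega(\B) = p^n\omega_2$, which is your Steps 1--3 phrased in terms of bases rather than Lie-algebra coordinates, and the paper likewise treats the Step 1 normalization you flag as an explicit computation.
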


\begin{proof}

	Recall that the composition $F^n_\B \circ V^n_\B$ is equal to multiplication by $p^n$ in $\B_{(n)}$. We then consider the bases $\omega_1$, $\omega_2$ of $\omega_{\B_{(n)}/\C}$ obtained by the relations
	$$\omega_1 := \omega(\B_{(n)}) = (V^n_\B)^*(\omega(\B)), \quad \text{and} \quad \omega(\B) = (F^n_\B)^*(\omega_2).$$
	Then, $\omega_1 = (F^n_\B \circ V^n_\B)^*(\omega_2) = [p^n]^*(\omega_2) = p^n\omega_2$. If we write $\Lambda_i$ for the lattice associated to $(\B_{(n)}, \omega_i)$ for $i = 1, 2$, this means that $\Lambda_1 = p^n\Lambda_2$. It is then enough to show that $\Lambda_2 = \Lambda_{(n)}'$.
	
	To do this, recall that the relative Frobenius morphism was defined as the quotient map $\B \to \B/C_n = \B_{(n)}$. Therefore, $\Lambda_2$ is defined as the preimage of $C_n(\C) \cong \dfr^{-1} \otimes_\Z \mu_{p^n}$ under the universal covering map $F \otimes_\Q \C \to (F \otimes_\Q \C)/\Lambda$. An explicit computation shows that
	$$\Lambda_2 = \Lambda + \beta(\dfr^{-1} \otimes_\Z p^{-n}\Z) = \Lambda_{(n)}',$$
	which indeed concludes our argument.

\end{proof}

\begin{remark}

	Under this interpretation, we can indeed see the restriction of the partial Fourier transform $PH$ as a function on the $p^n$-torsion $\B_{(n)}[p^n](\C) \cong p^{-n}\Lambda_{(n)}/\Lambda_{(n)}$. 
	
\end{remark}

We now use this new description of the space of definition of $PH$ to slightly rewrite the following result of Katz, which is a way to construct Eisenstein series associated to functions on $(\Ofr \otimes_\Z \Z_p)^2$ and $(\Ofr/l\Ofr)^2$. We note that this can be proved through essentially the same methods as the original proof of Katz.

\begin{proposition}[{\cite[Theorem 3.2.3]{Katz1978}}]\label{Katz Eisenstein mod forms infty level}\index[notation]{$G$@$G_{k, Hf, \Gamma}$}

	Consider two functions
	$$H \colon (\Ofr\otimes_\Z \Z_p)^2 \to R, \quad f \colon (\Ofr/l\Ofr)^2 \to R,$$
	such that $H$ is constant modulo $p^n$, has parity $k$ and is supported on $(\Ofr \otimes_\Z \Z_p)^{\times, 2}$. Then, there exists a unique $\cfr$-Hilbert modular form 
	$$G_{k, Hf, \Gamma} \in M_{\underline{k}}(\cfr, \Gamma(l), \Gamma_{00}(p^\infty), R)$$
	with the following transcendental expression when $R = \C$. For any $(\Gamma(l),\Gamma_{00}(p^\infty))$-test object $\B/\C$ together with a basis $\omega(\B)$, the complex number $G_{k,Hf, \Gamma}(\B/\C, \omega(\B))$ is the value at $s=0$ of the entire function of $s$ whose expression for $\textnormal{Re}(s) > 1 - \frac{k}{2}$ is given by the absolutely convergent series
	$$\frac{(-1)^{kg}\Gamma(k+s)^g}{\sqrt{d_F}} \sideset{}{'}\sum_{\lambda \in \Gamma \backslash (lp^n)^{-1}\Lambda_{(n)}} \frac{PH([l\lambda])f(-[p^n\lambda])}{N(\lambda)^k |N(\lambda)|^{2s}},$$
	where $\Lambda$ is the associated lattice in $F \otimes_\Q \C$ to $(\B, \omega(\B))$ by \Cref{correspondence lattices with compl AVs}, and $\Lambda_{(n)}$ is the lattice associated to $(\B_{(n)}, \omega(\B_{(n)}))$ as described in \Cref{lattice for quot by can subgrp}.

\end{proposition}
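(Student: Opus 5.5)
The plan is to reduce to Katz's original Theorem 3.2.3 by rewriting the summation. Katz handles level $\Gamma_{00}(p^\infty)$ with full unit group and no auxiliary $\Gamma(l)$-level. First, using Proposition \ref{partial Fourier defined on lattice}, I would note that $PH$, viewed on $\Lambda[p^{-1}]\cap PV_p(\Lambda)$, is supported on $\Lambda'_{(n)}$ and factors through $\Lambda'_{(n)}/p^n\Lambda'_{(n)}$. By Lemma \ref{lattice for quot by can subgrp}, $\Lambda_{(n)} = p^n\Lambda'_{(n)}$. Hence the sum in the statement is, after the substitution $\lambda \mapsto lp^n\lambda$, a sum over $\Gamma\backslash l^{-1}\Lambda_{(n)}$ of a function that factors through $l^{-1}\Lambda_{(n)}/p^n\Lambda_{(n)}$. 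I would check that the two arguments $[l\lambda]$ and $[p^n\lambda]$ are well defined. They use the coprimality of $l$ and $p$, so $l^{-1}\Lambda_{(n)}/\Lambda_{(n)}\cdot(\ldots)$ splits as a $p$-part times an $l$-part. I would also check that the sum is $\Gamma$-invariant term by term. The parity $k$ of $H$ (hence of $PH$) cancels against $N(\gamma)^k$ in the denominator, $|N(\gamma)|=1$, and $\Gamma$ acts trivially on $\Ofr/l\Ofr$, so $f$ is invariant.

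Next I would establish convergence and analytic continuation. For $\mathrm{Re}(s)>1-k/2$ the series is dominated by a standard Epstein-type sum over a lattice of rank $2g$ modulo units, so it converges absolutely. Meromorphic continuation and holomorphy at $s=0$ follow, as in Katz, by writing the function as a finite linear combination, indexed by the finitely many classes in $l^{-1}\Lambda_{(n)}/lp^n\Lambda_{(n)}$, of translated partial Eisenstein series. I would then apply Hecke's trick. Entireness uses that $H$ is supported on units: the $p$-adic second projection is then a unit, so there is no constant term contribution at the cusps and no pole.

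The main step is algebraicity and holomorphy in the modular variable: the value at $s=0$ must define a holomorphic Hilbert modular form of weight $\underline{k}$ defined over $R$. I would follow Katz and compute the $q$-expansion at each cusp, that is, at each Tate object $\mathrm{Tate}_{\afr,\bfr}(q)$ with its canonical $\Gamma_{00}(p^\infty)$- and $\Gamma(l)$-structures. Poisson summation in the first variable converts the sum into an expression whose coefficients are finite sums of values of $H$ and $f$ (the partial Fourier transform is undone by the inversion formula \eqref{eq:partial Fourier inversion}), multiplied by divisor-type sums $\sum N(d)^{k-1}$. There is no constant term, because of the unit support of $H$. These coefficients lie in the $\Z$-module generated by the values of $H$ and $f$, which handles the twist by $f$. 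Since the $\Gamma(l)$-level moduli space is geometrically connected, the $q$-expansion principle gives a form defined over any $R$, first over $\Z[l^{-1},\zeta_l,\zeta_{p^n}]$-subrings and then by base change. The values of $H$ and $f$ already lie in $R$, and the coefficients involve no roots of unity at $p$, since $PH$ was inverted. Uniqueness follows from the $q$-expansion principle as well, or from density over $\C$.

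I expect the main obstacle to be this $q$-expansion computation in the presence of the extra $f$ and $\Gamma(l)$-structure. In particular, I need to track how the sign $f(-[p^n\lambda])$ and the normalization $(-1)^{kg}/\sqrt{d_F}$ match Katz's conventions for the Tate curve. My first attempt would be to reduce formally to Katz by decomposing $f$ into characteristic functions of points of $(\Ofr/l\Ofr)^2$ and absorbing each $l$-torsion translate into a shifted lattice coset. I would then repeat Katz's computation for translated lattices.
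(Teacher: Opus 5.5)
Your plan matches the paper, which gives no independent proof: it rewrites Katz's sum using \Cref{partial Fourier defined on lattice} and \Cref{lattice for quot by can subgrp}, and says Katz's continuation and $q$-expansion argument carries over with the extra $f$ and $\Gamma$, which is what you propose. One correction: $f$, unlike $PH$, is not pre-Fourier-transformed, so Poisson summation puts $l$-th roots of unity into the coefficients, which therefore lie in the $\Z[\zeta_l]$-span of products of values of $H$ and $f$ rather than the $\Z$-span (harmless, since $R$ is a $\Z[l^{-1},\zeta_l]$-algebra); also, the substitution landing in $l^{-1}\Lambda_{(n)}$ is $\lambda\mapsto p^n\lambda$, not $lp^n\lambda$.
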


\begin{remark}

	In this statement, we are seeing $PH$ and $f$ as functions
	$$PH \colon p^{-n}\Lambda_{(n)}/\Lambda_{(n)} \to \C \quad \text{and} \quad f \colon l^{-1}\Lambda_{(n)}/\Lambda_{(n)} \to \C$$
	via \Cref{partial Fourier defined on lattice} and the $\Gamma(l)$-level structure on $\B_{(n)}$. 

\end{remark}

In fact, from this description we obtain the following formulas for passing to a finite index subgroup of $\Gamma$.

\begin{corollary}\label{Katz Eis series in smaller subgps}

	Let $H, f$ be functions as in \Cref{Katz Eisenstein mod forms infty level}, and $\Gamma'$ a finite index subgroup of $\Gamma$. Then, we have that
	$$G_{k,Hf,\Gamma'} = [\Gamma : \Gamma']G_{k,Hf,\Gamma}.$$

\end{corollary}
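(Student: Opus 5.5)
The plan is to reduce to the transcendental expression over $\C$ and then extend to arbitrary $R$ by uniqueness.

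First step: work over $R = \C$. Fix a $(\Gamma(l),\Gamma_{00}(p^\infty))$-test object $\B/\C$ with a basis $\omega(\B)$, and let $X := (lp^n)^{-1}\Lambda_{(n)} \setminus \{0\}$. The summand
$$\lambda \mapsto \frac{PH([l\lambda])f(-[p^n\lambda])}{N(\lambda)^k|N(\lambda)|^{2s}}$$
is $\Gamma$-invariant, and hence $\Gamma'$-invariant. For the numerator, $PH$ has parity $k$ by the remark after \Cref{partial Fourier transf}, so it picks up a factor $N(\gamma)^k$. The function $f$ is unchanged, because $\Gamma$ acts trivially on $\Ofr/l\Ofr$. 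The denominator also picks up $N(\gamma)^k$, while $|N(\gamma)| = 1$. So the ratio is invariant.

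Next, choose coset representatives $\gamma_1,\dots,\gamma_m$ of $\Gamma/\Gamma'$, with $m = [\Gamma:\Gamma']$. Every $\Gamma'$-orbit in $X$ lies inside a single $\Gamma$-orbit, and each $\Gamma$-orbit $\Gamma\lambda$ splits into the $\Gamma'$-orbits $\Gamma'\gamma_i\lambda$.

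The main point to check is that these $m$ orbits are pairwise distinct, i.e. that $\Gamma$ acts freely on $X$. Suppose $\gamma\lambda = \lambda$ with $\lambda \neq 0$. Then $(\gamma - 1)\lambda = 0$ in $F\otimes_\Q\C \cong \C^g$. Since $\lambda$ is a nonzero element of a lattice, it lies in $F\lambda_0$-type position with $F$ acting through $F \hookrightarrow \C^g$. An element of $F$ that kills a nonzero vector of the rank-2 $\Ofr$-module $\Lambda \otimes \Q$ must be $0$, because $\Lambda\otimes\Q$ is a free $F$-module and hence torsion-free. Therefore $\gamma = 1$.

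With freeness, the sum over $\Gamma'\backslash X$ equals $m$ times the sum over $\Gamma\backslash X$. For $\mathrm{Re}(s) > 1 - \frac{k}{2}$ this rearrangement is legitimate by absolute convergence. The prefactor $\frac{(-1)^{kg}\Gamma(k+s)^g}{\sqrt{d_F}}$ is the same for both, so the entire functions agree by analytic continuation. Evaluating at $s=0$ gives
$$G_{k,Hf,\Gamma'}(\B,\omega(\B)) = [\Gamma:\Gamma']\,G_{k,Hf,\Gamma}(\B,\omega(\B))$$
for every such $\B$, so the two forms agree as complex modular forms.

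Second step: pass to general $R$. Both $G_{k,Hf,\Gamma'}$ and $[\Gamma:\Gamma']G_{k,Hf,\Gamma}$ lie in $M_{\underline{k}}(\cfr,\Gamma(l),\Gamma_{00}(p^\infty),R)$. By the construction in \Cref{Katz Eisenstein mod forms infty level} (following Katz), the forms are determined by their $q$-expansions, which are given by universal formulas in $H$ and $f$ and base-change from the complex case. The equality over $\C$ therefore implies equality of $q$-expansions, and the $q$-expansion principle gives equality over $R$. Alternatively, one can invoke the uniqueness in the proposition directly, since $[\Gamma:\Gamma']G_{k,Hf,\Gamma}$ satisfies the defining transcendental expression for $\Gamma'$.

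The only step I expect to need care is this last transfer from $\C$ to $R$, which depends on how uniqueness is formulated in Katz's theorem. The summation argument itself is routine.
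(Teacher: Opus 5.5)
Your proposal is correct and is essentially the paper's argument, which simply reads the corollary off the transcendental expression in \Cref{Katz Eisenstein mod forms infty level}: the summand is $\Gamma$-invariant, and $\Gamma$ acts freely on nonzero lattice vectors (for $\gamma \neq 1$ every $\sigma_i(\gamma) - 1$ is nonzero), so each $\Gamma$-orbit splits into exactly $[\Gamma:\Gamma']$ orbits of $\Gamma'$. For general $R$ (for instance the rings $R/p^nR$ that enter the definition of $\widehat{G}_{k,Hf,\Gamma}$, which do not embed into $\C$), do not try to transfer the complex identity; instead run the same free orbit count directly on Katz's $q$-expansion coefficients, which are sums over unit orbits of pairs $(a,b)$ with $ab=\xi$, and conclude by the $q$-expansion principle.
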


We can use these functions in order to construct a $p$-adic version by taking the limit.

\begin{theorem}[{\cite[Theorem 3.4.1]{Katz1978}}]\label{p-adic Eisenstein series def}\index[notation]{$G$ p-adic@$\widehat{G}_{k, Hf, \Gamma}$}

	Let $R$ be a $p$-adic ring. We consider a continuous function 
	$$H \colon (\Ofr \otimes_\Z \Z_p)^2 \to R$$ 
	which is supported in $(\Ofr \otimes_\Z \Z_p)^{\times,2}$, and which is of parity $k$. Consider as well a function $f \colon (\Ofr/l\Ofr)^2 \to R$. Then, the element
	$$\widehat{G}_{k,Hf,\Gamma} := (G_{k, (H \textnormal{mod } p^n)f, \Gamma})_n \in \varprojlim_n V(\cfr, R/p^nR) \cong V(\cfr, \Gamma(l), R)$$
	is the \emph{$p$-adic Eisenstein series associated to $(H,f)$ of weight $k$}. Moreover, if $H$ is locally constant, then $\widehat{G}_{k,Hf, \Gamma}$ is the image of the function $G_{k,Hf, \Gamma}$ of \Cref{Katz Eisenstein mod forms infty level} under the map
	$$\bigoplus_{\chi \in I_{F,R}} M_\chi(\cfr, \Gamma(l), \Gamma_{00}(p^\infty), R) \to V(\cfr, \Gamma(l), R)$$
	of \Cref{map from classical to p-adic mod forms}. 

\end{theorem}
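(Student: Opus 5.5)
The theorem has two parts: that the system $(G_{k,(H \bmod p^n)f,\Gamma})_n$ is a well-defined element of $\varprojlim_n V(\cfr, R/p^nR)$, and that for locally constant $H$ it equals the image of $G_{k,Hf,\Gamma}$. I would follow Katz's strategy and prove both through $q$-expansions. Fix $n$ and let $H_n$ be a locally constant function, constant modulo $p^n$, with $H \equiv H_n \pmod{p^n}$. Such an $H_n$ exists because $H$ is continuous on a compact set and $R$ is $p$-adic. We may choose $H_n$ of parity $k$ and supported on $(\Ofr\otimes_\Z\Z_p)^{\times,2}$, by averaging over the finite quotient by $\Gamma$, or more simply by restricting to the units after reducing modulo $p^n$.

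Step 1 is to apply \Cref{Katz Eisenstein mod forms infty level} to $H_n$, which gives $G_{k,H_nf,\Gamma}$. Its image under \Cref{map from classical to p-adic mod forms} is a $p$-adic modular form, and reducing it modulo $p^n$ gives an element of $V(\cfr, R/p^nR)$. Step 2 is to show that this reduction depends only on $H \bmod p^n$, and that the reductions for different $n$ are compatible. For this I would compute the $q$-expansion of $G_{k,H_nf,\Gamma}$ at each cusp from the transcendental formula, as Katz does. The constant term and the higher coefficients should be given by finite sums of the form
$$\sum_{\substack{(a,b)\\ ab=\nu}} H_n(a,b)\, f(\cdots)\, N(a)^{k-1}\operatorname{sgn}(\cdots),$$
with a constant term involving partial zeta values. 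Since $H_n$ is supported on units with $p$ not dividing these quantities, the constant term should vanish, or at least be handled by the unit support. The remaining coefficients are then visibly congruent modulo $p^n$ whenever the $H_n$ agree modulo $p^n$.

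Step 3 applies the $q$-expansion principle for $p$-adic modular forms in the form $V(\cfr,\Gamma(l),R/p^nR)\hookrightarrow (R/p^nR)[[q]]$ at each cusp, using that $\Mcal$ is geometrically connected. This turns the congruences of Step 2 into congruences of the modular forms themselves, which gives both well-definedness and compatibility. The identification $\varprojlim_n V(\cfr,R/p^nR)\cong V(\cfr,\Gamma(l),R)$ follows from $V = H^0(\widehat{\Mcal},\Ocal)$ and the fact that $\widehat{\Mcal}$ is the limit of its reductions modulo $p^n$. For the final claim, if $H$ is locally constant we may take $H_n = H$ for all $n$ large, and the statement becomes tautological.

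The main obstacle is the explicit $q$-expansion computation in Step 2. This requires unfolding the Eisenstein series over $\Gamma\backslash(lp^n)^{-1}\Lambda_{(n)}$ at a Tate object, handling the partial Fourier transform with \eqref{eq:partial Fourier inversion} so that the coefficients involve $H$ itself rather than $PH$, and checking that the $\Gamma(l)$-level function $f$ introduces only locally constant factors. I would mimic Katz's proof of his Theorem 3.4.1 at each cusp and track the additional $f$-factor and the index $[\Gamma:\Gamma']$ via \Cref{Katz Eis series in smaller subgps}.
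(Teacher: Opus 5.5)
Your argument is correct and is essentially Katz's own proof of his Theorem 3.4.1, which the paper cites rather than reproves: the $q$-expansion coefficients of $G_{k,Hf,\Gamma}$ depend linearly on the values of $H$ (and $f$) with integral weights, the constant terms vanish because $H$ is supported on the units, and one concludes by the $p$-adic $q$-expansion principle. Two small corrections: first, $H \bmod p^n$ is in general only constant modulo some $p^{m(n)}$ with $m(n)\geq n$, so you cannot require $H_n$ to be constant modulo $p^n$. This is harmless, and you can avoid lifting altogether by applying \Cref{Katz Eisenstein mod forms infty level} directly over $R/p^nR$ to $H \bmod p^n$, which also makes the locally constant case hold for every $n$. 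Second, the injectivity of the $q$-expansion map on $V(\cfr,\Gamma(l),R/p^nR)$ rests on the irreducibility of the Igusa tower in characteristic $p$ (Deligne--Ribet), not merely on the geometric connectedness of $\Mcal$.
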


\subsubsection{The \textit{p}-adic measure}

Let us first recall some basic facts and notation about $p$-adic measures. 

\begin{definition}

	Let $X$ be a profinite topological space and $R$ a $p$-adic ring. Write $C(X,R)$ for the $R$-module of continuous maps of sets $X \to R$. The \emph{space of $R$-valued measures on $X$} is the following $R$-module:
	$$\text{Meas}(X,R) := \Hom_R(C(X,R),R).$$
	We write 
	$$\int_X H d\mu := \mu(H)$$
	for any $\mu \in \text{Meas}(X,R)$ and any continuous function $H \colon X \to R$.

\end{definition}

\begin{remark}\label{measure on intermediate algebra}

	If the ring $R$ is an $R_0$-algebra for some other $p$-adic ring $R_0$, we may also write
	$$\text{Meas}(X,R) \cong \Hom_{R_0}(C(X,R_0),R)$$
	(see \cite[(4.0.3)]{Katz1978}).

\end{remark}

It is quite straightforward to define new measures by pushforward or multiplying them by continuous functions.

\begin{definition}\label{maps of measures def}

	Let $X$ be a profinite space, $R$ a $p$-adic ring, and $\mu \in \text{Meas}(X,R)$ a measure.
	
	\begin{enumerate}[(1)]
	
		\item If $\varphi \colon X \to Y$ is a continuous map of profinite spaces, it induces a \emph{pushforward map}:
		$$\varphi_* \colon \text{Meas}(X,R) \to \text{Meas}(Y,R),$$
		given on $\mu$ and on any continuous function $H \colon Y \to R$ by $\int_Y H d(\varphi_*\mu) := \int_X (H \circ \varphi) d\mu$.
		
		\item If $H_0 \colon X \to R$ is a continuous function, we have a \emph{multiplication by $H_0$ map}:
		$$H_0 \cdot (-) \colon \text{Meas}(X,R) \to \text{Meas}(X,R),$$
		given on $\mu$ and on any continuous function $H \colon X \to R$ by $\int_X H d(H_0 \cdot \mu) := \int_X (H \cdot H_0) d\mu$. If there exists another continuous function $H_1$ such that $H_0 \cdot H_1$ is identically 1, then this map is an isomorphism.
	
	\end{enumerate}

\end{definition} 

Following the construction of Katz, we use the Eisenstein series $\widehat{G}_{1, Hf, \Gamma}$ to define $p$-adic measures taking values in the space of Hilbert modular forms. 

\begin{definition}\label{Katz Eisenstein measure - Katz def}\index[notation]{$mu Katz$@$\mu_{\Gamma, KE}(f)$}

	Consider the profinite group
	$$G := (\Ofr \otimes_\Z \Z_p)^{\times, 2}$$ 
	with the diagonal action of $\Gamma$, and fix a function $f \colon (\Ofr/l\Ofr)^2 \to R$. \emph{Katz's Eisenstein measure associated to $f$} is the measure with values on Hilbert $p$-adic modular forms 
	$$\mu_{\Gamma, KE}(f) \in \text{Meas}(G/\overline{\Gamma}, V(\cfr, \Gamma(l), R))$$ 
	given as follows: for any continuous $\Gamma$-equivariant function $H \colon (\Ofr \otimes_\Z \Z_p)^{\times, 2} \to R$, we set
	$$\int_{G/\overline{\Gamma}} H d\mu_{\Gamma, KE}(f) := \widehat{G}_{1, H'f, \Gamma},$$
	where we define $H'(x,y) := N(x)^{-1} H(x^{-1},y)$ on the units and extend it by zero to be a function on $(\Ofr \otimes_\Z \Z_p)^2$. If $f = \mathbbm{1}_{([0], [0])}$ is the characteristic function of $([0], [0])$, set $\mu_{\Gamma, KE} := \mu_{\Gamma, KE}(\mathbbm{1}_{([0], [0])})$.

\end{definition}

\begin{remark}

	The measures $\mu_{\Gamma, KE}(f)$ can also be understood as the restrictions of a more general measure $\mu_{\Gamma, KE}$ defined in the profinite group
	$$((\Ofr \otimes_\Z \Z_p)^{\times, 2} / \overline{\Gamma}) \times (\Ofr/l\Ofr)^2,$$
	where the integral of a product $H \cdot f$ is $\widehat{G}_{1, H'f, \Gamma}$ (with $H$ and $f$ as in the above definition). This is a direct generalization of the measure $\mu_N$ of \cite[Theorem 6.4.7]{Katz1976}, which is the case for elliptic curves.

\end{remark}

\begin{remark}

	This construction extends the measure constructed in \cite[Definition 4.2.5]{Katz1978}, which is precisely $\mu_{\Ofr^\times, KE}$. In the remainder of this work, we will proceed in the reverse direction to Katz's work: we provide an algebraic construction of a very similar measure, and show that this measure interpolates the Eisenstein series $\widehat{G}_{k, Hf, \Gamma}$ by the nature of its construction. This will then allow us to write our explicit comparison result of both measures.

\end{remark}

In order to simplify notation for the remainder of this work, we will drop $\Gamma$ from the notation when possible.

\begin{notation}

	Whenever $\Gamma$ is understood, we write $G_{k,Hf}$ instead of $G_{k,Hf,\Gamma}$.

\end{notation}

\section{Construction of the Eisenstein-Kronecker classes and specializations}

Following the construction of Kings-Sprang in \cite{Kings2025}, we now define the Eisenstein-Kronecker classes associated to abelian schemes with real multiplication. We then use the smooth Hodge decomposition and the unit root decomposition to show how to obtain smooth and $p$-adic Hilbert modular forms from these cohomology classes, and we relate these modular forms to the Eisenstein series $G_{k, Hf}$ and $\widehat{G}_{k, Hf}$ of Katz we discussed in the previous section. 

\subsection{Kings-Sprang's construction}

We start by recalling the construction of Kings-Sprang. We fix the following notation once and for all.

\begin{notation}\label{general setup A/S}

	Let $S$ be a noetherian scheme, and $\pi \colon \A \to S$ an abelian scheme of dimension $g$. We also consider an abstract group $\Gamma$ together with an action $f_\gamma$ on $\A$ and $g_\gamma$ on $S$. Moreover, we assume that $\pi$ is a $\Gamma$-equivariant morphism, and that for any $\gamma \in \Gamma$, the following diagrams commute:
	$$\begin{tikzcd}
	\A \arrow{r}{f_\gamma} \arrow[swap]{d}{\pi} & \A \arrow{d}{\pi} \\
	S \arrow[swap]{r}{g_\gamma} & S.
	\end{tikzcd}$$
	As we want our maps to respect the group structure, we will also impose that the induced morphism $\A \to \A \times_{S, g_\gamma} S$ is a homomorphism of $S$-group schemes (note that $f_\gamma \colon \A \to \A$ is not a morphism of $S$-schemes in general).
	
\end{notation}

The two cases where this situation will occur are when we consider an abelian scheme $\B$ with RM, and we then take the $\Gamma$-action given by its endomorphism structure; and when we work with the universal abelian scheme $\A$ over some Hilbert moduli scheme $\Mcal$, where the action of $\Gamma$ is then the one described in \eqref{eq:Gamma act on A M}.

One of the pieces of data required for our definition of Eisenstein-Kronecker classes will be a certain function $f$ defined on some closed subscheme of $\A$. In particular, we will consider the following space of functions for any such subscheme:

\begin{definition}\index[notation]{$Ocal S D 0 Gamma$@$\Ocal_S[\D]^{0, \Gamma}$}

	For $\D \subset \A$ a $\Gamma$-equivariant closed subscheme which is finite étale over $S$, write $\Ocal_S[\D] := H^0(\D, \Ocal_\D)$. Furthermore, we set 
	$$\Ocal_S[\D]^0 := \ker(H^0(\D, \Ocal_\D) \to H^0(S, \Ocal_S)),$$ 
	as the kernel of the classical trace map, and $\Ocal_S[\D]^{0, \Gamma}$ as its submodule of $\Gamma$-invariants.

\end{definition}

\begin{remark}\label{schematic decomposition of D}

	If $\delta \colon \A \to \B$ is a $\Gamma$-equivariant étale isogeny of abelian schemes over $S$, then the group scheme $\ker(\delta) \to S$ is a finite étale scheme which is stable under $\Gamma$. As a consequence, the zero section $e \colon S \to \ker(\delta)$ is an open immersion, meaning that $\ker(\delta) \setminus \{e(S)\}$ is still a closed subscheme of $\A$ which is finite and étale over $S$. It follows that, for $\D := \ker(\delta)$, we obtain a decomposition
	$$\Ocal_S[\D] \cong \Ocal_S[\D \setminus \{e(S)\}] \oplus \Ocal_S[e(S)].$$

\end{remark}

We then place ourselves in the setup given by this remark.

\begin{notation}\label{closed subscheme D notation}

	Let $\delta \colon \A \to \B$ be a $\Gamma$-invariant étale isogeny, such that its dual is étale. $\D$ will be one of the two closed and étale subschemes
	$$\D := \ker \delta, \quad \text{or} \quad \D := \ker \delta \setminus \{e(S)\}.$$
	We will also write $\Ucal_\D : = \A \setminus \D$ for the complement of $\D$.
	
\end{notation}

By construction, $\D$ is $\Gamma$-stable, and thus so is $\Ucal_\D$. We may then consider the sheaf $\coPo \otimes \Omega^g_{\A/S}$ on the open subscheme $\Ucal_\D$, which also admits a $\Gamma$-action. If we use the long exact sequence for equivariant cohomology with support, together with the vanishing of cohomology of the Poincaré bundle (see \cite[Corollary 2.17]{Kings2025}), we obtain the following exact sequence:
$$\begin{tikzcd}
0 \arrow{r} & H^{g-1}(\Ucal_\D, \Gamma ; \widehat{\Po} \otimes \Omega_{\A / S}^g) \arrow{r} & H^g_\D(\A, \Gamma ; \widehat{\Po} \otimes \Omega_{\A / S}^g) \arrow{r} & H^0(S, \Ocal_S)^\Gamma.
\end{tikzcd}$$
Kings-Sprang then show in \cite[Theorem 2.20]{Kings2025} that there exists an inclusion
$$\Ocal_S[\D]^{0,\Gamma} \hookrightarrow \ker \left( H^g_\D(\A, \Gamma ; \widehat{\Po} \otimes \Omega_{\A / S}^g)  \to H^0(S, \Ocal_S)^\Gamma \right) \cong H^{g-1}(\Ucal_\D, \Gamma ; \widehat{\Po} \otimes \Omega_{\A / S}^g),$$
leading to the following definition.

\begin{definition}\label{EK first def}\index[notation]{$EK$@$EK_{\Gamma, \A}(f)$}

	We denote the previous inclusion by
	$$EK_{\Gamma, \A} \colon \Ocal_S[\D]^{0,\Gamma} \to H^{g-1}(\Ucal_\D, \Gamma ; \widehat{\Po} \otimes \Omega_{\A / S}^g).$$
	For any function $f \in \Ocal_S[\D]^{0,\Gamma}$, its image $EK_{\Gamma, \A}(f) \in H^{g-1}(\Ucal_\D, \Gamma ; \widehat{\Po} \otimes \Omega^g_{\A/S})$ will be the \emph{equivariant Eisenstein-Kronecker class} associated to $f$. Further, via the map $\widehat{\Po} \to \widehat{\Po}^\natural$, we obtain a map of equivariant cohomology
	$$H^{g-1}(\Ucal_\D, \Gamma ; \widehat{\Po} \otimes \Omega^g_{\A/S}) \to H^{g-1}(\Ucal_\D, \Gamma ; \widehat{\Po}^\natural \otimes \Omega^g_{\A/S}),$$
	and we write $EK^\natural_{\Gamma, \A}(f)$ for the image of $EK_{\Gamma, \A}(f)$ under it. Whenever $\A$ is clear from the context, we will write $EK_\Gamma(f)$ and $EK^\natural_\Gamma(f)$.

\end{definition}

Next, recall the connection $\nabla \colon \widehat{\Po}^\natural \to \Omega^1_{\A / S} \otimes \widehat{\Po}^\natural$ on the Poincaré bundle $\widehat{\Po}^\natural$. Iterating it $a$ times for some positive integer $a$ gives 
$$\nabla^a \colon \widehat{\Po}^\natural \to \TSym^a(\Omega^1_{\A/S}) \otimes \widehat{\Po}^\natural.$$
We can thus extend our previous definition.

\begin{definition}

	Let $f \in \Ocal_S[\D]^{0, \Gamma}$ and $a \in \Z_{>0}$. The \emph{$a$-th derivative of the equivariant Eisenstein-Kronecker class} is the image of $EK^\natural_{\Gamma, \A}(f)$ under the map $\nabla^a$:
	$$\nabla^a EK^\natural_{\Gamma, \A}(f) \in H^{g-1}(\Ucal_\D, \Gamma ; \TSym^a(\Omega^1_{\A/S}) \otimes \widehat{\Po}^\natural \otimes \Omega^g_{\A/S}).$$
	Note that here we have used again the identification $\omega^g_{\A^\vee / S} \cong \omega^g_{\A/S}$.

\end{definition}

We now consider another isogeny $\varphi \colon \A \to \B'$ which is $\Gamma$-invariant and with étale dual. Let $x \colon S \to \Ucal_\D$ be a $\Gamma$-invariant $\varphi$-torsion section. We can pull back $\nabla^aEK_{\Gamma, \A}^\natural(f)$ via $x$, and we obtain
$$x^*\nabla^aEK_{\Gamma, \A}^\natural(f) \in H^{g-1}(S, \Gamma ; \TSym^a(\omega_{\A/S}) \otimes x^*\widehat{\Po}^\natural \otimes \omega^g_{\A/S}).$$
Then, we want to compose with the moment map \eqref{eq:moment map proj}:
$$\mom{\varrho^\natural}^b_x \colon x^*\widehat{\Po}^\natural \to \TSym^b(\Hscr_\A)$$
It follows that we have an element
$$\mom{\varrho^\natural}_x^b(\nabla^aEK_{\Gamma, \A}^\natural(f)) \in H^{g-1}(S, \Gamma ; \TSym^a(\omega_{\A/S}) \otimes \TSym^b(\Hscr_\A) \otimes \omega^g_{\A/S}).$$

Our construction can go further in the case that $S = \text{Spec}(R)$ is affine. Indeed, if we write
$$\Fscr^{a,b} := \TSym^a(\omega_{\A/S}) \otimes \TSym^b(\Hscr_\A) \otimes \omega^g_{\A/S},$$ 
then the degeneration of the spectral sequence given by composition of functors gives an isomorphism
\begin{equation}\label{equiv cohom affine iso}
H^{g-1}(S, \Gamma ; \Fscr^{a,b})  \cong H^{g-1}(\Gamma, H^0(S, \Fscr^{a,b})).
\end{equation}

Before giving our definition of these Eisenstein-Kronecker classes, we gather here the data that we have fixed for this construction.

\begin{notation}\label{pairs f x of EK classes general}\index[notation]{$f x$@$(f,x)$}

	We consider pairs $(f,x)$ of the following form:
	
	\begin{itemize}
	
		\item $x \colon S \to \A$ is a $\Gamma$-equivariant section of $\pi$ which is $\varphi$-torsion for a $\Gamma$-equivariant isogeny $\varphi \colon \A \to \B'$ with étale dual. 	
	
		\item $f \in \Ocal_S[\D]^{0,\Gamma}$, with $\D$ the closed subscheme of $\A$ defined as $\ker(\delta) \setminus (\{x(S)\} \cap \ker(\delta))$ for a $\Gamma$-equivariant étale isogeny $\delta \colon \A \to \B$ with étale dual. In particular, $\D$ is of the form mentioned in \Cref{closed subscheme D notation} and $x$ can be seen as a section of $\Ucal_\D \to S$.
	
	\end{itemize}
	
\end{notation}

\begin{definition}\label{EK specialized def}

	Let $(f,x)$ be a pair as in \Cref{pairs f x of EK classes general}, and let $a, b \in \Z_{>0}$. The \emph{Eisenstein-Kronecker class associated to $f$ and specialized at $x$} is the element
	$$EK^{b,a}_{\Gamma, \A}(f,x) := \mom{\varrho^\natural}^b_x(\nabla^aEK_{\Gamma, \A}^\natural(f)) \in H^{g-1}(S, \Gamma; \Fscr^{a,b}).$$
	By abuse of notation, if $S$ is affine, we write again $EK^{b,a}_{\Gamma, \A}(f,x) \in H^{g-1}(\Gamma, H^0(S, \Fscr^{a,b}))$ for the image of this class under the above isomorphism \eqref{equiv cohom affine iso}.

\end{definition}

Up to this point, we have rewritten the construction of Kings-Sprang in the slightly more general context where $S$ also admits a $\Gamma$-action. Next, we want to specialize these classes to elements in a cohomology group of the form
$$H^0(S, \TSym^{\alpha + \underline{1}}(\omega_{\A/S}) \otimes \TSym^\beta(\Hscr_\A))$$
for some characters $\alpha, \beta \in I_F^+$. Although our setup from now on (arbitrary base, only RM multiplication) diverges somewhat from that of Kings-Sprang, most of their arguments still hold with slight modifications, as we now show.

\begin{notation}

	Assume that $S$ is a $\text{Spec}(\Ofr_{F^{\text{Gal}}}[d_F^{-1}])$-scheme, and that $\A/S$ is equipped with real multiplication by $F$ as well as a $\cfr$-polarization $\lambda$, where $\cfr$ is a fractional ideal of $F$ with integral inverse such that multiplication by $N\cfr^{-1}$ is injective in $\Ocal_S$. We also fix $\Gamma$ to be a finite index subgroup of $\Ofr^\times$, which acts on $\A/S$ as described in \Cref{general setup A/S}.

\end{notation}

Recall the decomposition \eqref{eq:TSym decomposition}:
$$\TSym^a(\omega_{\A/S}) \otimes \TSym^b(\Hscr_\A) \cong \bigoplus_{\substack{ \alpha, \beta \in I_F^+ \\ |\alpha| = a, |\beta| = b}} \TSym^\alpha(\omega_{\A/S}) \otimes \TSym^\beta(\Hscr_\A).$$
We choose $\alpha, \beta$ such that $\alpha + \underline{1} - \beta = \underline{k}$ is of critical type for $\Gamma$ for some $k \in \Z_{> 0}$, and project to the $(\alpha, \beta)$-component. This gives a map 
\begin{equation}\label{proj to alpha, beta comp}
H^{g-1}(S, \Gamma; \Fscr^{a,b}) \to H^{g-1}(S, \Gamma ; \TSym^\alpha(\omega_{\A/S}) \otimes \TSym^\beta(\Hscr_\A) \otimes \omega^g_{\A/S}).
\end{equation}
By the same proof as \cite[Corollary 1.14]{Kings2025}, one can see that $\Gamma$ acts trivially on the sheaf $\Fscr^{\alpha, \beta} := \TSym^\alpha(\omega_{\A/S}) \otimes \TSym^\beta(\Hscr_\A) \otimes \omega^g_{\A/S}$. We can then apply \cite[Théorème 4.4.1]{Grothendieck1957}, which gives a (canonical) surjection
$$H^{g-1}(S, \Gamma ; \Fscr^{\alpha, \beta}) \twoheadrightarrow \bigoplus_{p + q = g-1} \Hom_\Z(H_p(\Gamma, \Z), H^q(S, \Fscr^{\alpha, \beta})).$$
Projecting to the term with $p = g-1$, we get a map
\begin{equation}\label{proj from Grothendieck spec seq}
H^{g-1}(S, \Gamma ; \Fscr^{\alpha, \beta}) \twoheadrightarrow \Hom_\Z(H_{g-1}(\Gamma, \Z), H^0(S, \Fscr^{\alpha, \beta})) \cong H^{g-1}(\Gamma, H^0(S, \Fscr^{\alpha, \beta})),
\end{equation}
where the isomorphism is the universal coefficient theorem for group cohomology (see \cite[Exercise III.1.3]{Brown1994}).

The next step follows similarly to \cite[Proposition 2.27]{Kings2025}: we consider a torsion-free subgroup $\Gamma' \leq \Gamma$ of finite index, and choose an element $\xi \in H_{g-1}(\Gamma, \Z)$ such that its image under the restriction map $\text{res}(\xi) \in H_{g-1}(\Gamma', \Z)$ is a generator. Then, taking the cap product with $\xi$ gives a map
\begin{equation}\label{cap prod with generator}
H^{g-1}(\Gamma, H^0(S, \Fscr^{\alpha, \beta})) \to H^0(S, \TSym^{\alpha + \underline{1}}(\omega_{\A/S}) \otimes \TSym^\beta(\Hscr_\A)).
\end{equation}
As the final step, note that the $\cfr$-polarization on $\A$ induces an injective map in de Rham cohomology $H^1_{dR}(\A^\vee/S) \hookrightarrow H^1_{dR}(\A/S)$, since by assumption multiplication by $N\cfr^{-1}$ is injective. 

\begin{definition}\label{EK nearly holom def}\index[notation]{$EK$ specialized@$EK^{\beta, \alpha}_{\Gamma, \A}(f,x)$}\index[notation]{$alpha beta$@$(\alpha, \beta)$}

	Let $\alpha, \beta \in I_F^+$ be such that $\alpha + \underline{1} - \beta = \underline{k}$ is of critical type for $\Gamma$ for some $k \in \Z_{>0}$. The \emph{specialized Eisenstein-Kronecker class} associated to $(\alpha, \beta)$ is the element
	$$EK_{\Gamma, \A}^{\beta, \alpha}(f,x) \in H^0(S, \TSym^{\alpha + \underline{1}}(\omega_{\A/S}) \otimes \TSym^\beta(\Hscr_{\A^\vee}))$$
	given as the image of $EK^{b,a}_{\Gamma, \A}(f,x)$ under the composition of the maps \eqref{proj to alpha, beta comp}, \eqref{proj from Grothendieck spec seq}, \eqref{cap prod with generator} and the (injective) map $\Hscr_{\A} \hookrightarrow \Hscr_{\A^\vee}$.

\end{definition}

To simplify notation further on, we may introduce a variant of these Eisenstein-Kronecker series where we replace $x$ by a function on the torsion of $\A$.

\begin{definition}

	Let $\alpha$, $\beta$ be as in the previous definition. Assume that $\Gamma$ acts trivially on the subgroup scheme $\ker(\varphi)$, for an isogeny $\varphi$ as in \Cref{pairs f x of EK classes general}. Then, for any map of sets $H \colon \ker(\varphi)(S) \to H^0(S, \Ocal_S)$, we may define the \emph{Eisenstein-Kronecker class associated to $(\alpha, \beta)$ and $(f,H)$} as
	$$EK^{\beta, \alpha}_{\Gamma, \A}(f, H) := \sum_{x \in \ker(\varphi)(S)} H(x) \cdot EK^{\beta, \alpha}_{\Gamma, \A}(f, x) \in H^0(S, \TSym^{\alpha + \underline{1}}(\omega_{\A/S}) \otimes \TSym^\beta(\Hscr_{\A^\vee})).$$

\end{definition}

From our description thus far, it is clear that the Eisenstein-Kronecker classes are stable under base change by any \textit{$\Gamma$-equivariant} map $T \to S$. What happens when we pull back along a map which is not $\Gamma$-equivariant? Consider in particular the following setup: we work over $S = \Mcal(\cfr, \Gamma(l), \Gamma_{00}(p^\infty))$, a Hilbert modular scheme of infinite level, and consider the universal abelian scheme $\A \to \Mcal$. Then, we want to study the functoriality of the Eisenstein-Kronecker classes associated to $\A/\Mcal$ after pulling back by some point $\xi \colon \Spec(R) \to \Mcal$ corresponding to a test object $\B/R$. As it turns out, if $\Gamma$ acts non-trivially on this test object, $\xi$ will \textit{not} be a $\Gamma$-equivariant map. Nevertheless, the functoriality still holds, as we will now show.

\begin{proposition}\label{EK stable under base change}

	Let $\A \to \Mcal:= \Mcal(\cfr, \Gamma(l), \Gamma_{00}(p^\infty))$ be the universal abelian scheme over the Hilbert moduli scheme of $(\Gamma(l), \Gamma_{00}(p^\infty))$-level. Let $R$ be an $\Ofr_{F^{\textnormal{Gal}}}[(ld_F)^{-1}, \zeta_l]$-algebra and consider $\xi \colon \Spec(R) \to \Mcal$ defining a test object $(\B/R, m_\B, \lambda_\B, \alpha_\B, \beta_\B)$. Consider as well a pair $(\alpha, \beta)$ as in \Cref{EK nearly holom def}, and a pair $(f,x)$ as in \Cref{pairs f x of EK classes general}, with the isogenies appearing in their definition given by $\varphi = [\ffr]$, $\delta = [\bfr]$ for some coprime integral ideals $\ffr, \bfr$ different from $\Ofr$. It then follows that:
	$$EK_{\Gamma, \B}^{\beta, \alpha}(\xi^*f, \xi^*x) = \xi^*(EK_{\Gamma, \A}^{\beta, \alpha}(f,x)).$$

\end{proposition}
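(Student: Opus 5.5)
The map $\xi$ is not $\Gamma$-equivariant: on $\B$ the group acts through the RM structure, while on $\A$ it acts through the moduli action \eqref{eq:Gamma act on A M}. The plan is to factor $\xi$ through an auxiliary $\Gamma$-equivariant situation and use the naturality of each step in \Cref{EK nearly holom def}.

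First, let $\Gamma$ act on $\Spec(R)$ trivially and on $\B$ via $m_\B$. Consider the $\Gamma$-orbit of $\xi$. For $\gamma\in\Gamma$, the point $f_\gamma\circ\xi$ corresponds to $(\B,m_\B,\lambda_\B,\gamma\alpha_\B,\gamma\beta_\B)$. Multiplication by $\gamma$ via $m_\B$ is an isomorphism of test objects from $(\B,\ldots,\alpha_\B,\beta_\B)$ to this one, because $\gamma$ is a unit and preserves the $\cfr$-polarization up to $\gamma^2$. Here one uses that $\lambda_\B$ is only defined up to the identification with $\cfr$-polarizations, and that $\Gamma$ acts on $\cfr^+$. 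Hence $f_\gamma\circ\xi=\xi$ as points of $\Mcal$. The induced automorphism of $\xi^*\A=\B$ given by the fibre square \eqref{eq:Gamma act on A M} is then $m_\B(\gamma)^{\pm1}$. So $\xi$, together with $\B\cong\xi^*\A\to\A$, is $\Gamma$-equivariant when $\B$ carries the RM action. The first thing I would check carefully is the sign or inverse convention, which should match \Cref{convention Gamma action}.

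Given this equivariance, base change of equivariant cohomology gives
\[
\xi^*EK_{\Gamma,\A}(f)=EK_{\Gamma,\B}(\xi^*f).
\]
To see this, note that the construction of \cite[Theorem 2.20]{Kings2025} is functorial for $\Gamma$-equivariant cartesian squares. The relevant compatibilities are:
\begin{itemize}
\item the completed Poincaré bundle, $\Omega^g$ and the connection commute with base change;
\item $\D=\ker[\bfr]\setminus x$ pulls back to the corresponding subscheme of $\B$, since $[\bfr]$ and $[\ffr]$ are defined by the RM and are compatible with $\xi$;
\item the localization sequence with supports is natural.
\end{itemize}
Pulling back along $\xi^*x$ and applying the moment maps $\mom{\varrho^\natural}^b$ is likewise compatible with base change, because the splitting principle isomorphism is canonical.

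It remains to check that the three specialization steps are natural:
\begin{itemize}
\item the projection \eqref{proj to alpha, beta comp} to the $(\alpha,\beta)$-component is natural, since the $\T_F$-decomposition is compatible with pullback;
\item the edge map \eqref{proj from Grothendieck spec seq} of Grothendieck's spectral sequence is natural in the equivariant sheaf and in equivariant morphisms of spaces with trivial action on coefficients;
\item the cap product with $\xi\in H_{g-1}(\Gamma,\Z)$ in \eqref{cap prod with generator} commutes with the induced map on $H^0$.
\end{itemize}
On $\Mcal$ the $\Gamma$-action on $S$ is nontrivial, so the Grothendieck spectral sequence there is for a group acting on the base. The step needing care is that its edge map is compatible with pullback to $\Spec(R)$ with trivial action. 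I would handle this by writing both sides via the Hochschild–Serre description $R\Gamma(S,\Gamma;-)=R\Gamma(\Gamma,R\Gamma(S,-))$ and using naturality of the edge map to $H^{g-1}(\Gamma,H^0)$. Finally, the polarization map $\Hscr_\A\hookrightarrow\Hscr_{\A^\vee}$ pulls back to the one for $\B$.

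The main obstacle is the first step. One must show that $\xi$ is genuinely $\Gamma$-equivariant, with the induced action on $\xi^*\A$ being exactly the RM action used to define $EK_{\Gamma,\B}$, including compatibility with the level structures and polarization. Everything else is formal naturality.
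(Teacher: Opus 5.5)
There is a genuine gap, and it sits in the step you yourself flagged as the main obstacle. The problem is not one of sign conventions: the claim $f_\gamma\circ\xi=\xi$ is false. In the paper a $\cfr$-polarization is a specific isomorphism $\lambda\colon\B(\cfr)\xrightarrow{\sim}\B^\vee$, and isomorphisms of test objects must respect it. The automorphism $m_\B(\gamma)$ does carry $(\alpha_\B,\beta_\B)$ to $(\gamma\alpha_\B,\gamma\beta_\B)$. However, it pulls $\lambda_\B$ back to $\lambda_\B\circ\gamma^2$. So what it actually shows is $(\B,\lambda_\B\circ\gamma^2,\alpha_\B,\beta_\B)\cong(\B,\lambda_\B,\gamma\alpha_\B,\gamma\beta_\B)$, which is a different point of $\Mcal$.

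In fact, suppose there were an isomorphism $(\B,\lambda_\B,\alpha_\B,\beta_\B)\cong(\B,\lambda_\B,\gamma\alpha_\B,\gamma\beta_\B)$. It would be an automorphism of a polarized abelian scheme, hence of finite order. It would also act on $\beta_\B(\mu_{p^\infty}\otimes_\Z\dfr^{-1})$ as multiplication by $\gamma\in(\Ofr\otimes_\Z\Z_p)^\times$. Together these force $\gamma=\pm1$. So for $\gamma\neq\pm1$ no point of $\Mcal$ is fixed by $f_\gamma$, and the $\Gamma$-orbits are infinite, since $\Gamma$ moves points along the Igusa tower. This is exactly why the paper stresses that $\xi$ is not $\Gamma$-equivariant.

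Reading $\lambda_\B$ only up to $\Ofr^{\times,+}$, as you suggest, changes the moduli problem. At finite level that problem is not even representable, because units congruent to $1$ modulo $l$ become automorphisms. Hence there is no induced action $m_\B(\gamma)^{\pm1}$ on $\xi^*\A$ to which your naturality arguments could be applied.

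What is missing is a way to factor $\xi$ through a genuinely $\Gamma$-equivariant map, and this requires a reduction to finite level. The paper proceeds in three steps.
\begin{enumerate}[(1)]
\item It descends $(f,x)$ to some $\Mcal_n=\Mcal(\cfr,\Gamma(l),\Gamma_{00}(p^n))$. For $x$ this uses the limit results of EGA IV 8.8. For $f$ it uses that traces, kernels and $\Gamma$-invariants commute with the filtered colimit over $n$; for invariants this is Brown's theorem, applied with a finite-type free resolution of $\Z$ over $\Z[\Gamma]$.
\item At finite level there are only finitely many level structures on $(\B,m_\B,\lambda_\B)$, so the $\Gamma$-orbit $\{\B_1=\B,\dots,\B_r\}$ of the point is finite. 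The map $\xi_\Gamma\colon\bigsqcup_{i=1}^r\Spec(R)\to\Mcal_n$, with $\Gamma$ permuting the components, is then $\Gamma$-equivariant. Naturality (the checks you list are the ones used here) gives $EK^{\beta,\alpha}_{\Gamma,\B_S}(\xi_\Gamma^*f,\xi_\Gamma^*x)=\xi_\Gamma^*EK^{\beta,\alpha}_{\Gamma,\A_n}(f_n,x_n)$.
\item The components of the left-hand side all coincide, because the $\B_i$ share $(\B,m_\B,\lambda_\B)$ and differ only in level structure, on which the construction of the classes does not depend. Pulling back along $\xi=\xi_1$ is projection to the first component.
\end{enumerate}
The finite-level reduction is essential: at infinite level the orbit is infinite, so this disjoint-union device is not available.
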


\begin{proof}
	
	First, we reduce to the finite level case. Write $\Mcal_n := \Mcal(\cfr, \Gamma(l), \Gamma_{00}(p^n))$ for all $n$ big enough such that this functor is indeed represented by a scheme, as well as $\A_n \to \Mcal_n$ for the universal abelian scheme. We then have the following commutative diagram (where all the squares are cartesian):
	$$\begin{tikzcd}
	\B \arrow{d} \arrow{r} & \A \arrow{r} \arrow{d} & \A_n \arrow{d} \\
	\text{Spec}(R) \arrow[swap]{r}{\xi} & \Mcal \arrow{r} & \Mcal_n.
	\end{tikzcd}$$
	Because the map $\Mcal \to \Mcal_n$ is $\Gamma$-equivariant, if there exists a pair $(f_n, x_n)$ as in \Cref{pairs f x of EK classes general} for $\A_n$ such that after pullback to $\A$ it becomes $(f,x)$, then we would just need to show that 
	$$EK_{\Gamma, \B}^{\beta, \alpha}(\xi^*_nf_n, \xi^*_nx_n) = \xi^*_n(EK_{\Gamma, \A_n}^{\beta, \alpha}(f_n,x_n)),$$ 
	where $\xi_n \colon \text{Spec}(R) \to \Mcal_n$ is the induced point. 
	
	Since the Serre construction commutes with base change, the isogenies $[\ffr]$ and $[\bfr]$ are obtained by base change from the multiplication maps
	$$[\ffr]_n \colon \A_n \to \A_n(\ffr^{-1}) \quad \text{and} \quad [\bfr]_n \colon \A_n \to \A_n(\bfr^{-1})$$
	respectively. Consider then a ($\Gamma$-equivariant) point $x \colon \Mcal \to \A[\ffr]$. Since the finite group schemes $\A_n[\ffr] \to \Mcal_n$ are locally of finite presentation, the schemes $\Mcal_n$ are quasi-compact and quasi-separated (and the transition maps are affine), and $\A[\ffr]$ is the limit of the $\A_n[\ffr]$, the map
	$$\varinjlim_n \Hom_{\Mcal_n}(\Mcal_n, \A_n[\ffr]) \to \Hom_{\Mcal}(\Mcal, \A[\ffr])$$
	induced by the natural maps $\Hom_{\Mcal_n}(\Mcal_n, \A_n[\ffr]) \to \Hom_{\Mcal}(\Mcal, \A[\ffr])$ (given by base change) is a bijection (see \cite[Chapitre IV, Théorème 8.8.2]{Grothendieck1966}). In particular, this means that there exists some big enough $n$ such that $x$ is induced by base change from a point $x_n \colon \Mcal_n \to \A_n[\ffr]$. The final remaining fact to check is that $x_n$ is $\Gamma$-equivariant, but since the maps $\Mcal \to \Mcal_n$ are epimorphisms of schemes, this follows by some diagram-chasing.
	
	Write $n_x$ for an integer such that $x_n$ as above is defined for all $n \geq n_x$. Then, it follows that we can define the closed subschemes $\D_n := \A_n[\bfr] \setminus \{x_n(\Mcal_n)\}$, and that $\D \cong \varprojlim_{n \geq n_x} \D_n$. In particular, this means that $H^0(\D, \Ocal_\D) \cong \varinjlim_{n \geq n_x} H^0(\D_n, \Ocal_{\D_n})$. Since the trace map $H^0(\D, \Ocal_\D) \to H^0(\Mcal, \Ocal_{\Mcal})$ is defined by using the fact that $H^0(\D, \Ocal_\D)$ is a locally free $H^0(\Mcal, \Ocal_\Mcal)$-module, and localizations commute with colimits, it follows that this map is the colimit of the trace maps
	$$H^0(\D_n, \Ocal_{\D_n}) \to H^0(\Mcal_n, \Ocal_{\Mcal_n}).$$
	Moreover, since the category of $H^0(\Mcal, \Ocal_\Mcal)$-modules with a $\Gamma$-action is Grothendieck abelian, direct limits commute with kernels, and thus $\Ocal_\Mcal[\D]^0 \cong \varinjlim_{n \geq n_x} \Ocal_{\Mcal_n}[\D_n]^0$. Lastly, to show that taking invariants commutes with direct limits, by \cite[Theorem 1]{Brown1975} it is enough to show that $\Z$ admits a resolution (as a $\Z[\Gamma]$-module) by finitely generated projectives. Since \cite[Lemma 4.6]{Bannai2024} provides us with a free resolution, this shows that 
	$$\Ocal_\Mcal[\D]^{0,\Gamma} \cong \varinjlim_{n \geq n_x} \Ocal_{\Mcal_n}[\D_n]^{0, \Gamma},$$
	and we may then take $n \geq n_x$ big enough such that an $f_n \in \Ocal_{\Mcal_n}[\D_n]^{0, \Gamma}$ exists giving back $f$ after base change.
	
	Let us then replace $\A$ and $\Mcal$ by the corresponding $\A_n$, $\Mcal_n$, and remove the subscript $n$ from the notation. Since the number of possible $(\Gamma(l), \Gamma_{00}(p^n))$-level structures we can put on $(\B/R, m_\B, \lambda_\B)$ is finite, this means that its $\Gamma$-orbit is finite as well. Write then $\{\B_1 = \B, \ldots, \B_r\}$ for the orbit, and $\xi_\Gamma \colon S \to \Mcal$ for the coproduct of the maps $\xi_i \colon \text{Spec}(R) \to \Mcal$ defining the $\B_i$. In particular, $S$ is the disjoint union of $r$ copies of $\text{Spec}(R)$. If we consider the obvious $\Gamma$-action on $S$, the map $\xi_\Gamma$ \emph{is} $\Gamma$-equivariant, so by the mentioned functoriality we have that
	$$EK_{\Gamma, \B_S}^{\beta, \alpha}(\xi_\Gamma^*f, \xi_\Gamma^*x) = \xi_\Gamma^*(EK_{\Gamma, \A}^{\beta, \alpha}(f,x)).$$
	Note now that the abelian scheme $\B_S$ is clearly a disjoint union of $r$ copies of $\B$. In fact, since the $\Gamma$-action only affects the level structure, this means that all of these copies are equipped with the same real multiplication $m_\B$ and $\cfr$-polarization $\lambda_\B$. Since our construction of the Eisenstein-Kronecker classes does not depend on any level structure, it follows that all of the elements $EK_{\Gamma, \B_i}^{\beta, \alpha}(\xi_i^*f, \xi_i^*x)$ are equal to each other.
	
	Then, from the construction it is clear that we can split
	$$EK_{\Gamma, \B_S}^{\beta, \alpha}(\xi_\Gamma^*f, \xi_\Gamma^*x) = (EK_{\Gamma, \B_1}^{\beta, \alpha}(\xi_1^*f, \xi_1^*x), \ldots, EK_{\Gamma, \B_r}^{\beta, \alpha}(\xi_r^*f, \xi_r^*x)) = $$
	$$ = (EK_{\Gamma, \B}^{\beta, \alpha}(\xi^*f, \xi^*x), \ldots, EK_{\Gamma, \B}^{\beta, \alpha}(\xi^*f, \xi^*x)),$$
	and since taking the pullback of this tuple by $\xi = \xi_1$ is equivalent to projecting to the first element, this finishes the proof.

\end{proof}

By essentially the same proof, we also have a similar result when we substitute the universal abelian scheme $\A$ by its quotient $\A_{(n)}$ which we described in \Cref{quot by can subgps general ring def}.

\begin{corollary}\label{EK stable under base change Frob}

	Let $K$ be a field with characteristic not dividing $p$, $l$ nor $d_F$, and assume that $F^{\textnormal{Gal}} \subset K$. Consider, as in \Cref{EK stable under base change}, the universal abelian scheme $\A \to \Mcal := \Mcal(\cfr, \Gamma(l), \Gamma_{00}(p^\infty))_K$, a point $\xi \colon \Spec(K) \to \Mcal$, and a pair $(\alpha, \beta)$. Then, for any pair $(f,x)$ as in \Cref{pairs f x of EK classes general} for the abelian scheme $\A_{(n)}/\Mcal$ and again for the isogenies $\varphi = [\ffr]$, $\delta = [\bfr]$ (here $\ffr$, $\bfr$ are integral ideals coprime to each other and different from $\Ofr$), we have that
	$$EK^{\beta, \alpha}_{\Gamma, \B}(f_\B, x_\B) = \xi^*(EK^{\beta, \alpha}_{\Gamma, \A_{(n)}}(f,x)),$$
	where $\B := \A_{(n)} \times_{\Mcal, \eta} \Spec(K)$.  

\end{corollary}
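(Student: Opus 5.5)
We follow the proof of \Cref{EK stable under base change} essentially verbatim, replacing $\A$ by $\A_{(n)}$ throughout; here $\xi$ is the point $\eta$ appearing in the statement, $\B = \A_{(n)} \times_{\Mcal, \eta} \Spec(K)$, and $f_\B$, $x_\B$ denote the pullbacks $\xi^*f$, $\xi^*x$. The plan has three steps.

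\textbf{Step 1 (descent to finite level).} We first descend the datum $(\A_{(n)}/\Mcal, f, x)$ to some finite level $\Mcal_m := \Mcal(\cfr, \Gamma(l), \Gamma_{00}(p^m))_K$ with $m \geq 2n$. For such $m$ the canonical subgroup $C_n = \beta(\mu_{p^n} \otimes_\Z \dfr^{-1})$ is already defined on the universal object $\A_m$. Hence the quotient $(\A_m)_{(n)} := \A_m / C_n$ exists over $\Mcal_m$, and its formation commutes with base change along $\Mcal \to \Mcal_m$, so $(\A_m)_{(n)} \times_{\Mcal_m} \Mcal \cong \A_{(n)}$. The $\Gamma$-action on $\Mcal_m$ lifts to $(\A_m)_{(n)}$ as in \eqref{eq:Gamma act on A M}: the action changes $\beta$ to $\gamma\beta$, but $C_n$ is an $\Ofr$-submodule, so $\gamma\beta$ has the same image $C_n$ and the action descends to the quotient. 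Then, exactly as before, we apply \cite[Chapitre IV, Théorème 8.8.2]{Grothendieck1966} to the finitely presented group schemes $(\A_m)_{(n)}[\ffr]$ over the qcqs schemes $\Mcal_m$. Together with the colimit arguments for $\Ocal[\D]^{0,\Gamma}$ (Brown's theorem and the finite free resolution of \cite[Lemma 4.6]{Bannai2024}), this produces a $\Gamma$-equivariant pair $(f_m, x_m)$ on $(\A_m)_{(n)}$ pulling back to $(f,x)$. Since $\Mcal \to \Mcal_m$ is $\Gamma$-equivariant, it suffices to prove the claim over $\Mcal_m$.

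\textbf{Step 2 (orbit trick).} Over $\Mcal_m$, the point $\xi_m \colon \Spec(K) \to \Mcal_m$ has a finite $\Gamma$-orbit, since only finitely many level structures exist. We take the coproduct $\xi_\Gamma \colon S = \coprod_{i=1}^r \Spec(K) \to \Mcal_m$ of the points in this orbit; with the permutation action on $S$ it is $\Gamma$-equivariant. Equivariant functoriality then gives
$$EK^{\beta,\alpha}_{\Gamma,\B_S}(\xi_\Gamma^*f_m, \xi_\Gamma^*x_m) = \xi_\Gamma^* EK^{\beta,\alpha}_{\Gamma,(\A_m)_{(n)}}(f_m,x_m),$$
and projecting to the first component gives the result.

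\textbf{Main obstacle.} The one point that needs care, and which I expect to be the main obstacle, is verifying that the components of $\B_S$ are all the same abelian scheme $\B$, with the same RM and polarization. Here the pulled-back object is no longer the universal test object: it is the quotient $\B_i/C_n^{(i)}$. Moving along the orbit replaces $\beta$ by $\gamma\beta$ for $\gamma \in \Gamma \subset \Ofr^\times$, and $\gamma$ acts invertibly on $\mu_{p^m} \otimes \dfr^{-1}$, so the canonical subgroup $C_n$ is unchanged. The quotient $\B = \A_{(n)} \times_{\Mcal,\eta} \Spec(K)$ together with $m_{(n)}$ and $\lambda_{(n)}$ is therefore literally the same on each component, and only the induced $\Gamma_{\arith}(p^n)$ and $\Gamma_{00}$ data differ. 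Since the construction of $EK^{\beta,\alpha}$ uses none of the level data, all components coincide. The hypotheses that the characteristic of $K$ does not divide $p$, $l$ or $d_F$, and that $F^{\textnormal{Gal}} \subset K$, ensure that $C_n$ is étale and that the $\T_F$-decompositions needed in \Cref{EK nearly holom def} are available.
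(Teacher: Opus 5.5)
Your proposal is correct and follows the paper's proof. The paper likewise reduces, via the argument of \Cref{EK stable under base change}, to descending $\A_{(n)}$ to the finite-level quotient $\A_{m,(n)} := \A_m/C_n$ over $\Mcal_m$; it needs only $m \geq n$, and it justifies compatibility of the quotient with base change by flatness of $\Mcal \to \Mcal_m$. Your additional checks, that the $\Gamma$-action descends to the quotient and that the orbit copies of $\B$ coincide because $C_n$ is unchanged under $\beta \mapsto \gamma\beta$, are details the paper leaves implicit.
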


\begin{proof}

	Going through the proof of \Cref{EK stable under base change} it is clear than in order to prove this result, it is enough to show that for big enough $m \geq 0$, there exist abelian schemes $\A'_m \to \Mcal_m := \Mcal(\cfr, \Gamma(l), \Gamma_{00}(p^m))$ and cartesian squares
	$$\begin{tikzcd}
	\A_{(n)} \arrow{r} \arrow{d} & \A'_m \arrow{d} \\
	\Mcal \arrow{r} & \Mcal_m,
	\end{tikzcd}$$
	where the bottom horizontal map is the canonical projection. 
	
	Let $\A_m \to \Mcal_m$ be the universal abelian schemes for all $m \geq 0$, and assume that $m \geq n$, which allows us to define the quotient $\A_{m, (n)}$ of $\A_m$ by the image of its $\Gamma_{00}(p^n)$-level structure. Since the $\Gamma_{00}(p^m)$-level structure of $\A$ is obtained by base change from $\A_m$, and the maps $\Mcal \to \Mcal_m$ are flat, the quotient commutes with base change and we obtain our desired commutative diagram with $\A'_m := \A_{m, (n)}$.
	
\end{proof}

\subsection{The complex specialization}

Now that we have constructed the Eisenstein-Kronecker classes, we want to focus on obtaining certain \emph{specializations}, maps which will give us Hilbert modular forms from our Eisenstein-Kronecker classes. Let us fix the following setup.

\begin{notation}\label{complex and p-adic embeddings}

	Fix a prime $p$ and field embeddings $\iota_p \colon \overline{\Q} \hookrightarrow \C_p$ and $\iota_\infty \colon \overline{\Q} \hookrightarrow \C$. 
	
\end{notation}

\subsubsection{The definition}

Fix $N \in \Z_{>0} \cup \{\infty\}$ and set $\Mcal_N = \Mcal(\cfr, \Gamma(l), \Gamma_{00}(p^N))_\C$ for the Hilbert moduli scheme of $(\Gamma(l), \Gamma_{00}(p^N))$-level defined over $\C$, and $\A_N$ for its corresponding universal abelian scheme.

The main result which we use in order to obtain our specialization is the following analogue of the splitting of the Hodge decomposition \eqref{eq:Hodge filtration ses} for abelian varieties over $\C$. If we write $\Mcal_N^{\text{an}}$ for the complex manifold $\Mcal_N(\C)$, we then have an embedding
\begin{equation}\label{eq:inclusion holom into smooth}
H^0(\Mcal^{\text{an}}_N, \Ocal_{\Mcal_N^{\text{an}}}) \subset H^0(\Mcal_N^{\text{an}}, \Cat^\infty_{\Mcal_N^{\text{an}}}),
\end{equation}
with $\Cat^\infty_{\Mcal_N^{\text{an}}}$ the sheaf of smooth functions on $\Mcal_N^{\text{an}}$. Then, we have the following isomorphism:
\begin{equation}\label{eq:Hodge decomp smooth}
	H^1_{dR}(\A^{\text{an}}_N/\Mcal_N^{\text{an}}) \otimes_\C \Cat^\infty_{\Mcal_N^{\text{an}}} \cong (\omega_{\A^{\text{an}}_N/\Mcal_N^{\text{an}}} \otimes_\C \Cat^\infty_{\Mcal_N^{\text{an}}}) \oplus (\text{Lie}(\A^{\vee, \text{an}}_N / \Mcal_N^{\text{an}}) \otimes_\C \Cat^\infty_{\Mcal_N^{\text{an}}}).
\end{equation}
Here we again write $\A_N^{\text{an}}$ to refer to the complex manifold $\A_N(\C)$.

Fix $\alpha, \beta \in I_F^+$, and write $\chi := \alpha + \underline{1} + \beta$. We consider the sheaves
$$\Gscr_N^{\alpha, \beta} :=  \TSym^{\alpha + \underline{1}}(\omega_{\A_N/\Mcal_N}) \otimes \TSym^\beta(\Hscr_{\A_N^\vee}).$$
By \eqref{eq:inclusion holom into smooth}, we have an inclusion
$$H^0(\Mcal_N, \Gscr^{\alpha, \beta}_N) \subset H^0(\Mcal_N, \Gscr^{\alpha, \beta}_N \otimes \Cat^\infty_{\Mcal_N^{\text{an}}}),$$
so we may apply the Hodge decomposition \eqref{eq:Hodge decomp smooth}, which gives elements in
$$H^0(\Mcal_N, \TSym^{\alpha + \underline{1}}(\omega_{\A_N/\Mcal_N}) \otimes \TSym^\beta(\omega_{\A_N / \Mcal_N}) \otimes \Cat^\infty_{\Mcal_N}) \cong M_{\chi}(\cfr, \Gamma(l), \Gamma_{00}(p^m), \Cat^\infty).$$
Note that we are using that we are working over a field of characteristic zero in order to identify $\TSym^{\alpha + \underline{1}}(\omega_{\A_N/\Mcal_N}) \otimes \TSym^\beta(\omega_{\A_N / \Mcal_N})$ and $\TSym^{\alpha + \underline{1} + \beta}(\omega_{\A_N/\Mcal_N})$ (recall \Cref{TSym char 0}).

\begin{definition}

	For any $\alpha, \beta \in I_F^+$, the \emph{complex specialization associated to $(\alpha, \beta)$} is the morphism
	$$\varphi_{\infty}^{\beta, \alpha} \colon H^0(\Mcal_N, \Gscr^{\alpha, \beta}_N) \to M_{\chi}(\cfr, \Gamma(l), \Gamma_{00}(p^m), \Cat^\infty)$$
	defined as the above composition. We will drop $\alpha, \beta$ from the notation when the characters are understood.

\end{definition}

\begin{notation}

	Let $\omega(\A_N)$ be an $\Ofr \otimes_\Z \Ocal_{\Mcal}$-basis of $\omega_{\A_N/\Mcal_N}$. By abuse of notation, the image of $\omega(\A_N)^{[\alpha + \underline{1}]} \otimes \omega(\A)^{[\beta]}$ under the isomorphism
	$$\TSym^{\alpha + \underline{1}}(\omega_{\A_N/\Mcal_N}) \otimes \TSym^\beta(\omega_{\A_N / \Mcal_N}) \cong \TSym^\chi(\omega_{\A_N/\Mcal_N})$$
	will be denoted by $\omega(\A_N)^{[\alpha + \underline{1}]} \otimes \omega(\A_N)^{[\beta]}$ as well. Note that a priori, this basis is \emph{not} equal to $\omega(\A_N)^{[\chi]}$. We similarly adopt this notation for bases for abelian varieties over $\C$.

\end{notation}

Consider a $\C$-point $\xi_0 \colon \Spec(\C) \to \Mcal_N$, defining a $(\Gamma(l), \Gamma_{00}(p^m))$-test object $\B_0 / \C$. In particular, we have a Hodge decomposition 
$$H^1_{dR}(\B_0/\C) \cong \omega_{\B_0/\C} \oplus \text{Lie}(\B_0^\vee/\C),$$
so we can repeat the same procedure as above to obtain a complex specialization
$$\varphi_{\infty, \B_0}^{\beta, \alpha} \colon H^0(\Spec(\C), \TSym^{\alpha + \underline{1}}(\omega_{\B_0/\C}) \otimes \TSym^\beta(\Hscr_{\B_0^\vee})) \to H^0(\Spec(\C), \TSym^{\chi}(\omega_{\B_0 / \C})).$$
Again, we omit $\alpha$ and $\beta$ from the notation when they are understood. From this description, we clearly have a commutative diagram
	\begin{equation}\label{eq:complex specialization after pullback to point}
	\begin{tikzcd}[column sep = 1.25cm]
	H^0(\Mcal_N, \Gscr^{\alpha, \beta}_N) \arrow{r}{\varphi_{\infty}^{\beta, \alpha}} \arrow[swap]{d}{\xi_0^*} & M_{\chi}(\cfr, \Gamma_{00}(p^m), \Cat^\infty) \arrow{d}{\xi_0^*} \\
	H^0(\Spec(\C), \xi_0^*\Gscr^{\alpha, \beta}_N) \arrow[swap]{r}{\varphi_{\infty, \B_0}^{\beta, \alpha}} & H^0(\Spec(\C), \TSym^{\chi}(\omega_{\B_0 / \C}))
	\end{tikzcd}
	\end{equation}
showing that the complex specialization commutes with taking the pullback to a point. Write $\Gscr_{\B_0}^{\beta, \alpha} := \xi_0^*\Gscr^{\alpha, \beta}_\C$, and fix a basis $\omega(\B_0)$ for $\B_0$. Then, if we take another point $\xi_1 \colon \Spec(\C) \to \Mcal_N$ defining a test object $\B_1/\C$, and consider an étale isogeny $\phi \colon \B_0 \to \B_1$, pullback under $\phi$ induces the following commutative diagram:
	\begin{equation}\label{eq:complex specialization after etale pullback}
	\begin{tikzcd}[row sep = 0.5cm, column sep = 1.5cm] 
	H^0(\Spec(\C), \Gscr^{\alpha, \beta}_{\B_1}) \arrow{r}{\varphi^{\beta, \alpha}_{\infty, \B_1}} \arrow[swap]{dd}{\phi^*} & H^0(\Spec(\C), \TSym^{\chi}(\omega_{\B_{1}/\C})) \arrow{dr}{\omega(\B_1)^{[\alpha + \underline{1}]} \otimes \omega(\B_1)^{[\beta]}} & \\
	& & \C, \\
	H^0(\Spec(\C), \Gscr^{\alpha, \beta}_{\B_0}) \arrow[swap]{r}{\varphi^{\beta, \alpha}_{\infty, \B_0}} & H^0(\Spec(\C), \TSym^{\chi}(\omega_{\B_{0}/\C})) \arrow["\omega(\B_0)^{[\alpha + \underline{1}]} \otimes \omega(\B_0)^{[\beta]}"']{ur} &
	\end{tikzcd}
	\end{equation}
where $\omega(\B_1)$ is the basis of $\B_1$ induced by $\phi$.

\subsubsection{Application to Eisenstein-Kronecker classes}

Using a result of Kings-Sprang, we now give an explicit description of the complex specialization of the Eisenstein-Kronecker classes we previously defined in terms of the complex Eisenstein series $G_{k, Hf}$ of Katz. We start by defining a pair $(f,x)$ for $\A_N/\Mcal_N$ as in \Cref{pairs f x of EK classes general}.

\begin{notation}\label{pairs f x of EK classes not equiv}
	
	Let $\bfr$, $\ffr$ be two coprime integral ideals which are distinct from $\Ofr$. We consider a pair $(f,x)$ consisting of:
	
	\begin{itemize} 

		\item a $\Gamma$-equivariant $[\ffr]$-torsion point $x \colon \Mcal_N \to \A_N[\ffr]$, and

		\item a $\Gamma$-invariant function $f \colon \A_N[\bfr] \setminus (\{x(\Mcal_N)\} \cap \A_N[\bfr]) \to H^0(\Mcal_N, \Ocal_{\Mcal_N}) = \C$ with trace zero (the last equality comes from the Köcher principle, see \cite[Chapter I, Proposition 4.7]{Freitag1990}).
		
	\end{itemize}
	For any $(\Gamma(l), \Gamma_{00}(p^N))$-test object $(\B,m,\lambda, \alpha, \beta)$ over $\C$, we can associate to it, by pullback, an analogous pair $(f_\B, x_\B)$ given by
	$$x_\B \in \B[\ffr](\C), \quad f_\B \in \C[\B[\bfr] \setminus \{x_\B(\C)\}]^{0,\Gamma}.$$
	
\end{notation}

Fix then a $(\Gamma(l), \Gamma_{00}(p^N))$-test object $\B/\C$, and let $\omega(\B)$ be a basis of $\B$. Write $(\Lambda, \langle -,- \rangle)$ for the $\cfr$-polarized lattice in $F \otimes_\Q \C$ associated to $(\B, \omega(\B))$ by \Cref{correspondence lattices with compl AVs}. We may then express any pair $(f_\B, x_\B)$ as in \Cref{pairs f x of EK classes not equiv} as a $\Gamma$-invariant element
$$x_\B \in \ffr^{-1}\Lambda / \Lambda$$
and a $\Gamma$-equivariant function (up to removing $0$ if $x_\B = 0$)
$$f_\B : \bfr^{-1}\Lambda / \Lambda \to \C$$
such that $\sum_{[\lambda] \in \bfr^{-1}\Lambda/\Lambda} f([\lambda]) = 0$. 

Consider then the following complex Eisenstein series, where we follow the notation set in \cite[Definition 3.24]{Kings2025}.

\begin{definition}

	Let $\Lambda'$ be a lattice in $F \otimes_\Q \C$, $t \in \Lambda' \otimes_\Z \Q$ a torsion point and $\alpha, \beta \in I_F^+$ such that $\alpha + \underline{1} - \beta = \underline{k}$ is of critical type for $\Gamma$ for some positive integer $k$. The \emph{Eisenstein series associated to $t$ and $\Lambda'$} is the following series on $s \in \C$:
	$$E^{\beta, \alpha}(t, s; \Lambda', \Gamma) := \sideset{}{'}\sum_{\lambda \in \Gamma \backslash (\Lambda' + \Gamma t)} \frac{\overline{\lambda}^\beta}{\lambda^{\alpha + \underline{1}} |N(\lambda)|^{2s}}.$$
	This series is absolutely convergent for all $s$ with $\text{Re}(s) > 1 - \frac{k}{2}$, and can be extended to an entire function on $\C$.

\end{definition}

In order to compare the Eisenstein series, we will use the following transcendental expression of the complex realization of the Eisenstein-Kronecker classes, which follows directly from a result of Kings-Sprang.

\begin{proposition}\label{result Kings-Sprang for EK classes}

	Set $N = \infty$, let $\alpha, \beta \in I_F^+$ such that $\alpha + \underline{1} - \beta$ is of critical type for $\Gamma$, and let $(\B/\C, m, \lambda, \alpha, \beta, \omega(\B))$ be a $(\Gamma(l), \Gamma_{00}(p^\infty))$-test object with a basis. Then, for any pair $(f,x)$ as in \Cref{pairs f x of EK classes not equiv}, $\varphi_{\infty, \B}(EK^{\beta, \alpha}_{\Gamma, \B}(f_\B, x_\B))$ is equal to 
	$$\frac{(-1)^{\frac{g(g-1)}{2}}\alpha!}{\langle \overline{\omega(\B)}, \omega(\B^\vee) \rangle_{dR,\B}^\beta} \sum_{t \in \Gamma \backslash (\bfr^{-1}\Lambda/\Lambda)} f_\B(-t) \cdot E^{\beta, \alpha}(t + x, 0; \Lambda, \Gamma) \cdot (\omega(\B)^{[\alpha + \underline{1}]} \otimes \omega(\B)^{[\beta]}),$$
	where $\Lambda$ is the associated lattice in $F \otimes_\Q \C$ to $(\B, m, \omega(\B))$, and $\omega(\B^\vee)$ is obtained from $\omega(\B)$ by using $\lambda$ and \eqref{dual colie algebra iso}.

\end{proposition}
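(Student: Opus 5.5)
The statement concerns a single complex abelian variety $\B/\C$ with RM by $F$ and a basis, so the plan is to reduce it to the corresponding computation of Kings-Sprang (their main transcendental result for the Eisenstein-Kronecker classes, \cite[Theorem 3.25 / Corollary 3.27-type statement]{Kings2025}). That computation expresses the Hodge-decomposed class $\mom{\varrho^\natural}^b_x(\nabla^a EK^\natural_\Gamma(f))$, after projection to the $(\alpha,\beta)$-component and cap product with the fundamental class $\xi$, as the sum $\sum_t f(-t)\,E^{\beta,\alpha}(t+x,0;\Lambda,\Gamma)$. It is multiplied by $(-1)^{g(g-1)/2}\alpha!$ and by the period pairing $\langle\overline{\omega},\omega^\vee\rangle^{-\beta}$ coming from the Hodge splitting. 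The key observation is that their argument over $\C$ uses only the following data: the complex torus $(F\otimes_\Q\C)/\Lambda$ with its $\Ofr$-action, the Dolbeault representative of the Poincaré bundle class, and the $\Gamma$-action on $\A$ through $\Ofr^\times$. None of these uses the CM structure beyond the $\Gamma$-action, which is what we have here.

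First, I will check that our specialized class $EK^{\beta,\alpha}_{\Gamma,\B}(f_\B,x_\B)$ of \Cref{EK nearly holom def} agrees with theirs. Over $S=\Spec(\C)$ the group $\Gamma$ acts on $S$ trivially, so the isomorphism \eqref{equiv cohom affine iso}, the projection \eqref{proj from Grothendieck spec seq} and the cap product \eqref{cap prod with generator} reduce to exactly the maps in \cite[Proposition 2.27]{Kings2025}. I also need to check that the $\Gamma$-action on $\B$ is the RM action. Here one has to be careful: a priori the action is via \eqref{eq:Gamma act on A M} on the universal family. By \Cref{EK stable under base change}, however, the pullback is computed with the action of $\Gamma$ on the fiber through $m$, because $f_\gamma$ changes only the level structure.

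Second, I will translate the lattice data. Via \Cref{correspondence lattices with compl AVs}, $x_\B$ becomes an element of $\ffr^{-1}\Lambda/\Lambda$ and $f_\B$ a function on $\bfr^{-1}\Lambda/\Lambda$, matching the Kings-Sprang input. Their formula is written in terms of a basis of $\omega$ and the de Rham pairing $\langle\overline{\omega(\B)},\omega(\B^\vee)\rangle_{dR,\B}$, which arises when $\TSym^\beta(\Hscr)$ is projected onto the anti-holomorphic part of \eqref{eq:Hodge decomp smooth}. I will rewrite the result with respect to $\omega(\B)^{[\alpha+\underline 1]}\otimes\omega(\B)^{[\beta]}$, using the polarization identification \eqref{dual colie algebra iso} to pass from $\omega(\B^\vee)$ to $\omega(\B)$. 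Finally, I apply the inclusion $\Hscr_\B\hookrightarrow\Hscr_{\B^\vee}$ induced by $\lambda$.

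The main obstacle will be the sign and normalization bookkeeping. This includes the contragredient conventions for $\omega_{\A^\vee}$ in \Cref{signs of dual}, which make the polarization isomorphism non-equivariant and force the twist by $\gamma\mapsto\gamma^{-1}$. It also includes the identification $\omega^g_{\A^\vee}\cong\omega^g_\A$, which contributes the $(-1)^{g(g-1)/2}$. I would track these by computing in the case $\alpha=\beta=0$ first and then checking that $\nabla^a$ and $\mom{\varrho^\natural}^b$ produce the factors $\alpha!$ and the period power.
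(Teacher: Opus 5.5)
Your route differs from the paper's, and as written its central step is not justified. Everything rests on the assertion that the transcendental computation of Kings--Sprang goes through verbatim for an arbitrary complex abelian variety with RM, because it ``uses only'' the complex torus, a Dolbeault representative and the $\Gamma$-action. The result you would be invoking, \cite[Corollary 3.28]{Kings2025}, is stated and proved for abelian varieties with CM, so you cannot cite it for a general $\B/\C$. To carry out your plan you would have to re-run their entire analytic argument in the RM setting and check each step. That means the Dolbeault representative of $EK_\Gamma(f)$, the analytic description of $\nabla$ on $\coPo^\natural$ and of the moment map, the projection via the smooth Hodge splitting, and the cap product with $\xi$. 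Your last paragraph only announces this bookkeeping. Your guesses about where the constants come from, e.g.\ attributing $(-1)^{g(g-1)/2}$ to $\omega^g_{\A^\vee/S}\cong\omega^g_{\A/S}$, are not derived. So the proposal does not yet contain a proof of the key identity.

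The missing idea is that no re-verification is needed: the paper uses Kings--Sprang only where it is proven and extends by density.
\begin{itemize}
\item By \eqref{eq:complex specialization after pullback to point} and \Cref{EK stable under base change}, $\varphi_{\infty,\B}(EK^{\beta,\alpha}_{\Gamma,\B}(f_\B,x_\B))$ is the value at $\B$ of the universal form $\varphi_\infty(EK^{\beta,\alpha}_{\Gamma,\A}(f,x))$. This is where the subtlety you noticed, that the classifying map of $\B$ is not $\Gamma$-equivariant, actually matters; in your direct route it plays no role.
\item The universal form lies at some finite level $p^m$, so it is a smooth function on $\Hcal^g$. The right-hand side is also smooth in $\tau$ via $\tau\mapsto\Lambda_\tau$.
\item Fix a CM extension $L/F$. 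Then \cite[Corollary 3.28]{Kings2025} gives equality at every $\tau$ whose test object has CM by $L$; it needs no level structure beyond the existence of $x_\B$.
\item By Shimura's result these CM points are dense in $\Hcal^g$, so the two smooth functions agree everywhere.
\end{itemize}
If you did complete the RM version of the Kings--Sprang computation, you would get a pointwise proof independent of the moduli space and of the density argument, which is more general. That, however, is substantially more work than your proposal indicates, and it is exactly what the paper's argument avoids.
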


\begin{proof}

	Let us write $\A := \A_\infty$ for simplicity. By the diagram \eqref{eq:complex specialization after pullback to point} together with \Cref{EK stable under base change}, it is clear that
	$$\varphi_{\infty, \B}(EK^{\beta, \alpha}_{\Gamma, \B}(f_\B, x_\B)) = \varphi_{\infty}(EK^{\beta, \alpha}_{\Gamma, \A}(f, x))(B).$$
	Although $\varphi_{\infty}(EK^{\beta, \alpha}_{\Gamma, \A}(f, x))$ is a priori a Hilbert modular form of infinite level, by construction we have that
	$$M_{\underline{k}}(\cfr, \Gamma(l), \Gamma_{00}(p^\infty), \Cat^\infty) = \varinjlim_i M_{\underline{k}}(\cfr, \Gamma(l), \Gamma_{00}(p^i), \Cat^\infty).$$
	Therefore, there exists a (finite) $m > 0$ such that $\varphi_{\infty}(EK^{\beta, \alpha}_{\Gamma, \A}(f, x)) \in M_{\underline{k}}(\cfr, \Gamma(l), \Gamma_{00}(p^m), \Cat^\infty)$.

	Recall that we assumed that our $\Gamma(l)$-level structure was symplectic and fixed its determinant, meaning that the complex manifold $\Mcal_m(\C)$ is isomorphic to a quotient of $\Hcal^g$ by a discrete subgroup of $\SL_2(\R)^g$. More concretely, any element in $M_\chi(\cfr, \Gamma(l), \Gamma_{00}(p^m), \Cat^\infty)$ can be seen as a smooth function $\Hcal^g \to \C$ satisfying certain transformation properties. We may then interpret $\varphi_\infty(EK^{\beta, \alpha}_{\Gamma, \A}(f,x))$ as such a function, with the property that evaluating at any point $\tau \in \Hcal^g$ representing $\B$ gives $\varphi_{\infty,\B}(EK^{\beta, \alpha}_{\Gamma, \B}(f_\B,x_\B))$ (note that here we see $\B$ with the $\Gamma_{00}(p^m)$-level structure induced by its infinite one). 
	
	On the other hand, the covering $\Hcal^g \twoheadrightarrow \Mcal(\cfr, \Gamma(l), \Gamma_{00}(p^\infty))(\C)$ can be described by sending an element $\tau$ to a $\cfr$-polarized lattice $\Lambda_\tau$ with the appropriate level structure (see, for example, Proposition 1.6 and its following paragraph in \cite[Chapter IX]{Geer1988}). It follows that we can see the series $ E^{\beta, \alpha}(t + x, 0; \Lambda, \Gamma)$ as a function on the points $\tau \in \Hcal^g$. One then easily checks that it is a smooth function $\Hcal^g \to \C$.
	
	After a straightforward computation, \cite[Corollary 3.28]{Kings2025} shows that the equality holds whenever $\B$ has complex multiplication by a CM extension of $F$ (note that the result of Kings-Sprang does not require any level structure assumptions on $\B$ except for the existence of $x_\B$). Fix then such an extension $L/F$. We thus have two smooth functions on $\Hcal^g$ which agree on the following subset:
	$$S_L := \{ \tau \in \Hcal^g \mid \tau \text{ defines a $\Gamma(l)$-test object over $\C$ with complex multiplication by $L$} \}.$$
	By a result of Shimura (see \cite[Proposition 24.13]{Shimura1998}), $S_L$ is precisely the intersection of $\Hcal^g$ with the image of the map $L \to \C^g$, $w \mapsto (\varsigma_1(w), \ldots, \varsigma_g(w))$, defined by choosing complex embeddings $\varsigma_i \colon L \to \C$ lifting the $\sigma_i$. Since the image of this map is dense in $\C^g$, $S_L$ is dense in $\Hcal^g$, and therefore both smooth functions are equal.

\end{proof}

By taking advantage of the fact that we have fixed a $\Gamma(l)$-level structure on our test objects, we set $\ffr = p^n\Ofr$ and $\bfr = l\Ofr$. In particular, this means that we can see the function $f$ we previously fixed as a function
$$f \colon (\Ofr/l\Ofr)^2 \to \C.$$
Set then $\A := \A_\infty$ and $\Mcal := \Mcal_\infty$ for the objects associated to the $\Gamma_{00}(p^\infty)$-level structure case. We then obtain the following comparison result between the complex realization of the Eisenstein-Kronecker classes and the Eisenstein series of Katz.

\begin{theorem}\label{EK classes comparison full level case}

	Let $\bfr = l\Ofr$ and $\ffr = p^n\Ofr$, and consider a function $f \colon (\Ofr/l\Ofr)^2 \to \C$ such that $\sum_d f(d) = 0$. Assume that $\Gamma$ acts trivially on $\A[lp^n]$. Then, for any locally constant function 
	$$H \colon (\Ofr \otimes_\Z \Z_p)^2 \to \C$$ 
	which is of parity $k$, supported on the units, and constant in cosets modulo $p^n$, we have the following equality inside $M_{\underline{k}}(\cfr, \Gamma(l), \Gamma_{00}(p^\infty), \C)$:
	$$G_{k,Hf, \Gamma} = \frac{(-1)^{\frac{g(g - 2k - 1)}{2}}}{\sqrt{d_F}} \cdot  \varphi_\infty((V^n_\A)^*EK^{\underline{0}, \underline{k} - \underline{1}}_{\Gamma, \A_{(n)}}(f, P_{\A}H)),$$
	where $(V^n_\A)^*$ is the isomorphism $H^0(\Mcal, \TSym^{\underline{k}}(\omega_{\A_{(n)}/\Mcal})) \cong H^0(\Mcal, \TSym^{\underline{k}}(\omega_{\A/\Mcal}))$ induced by the étale isogeny $V^n_{\A}$.

\end{theorem}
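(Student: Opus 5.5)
Both sides lie in $M_{\underline{k}}(\cfr, \Gamma(l), \Gamma_{00}(p^\infty), \C)$, and such forms are determined by their values on pairs $(\B/\C, \omega(\B))$. So I will fix a $(\Gamma(l),\Gamma_{00}(p^\infty))$-test object $\B/\C$ with a basis $\omega(\B)$, with associated $\cfr$-polarized lattice $\Lambda$, and compare the two complex numbers obtained by evaluating both sides on it.

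\textbf{Right-hand side.} First I move to the quotient $\B_{(n)}$ with basis $\omega(\B_{(n)}) := (V^n_\B)^*\omega(\B)$. By \Cref{lattice for quot by can subgrp}, its lattice is $\Lambda_{(n)} = p^n\Lambda'_{(n)}$. The diagram \eqref{eq:complex specialization after etale pullback}, applied to the étale isogeny $V^n_\B$, shows that evaluating $\varphi_\infty((V^n_\A)^*EK(\ldots))$ at $(\B,\omega(\B))$ equals evaluating $\varphi_{\infty,\B_{(n)}}$ of the pulled-back class at $(\B_{(n)},\omega(\B_{(n)}))$. Next, \Cref{EK stable under base change Frob} identifies this pulled-back class with $EK^{\underline{0},\underline{k}-\underline{1}}_{\Gamma,\B_{(n)}}(f_{\B_{(n)}}, P_\B H)$. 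Because $\Gamma$ acts trivially on $\A[lp^n]$, the $\Gamma(l)$-level structure and the $\Gamma_\arith(p^n)$-structure of \Cref{splitting p-tors Frob twist} let us regard $f$ as a function on $l^{-1}\Lambda_{(n)}/\Lambda_{(n)}$ and $P_\B H$ as a function on $p^{-n}\Lambda_{(n)}/\Lambda_{(n)}$. By linearity in $x$, I can then apply \Cref{result Kings-Sprang for EK classes} with $\beta=\underline{0}$ and $\alpha=\underline{k}-\underline{1}$. The pairing factor disappears, $\alpha! = ((k-1)!)^g = \Gamma(k)^g$, and the result is
$$(-1)^{\frac{g(g-1)}{2}}\Gamma(k)^g \sum_{x\in p^{-n}\Lambda_{(n)}/\Lambda_{(n)}} P_\B H(x)\sum_{t\in \Gamma\backslash (l^{-1}\Lambda_{(n)}/\Lambda_{(n)})} f(-t)\, E^{\underline{0},\underline{k}-\underline{1}}(t+x,0;\Lambda_{(n)},\Gamma).$$
By \Cref{Fourier both types agree}, I may replace $P_\B H$ by $PH$.

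\textbf{Unfolding the double sum.} The summand is invariant under $\Gamma$ (the parity assumptions and the trivial action on torsion ensure this). Hence the double sum over classes $t$, the representatives $x$, and $\lambda\in\Gamma\backslash(\Lambda_{(n)}+\Gamma(t+x))$ combines into a single sum over $\mu\in\Gamma\backslash (lp^n)^{-1}\Lambda_{(n)}$, with $\mu\neq 0$. The torsion classes are recovered through the CRT decomposition, since $l$ and $p$ are coprime: $t$ corresponds to the class of $p^n\mu$ up to the unit $p^n$ mod $l$, and $x$ corresponds to the class of $l\mu$. The $H$-term becomes $PH([l\mu])$ and the $f$-term becomes $f(-[p^n\mu])$, where the sign matches the $f(-t)$ in \Cref{result Kings-Sprang for EK classes}. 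I will need to check carefully that the CRT identifications agree with the conventions $PH([l\lambda])$ and $f(-[p^n\lambda])$ of \Cref{Katz Eisenstein mod forms infty level}.

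\textbf{Comparing with Katz and the main obstacle.} After unfolding, the right-hand side is $(-1)^{g(g-1)/2}\Gamma(k)^g$ times Katz's series at $s=0$. The remaining prefactor is
$$\frac{(-1)^{kg}}{\sqrt{d_F}}\cdot\frac{1}{(-1)^{g(g-1)/2}} = \frac{(-1)^{kg - g(g-1)/2}}{\sqrt{d_F}},$$
and since $kg - g(g-1)/2 \equiv g(g-2k-1)/2 \pmod 2$ up to sign parity, this is the sign claimed in the statement. The main obstacle is this bookkeeping. I have to reconcile normalizations: the moduli space carries the $p$-adic level while Kings-Sprang use lattice torsion points, the isomorphism $\mu_{p^n}\cong\Z/p^n$ uses the root $e^{2\pi i/p^n}$, and there may be factors of $p^{ng}$ or $l^{g}$ from rescaling the lattice. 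I expect the scaling by $p^n$ in $\Lambda_{(n)} = p^n\Lambda'_{(n)}$ to be exactly what absorbs these factors, and I plan to verify this by comparing the $s$-dependence term by term. Once the two functions agree on every $(\B,\omega(\B))$, the two forms are equal.
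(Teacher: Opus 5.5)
Your proposal is correct and follows the paper's proof step for step. You evaluate at $(\B/\C,\omega(\B))$, pass to $(\B_{(n)},(V^n_\B)^*\omega(\B))$ via \eqref{eq:complex specialization after pullback to point}, \eqref{eq:complex specialization after etale pullback} and \Cref{EK stable under base change Frob}, apply \Cref{result Kings-Sprang for EK classes} with $\alpha=\underline{k}-\underline{1}$, $\beta=\underline{0}$, identify $P_\B H=PH$ by \Cref{Fourier both types agree}, recombine the double sum through $l^{-1}\Lambda_{(n)}/\Lambda_{(n)}\oplus p^{-n}\Lambda_{(n)}/\Lambda_{(n)}$, and your sign check is right.

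The factors of $p^{ng}$ or $l^{g}$ you were worried about do not appear. \Cref{Katz Eisenstein mod forms infty level} is already written as a sum over $(lp^n)^{-1}\Lambda_{(n)}$ for the same lattice $\Lambda_{(n)}$ of \Cref{lattice for quot by can subgrp} that enters the Kings-Sprang formula. So the two series agree term by term with no rescaling.
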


\begin{proof}
	
	To prove this, we may compare both sides at $(\Gamma(l), \Gamma_{00}(p^\infty))$-test objects with a basis $(\B/\C, \omega(\B))$. Let us write $\eta \colon \Spec(\C) \to \Mcal$ for the point defining $\B$. We then have the following diagram where all squares are cartesian:
	$$\begin{tikzcd}
	\B_{(n)} \arrow{r} \arrow{d} & \A_{(n)} \arrow{d} \arrow{r} & \A \arrow{d} \\
	\Spec(\C) \arrow[swap]{r}{\eta} & \Mcal \arrow[swap]{r}{\sigma} & \Mcal,
	\end{tikzcd}$$
	where $\sigma$ is the endomorphism given by the maps
	\begin{align*}
	\{(\Gamma(l), \Gamma_{00}(p^\infty))\text{-test objects over } S\} & \longrightarrow \{(\Gamma(l), \Gamma_{00}(p^\infty))\text{-test objects over } S\} \\
	\B'/S & \longmapsto \B'_{(n)}/S
	\end{align*}
	for any noetherian $\C$-scheme $S$.

	This induces a commutative diagram
	$$\begin{tikzcd}
	H^0(\Mcal, \TSym^{\underline{k}}(\omega_{\A_{(n)}/\Mcal})) \arrow{r}{(V^n_\A)^*}  \arrow[swap]{d}{\eta^*} & H^0(\Mcal, \TSym^{\underline{k}}(\omega_{\A/\Mcal})) \arrow{d}{\eta^*} \\
	H^0(\Spec(\C), \TSym^{\underline{k}}(\omega_{\B_{(n)}/\C})) \arrow[swap]{r}{(V^n_\B)^*} & H^0(\Spec(\C), \TSym^{\underline{k}}(\omega_{\B/\C})),
	\end{tikzcd}$$
	with $(V^n_\B)^*$ being defined in the same way as for $\A$. Using \eqref{eq:complex specialization after pullback to point} and \eqref{eq:complex specialization after etale pullback} together with this diagram, we see that
	$$\eta^* \circ \varphi_\infty \circ (V^n_\A)^* = \varphi_{\infty, \B} \circ \eta^* \circ (V^n_\A)^* = \varphi_{\infty, \B} \circ (V^n_\B)^* \circ \eta^* = \varphi_{\infty, \B_{(n)}} \circ \eta^*.$$
	\Cref{EK stable under base change Frob} then shows that we have
	$$\varphi_\infty((V^n_\A)^*EK^{\underline{0}, \underline{k} - \underline{1}}_{\Gamma, \A_{(n)}}(f, P_{\A}H))(\B, \omega(\B)) = \varphi_{\infty, \B_{(n)}}(EK^{\underline{0}, \underline{k} - \underline{1}}_{\Gamma, \B_{(n)}}(f_{\B_{(n)}}, P_\B H))(\omega(\B)^{[\underline{k}]}).$$
	 By \Cref{result Kings-Sprang for EK classes}, this is equal to
	$$(-1)^{\frac{g(g-1)}{2}} ((k-1)!)^g \sum_{x \in p^{-n}\Lambda_{(n)}/\Lambda_{(n)}} P_\B H(x) \sum_{t \in l^{-1}\Lambda_{(n)}/\Lambda_{(n)}} f_{\B_{(n)}}(-t) E^{\underline{0}, \underline{k} - \underline{1}}(t + x, 0 ; \Lambda_{(n)}, \Gamma),$$
	with $\Lambda_{(n)}$ the lattice associated to $\B_{(n)}$ as described in \Cref{lattice for quot by can subgrp}. 
	
	On the other hand, as we saw in \Cref{Katz Eisenstein mod forms infty level}, we may write
	$$G_{k, Hf, \Gamma}(\B, \omega(\B)) = \frac{(-1)^{kg}((k-1)!)^g}{\sqrt{d_F}} \sideset{}{'}\sum_{\lambda \in \Gamma \backslash (lp^n)^{-1}\Lambda_{(n)}} \frac{PH([l\lambda])f(-[p^n\lambda])}{N(\lambda)^k |N(\lambda)|^{2s}}\Bigg|_{s=0}.$$
	Since $(lp^n)^{-1}\Lambda_{(n)}/\Lambda_{(n)} \cong (l^{-1}\Lambda_{(n)}/\Lambda_{(n)}) \oplus (p^{-n}\Lambda_{(n)}/\Lambda_{(n)})$, the sum can be rewritten to be
	$$\sum_{x \in p^{-n}\Lambda_{(n)}/\Lambda_{(n)}} PH(x) \sum_{t \in l^{-1}\Lambda_{(n)}/\Lambda_{(n)}} f(-t) \sideset{}{'}\sum_{\lambda \in \Gamma \backslash \Lambda_{(n)}} \frac{1}{N(\lambda + x + t)^k |N(\lambda + x + t)|^{2s}} = $$
	$$ = \sum_{x \in p^{-n}\Lambda_{(n)}/\Lambda_{(n)}} PH(x) \sum_{t \in l^{-1}\Lambda_{(n)}/\Lambda_{(n)}} f(-t) \cdot E^{\underline{0}, \underline{k} - \underline{1}}(t + x, 0; \Lambda_{(n)}, \Gamma),$$
	proving the desired equality (note that $P_\B H$ is equal to $PH$ by using \Cref{Fourier both types agree}).

\end{proof}

What happens in the case where we do not have $\Gamma(l)$-level structure? In this situation, although we cannot use this level structure to define arbitrary functions $f$, one can still construct an example of a function $f$ which works in general. Let us then return momentarily to working over an arbitrary noetherian scheme $S$. Fix $(\B/S, m)$ to be an abelian scheme with RM by $F$, and assume that $\Ocal_S$ is an $\Ofr_{F^{\text{Gal}}}[d_F^{-1}]$-algebra. 

\begin{notation}\label{b f ideals notation}

	Let $\bfr$, $\ffr$ be two coprime integral ideals (different from $\Ofr$) such that $[\bfr], [\bfr]^\vee$ and $[\ffr]^\vee$ are étale isogenies. Moreover, we assume that we have a decomposition $\bfr = \bfr_1\bfr_2$ such that the $\bfr_i$ are coprime integral ideals different from $\Ofr$ and the isogenies $[\bfr_i]$ and $[\bfr_i]^\vee$ are étale. 

\end{notation}

We will now construct a pair $(f,x)$ as in \Cref{pairs f x of EK classes not equiv} for the ideals $\bfr$ and $\ffr$. For an arbitrary $[\ffr]$-torsion point $x$, the closed subscheme $\D := \B[\bfr] \setminus \{x(S)\}$ is one of the two following options:
$$\D = \begin{cases}
\B[\bfr] & \text{if } x \neq e, \\
\B[\bfr] \setminus \{e(S)\} & \text{otherwise.}
\end{cases}$$
Thus, we first construct an $f$ for the case $x \neq e$, and show how to obtain a function for the more general case from it. 

Since $[\bfr_1]$ is étale, the zero section $e \colon S \to \B[\bfr_1]$ is an open and closed immersion (see \Cref{schematic decomposition of D}) and we have a schematic decomposition
$$\B[\bfr_1] = (\B[\bfr_1] \setminus \{e(S)\}) \sqcup \{e(S) \}.$$
In particular, there exists a section $1_{e(S)} \in H^0(\B[\bfr_1], \Ocal_{\B[\bfr_1]})$ which is 1 at $e(S)$ and 0 everywhere else.

\begin{definition}\label{f for comparison with Katz}\index[notation]{$f B b$@$\widetilde{f}_{\B, [\bfr_1]}$}
	
	Following \cite[p.84]{Scheider2014} and \cite[(4.4.1)]{Kings2025}, we define the following function on $\B[\bfr_1]$:
	$$f_{\B,[\bfr_1]} := N\bfr_1 \cdot 1_{e(S)} - 1_{\B[\bfr_1]},$$
	where $1_{\B[\bfr_1]}$ is the constant unit section. This defines a function on $\Ocal_S[\B[\bfr_1]]^{0, \Gamma}$. Consider the closed subscheme 
	$$(\B[\bfr_2] \setminus \{e(S)\}) \times \B[\bfr_1] \subset \B[\bfr] \setminus \{e(S)\},$$ 
	and take the pullback of $f_{\B,[\bfr_1]}$ to it via the map given by projection to the second coordinate. Extending it by zero, this gives a function
	$$\widetilde{f}_{\B,[\bfr_1]} \in \Ocal_{S}[\B[\bfr] \setminus \{e(S)\}]^{0,\Gamma}.$$
	This means that for all $[\ffr]$-torsion sections $x$, the pairs $(\widetilde{f}_{\B,[\bfr_1]}, x)$	are as in \Cref{pairs f x of EK classes not equiv}. If $\bfr_1 = b_1\Ofr$ and $\bfr_2 = b_2\Ofr$, we write $f_{\B, [b_1]} := f_{\B, [b_1\Ofr]}$ and $\widetilde{f}_{\B, [b_1]} := \widetilde{f}_{\B,[b_1\Ofr]}$. Whenever the base abelian scheme is understood, we will drop $\B$ from the notation and write $f_{[\bfr_1]}, \widetilde{f}_{[\bfr_1]}$.
	
\end{definition}

\begin{remark}

	We note that this construction can be done for arbitrary abelian schemes with RM without needing to fix any level structure in relation to $\bfr$. 
	
\end{remark}

One final piece of notation will be regarding the locally constant functions $H$, which will appear altered in our comparison result. Consider an integer $m$ which is coprime to $p$. Then, we want to take the pullback under the multiplication map
$$[m] \colon (\Ofr \otimes_\Z \Z_p)^2 \to (\Ofr \otimes_\Z \Z_p)^2, \quad (x,y) \mapsto (mx, y/m).$$
In other words, for any function $H \colon (\Ofr \otimes_\Z \Z_p)^2 \to R$, we write $[m]^*H := H \circ [m]$.

We then return to our setup over $\C$, where we can state the following version of \Cref{EK classes comparison full level case}, which now sets a relation between the complex specialization of the Eisenstein-Kronecker classes and the Eisenstein series $G_{k, H}$ as they appear in \cite{Katz1978}.

\begin{proposition}\label{distribution relation Eisenstein complex}

	Let $\bfr_1 = b_1\Ofr$, $\bfr_2 = b_2\Ofr$ and $\ffr = p^n\Ofr$ be as in \Cref{b f ideals notation} for some $n \geq 1$. Assume that $\Gamma$ acts trivially on  $\A[p^n]$. Then, for any locally constant function 
	$$H \colon (\Ofr \otimes_\Z \Z_p)^2 \to \C$$ 
	which is of parity $k$, supported on the units, and constant in cosets modulo $p^n$, we have the following equality:
	$$b_1^gb_2^{kg} G_{k,[b_2]^*H} - b_1^gG_{k,H} + b_1^{kg}G_{k, [b_1]^*H} - (b_1b_2)^{kg}G_{k, [b_1b_2]^*H} = $$
	$$ = \frac{(-1)^{\frac{g(g - 2k - 1)}{2}}}{\sqrt{d_F}} \varphi_{\infty}((V^n_\A)^*EK^{\underline{0}, \underline{k} - \underline{1}}_{\Gamma, \A_{(n)}}(\widetilde{f}_{[b_1]}, P_{\A}H)).$$

\end{proposition}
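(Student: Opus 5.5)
I would follow the proof of \Cref{EK classes comparison full level case} essentially verbatim, the only new input being the explicit form of $\widetilde{f}_{[b_1]}$. Both sides are evaluated at a $\Gamma_{00}(p^\infty)$-test object with basis $(\B/\C,\omega(\B))$. Using \eqref{eq:complex specialization after pullback to point}, \eqref{eq:complex specialization after etale pullback} and \Cref{EK stable under base change Frob} as before, the right-hand side becomes $\varphi_{\infty,\B_{(n)}}(EK^{\underline{0},\underline{k}-\underline{1}}_{\Gamma,\B_{(n)}}(\widetilde{f}_{\B_{(n)},[b_1]},P_\B H))$. By \Cref{result Kings-Sprang for EK classes} and \Cref{Fourier both types agree}, this equals, up to the constant $(-1)^{g(g-1)/2}((k-1)!)^g$, the sum
\[
\sum_{x\in p^{-n}\Lambda_{(n)}/\Lambda_{(n)}} PH(x)\sum_{t\in b^{-1}\Lambda_{(n)}/\Lambda_{(n)}}\widetilde{f}(-t)\,E^{\underline{0},\underline{k}-\underline{1}}(t+x,0;\Lambda_{(n)},\Gamma),
\]
with $b=b_1b_2$. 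Here $\widetilde{f}(t_2,t_1)=f_{[b_1]}(t_1)$ for $t_2\neq0$ and $0$ otherwise, via the decomposition $b^{-1}\Lambda/\Lambda\cong b_2^{-1}\Lambda/\Lambda\oplus b_1^{-1}\Lambda/\Lambda$.

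Next I would expand $f_{[b_1]}=b_1^g\mathbbm 1_0-\mathbbm 1$ and $\mathbbm 1_{t_2\neq 0}=\mathbbm 1-\mathbbm 1_{0}$, which gives four terms. Each term is then collapsed using the distribution relation for Eisenstein series:
\[
\sum_{t\in m^{-1}\Lambda'/\Lambda'}E(u+t,s;\Lambda')=E(u,s;m^{-1}\Lambda')=m^{kg+2gs}E(mu,s;\Lambda'),
\]
applied with $m\in\{b_1,b_2,b_1b_2\}$ and at $s=0$. A summed-out $b_1$ variable contributes $-1$ and rescales by $b_1^{kg}$, while the delta at $0$ in $b_1$ contributes $b_1^g$; the $b_2$ variable behaves in the same way. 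The four terms are therefore roughly:
\begin{itemize}
\item $b_1^g E(x)$ (from $\mathbbm 1_0\otimes\mathbbm 1$ after summing out $t_2$, giving $b_2^{kg}E(b_2x)$),
\item $-b_1^gE(x)$,
\item $-b_1^{kg}E(b_1x)$-type terms,
\item $(b_1b_2)^{kg}E(b_1b_2x)$-type terms.
\end{itemize}

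Finally, I would reindex the sum over $x$ by $x\mapsto m^{-1}x$, which is allowed since $m$ is coprime to $p$ and so acts bijectively on $p^{-n}\Lambda_{(n)}/\Lambda_{(n)}$. This turns $PH(x)$ into $PH(m^{-1}x)$. The key check is that $P([m]^*H)=PH\circ m^{-1}$ on the first factor: substituting $x'=mx$ in the Fourier sum over $\Ofr/p^n$ gives $P([m]^*H)(d,y)=PH(d/m,y/m)$, and on $\B_{(n)}[p^n]$ this is exactly multiplication by $m^{-1}$ on both coordinates. Each term is then recognized, through the Katz expression in the proof of \Cref{EK classes comparison full level case} with $f$ trivial, as a multiple of $G_{k,[m]^*H}$, and the constant $\frac{(-1)^{g(g-2k-1)/2}}{\sqrt{d_F}}$ matches exactly as before.

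The main obstacle is the bookkeeping: getting the signs and powers $b_i^g$ versus $b_i^{kg}$ right and matching them to the stated combination. In particular, the $\Gamma$-quotients in the distribution relation need care; I expect them to be harmless because $\Gamma$ acts trivially on the relevant torsion after reducing to the finite-level setting, so the orbit sums decompose cleanly. I would first carry out the computation for $g=1$ to fix the signs and then generalize.
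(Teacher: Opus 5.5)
Your proposal follows the paper's proof. You evaluate at $(\B,\omega(\B))$ and pass to $\B_{(n)}$ as in \Cref{EK classes comparison full level case}. You then apply the four-term distribution relation for $\widetilde f_{[b_1]}$, which the paper quotes from the proof of Theorem 4.10 of Kings--Sprang and you derive by expanding $\widetilde f$. Finally you rescale, reindex $x\mapsto m^{-1}x$, and use $P([m]^*H)=PH\circ m^{-1}$. Two pieces of your bookkeeping are wrong as written, however.

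\textbf{Signs.} The $b_2$-variable does not behave like the $b_1$-variable. Since $\mathbbm 1_{t_2\neq 0}=\mathbbm 1-\mathbbm 1_0$, the full sum over $t_2$ enters with coefficient $+1$ and the delta with $-1$. Expanding $(\mathbbm 1-\mathbbm 1_0)(t_2)\,(b_1^g\mathbbm 1_0-\mathbbm 1)(t_1)$ gives
\[
b_1^gE(x;b_2^{-1}\Lambda_{(n)})-b_1^gE(x;\Lambda_{(n)})+E(x;b_1^{-1}\Lambda_{(n)})-E(x;(b_1b_2)^{-1}\Lambda_{(n)}).
\]
So the $b_1^{kg}E(b_1x)$ term carries $+$ and the $(b_1b_2)^{kg}E(b_1b_2x)$ term carries $-$. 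This is the opposite of your list, and it is what produces the stated combination.

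\textbf{The $\Gamma$-orbits.} The hypothesis only makes $\Gamma$ trivial on $p^n$-torsion, not on $b_1b_2$-torsion. So you cannot justify the sum over all $t\in\bfr^{-1}\Lambda_{(n)}/\Lambda_{(n)}$ by saying $\Gamma$ acts trivially on the relevant torsion. \Cref{result Kings-Sprang for EK classes} sums over $t\in\Gamma\backslash(\bfr^{-1}\Lambda_{(n)}/\Lambda_{(n)})$, and summing over all $t$ would overcount nontrivial orbits. The orbit version works as needed. Since $\Gamma$ fixes $x$ modulo $\Lambda_{(n)}$ and preserves each $b_i^{-1}\Lambda_{(n)}/\Lambda_{(n)}$, the set $\Gamma\backslash(m^{-1}\Lambda_{(n)}+x)$ decomposes along $\Gamma$-orbits of cosets. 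This gives $\sum_{t\in\Gamma\backslash(m^{-1}\Lambda_{(n)}/\Lambda_{(n)})}E(t+x,0;\Lambda_{(n)},\Gamma)=E(x,0;m^{-1}\Lambda_{(n)},\Gamma)$, and $\widetilde f_{[b_1]}$ is constant on these orbits. Alternatively, shrink $\Gamma$ and use the index relations.
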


\begin{proof}

	The proof follows by a very similar argument to the one of \Cref{EK classes comparison full level case}, where for any lattice $\Lambda$ in $F \otimes_\Q \C$ corresponding to a $\Gamma_{00}(p^\infty)$-test object $\B/\C$ with a basis, we use the following distribution relation for the function $\widetilde{f}_{[b_1]}$: for all $x \in p^{-n}\Lambda_{(n)}/\Lambda_{(n)}$, we have that
	$$\sum_{t \in \Gamma \backslash (\bfr^{-1}\Lambda_{(n)}/\Lambda_{(n)})} \widetilde{f}_{[b_1]}(-t) \cdot E^{\beta, \alpha}(t + x, 0 ; \Lambda, \Gamma) = b_1^g E^{\beta, \alpha}(x, 0; b_2^{-1}\Lambda_{(n)}, \Gamma) - $$
	$$ - b_1^g E^{\beta, \alpha}(x, 0; \Lambda_{(n)}, \Gamma) + E^{\beta, \alpha}(x, 0; b_1^{-1}\Lambda_{(n)}, \Gamma) - E^{\beta, \alpha}(x, 0; (b_1b_2)^{-1}\Lambda_{(n)}, \Gamma)$$
	(see the proof of Theorem 4.10 in \cite{Kings2025}). Then, for any $m \in \{1, b_1, b_2, b_1b_2\}$, the rescaling
	$$E^{\underline{0}, \underline{k} - \underline{1}}(x, 0; m^{-1}\Lambda_{(n)}, \Gamma) = m^{kg}E^{\underline{0}, \underline{k} - \underline{1}}(mx, 0; \Lambda_{(n)}, \Gamma)$$
	allows us to rewrite
	$$\sum_{x \in p^{-n}\Lambda_{(n)} / \Lambda_{(n)}} PH(x) \cdot E^{\underline{0}, \underline{k} - \underline{1}}(mx, 0; \Lambda_{(n)}, \Gamma) =  \sum_{x' \in p^{-n}\Lambda_{(n)} / \Lambda_{(n)}} PH(m^{-1}x') \cdot E^{\underline{0}, \underline{k} - \underline{1}}(x', 0; \Lambda_{(n)}, \Gamma).$$
	To finish, we then observe that the partial Fourier inverse of the function
	$$(p^{-n}\dfr^{-1}/\dfr^{-1}) \oplus (\Ofr / p^n\Ofr) \to \C, \quad ([d], [y]) \mapsto PH([d/m], [y/m])$$ 
	is $[m]^*H$.

\end{proof}

\subsection{The \textit{p}-adic specialization}

Next, we describe how to obtain a $p$-adic version of the specialization $\varphi_\infty$, as well as similar comparison results for the $p$-adic Eisenstein series $\widehat{G}_{k,Hf}$.

\subsubsection{The definition}

Set $\Mcal = \Mcal(\cfr, \Gamma(l), \Gamma_{00}(p^\infty))_{\C_p}$ and $\A \to \Mcal$ for the universal abelian scheme. Recall as well our notation $\widehat{\Mcal}$, $\widehat{\A}_p$ for the formal completions of these schemes, as we set in \Cref{formal compl Hilbert mod scheme notation} and \Cref{formal compl special fiber notation}.

In order to make the complex construction work in this case, we require a $p$-adic analogue of the smooth Hodge decomposition \eqref{eq:Hodge decomp smooth}. This is provided by the following result.

\begin{theorem}[{\cite[Section 7]{Katz1973a}}]\label{unit root decomposition thm}

	There exists a unique splitting stable under Frobenius:
	\begin{equation}\label{eq:unit root decomp}
	H^1_{dR}(\widehat{\A}_p / \widehat{\Mcal}) \cong \omega_{\widehat{\A}_p / \widehat{\Mcal}} \oplus U,
	\end{equation}
	where $U$ is an invertible $\Ofr \otimes \Ocal_{\widehat{\Mcal}}$-module, horizontal for the Gauß-Manin connection. This is known as the \emph{unit root decomposition}. In particular, for any $p$-adic ring $R$ and any morphism $\xi \colon \textnormal{Spec}(R) \to \Mcal$, taking the pullback of this isomorphism gives a splitting
	\begin{equation}\label{eq:unit root decomp affine}
	H^1_{dR}(\widehat{\A}_{\xi,p} / R) \cong \omega_{\widehat{\A}_{\xi,p}/R} \oplus \xi^*U,
	\end{equation}
	where $\A_\xi$ is the pullback of $\A$ along $\xi$. 

\end{theorem}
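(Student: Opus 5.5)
The proof follows Katz's construction of the unit root splitting in \cite[Section 7]{Katz1973a}, globalized over the formal scheme $\widehat{\Mcal}$. First I would reduce to a local statement. The splitting is unique, and both sides of \eqref{eq:unit root decomp} are compatible with restriction to opens. So it suffices to construct the splitting on a finite affine cover of $\widehat{\Mcal}$ by formal spectra $\Spf(R)$ of $p$-adic rings, with affine intersections; this is possible because $\Mcal$ is quasi-compact and quasi-separated, as in the proof of \Cref{universal Poincare props}. Uniqueness then makes the local splittings glue. The same uniqueness gives the base change statement \eqref{eq:unit root decomp affine}: the pullback along $\xi$ of a Frobenius-stable splitting is again Frobenius-stable, hence it is \emph{the} splitting for $\A_\xi$.

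Locally I would construct $U$ using the $\Gamma_{00}(p^\infty)$-structure. By \Cref{mu level structure datum}, $\widehat{\A}_p \cong \widehat{\G}_m \otimes_\Z \dfr^{-1}$, so $\A$ is ordinary. The Frobenius quotient $F_\A \colon \A \to \A_{(1)} = \A/C_1$ of \Cref{quot by can subgps general ring def} lifts the relative Frobenius modulo $p$. I would iterate it to the tower $F^n_\A$, which is compatible with the Gauß--Manin connection, and define $U$ as the subspace on which the induced $\varphi$-linear Frobenius on $H^1_{dR}$ acts invertibly. Concretely, I would take $U$ to be the image of $(V^n_\A)^*$ in the limit, or dually the kernel of the $p$-adically topologically nilpotent part.

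Next come the required properties of $U$. It is horizontal because Frobenius is horizontal. It is an invertible $\Ofr \otimes \Ocal$-module because the $\Ofr$-action commutes with $F$ and the rank computation can be done fibrewise. It is complementary to $\omega$ because $(F^n)^*$ on $\omega$ is divisible by $p^n$ (it is multiplication by $p^n$ through the isomorphism with $\widehat{\G}_m \otimes \dfr^{-1}$), while on the graded piece $\text{Lie}(\A^\vee)$ Frobenius is invertible by ordinarity. Uniqueness follows because any Frobenius-stable complement must land in the unit-root part.

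I expect the main obstacle to be the complementarity and invertibility step: showing rigorously that the slope decomposition $H^1_{dR} = \omega \oplus U$ exists $p$-adically, meaning the relevant limits converge and commute with the $\Ofr$-action and with base change along non-flat maps. I would handle it by working modulo $p^m$ for each $m$, where the iterated $F^n$ for $n \geq m$ kills $\omega$, and then passing to the inverse limit.
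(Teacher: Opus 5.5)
Your global strategy (Frobenius from the canonical subgroup, build the splitting modulo $p^m$, pass to the limit, glue by uniqueness) is the standard unit-root argument behind the citation to Katz, which is all the paper offers. However, two steps fail as you state them.

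First, your concrete definition of $U$ produces the wrong line. The map $(V^n_\A)^*$ goes from $H^1_{dR}(\A)$ to $H^1_{dR}(\A_{(n)})$. It is an isomorphism on invariant differentials because $V^n_\A$ is étale. Moreover $(V^n_\A)^* \circ (F^n_\A)^* = p^n$ makes it vanish modulo $p^n$ on the image of $(F^n_\A)^*$. So modulo $p^n$ its image is $\omega_{\A_{(n)}}$, the Hodge line, not the unit root part. What you want is $U \bmod p^m := \text{Im}((F^n_\A)^*) \bmod p^m$ for $n \geq m$, equivalently the kernel of $(V^n_\A)^*$ modulo $p^n$; this is what your last paragraph actually uses.

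The fact that makes this work, and which you should state, is the following. Since $(F^n_\A)^*\omega_{\A_{(n)}} = p^n\omega_\A$, the map $(F^n_\A)^* \bmod p^n$ factors through $\text{Lie}(\A_{(n)}^\vee)$. Its composite with $H^1_{dR}(\A) \to \text{Lie}(\A^\vee)$ is $\text{Lie}((F^n_\A)^\vee)$, which is an isomorphism because $\ker((F^n_\A)^\vee) = (\mu_{p^n} \otimes_\Z \dfr^{-1})^t$ is étale. Hence the image is that of a split injection. This gives, in one stroke, complementarity to $\omega$, $\Ofr$-stability, invertibility over $\Ofr \otimes \Ocal$ (through $U \cong \text{Lie}(\A^\vee)$), and compatibility with arbitrary base change.

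Base change is what \eqref{eq:unit root decomp affine} actually needs. Over a general $p$-adic $R$ there is no Frobenius lift compatible with $\xi$, so your argument ``the pullback is again Frobenius-stable, hence is the splitting'' has no meaning there. The bare assertion that a pulled-back direct sum decomposition is again one is trivial.

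Second, horizontality does not follow from ``Frobenius is horizontal''. The line $\omega$ is Frobenius-stable as well, since $(F_\A)^*\omega_{\A_{(1)}} = p\,\omega_\A$, yet it is not horizontal (Kodaira--Spencer). Two inputs are missing:
\begin{enumerate}
\item The Frobenius lift $\varphi$ of $\widehat{\Mcal}$ itself, which you never specify although ``stable under Frobenius'' depends on it. It is the moduli endomorphism $\B \mapsto \B/C_1$ with its induced structures, so that $\A_{(1)} \cong \varphi^*\A$ and $\Phi := (F_\A)^*$ is $\varphi$-linear.
\item The congruence $d\varphi \equiv 0 \bmod p$.
\end{enumerate}
With these, the argument goes as follows. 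Because $\Phi^n \colon \varphi^{n*}U \to U$ is an isomorphism, every $u \in U$ can be written as $\sum_i a_i \Phi^n(1 \otimes u_i)$ with $a_i \in \Ocal_{\widehat{\Mcal}}$ and $u_i \in U$. The compatibility $\nabla \circ \Phi^n = (\Phi^n \otimes d\varphi^n) \circ \varphi^{n*}\nabla$ then yields $\nabla u \in U \otimes \Omega^1_{\widehat{\Mcal}} + p^n H^1_{dR} \otimes \Omega^1_{\widehat{\Mcal}}$ for every $n$. Hence $\nabla U \subset U \otimes \Omega^1_{\widehat{\Mcal}}$. Invertibility of Frobenius on $U$ is used essentially here, which is exactly why the argument does not apply to $\omega$.
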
 

Let $\alpha, \beta \in I_F^+$ and $\chi := \alpha + \underline{1} + \beta$, and take the sheaf 
$$\Gscr^{\alpha, \beta}_{\C_p} := \TSym^{\alpha + \underline{1}}(\omega_{\A/\Mcal}) \otimes \TSym^\beta(\Hscr_{\A^\vee})$$
on $\Mcal$. Restricting to the completion and using \eqref{eq:unit root decomp} gives maps
$$H^0(\Mcal, \Gscr^{\alpha,\beta}_{\C_p}) \to H^0(\widehat{\Mcal}, \Gscr^{\alpha,\beta}_{\C_p}|_{\widehat{\Mcal}}) \to H^0(\widehat{\Mcal}, \TSym^{\alpha + \underline{1}}(\omega_{\widehat{\A}_p/\widehat{\Mcal}}) \otimes \TSym^{\beta}(\omega_{\widehat{\A}_p/\widehat{\Mcal}})).$$
As mentioned in \Cref{TSym char 0}, and since we are working over $\C_p$, we may use the isomorphism $\TSym^{\alpha + \underline{1}}(\omega_{\widehat{\A}_p/\widehat{\Mcal}}) \otimes \TSym^{\beta}(\omega_{\widehat{\A}_p/\widehat{\Mcal}}) \cong \TSym^{\chi}(\omega_{\widehat{\A}_p/\widehat{\Mcal}})$ as the final map in our specialization.

\begin{definition}\label{p-adic specialization def}

	Let $\alpha, \beta \in I_F^+$. The \emph{$p$-adic specialization associated to $(\alpha, \beta)$} is the morphism
	$$\varphi_p^{\beta, \alpha} \colon H^0(\Mcal, \Gscr^{\alpha, \beta}_{\C_p}) \to H^0(\widehat{\Mcal}, \TSym^{\chi}(\omega_{\widehat{\A}_p / \widehat{\Mcal}}))$$
	defined as the composition of the above morphisms. We will drop $\alpha$ and $\beta$ from the notation when the characters are clear.

\end{definition}

\begin{remark}\label{p-adic isom using basis}

	Whenever we have a basis $\omega(\A)$ of $\omega_{\A/\Mcal}$, the element $\omega(\A)^{[\alpha + \underline{1}]} \otimes \omega(\A)^{[\beta]}$ induces an isomorphism $\omega(\alpha, \beta) \colon \Ocal_{\widehat{\Mcal}} \cong \TSym^{\chi}(\omega_{\widehat{\A}_p/\widehat{\Mcal}})$. We may use it to define the composition 
	$$\begin{tikzcd}[column sep = 1.25cm]
	H^0(\Mcal, \Gscr^{\alpha, \beta}_{\C_p}) \arrow{r}{\varphi^{\beta, \alpha}_p} & H^0(\widehat{\Mcal}, \TSym^{\chi}(\omega_{\widehat{\A}_p / \widehat{\Mcal}})) \arrow["\sim"', "\omega(\alpha{,} \beta )^{-1}"]{r} & V(\cfr, \Gamma(l), \C_p),
	\end{tikzcd}$$
	giving us $p$-adic Hilbert modular forms from the $p$-adic specialization. Still, note that this depends on the choice of basis, whereas $\varphi_{p}^{\beta, \alpha}$ is independent of this choice. 

\end{remark}

Consider a point $\xi_0 \colon \Spec(\C_p) \to \Mcal$ defining a test object $\B_0 / \C_p$. Then, if we consider its completion $\widehat{\B}_{0,p}$ along the special fiber of its zero section as well as the unit root decomposition \eqref{eq:unit root decomp affine}, we obtain the following version of the $p$-adic specialization:
$$\varphi_{p, \B_0}^{\beta, \alpha} \colon H^0(\Spec(\C_p), \xi_0^*\Gscr^{\alpha, \beta}_{\C_p}) \to H^0(\Spf(\C_p), \TSym^{\chi}(\omega_{\widehat{\B}_{0,p} / \C_p})).$$
As above, we write $\varphi_{p, \B_0}$ if $\alpha$ and $\beta$ are understood. As in the complex situation, we have certain functoriality properties, such as the following commutative diagram:
\begin{equation*}\label{eq:p-adic specialization after pullback to a point}
	\begin{tikzcd}[column sep = 1.25cm]
	H^0(\Mcal, \Gscr^{\alpha, \beta}_{\C_p}) \arrow{r}{\varphi_{p}^{\beta, \alpha}} \arrow[swap]{d}{\xi_0^*} & H^0(\widehat{\Mcal}, \TSym^\chi(\omega_{\widehat{\A}_p/\widehat{\Mcal}})) \arrow{d}{\xi^*_0} \\
	H^0(\Spec(\C_p), \xi_0^*\Gscr^{\alpha, \beta}_{\C_p}) \arrow[swap]{r}{\varphi_{p,\B_0}^{\beta, \alpha}} & H^0(\Spf(\C_p), \TSym^{\chi}(\omega_{\widehat{\B}_{0,p} / \C_p})).
	\end{tikzcd}
\end{equation*}

\subsubsection{Comparison with Katz's \textit{p}-adic Eisenstein series}

Directly from the results in the complex case, we obtain a comparison result between the above $p$-adic specialization and the $p$-adic Eisenstein series $\widehat{G}_{k, Hf}$. 

\begin{notation}\index[notation]{$Afr p$@$\Afr_p$}

	We fix the following pieces of data.
	
	\begin{itemize}
	
		\item Set $\Mcal := \Mcal(\cfr, \Gamma(l), \Gamma_{00}(p^\infty))_{\Afr_p}$, for $\Afr_p := \iota_p^{-1}(\Ofr_{\C_p})$, and $\A \to \Mcal$ the universal abelian scheme. We write $\Mcal_{\C_p}$, $\Mcal_\C$, and so on for any base change, and similarly for $\A$.
		
		\item Let $\bfr = l\Ofr$ and $\ffr = p^n\Ofr$ be coprime integral ideals. We fix a finite index subgroup $\Gamma \leq \Ofr^{\times, +}$ which fixes all of the points of $\A_{(n)}[lp^n]$, and consider a function $f \colon (\Ofr/l\Ofr)^2 \to \Afr_p$ such that $\sum_{t \in (\Ofr/l\Ofr)^2} f(t) = 0$. We may see it as an element 
		$$f \in \Ocal_{\Mcal}[\A_{(n)}[l] \setminus \{e_{(n)}(\Mcal)\}]^{0, \Gamma}$$ 
		by using the $\Gamma(l)$-level structure on $\A_{(n)}$.
	
		\item Fix a basis $\omega(\A)$ of $\omega_{\A/\Mcal}$, which induces another $\omega(\A_{\C_p})$ over $\C_p$ by base change. As we saw in \Cref{p-adic periods def}, we obtain some $p$-adic period $\Omega_p \in \Ofr \otimes_\Z \C_p$ relating this basis to $\omega(\A_{\C_p})_{can}$. Lastly, the Verschiebung map also gives a basis $\omega(\A_{\C_p, (n)}) := (V^n_\A)^*(\omega(\A_{\C_p}))$.
	
		\item As an argument for the Eisenstein series of Katz, we consider a locally constant function $H \colon (\Ofr \otimes_\Z \Z_p)^2 \to \Afr_p$ which is supported in the units and of parity $k$. In particular, we will assume that $H$ factors through $(\Ofr/p^n\Ofr)^2$.
		
	\end{itemize}

\end{notation}

\begin{theorem}\label{p-adic specialization is Katz Eisenstein series}

	With the setup as above, we have the following equality:
	$$\frac{1}{\Omega_p^{\underline{k}}}\widehat{G}_{k, Hf} = \frac{(-1)^{\frac{g(g-2k-1)}{2}}}{\sqrt{d_F}} \cdot \varphi_p((V^n_{\A_{\C_p}})^*EK^{\underline{0}, \underline{k}-\underline{1}}_{\Gamma, \A_{\C_p, (n)}}(f, P_{\A_{\C_p}} H))(\omega(\A_{\C_p, (n)})^{[\underline{k}]}),$$
	where $(V^n_{\A_{\C_p}})^*$ is the isomorphism induced by the Verschiebung over $\C_p$:
	$$H^0(\Mcal_{\C_p}, \TSym^{\underline{k}}(\omega_{\A_{\C_p, (n)}/\Mcal_{\C_p}})) \cong H^0(\Mcal_{\C_p}, \TSym^{\underline{k}}(\omega_{\A_{\C_p}/\Mcal_{\C_p}})).$$

\end{theorem}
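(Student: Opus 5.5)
The plan is to deduce the $p$-adic statement from its complex counterpart, \Cref{EK classes comparison full level case}, by an algebraicity argument over the ring $\Afr_p$ (or a finitely generated subring of $\overline{\Q}$). Both sides are then compared through the map of \Cref{map from classical to p-adic mod forms} and the $q$-expansion principle.

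\textbf{Step 1: work with an algebraic form.} By \Cref{Katz Eisenstein mod forms infty level} applied over $R = \Afr_p$, $G_{k,Hf}$ is a classical form in $M_{\underline{k}}(\cfr,\Gamma(l),\Gamma_{00}(p^\infty),\Afr_p)$. Since $H$ is locally constant, \Cref{p-adic Eisenstein series def} identifies $\widehat{G}_{k,Hf}$ with its image under $f\mapsto \widetilde f$, which is evaluation at $\omega(\A)_{can}$. On the other side, consider the algebraic class
$$E := \frac{(-1)^{\frac{g(g-2k-1)}{2}}}{\sqrt{d_F}}(V^n_{\A})^*EK^{\underline{0},\underline{k}-\underline{1}}_{\Gamma,\A_{(n)}}(f,P_\A H) \in H^0(\Mcal,\TSym^{\underline{k}}(\omega_{\A_{(n)}/\Mcal})\otimes \TSym^{\underline 0}(\cdots)),$$
defined over $\Afr_p[\zeta_{p^n}]$ and compatible with base change to $\C$ and $\C_p$. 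Because $\beta=\underline 0$, no de Rham factor enters. Both $\varphi_\infty$ and $\varphi_p$ therefore reduce to the identity on $H^0(\Mcal,\TSym^{\underline k}(\omega_{\A/\Mcal}))$, with no Hodge or unit root splitting needed. Thus $E$ is simply a classical Hilbert modular form of parallel weight $k$ defined over a subring of $\overline{\Q}$.

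\textbf{Step 2: equality of algebraic forms.} By \Cref{EK classes comparison full level case}, the base changes of $G_{k,Hf}$ and $E$ to $\C$ via $\iota_\infty$ agree. Both live in $M_{\underline k}(\cfr,\Gamma(l),\Gamma_{00}(p^m),R)$ for some finite $m$ and some $R\subset\overline{\Q}$. This uses the same limit argument as in the proof of \Cref{result Kings-Sprang for EK classes}, together with \Cref{EK stable under base change} to descend the classes. Injectivity of $M_{\underline k}(R)\to M_{\underline k}(\C)$, for instance via $q$-expansions, gives $G_{k,Hf}=E$ over $R$, hence also after base change along $\iota_p$ to $\C_p$. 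I expect this descent to be the main obstacle. One must check that the Eisenstein–Kronecker class, $P_\A H$ and the sign and $\sqrt{d_F}$ normalisations are genuinely defined over a common subring $R$ of $\overline{\Q}$, and compatibly so for both embeddings $\iota_\infty,\iota_p$. Without this, the complex identity does not transfer, since $\C$ and $\C_p$ are not related directly.

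\textbf{Step 3: evaluation with bases.} Evaluate both sides at the bases. $\widehat G_{k,Hf}$ is $G_{k,Hf}$ evaluated on $\omega(\A_{\C_p})_{can}$. Since $\omega(\A_{\C_p})=\Omega_p\,\omega(\A_{\C_p})_{can}$, property (3) of modular forms (with $\chi=\underline{k}$) gives
$$G_{k,Hf}(\omega(\A_{\C_p})) = \Omega_p^{-\underline k}\, G_{k,Hf}(\omega(\A_{\C_p})_{can}).$$
Next, $(V^n)^*$ carries $\omega(\A_{\C_p,(n)})^{[\underline k]}$ to $\omega(\A_{\C_p})^{[\underline k]}$. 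Hence evaluating $\varphi_p((V^n)^*EK\cdots)$ against $\omega(\A_{\C_p,(n)})^{[\underline k]}$, read through the identification in the statement, is the same as evaluating $E$ against $\omega(\A_{\C_p})$. Combining the two gives the claimed equality $\Omega_p^{-\underline k}\widehat G_{k,Hf}=\ldots$.
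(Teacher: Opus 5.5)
Your proposal is correct and follows the paper's proof essentially step for step. The paper first identifies $\widehat G_{k,Hf}$ with $\varphi_p(G_{k,Hf})$ evaluated at $\omega(\A_{\C_p})_{can}^{[\underline k]}$ and converts to $\omega(\A_{\C_p})$ at the cost of $\Omega_p^{-\underline k}$; it then observes that both sides come from classical forms over $\Afr_p\subset\overline{\Q}$ (via the $q$-expansion principle for $G_{k,Hf}$ and base change for the EK class), and concludes over $\C$ by \Cref{EK classes comparison full level case}, since for $\beta=\underline 0$ the complex specialization is just the inclusion. The integrality/descent issue you flag is handled no more carefully in the paper, your finite-level reduction is not actually needed, and working over $\overline{\Q}$ (or $\Afr_p[p^{-1}]$) rather than $\Afr_p$ would sidestep the $p^{-ng}$ in $P_{\A}H$.
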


\begin{proof}

	Since $H$ is locally constant, by \Cref{p-adic Eisenstein series def} it follows that $\widehat{G}_{k, Hf}$ is the image of $G_{k, Hf}$ under the map
	$$\begin{tikzcd}[column sep = small]
	M_{\underline{k}}(\cfr, \Gamma(l), \Gamma_{00}(p^\infty), \C_p) \arrow{r} & \bigoplus_{\chi} M_\chi(\cfr, \Gamma(l), \Gamma_{00}(p^\infty), \C_p) \arrow{r} & V(\cfr, \Gamma(l), \C_p),
	\end{tikzcd}$$
	where the second map is that of \Cref{map from classical to p-adic mod forms}. In fact, we claim that this composition is equal to 
	$$\begin{tikzcd}
	M_{\underline{k}}(\cfr, \Gamma(l), \Gamma_{00}(p^\infty), \C_p) \arrow{r}{\varphi_p} & H^0(\widehat{\Mcal}, \TSym^{\underline{k}}(\omega_{\widehat{\A}_p/\widehat{\Mcal}})) \arrow{r}{\sim} & V(\cfr, \Gamma(l), \C_p),
	\end{tikzcd}$$
	where the isomorphism is the map $\omega(\underline{k} - \underline{1}, \underline{0})^{-1}_{can}$ associated to the canonical basis $\omega(\A_{\C_p})_{can}$ and the pair $(\underline{k} - \underline{1}, \underline{0})$ as in \Cref{p-adic isom using basis}. This is clear if we write down the following diagram (compare with \cite[(2.6.11)]{Katz1978}):
	$$\begin{tikzcd}
	M_{\underline{k}}(\cfr, \Gamma(l), \Gamma_{00}(p^\infty), \C_p) \arrow{r}{\varphi_p} \arrow{dr} & H^0(\widehat{\Mcal}, \TSym^{\underline{k}}(\omega_{\widehat{\A}_p/\widehat{\Mcal}})) \arrow["\sim"', "\omega(\underline{k})_{can}^{-1}"]{d} \\
	 & V(\cfr, \Gamma(l), \C_p),
\end{tikzcd}$$
where the diagonal map is the restriction of the mentioned map from \Cref{map from classical to p-adic mod forms} to the summand corresponding to $\chi = \underline{k}$. Note that the maps $\omega(\underline{k} - \underline{1}, \underline{0})_{can}$ and $\omega(\underline{k})_{can}$ are equal, since they are induced by the bases $\omega(\A_{\C_p})_{can}^{[\underline{k}]} \otimes \omega(\A_{\C_p})_{can}^{[\underline{0}]}$ and $\omega(\A_{\C_p})_{can}^{[\underline{k}]}$, which are clearly the same.

	Writing it down explicitly, we have shown that $\widehat{G}_{k,Hf} = \varphi_p(G_{k, Hf})(\omega(\A_{\C_p})_{can}^{[\underline{k}]})$. By definition of the $p$-adic period $\Omega_p$, it follows that 
	$$\frac{1}{\Omega_p^{\underline{k}}} \widehat{G}_{k,Hf} = \varphi_p(G_{k, Hf})(\omega(\A_{\C_p})^{[\underline{k}]}).$$	
	We then want to compare this $p$-adic Hilbert modular form to
	$$\frac{(-1)^{\frac{g(g-2k-1)}{2}}}{\sqrt{d_F}} \cdot \varphi_p((V^n_{\A_{\C_p}})^*EK^{\underline{0}, \underline{k}-\underline{1}}_{\Gamma, \A_{\C_p, (n)}}(f, P_\A H))(\omega(\A_{\C_p, (n)})^{[\underline{k}]}).$$
	To do this, we note that it is enough to compare both sides before applying $\varphi_p$. In fact, we claim that both modular forms are in the image of the map
	$$H^0(\Mcal, \TSym^{\underline{k}}(\omega_{\A/\Mcal})) \to H^0(\Mcal_{\C_p}, \TSym^{\underline{k}}(\omega_{\A_{\C_p}/\Mcal_{\C_p}}))$$
	induced by $\iota_p$. This is clear for $G_{k, Hf}$ by the $q$-expansion principle, since both $H$ and $f$ take values in $\Afr_p$. On the other hand, for the Eisenstein-Kronecker classes this follows by the commutativity of the following diagram:
	$$\begin{tikzcd}
	H^0(\Mcal, \TSym^{\underline{k}}(\omega_{\A_{(n)}/\Mcal})) \arrow{r} \arrow[swap]{d}{(V^n_\A)^*} &  H^0(\Mcal_{\C_p}, \TSym^{\underline{k}}(\omega_{\A_{\C_p, (n)}/\Mcal_{\C_p}})) \arrow{d}{(V^n_{\A_{\C_p}})^*} \\
	H^0(\Mcal, \TSym^{\underline{k}}(\omega_{\A/\Mcal})) \arrow{r} & H^0(\Mcal_{\C_p}, \TSym^{\underline{k}}(\omega_{\A_{\C_p}/\Mcal_{\C_p}})),
	\end{tikzcd}$$
	since $EK^{\underline{0}, \underline{k}-\underline{1}}_{\Gamma, \A_{\C_p, (n)}}(f, P_{\A_{\C_p}} H)$ is the image of $EK^{\underline{0}, \underline{k}-\underline{1}}_{\Gamma, \A_{(n)}}(f, P_\A H)$ via the top horizontal map of this diagram.
	
	Our comparison has been reduced to showing that we have the following equality
	$$G_{k, Hf} = \frac{(-1)^{\frac{g(g-2k-1)}{2}}}{\sqrt{d_F}} \cdot (V^n_{\A})^*EK^{\underline{0}, \underline{k}-\underline{1}}_{\Gamma, \A_{(n)}}(f, P_\A H)$$
	inside $M_{\underline{k}}(\cfr, \Gamma(l), \Gamma_{00}(p^\infty), \Afr_p)$. Since the complex specialization for $(\underline{k}- \underline{1}, \underline{0})$ is the inclusion 
	$$M_{\underline{k}}(\cfr, \Gamma(l), \Gamma_{00}(p^\infty), \Afr_p) \subset M_{\underline{k}}(\cfr, \Gamma(l), \Gamma_{00}(p^\infty), \C),$$ 
	we may compare these modular forms over $\C$, but then \Cref{EK classes comparison full level case} shows that they agree.

\end{proof}

As in the complex case, if we drop the $\Gamma(l)$-level structure assumption, we still obtain a comparison result by using the function $\widetilde{f}_{[b_1]}$ we previously defined. 

\begin{proposition}\label{p-adic specialization is Katz no full}
	
	Let $\bfr_1 = b_1\Ofr$, $\bfr_2 = b_2\Ofr$, $\bfr = \bfr_1\bfr_2$, and $\ffr = p^n\Ofr$ be integral ideals as in \Cref{b f ideals notation}, and set $l = 1$. With the same notation as above, we have the equality
	$$\frac{1}{\Omega_p^{\underline{k}}}(b_1^gb_2^{kg} \widehat{G}_{k,[b_2]^*H} - b_1^g\widehat{G}_{k,H} + b_1^{kg}\widehat{G}_{k, [b_1]^*H} - (b_1b_2)^{kg}\widehat{G}_{k, [b_1b_2]^*H}) = $$
	$$ = \frac{(-1)^{\frac{g(g-2k-1)}{2}}}{\sqrt{d_F}} \cdot \varphi_{p}((V^n_{\A_{\C_p}})^*EK^{\underline{0}, \underline{k}-\underline{1}}_{\Gamma, \A_{\C_p,(n)}}(\widetilde{f}_{[b_1]}, P_{\A_{\C_p}}H))(\omega(\A_{\C_p, (n)})^{[\underline{k}]}),$$
	where $\widetilde{f}_{[b_1]}$ is the function of \Cref{f for comparison with Katz} defined for the abelian variety $\A_{(n)}/\Mcal$. 

\end{proposition}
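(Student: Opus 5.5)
The plan is to follow the proof of \Cref{p-adic specialization is Katz Eisenstein series} verbatim, with \Cref{distribution relation Eisenstein complex} playing the role that \Cref{EK classes comparison full level case} played there.

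\textbf{Step 1: passing to $p$-adic forms via the canonical basis.} For each $m \in \{1, b_1, b_2, b_1b_2\}$, the function $[m]^*H$ is again locally constant, supported on the units, of parity $k$, and constant modulo $p^n$, since $m$ is prime to $p$. So \Cref{p-adic Eisenstein series def} applies to each term and gives
$$\widehat{G}_{k,[m]^*H} = \varphi_p(G_{k,[m]^*H})(\omega(\A_{\C_p})_{can}^{[\underline{k}]}).$$
To justify this, I will use the triangle comparing the map of \Cref{map from classical to p-adic mod forms} with $\omega(\underline{k})_{can}^{-1}\circ\varphi_p$. Multiplying through by $\Omega_p^{-\underline{k}}$ and using \Cref{p-adic periods def}, each term becomes $\varphi_p(G_{k,[m]^*H})(\omega(\A_{\C_p})^{[\underline{k}]})$. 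By linearity, the left-hand side of the claim becomes $\varphi_p$ of the classical combination
$$b_1^gb_2^{kg}G_{k,[b_2]^*H} - b_1^gG_{k,H} + b_1^{kg}G_{k,[b_1]^*H} - (b_1b_2)^{kg}G_{k,[b_1b_2]^*H},$$
evaluated on $\omega(\A_{\C_p})^{[\underline{k}]}$.

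\textbf{Step 2: matching the Eisenstein-Kronecker side.} I then need to identify $\varphi_p(\cdot)(\omega(\A_{\C_p})^{[\underline{k}]})$ with the right-hand side evaluated on $\omega(\A_{\C_p,(n)})^{[\underline{k}]}$. This follows from the definition $\omega(\A_{\C_p,(n)}) = (V^n_{\A_{\C_p}})^*\omega(\A_{\C_p})$ together with the fact that $(V^n)^*$ is an isomorphism on $\TSym^{\underline{k}}(\omega)$ that is compatible with pulling back bases. It then suffices to prove the equality
$$b_1^gb_2^{kg}G_{k,[b_2]^*H} - \cdots = \frac{(-1)^{\frac{g(g-2k-1)}{2}}}{\sqrt{d_F}}\,(V^n_{\A})^*EK^{\underline{0},\underline{k}-\underline{1}}_{\Gamma,\A_{(n)}}(\widetilde{f}_{[b_1]},P_\A H)$$
before applying $\varphi_p$, as sections of $\TSym^{\underline{k}}(\omega_{\A/\Mcal})$ over $\Mcal_{\C_p}$.

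\textbf{Step 3: descent to $\Afr_p$.} Both sides should be images of sections over $\Mcal = \Mcal_{\Afr_p}$.
\begin{itemize}
\item For the Katz series, this is the $q$-expansion principle, since $[m]^*H$ takes values in $\Afr_p$.
\item For the Eisenstein-Kronecker side, $\widetilde{f}_{[b_1]}$ is defined over any base, so the class $EK^{\underline{0},\underline{k}-\underline{1}}_{\Gamma,\A_{(n)}}(\widetilde{f}_{[b_1]},P_\A H)$ already exists over $\Mcal$. Its base change to $\C_p$ is compatible with $(V^n)^*$ by the same commutative square as before, using that EK classes commute with $\Gamma$-equivariant base change.
\end{itemize}
Next, for the pair $(\underline{k}-\underline{1},\underline{0})$ the complex specialization is simply the inclusion $M_{\underline{k}}(\Afr_p)\subset M_{\underline{k}}(\C)$, because $\beta = \underline{0}$. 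So the equality can be tested after $\iota_\infty$, and there it is exactly \Cref{distribution relation Eisenstein complex}. The hypothesis that $\Gamma$ fixes the relevant torsion holds because $\Gamma$ fixes $\A_{(n)}[p^n]$ and $\widetilde{f}_{[b_1]}$ is $\Gamma$-invariant.

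\textbf{Main obstacle.} I expect the hardest point to be the injectivity and compatibility needed to compare over $\Afr_p$. Concretely, one must check that a section of $\TSym^{\underline{k}}(\omega)$ over $\Mcal_{\Afr_p}$ is determined by its complex image, which holds because $\Afr_p \subset \C$ is flat and torsion-free. One must also check that the numerous $b_i^{\,\cdot}$ normalizations of the distribution relation survive the transfer unchanged. Since these factors are integers, both checks are expected to be routine.
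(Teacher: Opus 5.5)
Your proposal is correct and is the argument the paper intends: the paper gives no separate proof, and the proposition follows by rerunning the proof of \Cref{p-adic specialization is Katz Eisenstein series} with \Cref{distribution relation Eisenstein complex} in place of \Cref{EK classes comparison full level case}, together with your observations that each $[m]^*H$ again satisfies the hypotheses on $H$ and that $\varphi_p$ is linear. One caveat, which the paper shares: the dual of $[p^n]$ is not étale over $\Afr_p$ (cf.\ the remark after \Cref{measure at loc const functs general}), so the class $EK^{\underline{0},\underline{k}-\underline{1}}_{\Gamma,\A_{(n)}}(\widetilde{f}_{[b_1]},P_\A H)$ is a priori only defined after inverting $p$, and your reason that $\widetilde{f}_{[b_1]}$ exists over any base does not address the $p^n$-torsion points; the descent in Step 3 should therefore be run over $\overline{\Q}$ rather than $\Afr_p$, which costs nothing because $\overline{\Q}$ embeds into both $\C$ and $\C_p$.
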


\section{\textit{p}-adic measures and comparison}

In this final section, we use the construction of Kings-Sprang in \cite[Section 5]{Kings2025} to describe how to obtain a $p$-adic measure $\mu_{\Gamma, \A}(f,x)$ taking values in $V(\cfr, \Gamma(l), \C_p)$. Furthermore, by using \Cref{p-adic specialization is Katz Eisenstein series} together with an interpolation result from Kings-Sprang, we show how to relate this measure to the $p$-adic measures $\mu_{\Gamma, KE}(f)$ from \Cref{Katz Eisenstein measure - Katz def}.

Through the rest of this work, we fix the following data.

\begin{notation}\label{conditions for p-adic measure affine}
 
 	Let $R$ be a $p$-adic ring with a fixed embedding $F^{\text{Gal}} \subset K := \text{Frac}(R)$ (in particular, we assume that $R$ is an integral domain).
	 
	\begin{itemize}

		\item $(\B / R, m, \lambda, \alpha, \beta, \omega(\B))$ is a $(\Gamma(l), \Gamma_{00}(p^\infty))$-test object with a basis, where the polarization ideal $\cfr$ has integral inverse, and is such that $N\cfr$ is invertible in $R$. In particular, we have a canonical choice of basis $\omega(\B)_{can}$ as in \Cref{canonical basis affine def} (which a priori is different from $\omega(\B)$).
		
		\item $\Gamma \leq \Ofr^{\times}$ is a finite index subgroup, which naturally acts on $\B$ by the RM structure. 
	
		\item $\bfr, \ffr$ are integral ideals of $F$ which are coprime to $p$ and to each other (which in particular means that $[\ffr]^\vee$, $[\bfr]^\vee$ and $[\bfr]$ are étale isogenies over $R$), 
	
		\item $x$ and $f$ are as in \Cref{pairs f x of EK classes general} for the morphisms $\varphi = [\ffr]$ and $\delta = [\bfr]$. In particular, $\D = \B[\bfr] \setminus \{x(R)\}$, and $\Ucal_\D = \B \setminus \D$.
	
	\end{itemize}
	
\end{notation}

\subsection{Construction of the \textit{p}-adic Eisenstein measure}

Following the approach of Kings-Sprang, we show in this section how to construct a $p$-adic measure using the Eisenstein-Kronecker classes. 

\subsubsection{\textit{p}-adic theta functions} 

Let us go through the procedure described in \cite[Section 5.3]{Kings2025} to obtain a $p$-adic theta function associated to the test object $\B/R$. Consider the Eisenstein-Kronecker class associated to $f$ (cf.~\Cref{EK first def}):
$$EK_{\Gamma, \B}(f) \in H^{g-1}(\Ucal_\D, \Gamma; \widehat{\Po} \otimes \Omega^g_{\B / R}).$$
Let $\varphi \colon \B \to \B'$ be some $\Gamma$-equivariant isogeny with étale dual and $y \in \B(R)$ a $\varphi$-torsion section (note that we do not assume $y$ to be $\Gamma$-equivariant). If we write $y_p \colon \text{Spec}(R/pR) \to \B_{R/pR}$ for its special fiber, we can take the formal completion $\widehat{\B}_y$ associated to this closed immersion. Since this comes equipped with a map $\widehat{\B}_y \to \B$, we may restrict along it:
$$EK_{\Gamma, \B}(f)|_{\widehat{\B}_y} \in H^{g-1}(\widehat{\B}_y, \Gamma; (\widehat{\Po} \otimes \Omega^g_{\B / R})|_{\widehat{\B}_y}).$$
The formal scheme $\widehat{\B}_y$ is affine, which means that 
$$H^{g-1}(\widehat{\B}_y, \Gamma; (\widehat{\Po} \otimes \Omega^g_{\B / R})|_{\widehat{\B}_y}) \cong H^{g-1}(\Gamma, H^0(\widehat{\B}_y, (\widehat{\Po} \otimes \Omega^g_{\B/R})|_{\widehat{\B}_y})).$$
The translation map $T_y$ induces an isomorphism $\widehat{\B}_p \xrightarrow{\sim} \widehat{\B}_y$, so taking the pullback along this map gives another isomorphism
$$T_y^* \colon H^0(\widehat{\B}_y, (\widehat{\Po} \otimes \Omega^g_{\B/R})|_{\widehat{\B}_y}) \xlongrightarrow{\sim} H^0(\widehat{\B}_p, (T_y^*\widehat{\Po} \otimes \Omega^g_{\B/R})|_{\widehat{\B}_p}).$$
Following our Eisenstein-Kronecker class through these isomorphisms, and applying the isomorphism \eqref{eq:Poincare isom triv}, we obtain an element
$$\widehat{\varrho}_yT_y^*(EK_{\Gamma, \B}(f)|_{\widehat{B}_y}) \in H^{g-1}(\Gamma, H^0(\widehat{\B}_p, \text{pr}_{\widehat{\B}_p, *}\Ocal_{\widehat{\B}_p \times \widehat{\B}^\vee_p} \otimes \Omega^g_{\B/R}|_{\widehat{\B}_p})).$$
By the projection formula, we obtain an isomorphism 
$$\text{pr}_{\widehat{\B}_p, *}\Ocal_{\widehat{\B}_p \times \widehat{\B}^\vee_p} \otimes (\Omega^g_{\B/R})|_{\widehat{\B}_p} \cong \text{pr}_{\widehat{\B}_p,*}\left( \Ocal_{\widehat{\B}_p \times \widehat{\B}^\vee_p} \otimes \text{pr}_{\widehat{\B}_p}^*(\Omega^g_{\B/R}|_{\widehat{\B}_p})\right),$$
and thus
$$\widehat{\varrho}_yT_y^*(EK_{\Gamma, \B}(f)|_{\widehat{B}_y}) \in H^{g-1}(\Gamma, H^0(\widehat{\B}_p \times \widehat{\B}^\vee_p,  \Ocal_{\widehat{\B}_p \times \widehat{\B}^\vee_p} \otimes \text{pr}_{\widehat{\B}_p}^*(\Omega^g_{\B/R}|_{\widehat{\B}_p}))).$$
To simplify the group cohomology, we may just take the cap product with some generator of group homology in degree $g-1$.

\begin{lemma}[{\cite[Lemma 5.15]{Kings2025}}] \label{cap product p-adic}

	Let $\Gamma'\leq \Gamma$ be a torsion-free subgroup of finite index and $\xi \in H_{g-1}(\Gamma, \Z)$ be an element such that $\textnormal{res}(\xi) \in H_{g-1}(\Gamma', \Z)$ is a generator. Then, the cap-product with $\xi$ induces a canonical homomorphism
	$$H^{g-1}(\Gamma, H^0(\widehat{\B}_p \times \widehat{\B}^\vee_p, \Ocal_{\widehat{\B}_p \times \widehat{\B}^\vee_p} \otimes \textnormal{pr}_{\widehat{\B}_p}^*(\Omega^g_{\B/R}|_{\widehat{\B}_p}))) \to H^0(\widehat{\B}_p \times \widehat{\B}^\vee_p, \Ocal_{\widehat{\B}_p \times \widehat{\B}^\vee_p} \otimes \TSym^{\underline{1}}(\omega_{\widehat{\B}_p/R}))_\Gamma.$$
	Assume now that $\Gamma$ is contained in $\Ofr^{\times, +}$. The canonical basis $\omega(\B)_{can}$ induces a $\Gamma$-equivariant isomorphism $\TSym^{\underline{1}}(\omega_{\widehat{\B}_p/R}) \cong R$, and thus we can see the image of this cap product map inside
	$$H^0(\widehat{\B}_p \times \widehat{\B}^\vee_p, \Ocal_{\widehat{\B}_p \times \widehat{\B}^\vee_p})_\Gamma.$$

\end{lemma}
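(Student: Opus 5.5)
The cap product map has the same construction as in \cite[Lemma 5.15]{Kings2025}, and I would follow it in three steps: identify the coefficient module, show that the cap product with $\xi$ lands in $\Gamma$-coinvariants, and then deal with the canonical basis.

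\textbf{Step 1: the coefficients.} Set $M := H^0(\widehat{\B}_p \times \widehat{\B}^\vee_p, \Ocal \otimes \textnormal{pr}^*(\Omega^g_{\B/R}|_{\widehat{\B}_p}))$. Since $\Omega^g_{\B/R} = \pi^*\omega^g_{\B/R}$ and $\omega^g_{\B/R} \cong \TSym^{\underline{1}}(\omega_{\B/R})$ as $\Gamma$-modules (the determinant of the rank one $\Ofr\otimes R$-module $\omega_{\B/R}$ is the product of its $\sigma$-isotypic pieces), we get an identification
$$M \cong H^0(\widehat{\B}_p \times \widehat{\B}^\vee_p, \Ocal_{\widehat{\B}_p\times\widehat{\B}^\vee_p}) \otimes_R \TSym^{\underline{1}}(\omega_{\widehat{\B}_p/R}).$$

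\textbf{Step 2: the cap product.} Capping with $\xi \in H_{g-1}(\Gamma,\Z)$ gives a natural map $H^{g-1}(\Gamma, M) \to H_0(\Gamma, M) = M_\Gamma$. This is defined for any $\Gamma$-module $M$, using the cap product $H^{q}(\Gamma,M)\otimes H_{q}(\Gamma,\Z)\to H_0(\Gamma,M)$ on the standard (bar) resolution. Canonicity then comes down to checking independence of the choices. Here $\Gamma$ is a finite index subgroup of $\Ofr^\times$, so it is virtually $\Z^{g-1}$ by Dirichlet. A torsion-free finite-index $\Gamma'$ is therefore free abelian of rank $g-1$, hence a Poincaré duality group, and $H_{g-1}(\Gamma',\Z)\cong\Z$. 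Two choices of $\xi$ with generating restriction differ only by a sign and by classes whose restriction vanishes. On the rationalized coefficients these latter classes act trivially via corestriction; I would state the canonicity with respect to a fixed orientation, as in Kings--Sprang, rather than prove anything stronger.

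\textbf{Step 3: the canonical basis.} Take $\Gamma \leq \Ofr^{\times,+}$. The basis $\omega(\B)_{can}$ gives $\omega(\underline{1})_{can}\colon R \xrightarrow{\sim} \TSym^{\underline{1}}(\omega_{\widehat{\B}_p/R})$, and it remains to check that this is $\Gamma$-equivariant. By \Cref{signs of dual}, $\gamma$ acts on $\TSym^{\underline{1}}(\omega)$ through the character $\underline{-1}$, i.e.\ by $N(\gamma)^{-1}$. For totally positive units this is $1$: $N(\gamma)=\pm1$ because $\gamma$ is a unit, and $N(\gamma)>0$ because $\gamma\gg0$ (compare \Cref{norm type implies vanishing}). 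So $\Gamma$ acts trivially on $\TSym^{\underline{1}}(\omega_{\widehat{\B}_p/R})$. It also fixes the generator $\omega(\B)_{can}^{[\underline{1}]}$, since $\gamma$ rescales each component $\omega_{can}(-\sigma_i)$ by $\sigma_i(\gamma)^{-1}$ and the tensor product is rescaled by $N(\gamma)^{-1}=1$. Tensoring the coinvariants with this equivariant isomorphism gives the map into $H^0(\widehat{\B}_p\times\widehat{\B}^\vee_p,\Ocal)_\Gamma$.

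\textbf{Main obstacle.} I expect Step 1 to be the delicate point: one has to make sure the $\Gamma$-action on $\Omega^g_{\B/R}|_{\widehat{\B}_p}$, taken with the contragredient convention, really matches $\TSym^{\underline{1}}(\omega)$ and not its inverse. The second part of the statement depends on this sign only mildly, because both $\underline{1}$ and $\underline{-1}$ are trivial on totally positive units. For this reason I would rely on the earlier isomorphism $\omega^g_{\A^\vee/S}\cong\omega^g_{\A/S}$ of $\Gamma$-modules and would not recompute the sign.
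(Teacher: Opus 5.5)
Your proposal is correct and is essentially the argument behind \cite[Lemma 5.15]{Kings2025}, which the paper cites without giving its own proof. You cap with $\xi$ to get $H^{g-1}(\Gamma,M)\to H_0(\Gamma,M)=M_\Gamma$, identify $\Omega^g_{\B/R}\cong\pi^*\omega^g_{\B/R}\cong\pi^*\TSym^{\underline{1}}(\omega_{\B/R})$ equivariantly, and use that $\Gamma\subset\Ofr^{\times,+}$ acts on $\TSym^{\underline{1}}$ through $N(\gamma)^{-1}=1$.

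Two small remarks. The map is only claimed to be canonical once $\xi$ is fixed, so your independence-of-$\xi$ discussion is not needed. The sign worry in your last paragraph is already settled by your Step 1, since the top exterior power is the tensor product of the eigenlines $\omega(-\sigma_i)$; the isomorphism $\omega^g_{\A^\vee/S}\cong\omega^g_{\A/S}$ you invoke compares $\A$ with its dual and is not the relevant input.
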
 

Combining all of these steps, we have obtained our desired theta function whenever $\Gamma$ is composed of totally positive units.

\begin{definition}

	Let $\varphi \colon \B \to \B'$ be a $\Gamma$-equivariant isogeny with étale dual and $y$ a $\varphi$-torsion point, and assume that $\Gamma$ is contained in $\Ofr^{\times, +}$. The \emph{$p$-adic theta function} associated with $EK_{\Gamma, \B}(f)$ at $y$ is the image of $\widehat{\varrho}_yT_y^*(EK_{\Gamma, \B}(f)|_{\widehat{B}_y})$ under the map of \Cref{cap product p-adic}, which we denote by
	$$\vartheta_{\Gamma, \B}(f,y) \in H^0(\widehat{\B}_p \times \widehat{\B}^\vee_p, \Ocal_{\widehat{\B}_p \times \widehat{\B}^\vee_p})_\Gamma.$$

\end{definition}

It follows directly from a group cohomology computation that passing to a smaller subgroup multiplies the theta function by a scalar.

\begin{lemma}[{\cite[Lemma 5.17]{Kings2025}}]\label{theta functs in smaller subgp}

	Assume that $\Gamma$ is contained in $\Ofr^{\times, +}$ and that $\Gamma' \leq \Gamma$ is a finite index subgroup. Then, for any $\varphi$-torsion point $y$ as above, we have that
	$$\vartheta_{\Gamma', \B}(f,y) = [\Gamma : \Gamma']\vartheta_{\Gamma,\B}(f,y)$$
	as elements in $H^0(\widehat{\B}_p \times \widehat{\B}^\vee_p, \Ocal_{\widehat{\B}_p \times \widehat{\B}^\vee_p})_{\Gamma}$.

\end{lemma}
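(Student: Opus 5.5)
The plan is to compare the two constructions step by step, using the corestriction–restriction formalism for group cohomology. The theta function $\vartheta_{\Gamma,\B}(f,y)$ is the image of
$$c_\Gamma := \widehat{\varrho}_yT_y^*(EK_{\Gamma, \B}(f)|_{\widehat{\B}_y}) \in H^{g-1}(\Gamma, M)$$
under the cap product with $\xi$. Here $M := H^0(\widehat{\B}_p \times \widehat{\B}^\vee_p, \Ocal_{\widehat{\B}_p \times \widehat{\B}^\vee_p} \otimes \textnormal{pr}_{\widehat{\B}_p}^*(\Omega^g_{\B/R}|_{\widehat{\B}_p}))$. This is followed by the trivialization given by $\omega(\B)_{can}$, which is $\Gamma$-equivariant because $\Gamma \subset \Ofr^{\times,+}$.

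The first step is to show that $c_{\Gamma'} = \textnormal{res}^\Gamma_{\Gamma'}(c_\Gamma)$. The construction of $EK_{\Gamma,\B}(f)$ in \cite[Theorem 2.20]{Kings2025} is compatible with restricting the group of equivariance: $f$ is also $\Gamma'$-invariant, and the inclusion $\Ocal_R[\D]^{0,\Gamma} \hookrightarrow H^{g-1}(\Ucal_\D, \Gamma;\cdot)$ commutes with the forgetful maps to $\Gamma'$-equivariant cohomology. The subsequent operations are restriction to $\widehat{\B}_y$, $T_y^*$, $\widehat{\varrho}_y$, the projection formula, and the identification of equivariant cohomology on an affine formal scheme with group cohomology. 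All of these are natural in the group, so they commute with $\textnormal{res}$.

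Next I choose the generators consistently. A torsion-free finite-index subgroup $\Gamma'' \le \Gamma'$ serves both groups, and I take $\xi' := \textnormal{res}^{\Gamma}_{\Gamma'}(\xi)\in H_{g-1}(\Gamma',\Z)$. Here $\textnormal{res}$ in homology is the transfer. Its further restriction to $\Gamma''$ equals $\textnormal{res}^\Gamma_{\Gamma''}(\xi)$, which is a generator, so $\xi'$ is a legitimate choice for $\Gamma'$. I will also check, as in \cite{Kings2025}, that the cap product is independent of this choice. I will then use the projection formula
$$\textnormal{cor}^{\Gamma'}_{\Gamma}\bigl(\textnormal{res}(c)\cap \textnormal{res}(\xi)\bigr) = c \cap \textnormal{cor}\,\textnormal{res}(\xi) = [\Gamma:\Gamma']\,(c\cap\xi).$$
Here corestriction on $H_0$ is the natural projection $M_{\Gamma'} \to M_\Gamma$, and $\textnormal{cor}\circ\textnormal{res} = [\Gamma:\Gamma']$ on $H_{g-1}(\Gamma,\Z)$. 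This gives $\vartheta_{\Gamma'}(f,y) \mapsto [\Gamma:\Gamma']\vartheta_\Gamma(f,y)$ in $H^0(\cdots)_\Gamma$, which is exactly the claimed equality read in $\Gamma$-coinvariants.

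The main obstacle is the first step: verifying that the class $EK_{\Gamma,\B}(f)$ genuinely restricts to $EK_{\Gamma',\B}(f)$. The classes are defined via the equivariant local cohomology sequence, and I would check this by naturality of that exact sequence under change of group. A further point is that the trivialization of $\TSym^{\underline 1}(\omega)$ does not introduce signs; this is harmless because the units involved are totally positive, so $N(\gamma)=1$.
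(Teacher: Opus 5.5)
Your proposal is correct and is the group-cohomology computation the paper alludes to when it cites Kings--Sprang: the Eisenstein--Kronecker class for $\Gamma'$ is the restriction of the one for $\Gamma$, and the projection formula $\textnormal{cor}(\textnormal{res}(c)\cap\textnormal{res}(\xi)) = c\cap\textnormal{cor}\,\textnormal{res}(\xi) = [\Gamma:\Gamma'](c\cap\xi)$ then gives the identity in $\Gamma$-coinvariants. One small correction is that the cap product is not independent of the sign of the generator, so the lemma holds only for compatibly chosen generators; your choice $\xi' = \textnormal{res}(\xi)$ (equivalently, generators fixed by an orientation of $\Gamma\otimes_\Z\R$) is exactly what guarantees this.
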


In fact, this can be used to extend our definition of theta function to those finite index subgroups $\Gamma$ which are not necessarily totally positive. We may do this by taking the theta function for a subgroup $\Gamma' \leq \Gamma$ of finite index which is totally positive and dividing by $[\Gamma : \Gamma']$, which is of course only possible whenever this index is invertible in $R$. If we choose $\Gamma' := \Gamma \cap \Ofr^{\times, +}$, then this index is always a power of 2, which will be invertible in $R$ if we set the following assumption.

\begin{convention}

	From now on we assume $p \neq 2$.

\end{convention}

We may then define the theta function for $\Gamma$ directly in terms of the one for $\Gamma \cap \Ofr^{\times, +}$.

\begin{definition}

	Let $\Gamma$ be an arbitrary finite index subgroup of $\Ofr^\times$, $\varphi \colon \B \to \B'$ a $\Gamma$-equivariant isogeny with étale dual, and $y$ a $\varphi$-torsion point. The \emph{$p$-adic theta function} associated with $EK_{\Gamma, \B}(f)$ at $y$ 
	$$\vartheta_{\Gamma, \B}(f,y) \in H^0(\widehat{\B}_p \times \widehat{\B}^\vee_p, \Ocal_{\widehat{\B}_p \times \widehat{\B}^\vee_p})_\Gamma$$
	is the image of $[\Gamma : \Gamma \cap \Ofr^{\times, +}]^{-1}\vartheta_{\Gamma \cap \Ofr^{\times, +}, \B}(f,y) $ under the map 
	\begin{equation}\label{eq:subgroup cohom inclusion}
		H^0(\widehat{\B}_p \times \widehat{\B}^\vee_p, \Ocal_{\widehat{\B}_p \times \widehat{\B}^\vee_p})_{\Gamma \cap \Ofr^{\times, +}} \to H^0(\widehat{\B}_p \times \widehat{\B}^\vee_p, \Ocal_{\widehat{\B}_p \times \widehat{\B}^\vee_p})_{\Gamma}
	\end{equation}
	given by the inclusion $\Gamma \cap \Ofr^{\times, +} \subset \Gamma$. 
	
\end{definition}

\begin{remark}

	If $p = 2$, the element $[\Gamma : \Gamma \cap \Ofr^{\times, +}]^{-1}\vartheta_{\Gamma \cap \Ofr^{\times, +}, \B}(f,y)$ is not contained a priori in $H^0(\widehat{\B}_p \times \widehat{\B}^\vee_p, \Ocal_{\widehat{\B}_p \times \widehat{\B}^\vee_p})_{\Gamma \cap \Ofr^{\times, +}}$, and one should replace \eqref{eq:subgroup cohom inclusion} by the map
	$$H^0(\widehat{\B}_p \times \widehat{\B}^\vee_p, \Ocal_{\widehat{\B}_p \times \widehat{\B}^\vee_p})_{\Gamma \cap \Ofr^{\times, +}} \otimes_R K \to H^0(\widehat{\B}_p \times \widehat{\B}^\vee_p, \Ocal_{\widehat{\B}_p \times \widehat{\B}^\vee_p})_{\Gamma} \otimes_R K.$$
 	One can then define $\vartheta_{\Gamma, \B}(f,y)$ as the image of $[\Gamma : \Gamma \cap \Ofr^{\times, +}]^{-1}\vartheta_{\Gamma \cap \Ofr^{\times, +}, \B}(f,y)$ under this map, and proceed through the following sections in the same manner.

\end{remark}

\subsubsection{Obtaining a measure from the theta function}

Now that we have constructed our $p$-adic theta function from our Eisenstein-Kronecker classes, we want to use the level structure on $\B$ to obtain a $p$-adic measure, which we will relate later to Katz's Eisenstein measure. 

Since $N\cfr$ is invertible in $R$, recall that, by the construction \eqref{eq:dual ab sch structure}, $\B^\vee/R$ is equipped with a $\Gamma_{00}(p^\infty)$-test object structure, and in particular with a canonical basis $\omega(\B^\vee)_{can}$.

\begin{definition}

	Let $\alpha, \beta \in I_F$ be characters. Recall the moment map for the formal group $\widehat{\B}_p \times \widehat{\B}^\vee_p$:
	$$\text{mom}_{\widehat{\B}_p \times \widehat{\B}^\vee_p} \colon \Ocal_{\widehat{\B}_p \times \widehat{\B}^\vee_p} \to \widehat{\TSym^\bullet}(\omega_{\widehat{\B}_p \times \widehat{\B}^\vee_p / R}).$$
	We define differential operators
	$$\partial(\B)^{\alpha} \partial(\B^\vee)^\beta \colon H^0(\widehat{\B}_p \times \widehat{\B}^\vee_p, \Ocal_{\widehat{\B}_p \times \widehat{\B}^\vee_p}) \to R$$
	given on any global section $h$ of $\Ocal_{\widehat{\B}_p \times \widehat{\B}^\vee_p}$ via the formula
	$$\text{mom}_{\widehat{\B}_p \times \widehat{\B}^\vee_p}(h) = \left( (\partial(\B)^\alpha \partial(\B^\vee)^\beta h) \cdot \omega(\B)_{can}^{[\alpha]} \omega(\B^\vee)_{can}^{[\beta]} \right)_{\alpha, \beta}.$$

\end{definition}

\begin{remark}

	By composing the moment map with the isomorphism \eqref{dual colie algebra iso} induced by the $\cfr$-polarization $\lambda$ of $\B$, which in particular sends $\omega(\B^\vee)_{can}$ to $\omega(\B)_{can}$ (recall the diagram \eqref{eq:relation canonical bases}), we can consider these operators to be acting on $\widehat{\B}_p \times \widehat{\B}_p$, and we then write
	$$\partial(\B)^{\alpha}\partial(\B^\vee)^\beta = \partial(\B)^{\alpha} \otimes \partial(\B)^{\beta}.$$

\end{remark}

Similarly, we also define the functions which we integrate to obtain our moments. Since we have an isomorphism $\Ofr \otimes_\Z \Z_p \cong \Z_p^g$ of $\Z_p$-modules, they can be defined as the powers of the obvious maps.

\begin{definition}\index[notation]{$t alpha s beta$@$\ts^\alpha \st^\beta$}

	Let $\alpha, \beta \in I_F^+$. Consider the continuous function (recall that we fixed an embedding $F^{\text{Gal}} \subset \text{Frac}(R)$)
	\begin{align*}
	\ts^\alpha \st^\beta \colon (\Ofr \otimes_\Z \Z_p) \times (\Ofr \otimes_\Z \Z_p) & \longrightarrow R \\
	(x \otimes r, y \otimes s) & \longmapsto \prod_{i = 1}^g (r \sigma_i(x))^{\alpha_i} \cdot \prod_{i = 1}^g (s \sigma_i(y))^{\beta_i}.
	\end{align*}
	We may also extend this definition to general $\alpha, \beta \in I_F$ by using this formula restricted to the space $(\Ofr \otimes_\Z \Z_p)^\times \times (\Ofr \otimes_\Z \Z_p)^\times$ and then extending by zero.

\end{definition}

We next want to describe the global sections of $\widehat{\B}_p \times \widehat{\B}^\vee_p$ as a space of $R$-valued measures. This will be a consequence of the $\Gamma_{00}(p^\infty)$-level structure which we imposed on $\B$, which will make this formal group into a formal torus. Still, we want our isomorphism to also respect the action by $\Gamma$, so we now describe the following $\Gamma$-action on the space of measures $\text{Meas}((\Ofr \otimes_\Z \Z_p)^2, R)$. Note that as in the CM case (see \cite[Proposition 5.6]{Kings2025}), we have an isomorphism
$$(\Ofr \otimes_\Z \Z_p)^2 \cong T_p\widehat{\B}_p^t \times T_p(\widehat{\B}_p^\vee)^t.$$
Indeed, the formal completions can be computed as
$$\widehat{\B}_p^t \cong \varinjlim_n C_n^t(\overline{R}), \quad \text{and} \quad (\widehat{\B}_p^\vee)^t \cong \varinjlim_n (C_n^\vee)^t(\overline{R}),$$
with $\overline{R}$ an algebraic closure of $R$, and $C_n$, $C_n^\vee$ the canonical subgroups of \Cref{quot by can subgps general ring def}. In particular, this means that by our \Cref{signs of dual} the $\Gamma$-action on the first term is given by $\gamma^{-1}$ for all $\gamma \in \Gamma$, since it is given by multiplication by $\gamma$ in $C_n$. Similarly, in $(C_n^\vee)^t$ we apply this convention twice, so the $\Gamma$-action is the standard one. Therefore, from this geometric interpretation, we can see that for any $\gamma \in \Gamma$ and any pair $(x,y) \in (\Ofr \otimes_\Z \Z_p)^2$, 
$$\gamma \cdot (x,y) = (\gamma^{-1}x, \gamma y),$$
which agrees exactly with \Cref{convention Gamma action}. We thus will see $\text{Meas}((\Ofr \otimes_\Z \Z_p)^2,R)$ as a $\Gamma$-module with this induced action by pushforward: for any measure $\mu$ and any continuous function $H \colon (\Ofr \otimes_\Z \Z_p)^2 \to R$, we write
$$\int_{(\Ofr \otimes_\Z \Z_p)^2}Hd(\gamma \cdot \mu) := \int_{(\Ofr \otimes_\Z \Z_p)^2}(\gamma \cdot H) d\mu := \int_{(\Ofr \otimes_\Z \Z_p)^2}H(\gamma^{-1} \cdot (-), \gamma \cdot (-))d\mu.$$

\begin{proposition}\label{global sections are measures}

	For any $r \geq 0$, there is a $\Gamma$-equivariant isomorphism of rings
	$$H^0(\widehat{\B}_p \times \widehat{\B}^\vee_p, \Ocal_{\widehat{\B}_p \times \widehat{\B}^\vee_p}) \xlongrightarrow{\sim} \textnormal{Meas}((\Ofr \otimes_\Z \Z_p)^2, R),$$
	where the image $\mu_h$ of some global section $h$ is uniquely determined by the integration formulas
	$$\int_{(\Ofr \otimes_\Z \Z_p)^2} \ts^\alpha \st^\beta d\mu_h = \partial(\B)^\alpha \partial(\B^\vee)^\beta h = (\partial(\B)^{\alpha} \otimes \partial(\B)^{\beta})h$$
	for all $\alpha, \beta \in I_F^+$. Moreover, this construction commutes with arbitrary extensions of scalars $R \to R'$ of $p$-adic rings. 

\end{proposition}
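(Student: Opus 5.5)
The plan is to reduce to the classical Amice/Mahler description of measures on $\Z_p^{2g}$ via the $p$-adic trivialization coming from the $\Gamma_{00}(p^\infty)$-level structure. By \Cref{mu level structure datum}, applied to $\B$ and to $\B^\vee$ (the latter with the level structure of \eqref{eq:dual ab sch structure}, which exists since $N\cfr$ is invertible in $R$), we get isomorphisms of formal groups
$$\widehat{\G}_m \otimes_\Z \dfr^{-1} \xrightarrow{\sim} \widehat{\B}_p, \qquad \widehat{\G}_m \otimes_\Z \dfr^{-1} \xrightarrow{\sim} \widehat{\B}^\vee_p.$$
Hence $\widehat{\B}_p \times \widehat{\B}^\vee_p$ is a formal torus over $\Spf(R)$, with character group $(\Ofr \otimes_\Z \Z_p)^2$ after identifying $\dfr^{-1}\otimes\Z_p \cong \Ofr \otimes \Z_p$ via the trace (recall $p \nmid d_F$). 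Choosing a $\Z_p$-basis of $\Ofr \otimes_\Z \Z_p$, its global sections become $R[[T_1,\dots,T_{2g}]]$. I will then invoke the standard Amice isomorphism $R[[T_1,\dots,T_{2g}]] \cong \textnormal{Meas}(\Z_p^{2g},R)$, sending $h$ to $\mu_h$ with $\int \binom{z}{n}\,d\mu_h$ equal to the coefficient of $T^n$. Equivalently, $\int \chi\,d\mu_h = h(\chi)$, where one evaluates $h$ at the point $1+T = \zeta$ for finite characters. Intrinsically, this says that $\mu_h$ is characterized by pairing $h$ with characters of the torus, so the map is canonical and does not depend on the chosen basis. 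This construction is a ring isomorphism, with convolution corresponding to the product. It is compatible with any extension of scalars $R \to R'$, since both the trivialization of \Cref{mu level structure datum} and the Amice transform commute with base change.

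Next I will identify the moments. The key step is a one-variable computation: for $\widehat{\G}_m$ with canonical invariant differential $dT/(1+T)$, the moment map of $\widehat{\G}_m$ sends $h$ to the sequence $\left(((1+T)\tfrac{d}{dT})^n h\right)|_{T=0}$ against $(dT/(1+T))^{[n]}$. For $h=\int (1+T)^z\,d\mu$ this gives $\int z^n\, d\mu$. Tensoring with $\dfr^{-1}$ and decomposing along the embeddings $\sigma_i$, as $F^{\text{Gal}} \subset \textnormal{Frac}(R)$ lets the torus split, identifies the invariant derivation in the $\sigma_i$-eigendirection with the coordinate function $(x\otimes r) \mapsto r\sigma_i(x)$. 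Since $\omega(\B)_{can}$ and $\omega(\B^\vee)_{can}$ are by definition (\Cref{canonical basis affine def}) induced by these trivializations, $\partial(\B)^\alpha\partial(\B^\vee)^\beta h = \int \ts^\alpha\st^\beta\, d\mu_h$. The second equality in the statement is then \eqref{eq:relation canonical bases}, namely $\lambda^*\omega(\B)_{can} = \omega(\B^\vee)_{can}$.

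I also need to justify that the moments determine $\mu_h$. Over the domain $R$ with $p$-adic topology, polynomials in the $\ts^\alpha\st^\beta$ with $K$-coefficients include the binomial functions, which are dense in the continuous functions. So an $R$-valued measure (with $R$ $p$-torsion-free) is determined by its moments. If $R$ had $p$-torsion I would instead argue via the Mahler coefficients directly, which is what the Amice isomorphism gives.

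Finally, for $\Gamma$-equivariance, I will use the description $(\Ofr\otimes\Z_p)^2 \cong T_p\widehat{\B}^t_p \times T_p(\widehat{\B}^\vee_p)^t$ explained before the proposition. The RM action of $\gamma$ on $\widehat{\B}_p$ corresponds, through the $\Ofr$-linear trivialization, to multiplication by $\gamma$ on $\dfr^{-1}$, hence to the contragredient action $\gamma^{-1}$ on characters. On $\widehat{\B}^\vee_p$ the convention of inverse action applies twice, giving the action $\gamma$. I will check this on the characters $(1+T)^z$, which suffices by density, and compare with the pushforward action defined just before the statement. I expect this sign bookkeeping, together with the check that the canonical differential matches $\ts^\alpha\st^\beta$ exactly rather than up to a trace or different twist, to be the main obstacle; I would settle both by computing explicitly in the case $g=1$ and then tensoring with $\dfr^{-1}$.
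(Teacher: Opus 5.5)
Your proposal is correct and is essentially the paper's argument. The paper's proof only observes that the $\Gamma_{00}(p^\infty)$-level structure (via \Cref{mu level structure datum}) identifies $\widehat{\B}_p$ and $\widehat{\B}^\vee_p$ with $\widehat{\G}_m \otimes_{\Z_p} (\Ofr \otimes_\Z \Z_p)$, and then cites Kings--Sprang's Proposition 5.20, i.e.\ Katz's Amice-type duality between functions on a formal torus and measures on its character group. Your one-variable moment computation with $(1+T)\frac{d}{dT}$, your uniqueness argument via torsion-freeness, and your $\Gamma$-action bookkeeping unpack what the paper leaves to those references; the $p$-torsion hedge is unnecessary, since $R$ is a domain whose fraction field contains $F^{\textnormal{Gal}}$ and so has characteristic zero.
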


\begin{proof}

	This is simply the result \cite[Proposition 5.20]{Kings2025}, which follows from \cite[Theorem 1]{Katz1981a}. As in the proof of the former, the key observation is the fact that \Cref{mu level structure datum} implies that
	$$\widehat{\B}_p \cong \widehat{\G}_m \otimes_{\Z_p} (\Ofr \otimes_\Z \Z_p),$$
	and similarly for $\widehat{\B}^\vee_p$. 

\end{proof}

\begin{remark}

	Although we have been working up to this point mostly with the formal completions $\widehat{\B}_p$ and not $\widehat{\B}$ (which were the completions along the ``whole'' zero section), by the isomorphism \eqref{eq:both formal compls agree} we know that the ring of global sections appearing in this result is the same (sans topology) in both cases.

\end{remark}

If we take the coinvariants of the isomorphism in \Cref{global sections are measures}, the image of the theta function $\vartheta_{\Gamma, \B}(f,y)$ will only determine a measure up to an equivalence class. Instead of choosing a representative, we may instead use the following result, by which we will write these equivalence classes as measures on a profinite quotient.

\begin{lemma}\label{coinv of measures are measures in quot}

	Let $G$ be a profinite group, and $H$ a normal closed subgroup which is finitely topologically generated. Then, there exists a canonical isomorphism
	$$\textnormal{Meas}(G,R)_H \xrightarrow{\sim} \textnormal{Meas}(G/H,R),$$
	which commutes with continuous maps $R \to S$, where $S$ is also a $p$-adic ring.

\end{lemma}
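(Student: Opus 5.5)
The plan is to identify both sides with duals of function spaces and reduce the statement to a statement about continuous functions. By definition, $\textnormal{Meas}(G,R) = \Hom_R(C(G,R),R)$ and $\textnormal{Meas}(G/H,R) = \Hom_R(C(G/H,R),R)$. Pullback along the projection $\pi \colon G \to G/H$ identifies $C(G/H,R)$ with the $H$-invariant functions $C(G,R)^H$. The candidate map is therefore $\pi_*$ of \Cref{maps of measures def}, which sends $\mu$ to its restriction to $C(G,R)^H$. This map kills every element of the form $h\cdot\mu - \mu$, since $\int (h\cdot \phi - \phi)\, d\mu = 0$ for $H$-invariant $\phi$. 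Hence it factors through a map $\textnormal{Meas}(G,R)_H \to \textnormal{Meas}(G/H,R)$. It plainly commutes with continuous base change $R\to S$, because both constructions do.

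\emph{Surjectivity.} Given $\nu$ on $G/H$, I will lift it to a measure on $G$. Write $G = \varprojlim G/U$ over open normal subgroups $U$. A measure is a compatible system of finitely additive functions on the finite quotients $G/U$, bounded when $R$ is viewed $p$-adically; equivalently, $\textnormal{Meas}(G,R) = \varprojlim_U R[G/U]$, using that $R$ is $p$-adic so that $C(G,R)$ is the completion of the locally constant functions. Similarly $\textnormal{Meas}(G/H,R) = \varprojlim_U R[G/HU]$. The maps $R[G/U] \to R[G/HU]$ are surjective, with kernel the augmentation ideal $I_{H}R[G/U]$ generated by the elements $(h-1)$. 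To lift compatibly, I will use a section: choose a continuous section $s\colon G/H \to G$, which exists for profinite groups. Then $s_*\nu$ maps to $\nu$, so $\pi_*$ is surjective.

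\emph{Injectivity, the main obstacle.} I must show that the kernel of $\pi_*$ equals the closure-free submodule $I_H\textnormal{Meas}(G,R)$. At finite level, $\ker(R[G/U]\to R[G/HU]) = \sum_i (h_i-1)R[G/U]$, where $h_1,\dots,h_r$ are topological generators of $H$. This uses finite topological generation: the images of the $h_i$ generate $HU/U$. Given $\mu$ in the kernel, I will write its image $\mu_U$ at each level as $\sum_i (h_i-1)\mu_{U,i}$. The difficulty is choosing the $\mu_{U,i}$ compatibly in $U$ so that they assemble into measures $\mu_i$ with $\mu = \sum_i (h_i-1)\mu_i$. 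Here the \emph{fixed finite} set of generators is used. For each $U$ the set of solutions $(\mu_{U,i})_i$ is a coset of a submodule of the compact (or, $p$-adically, Mittag-Leffler) module $R[G/U]^r$. Reducing modulo $p^m$ makes everything finite when $R/p^m$ is handled level by level, and a compactness / Mittag-Leffler argument over the directed system $(U,m)$ then produces a compatible family of solutions. If $R/p^m$ is not finite, I will instead argue that the transition maps between solution sets are surjective. This holds because $(h_i-1)$-relations lift along $R[G/U']\to R[G/U]$ via the surjection of augmentation ideals, which gives Mittag-Leffler, so $\varprojlim^1$ vanishes. Finally, the resulting isomorphism is canonical, being induced by $\pi_*$.
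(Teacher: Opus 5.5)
\textbf{Verdict: genuine gap.} Most of your set-up is sound. The map is $\pi_*$ and it kills $h\mu-\mu$. The continuous section $s\colon G/H\to G$ gives surjectivity, which is a cleaner route than the paper's Mittag-Leffler argument for the ideals $I_U$. The finite-level kernel is $\sum_i(h_i-1)R[G/U]$. When $R$ is compact, i.e.\ all $R/p^m$ are finite, your compactness argument for choosing compatible solutions is correct. It is essentially the paper's step that the image of $\prod_i R\llbracket G\rrbracket$ under $(z_i)\mapsto\sum_i z_i(\gamma_i-1)$ is closed, and that step likewise presupposes $R$ compact.

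The gap is the remaining case, $R/p^m$ infinite, which is exactly where the lemma is used ($R=\Ofr_{\C_p}$). There you claim the transition maps between the solution sets $S_U=\{(z_i)_i:\sum_i(h_i-1)z_i=\mu_U\}$ are surjective, because $I_{U'}\to I_U$ is surjective. That surjectivity only lets you lift the right-hand side $\mu_U$, not a chosen solution, and the claim is false.

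Take $G=H=\Z_p$ with generator $\gamma$ and $U_k=p^k\Z_p$. In $R[\Z/p^k]$ the solutions of $(\gamma-1)z=\mu_k$ form a coset $z_k+RN_k$, where $N_k$ is the sum of all group elements. Since $N_{k+j}\mapsto p^jN_k$, the image of $S_{k+j}$ in $S_k$ is a single coset of $p^jRN_k$. So the maps are not surjective, and the system is not even Mittag-Leffler. The obstruction to lifting a solution is a $\mathrm{Tor}_1$-type group, here $R/pR$. The conclusion does hold in this example, but only because $\varprojlim^1$ of $R\xleftarrow{p}R\xleftarrow{p}\cdots$ vanishes by $p$-adic completeness, not by Mittag-Leffler.

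What you actually need is $\varprojlim^1_U K_U=0$ for $K_U=\ker\bigl(R[G/U]^r\to R[G/U]\bigr)$. Here is one way to get it, for $p$-torsion-free $R$ (as in the paper's applications) and a countable cofinal family of $U$ (true for $G=(\Ofr\otimes_\Z\Z_p)^{\times,2}$).
\begin{enumerate}
\item The sequence $0\to K_U(\Z_p)\to\Z_p[G/U]^r\to\Z_p[G/U]\to\Z_p[G/HU]\to0$ consists of finite free $\Z_p$-modules, so it stays exact after any base change.
\item Since $R/p^m$ is free over $\Z/p^m$, the system $K_U(R/p^m)$ is a direct sum of copies of the finite system $K_U(\Z/p^m)$. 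Hence it is Mittag-Leffler, which proves the statement modulo $p^m$.
\item The kernels of $K_U(R/p^{m+1})\to K_U(R/p^m)$ are $K_U(\Z_p)\otimes_{\Z_p}R/pR$, which are Mittag-Leffler for the same reason. So $\varprojlim_U K_U(R/p^{m+1})\to\varprojlim_U K_U(R/p^m)$ is surjective, and you can pass to the limit over $m$.
\end{enumerate}
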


\begin{proof}

	First, note that we have isomorphisms
	\begin{align*}
	\text{Meas}(G, R) & \cong R \llbracket G \rrbracket = \varprojlim_U R[G / U], \text{ and}\\
	\text{Meas}(G/H, R) & \cong R \llbracket G/H \rrbracket = \varprojlim_U R[(G/U) / (H / H \cap U)],
	\end{align*}
	where in both limits, $U$ runs over all open normal subgroups of $G$. For simplicity, we write $G_U := G / U$, and $H_U := H / H \cap U$, which are finite (and discrete) groups for any such $U$. We thus show first that there is an isomorphism $\text{Meas}(G_U, R)_{H_U} \xrightarrow{\sim} \text{Meas}(G_U / H_U, R)$. Consider the obvious morphism of rings 
	$$\varphi_U : R[G_U] \to R[G_U / H_U], \quad \sum_i r_i \cdot g_i \mapsto \sum_i r_i \cdot g_iH_U. $$
	It is immediate that this map is surjective. We then want to show that its kernel is equal to the ideal $I_U$ generated by the elements $h-1$ for all $h \in H_U$. It is clear that $I_U \subset \ker(\varphi_U)$, since $\varphi_U(h-1) = 0$ for all $h \in H_U$. Assume then that $\sum_{i = 1}^n r_i \cdot g_i H_U = 0$ for some $r_i \in R, g_i \in G_U$. Grouping together the $g_i$ defining the same equivalence class in $H_U$, we may assume as well that $g_i H_U = g_j H_U$ for all $i, j$, which means that $\sum_{i = 1}^n r_i = 0$. In fact, for all $j > 1$, there exists an element $h_j \in H_U$ such that $g_j = g_1 h_j$. Setting $h_1 := 1$, we can then write (in $R[G_U]$)
	$$\sum_{i = 1}^n r_i \cdot g_i = \sum_{i = 1}^n r_i \cdot g_1 h_i = g_1 \cdot \left( \sum_{i = 1}^n r_i \cdot h_i \right),$$
	where for the last equality we see $R[G_U]$ as a right $R[H_U]$-module. Since the element $\sum_{i =1}^n r_i \cdot h_i$ is clearly in $I_U$, it follows that $\ker(\varphi_U) = I_U$. We have thus obtained a short exact sequence
	\begin{equation}\label{kernel is coinv, fin case}
	\begin{tikzcd}
	0 \arrow{r} & I_U \arrow{r} & R[G_U] \arrow{r}{\varphi_U} & R[G_U / H_U] \arrow{r} & 0.
	\end{tikzcd}
	\end{equation}
	Taking the inverse limit $\varprojlim_U$, we obtain now a left-exact sequence of $R$-modules
	$$\begin{tikzcd}
	0 \arrow{r} & \varprojlim_U I_U \arrow{r} & R \llbracket G \rrbracket \arrow{r}{\varphi} & R \llbracket G/H \rrbracket,
	\end{tikzcd}$$
	where $\varphi := \varprojlim_U \varphi_U$ is the obvious map. Note that for any inclusion $U \subset V$ of open normal subgroups of $G$, the transition map $I_V \to I_U$ is clearly surjective. Therefore, the system $(I_U)_U$ is a Mittag-Leffler system, which means that this is in fact a short exact sequence.
	
	To finish our proof, we just need to show that $I := \varprojlim_U I_U$ is equal to the ideal $J$ generated by the elements $h - 1$ for all $h \in H$. We clearly have equalities $f_U(J) = I_U$ for all $U$, where the $f_U : R \llbracket G \rrbracket \to R[G/U]$ are the restriction maps. This gives an isomorphism
	 $$I \xrightarrow{\sim} \varprojlim_U f_U(J) \cong \overline{J}.$$
	Write $\gamma_1, \ldots, \gamma_r$ for the topological generators of $H$. By construction, the ideal $(\gamma_1 - 1, \ldots, \gamma_r - 1)$ is the image of the following map:
	 $$\prod_{i = 1}^r R\llbracket G \rrbracket \to R \llbracket G \rrbracket, \quad (z_1, \ldots, z_r) \mapsto \sum_i z_i (\gamma_i - 1).$$
	 Since this map is continuous (as $R \llbracket G \rrbracket$ is a topological ring), the image of the compact set $\prod_i R\llbracket G \rrbracket$ inside the Hausdorff space $R \llbracket G \rrbracket$ is closed. Since every element in $J$ is a limit of elements in $(\gamma_1 - 1, \ldots, \gamma_r - 1)$, it follows that $J = \overline{J} = (\gamma_1 - 1, \ldots, \gamma_r - 1)$.
	 
	Lastly, the fact that this isomorphism commutes with extension of scalars comes from the obvious functoriality of the short exact sequences \eqref{kernel is coinv, fin case} with respect to $R$.	

\end{proof}

\begin{remark}

	Explicitly, the map $\varphi \colon R \llbracket G \rrbracket \to R \llbracket G/H \rrbracket$ appearing on the proof can be described on the level of measures as the pushforward $p_* \colon \text{Meas}(G, R) \to \text{Meas}(G/H, R)$, where $p \colon G \to G/H$ is the projection map. 

\end{remark}

We may thus fix the profinite groups over which we want to define our final measure.

\begin{notation}

	Set $G := (\Ofr \otimes_\Z \Z_p)^\times \times (\Ofr \otimes_\Z \Z_p)^\times$ and $ H(\Gamma)$ as the closure of $\Gamma$ in $G$, where we see $\Gamma$ as a subgroup of $G$ via the morphism $\gamma \mapsto (\gamma^{-1}, \gamma)$.

\end{notation}

With this in mind, \Cref{coinv of measures are measures in quot} gives an isomorphism $\text{Meas}(G, R)_{\overline{\Gamma}} \cong \text{Meas}(G/H(\Gamma), R)$. Projecting to the smaller quotient, we have a composition
\begin{equation}\label{coinv of measures comp}
\begin{tikzcd}
\text{Meas}(G,R)_{\Gamma} \arrow{r} & \text{Meas}(G,R)_{\overline{\Gamma}} \arrow{r}{\sim} & \textnormal{Meas}(G/H(\Gamma), R).
\end{tikzcd}
\end{equation}
Combining all of these maps, we obtain our measure.

\begin{definition}\label{p-adic Eisenstein measure}\index[notation]{$mu univ$@$\mu_{\Gamma, \B(f,x)}$}

	Recall that $p \neq 2$ and consider again the pair $(f,x)$ from \Cref{conditions for p-adic measure affine}. We write $\widetilde{\mu}_{\Gamma, \B}(f,x)$ for the image of $\vartheta_{\Gamma, \B}(f,x)$ under the composition
	$$\begin{tikzcd}[column sep = small]
	H^0(\widehat{\B}_p \times \widehat{\B}^\vee_p, \Ocal_{\widehat{\B}_p \times \widehat{\B}^\vee_p})_\Gamma \arrow{r}{\sim} & \text{Meas}((\Ofr \otimes_\Z \Z_p)^2, R)_\Gamma \arrow{r} & \text{Meas}(G, R)_\Gamma \arrow{r} & \text{Meas}(G/H(\Gamma), R),
	\end{tikzcd}$$
	where the maps are (from left to right): the isomorphism of \Cref{global sections are measures}, the restriction to the units $G$, and the composition \eqref{coinv of measures comp}. Then, the \emph{$p$-adic Eisenstein measure} associated to the tuple $(\B / R, \omega(\B), x)$ and to $f$ is the measure
	$$\mu_{\Gamma, \B}(f,x) \in \textnormal{Meas}(G / H(\Gamma), R)$$
	defined as follows, depending on the group $\Gamma$.
	
	\begin{enumerate}[(1)]
	
		\item Assume that $\Gamma$ is contained in $\Ofr^{\times, +}$. Then, the aforementioned measure is given as 
		$$\mu_{\Gamma, \B}(f,x) := \ts^{-\underline{1}} \cdot \widetilde{\mu}_{\Gamma, \B}(f,x)$$
		(see again \Cref{maps of measures def}). Note that the function $\ts^{-\underline{1}}$ is $\Gamma$-equivariant, and thus defined on $G/H(\Gamma)$.
	
		\item For arbitrary $\Gamma$, it is defined as the image of $[\Gamma : \Gamma \cap \Ofr^{\times, +}]^{-1}\mu_{\Gamma \cap \Ofr^{\times, +}, \B}(f,x)$ under the pushforward map
		$$\text{pr}_* \colon \text{Meas}(G/H(\Gamma \cap \Ofr^{\times, +}), R) \to \text{Meas}(G/H(\Gamma), R)$$
		induced by the projection $\text{pr} \colon G/H(\Gamma \cap \Ofr^{\times, +}) \twoheadrightarrow G/H(\Gamma)$.
	
	\end{enumerate}
	
\end{definition}

Since we have followed the same steps as Kings-Sprang, we see that we have generalized their $p$-adic measure in the CM case.

\begin{lemma}\label{both measures agree affine case}

	Assume that $R = \Ofr_{\C_p}$, that $L$ is a CM extension of $F$ together with a CM type which is $p$-ordinary, and that $\B$ has CM by $L$ (compatibly with the RM by $F$). Then, $\widetilde{\mu}_{\Gamma, \B}(f,x)$ is equal to the $p$-adic Eisenstein measure constructed by Kings-Sprang in \cite[Definition 5.21]{Kings2025}.

\end{lemma}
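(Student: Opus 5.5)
The plan is to match the two constructions step by step. Each step in the construction of $\widetilde{\mu}_{\Gamma, \B}(f,x)$ above (restriction of $EK_{\Gamma, \B}(f)$ to $\widehat{\B}_x$, translation $T_x^*$, the trivialization $\widehat{\varrho}_x$, the cap product, and the identification with measures) should be seen to agree with the corresponding step in \cite[Section 5]{Kings2025}. The key preliminary observation is that, when $\B$ has CM by $L$ compatibly with the RM by $F$ and $\Gamma$ acts through the RM structure, the data entering our construction coincide with those used by Kings-Sprang. These data are the abelian scheme, the $\Gamma$-action, the subscheme $\D = \B[\bfr]\setminus\{x(R)\}$, and the function $f \in \Ocal_R[\D]^{0,\Gamma}$. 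In particular, the base $S = \Spec(\Ofr_{\C_p})$ carries the trivial $\Gamma$-action, so the class $EK_{\Gamma, \B}(f)$ of \Cref{EK first def} is literally the class of \cite[Theorem 2.20]{Kings2025}, since the definition above is a rewriting of theirs.

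Next I will check the $p$-adic trivializations. The $p$-ordinary CM type gives $\B$ a $\Gamma_{00}(p^\infty)$-structure $\mu_{p^\infty}\otimes \dfr^{-1}\hookrightarrow \B$, and I will verify that it is the one Kings-Sprang use, namely the one attached to the ordinary CM type via the splitting $L\otimes\Z_p \cong (F\otimes \Z_p)^2$. Granting this, \Cref{mu level structure datum} produces the same isomorphism $\widehat{\G}_m\otimes\dfr^{-1}\cong\widehat{\B}_p$. Consequently \Cref{Poincare p-adic triv} and \eqref{eq:Poincare isom triv} agree with \cite[Proposition 5.9, Lemma 5.12]{Kings2025}, the canonical bases $\omega(\B)_{can}$ and $\omega(\B^\vee)_{can}$ agree (using \eqref{eq:relation canonical bases}), and the cap product map of \Cref{cap product p-adic} is their Lemma 5.15 verbatim.

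Finally I will compare the identification with measures. \Cref{global sections are measures} is by construction \cite[Proposition 5.20]{Kings2025}, so the two isomorphisms agree as soon as the moment maps and canonical bases agree. I will also check that the $\Gamma$-action $(x,y)\mapsto(\gamma^{-1}x,\gamma y)$ matches theirs, which follows from \Cref{signs of dual}. After that, the restriction to $G$ and the passage to coinvariants through \Cref{coinv of measures are measures in quot} are formal.

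I expect the main obstacle to be the compatibility of the indexing and conventions when comparing characters of $L$ with those of $F$. Kings-Sprang index their characters by the CM type $\Sigma$ of $L$, whereas here the indexing is by $J_F$, and $\omega_{\B}$ decomposes along $\Sigma\leftrightarrow J_F$ via restriction. I will fix the bijection $\Sigma\to J_F$, $\varsigma\mapsto\varsigma|_F$, and check that under it $\ts^\alpha\st^\beta$ and the operators $\partial(\B)^\alpha\partial(\B^\vee)^\beta$ correspond exactly to their counterparts. Moments determine the measure, so once this correspondence is in place the two measures are equal.
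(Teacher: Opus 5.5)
Your proposal is correct and is essentially the paper's own argument. The paper gives no separate proof: it justifies the lemma only by remarking that the construction of $\widetilde{\mu}_{\Gamma,\B}(f,x)$ repeats Kings--Sprang's steps verbatim. Those steps are the class $EK_{\Gamma,\B}(f)$, restriction and translation, the trivialization \eqref{eq:Poincare isom triv}, the cap product of \Cref{cap product p-adic}, and the identification of \Cref{global sections are measures}. Your step-by-step matching, including the checks on the level structure, the $\Gamma$-action and the $\Sigma \leftrightarrow J_F$ indexing of characters, is a more detailed version of exactly that.

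Two small points. First, the $\Gamma_{00}(p^\infty)$-structure is part of the data of the test object $\B$, not something produced by the ordinary CM type, so the comparison is with Kings--Sprang's measure for that same structure. Second, for $\Gamma \not\subseteq \Ofr^{\times,+}$ the theta function $\vartheta_{\Gamma,\B}$ is defined through $\Gamma\cap\Ofr^{\times,+}$, so matching normalizations there also uses \Cref{theta functs in smaller subgp}.
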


\begin{remark}

	As we will see later, the reason why we work with the ``twisted'' measure $\mu_{\Gamma, \B}(f,x)$ and not with $\widetilde{\mu}_{\Gamma, \B}(f,x)$ is because the one which will generalize the measure $\mu_{\Gamma, KE}$ constructed by Katz is the former and not the latter.

\end{remark}

We also obtain a result determining how this $p$-adic measure changes when we go from $\Gamma$ to a smaller, finite index subgroup when we are in the subgroup of totally positive units.

\begin{lemma}\label{measure in small subgps}

	Let $\Gamma' \leq \Gamma$ be a finite index subgroup, $(f,x)$ as above, and assume that $\Gamma \leq \Ofr^{\times, +}$. Then, we have that
	$$\textnormal{pr}_{\Gamma', *}\mu_{\Gamma', \B}(f,x) = [\Gamma : \Gamma']\mu_{\Gamma, \B}(f,x),$$
	with $\textnormal{pr}_{\Gamma'} \colon G/H(\Gamma') \twoheadrightarrow G/H(\Gamma)$ the obvious projection map. 

\end{lemma}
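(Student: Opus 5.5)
The plan is to reduce the statement to \Cref{theta functs in smaller subgp} and then track the theta functions through the maps of \Cref{p-adic Eisenstein measure}. Since $\Gamma' \leq \Gamma \leq \Ofr^{\times,+}$, both measures are defined by case (1), so
$$\mu_{\Gamma', \B}(f,x) = \ts^{-\underline{1}} \cdot \widetilde{\mu}_{\Gamma', \B}(f,x), \qquad \mu_{\Gamma, \B}(f,x) = \ts^{-\underline{1}} \cdot \widetilde{\mu}_{\Gamma, \B}(f,x).$$
The function $\ts^{-\underline{1}}$ is $\Gamma$-equivariant, so it descends to $G/H(\Gamma)$, and its pullback along $\textnormal{pr}_{\Gamma'}$ is the corresponding function on $G/H(\Gamma')$. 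Multiplication by it therefore commutes with pushforward: directly from \Cref{maps of measures def}, we have $\textnormal{pr}_{\Gamma',*}(H_0\circ \textnormal{pr}_{\Gamma'} \cdot \mu) = H_0 \cdot \textnormal{pr}_{\Gamma',*}\mu$. It is thus enough to show
$$\textnormal{pr}_{\Gamma',*}\widetilde{\mu}_{\Gamma', \B}(f,x) = [\Gamma:\Gamma']\,\widetilde{\mu}_{\Gamma, \B}(f,x).$$

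For this, I would first check that the chain of maps defining $\widetilde{\mu}$ is compatible with the natural projection of coinvariants $(-)_{\Gamma'} \to (-)_\Gamma$ on the source and with $\textnormal{pr}_{\Gamma',*}$ on the target. Concretely:
\begin{enumerate}[(i)]
\item The isomorphism of \Cref{global sections are measures} is $\Gamma$-equivariant and involves no choice depending on the group, so it commutes with passing from $\Gamma'$-coinvariants to $\Gamma$-coinvariants.
\item Restriction to the units $G$ obviously commutes with this projection.
\item For \eqref{coinv of measures comp}, the isomorphism of \Cref{coinv of measures are measures in quot} is induced by the pushforward $p_*$ along $G \to G/H$, as noted in the remark after it. The relevant square therefore commutes: the map $\textnormal{Meas}(G,R)_{\Gamma'} \to \textnormal{Meas}(G/H(\Gamma'),R)$ followed by $\textnormal{pr}_{\Gamma',*}$ equals the projection to $\Gamma$-coinvariants followed by $\textnormal{Meas}(G,R)_\Gamma \to \textnormal{Meas}(G/H(\Gamma),R)$. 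Both composites are $(\textnormal{pr}_{\Gamma'} \circ p')_* = p_*$ on representatives.
\end{enumerate}
Once this compatibility is established, the claim follows by applying the chain to the identity $\vartheta_{\Gamma', \B}(f,x) = [\Gamma:\Gamma']\vartheta_{\Gamma, \B}(f,x)$ in $H^0(\widehat{\B}_p \times \widehat{\B}^\vee_p, \Ocal)_\Gamma$, which is \Cref{theta functs in smaller subgp}. Here $\vartheta_{\Gamma'}$ is read via its image under the projection from $\Gamma'$-coinvariants.

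The step that needs the most care is confirming that \Cref{theta functs in smaller subgp} is stated with exactly this projection $(\,)_{\Gamma'} \to (\,)_\Gamma$, as opposed to a corestriction-type map. I would also check that the canonical-basis trivialization $\TSym^{\underline{1}}(\omega) \cong R$ of \Cref{cap product p-adic} is $\Gamma$-equivariant for $\Gamma \leq \Ofr^{\times,+}$, so that the same trivialization is used for both groups. If the lemma were instead phrased with a transfer, I would insert the elementary identity on coinvariants relating the transfer to the projection. For a finite index subgroup acting on a module with a compatible $\Gamma$-action, that identity gives exactly the factor $[\Gamma:\Gamma']$ after projecting.
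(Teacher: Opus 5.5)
Your proposal is correct and follows the paper's route. The paper's proof simply says the lemma is a direct consequence of \Cref{theta functs in smaller subgp}, and your argument supplies the omitted bookkeeping: commuting multiplication by $\ts^{-\underline{1}}$ with $\textnormal{pr}_{\Gamma',*}$, and checking that the chain defining $\widetilde{\mu}$ is compatible with the projection of coinvariants and with pushforward. Your worry about projection versus transfer resolves in your favour, since on $H_0$ the corestriction $H_0(\Gamma', M) \to H_0(\Gamma, M)$ is exactly the natural projection $M_{\Gamma'} \to M_\Gamma$, which is how the paper uses that lemma.
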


\begin{proof}

	This is a direct consequence of \Cref{theta functs in smaller subgp}.

\end{proof}

\subsection{\textit{p}-adic interpolation}

As computed by Kings-Sprang in \cite{Kings2025}, the $p$-adic measures $\mu_{\Gamma, \B}(f,x)$ interpolate the Eisenstein-Kronecker classes. We now use this result in order to show that these measures coincide with the Katz Eisenstein measures $\mu_{\Gamma, KE}(f)$.

\subsubsection{Interpolation and Eisenstein-Kronecker classes}

We start by setting some notation.

\begin{notation}

	Write 
	$$B:= \B \times_R \text{Spec}(K), \quad B^\vee := \B^\vee \times_R \text{Spec}(K)$$ 
	for the generic fibers of $\B$ and $\B^\vee$. Set as well $\omega(B)$, $\omega(B^\vee)$ to be the images of the differential forms $\omega(\B)$ and $\omega(\B^\vee)$ under the pullback maps $B \to \B$ and $B^\vee \to \B^\vee$.

\end{notation}

Recall from \Cref{quot by can subgps general ring def} the abelian schemes $\B_{(n)}/R$, $\B_{(n)}^\vee/R$ and the étale isogenies
$$V^n_\B \colon \B_{(n)} \to \B, \quad V^n_{\B^\vee} \colon \B_{(n)}^\vee \to \B^\vee.$$
Taking the pullback of $\D$ and of $f$ under $V^n_\B$, we can write (using that $\bfr$ is coprime to $p$):
$$\D' = \B_{(n)}[\bfr] \subset \B_{(n)}, \quad f' \in R[\D']^{0, \Gamma}.$$
As before, we can take the generic fibers of these abelian schemes to obtain abelian varieties $B_{(n)}/K$ and $B_{(n)}^\vee / K$, as well as isogenies $V^n_B$, $V^n_{B^\vee}$. 

Next, we will use a result of Kings-Sprang to show that the value of this $p$-adic measure at $\Gamma$-equivariant functions of the form $\ts^{\alpha + \underline{1}}\st^\beta H$, with $H$ some locally constant function supported on $G$, gives back a formula relating these moments to the $p$-adic specialization of the Eisenstein-Kronecker classes and to the partial Fourier transform of $H$ from \Cref{partial Fourier alg def}.

\begin{proposition}\label{measure at loc const functs general}

	Let $H$ be a locally constant function $(\Ofr \otimes_\Z \Z_p)^2 \to R$ which is supported on the units $G$ and constant modulo $p^n$. Assume that $\Gamma$ fixes the points of $B_{(n)}[p^n]$, and write $\omega(B_{(n)}) := (V^n_B)^*(\omega(B))$. Then, for any $\alpha, \beta \in I_F^+$ such that $\ts^{\alpha + \underline{1}}\st^\beta H$ is $\Gamma$-equivariant, we have an equality
	$$\frac{1}{\Omega_p^{\alpha + \underline{1} + \beta}} \int_{G/H(\Gamma)} \ts^{\alpha + \underline{1}} \st^\beta H d\mu_{\Gamma, \B}(f,x) = $$
	$$ = \sum_{s' \in B_{(n)}[p^n]} P_BH(s') \varphi_{p, B_{(n)}}(EK^{\beta, \alpha}_{\Gamma, B_{(n)}}(f', x' + s'))(\omega(B_{(n)})^{[\alpha + \underline{1}]} \otimes \omega(B_{(n)})^{[\beta]}),$$
	where $x'$ is a lift of $x$ via the Verschiebung map.

\end{proposition}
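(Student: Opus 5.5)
The plan is to reduce to the corresponding interpolation formula of Kings-Sprang \cite[Theorem 5.27 / Proposition 5.25]{Kings2025}, whose proof only uses the $\Gamma_{00}(p^\infty)$-trivialization of $\widehat{\B}_p$ and $\widehat{\B}^\vee_p$, the moment map and the partial Fourier transform. None of these require CM. First, by \Cref{measure in small subgps} and the definition of $\mu_{\Gamma,\B}(f,x)$ for general $\Gamma$, I reduce to $\Gamma \leq \Ofr^{\times,+}$. The factor $[\Gamma:\Gamma\cap\Ofr^{\times,+}]^{-1}$ is compensated on the right-hand side by the analogous index relation for the Eisenstein-Kronecker classes, which comes from the cap product with $\xi$ in \eqref{cap prod with generator}. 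Once $\Gamma$ is totally positive, the twist by $\ts^{-\underline{1}}$ turns the integral into $\int \ts^{\alpha}\st^\beta H\, d\widetilde{\mu}_{\Gamma,\B}(f,x)$. I then lift this along the projection $G\to G/H(\Gamma)$ to a representative measure in $\textnormal{Meas}((\Ofr\otimes_\Z\Z_p)^2,R)$. This is legitimate because the integrand is $\Gamma$-equivariant.

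Next, I write $H$ by partial Fourier inversion \eqref{eq:partial Fourier inversion} as a sum over $s'\in\B_{(n)}[p^n]$ of $P_\B H(s')$ times a character in the first variable times the indicator function of a coset in the second variable. The multiplication of a measure $\mu_h$ by such a character corresponds, under \Cref{global sections are measures}, to translating $h$ on $\widehat{\B}_p^\vee$-side by a $p^n$-torsion point. The restriction to a coset corresponds to a trace along $\ker(V^n)$, i.e.\ to passing to $\B_{(n)}$ via the étale isogeny $V^n_\B$. Combining both, I show that the measure associated to $\vartheta_{\Gamma,\B}(f,x)$ restricted in this way equals the measure associated to $\vartheta_{\Gamma,\B_{(n)}}(f',x'+s')$. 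This uses the compatibility of $EK_\Gamma$ with étale isogenies, namely the norm/pullback compatibility from \cite[Section 2]{Kings2025}, together with \Cref{Poincare translation inv}. Here \Cref{Fourier both types agree} identifies the Cartier pairing with the $\zeta$-pairing. The hypothesis that $\Gamma$ fixes $B_{(n)}[p^n]$ makes each summand $\Gamma$-equivariant.

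Then I compute moments. By \Cref{global sections are measures}, $\int \ts^\alpha\st^\beta d\mu_h=(\partial(\B)^\alpha\otimes\partial(\B)^\beta)h$ with respect to the canonical bases. For $h=\vartheta_{\Gamma,\B_{(n)}}(f',y)$, these derivatives at the origin are the moment map $\mom{\varrho}_y$ applied to $\nabla^{\alpha}EK$ followed by the unit root splitting. This is exactly the content of \cite[Proposition 5.23]{Kings2025}: the $\partial(\B^\vee)$-derivatives of the trivialized Poincaré bundle correspond to $\mom{\varrho^\natural}^\beta$ projected via the unit root decomposition \eqref{eq:unit root decomp affine}, and the $\partial(\B)$-derivatives correspond to $\nabla^\alpha$. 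Hence this gives $\varphi_{p,B_{(n)}}(EK^{\beta,\alpha}_{\Gamma,B_{(n)}}(f',y))$ evaluated on $\omega(\B_{(n)})_{can}^{[\alpha+\underline{1}]}\otimes\omega(\B_{(n)})_{can}^{[\beta]}$. The extra $\underline{1}$ comes from $\Omega^g$ in \Cref{cap product p-adic}. Finally I pass to the generic fiber $K$, where the specialization is defined, and change from canonical bases to $\omega(B_{(n)})$. The canonical basis of $\B_{(n)}$ is compatible with $V^n$ and $p$-adic periods are invariant under étale isogenies, so this change produces the factor $\Omega_p^{-(\alpha+\underline{1}+\beta)}$.

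I expect the main obstacle to be the middle step: checking that twisting by the character and restricting to cosets on the measure side matches precisely translation by $s'$ together with pullback along $V^n_\B$ on the theta-function side, with the correct normalization $p^{-ng}$ and the correct signs and inverses. Concretely, I must track the contragredient $\Gamma$-action conventions and $\langle\cdot,\cdot\rangle_{F^n}^{-1}$ through \Cref{partial Fourier alg def}. To settle this, I would first verify it on the finite level, on the torsion $\B_{(n)}[p^n]$, mirroring Kings-Sprang's CM argument line by line.
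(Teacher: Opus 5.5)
The skeleton of your argument is the paper's proof. You reduce from $\Gamma$ to $\Gamma\cap\Ofr^{\times,+}$ via the definition of $\mu_{\Gamma,\B}(f,x)$ and the index relation on the Eisenstein--Kronecker side (the paper quotes \cite[Corollary 2.29]{Kings2025} for this). You then use $\mu_{\Gamma,\B}(f,x)=\ts^{-\underline{1}}\cdot\widetilde{\mu}_{\Gamma,\B}(f,x)$ to turn the left-hand side into $\int\ts^{\alpha}\st^{\beta}H\,d\widetilde{\mu}_{\Gamma,\B}(f,x)$, which is the quantity computed in \cite[Theorem 5.23, Corollary 5.25]{Kings2025}. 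The paper stops there, observing that the Kings--Sprang proof never uses CM. Where you go further and re-derive that proof, the central step fails as you state it.

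Write $s'=(r,s)$ with $r\in\beta_{(n)}(\mu_{p^n}\otimes_\Z\dfr^{-1})$ and $s\in\ker(V^n_\B)$. Suppose $x'+s'$ has a nonzero component in the étale subgroup $\ker(V^n_\B)$, which happens for most $s'$. Then $x'+s'$ is not torsion for any isogeny of $\B_{(n)}$ with étale dual over $R$. Indeed, that étale component stays nonzero on the special fibre. On the other hand, the $p$-part of a finite flat subgroup with étale Cartier dual is of multiplicative type, hence has no nonzero points in characteristic $p$. So neither the splitting principle nor \Cref{Poincare translation inv} is available at $x'+s'$, and $\vartheta_{\Gamma,\B_{(n)}}(f',x'+s')$, hence its measure, does not exist over $R$. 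Passing to $K$ does not help either, since the theta function is built from $p$-adic formal completions and the $\Gamma_{00}(p^\infty)$-trivialization, which are integral constructions. This is exactly why the right-hand side of the proposition is formed on the generic fibre $B_{(n)}$, where $[p^n]$ acquires an étale dual (cf.\ the remark after the proposition).

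Consequently the identity of measures you propose cannot be stated or proved over $R$. The comparison has to be made on moments: compute them integrally on $\B$, and match them with $\varphi_{p,B_{(n)}}(EK^{\beta,\alpha}_{\Gamma,B_{(n)}}(f',x'+s'))$ only after inverting $p$. Alternatively, quote Kings--Sprang for this step, as the paper does.

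There is also a smaller slip in the same paragraph. Twisting $\mu_h$ by the first-variable character $t\mapsto\langle V^n_\B(r),t\rangle_{F^n}$ is translation of $h$ by a point of $C_n=\ker(F^n_\B)$ on the $\widehat{\B}_p$-factor, not on $\widehat{\B}^\vee_p$. The first variable is the one whose moments are the $\partial(\B)^\alpha$. It is character-weighted averages of translations on $\widehat{\B}^\vee_p$, by points of the multiplicative subgroup Cartier dual to $\ker(V^n_\B)$, that produce the coset restriction in the second variable.
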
 

\begin{proof}

	This follows by repeating the same proof as \cite[Theorem 5.23, Corollary 5.25]{Kings2025}, as the arguments still hold with the weaker assumption that $\B$ has real and not complex multiplication. Note as well that the formula in the mentioned work is for
	$$\int_{G/H(\Gamma)} \ts^\alpha \st^\beta H d\widetilde{\mu}_{\Gamma, \B}(f,x)$$
	whenever $\ts^{\alpha}\st^\beta H$ is $\Gamma$-equivariant. If we place ourselves in the case where $\Gamma$ is contained in $\Ofr^{\times, +}$, then this holds, and moreover $\mu_{\Gamma, \B}(f,x) = \ts^{-\underline{1}} \cdot \widetilde{\mu}_{\Gamma, \B}(f,x)$, which combined gives the desired equality. This then implies the result for arbitrary $\Gamma$ by definition of the $\mu_{\Gamma, \B}(f,x)$ together with \cite[Corollary 2.29]{Kings2025}.

\end{proof}

\begin{remark}

	As it was observed in \cite[p.85]{Kings2025}, this equality shows in particular that the right-hand side is defined integrally, although a priori we may only give a rational construction on the generic fiber $B_{(n)}$. Indeed, the isogeny $[p^n]$ does not have an étale dual over $R$, but it does become étale over $K$.

\end{remark}

Lastly, we can combine this explicit expression with \Cref{p-adic specialization is Katz Eisenstein series} to compare the value of our measure at a locally constant function with the $p$-adic Eisenstein series $\widehat{G}_{k, Hf}$. Note that the following result also holds for $\Gamma$ which don't fix the $p^n$-torsion subgroup of these abelian schemes.

\begin{corollary}\label{measure is Katz Eisenstein series affine}

	Let $R = \Ofr_{\C_p}$. Consider a locally constant function $H \colon (\Ofr \otimes_\Z \Z_p)^2 \to \Afr_p$ which is supported in $G$ and is of parity $k$. Fix as well a function $f \colon (\Ofr/l\Ofr)^2 \to \Afr_p$, and assume that $\Gamma$ fixes $\B_{(n)}[l]$. We then have an equality
	$$\frac{(-1)^{\frac{g(g - 2k - 1)}{2}}}{\sqrt{d_F}}\int_{G/H(\Gamma)} \ts^{\underline{k}}H d\mu_{\Gamma, \B}(f,e) =  \widehat{G}_{k,Hf, \Gamma}(B).$$

\end{corollary}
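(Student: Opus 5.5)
We specialize \Cref{measure at loc const functs general} to $x = e$, $\alpha = \underline{k} - \underline{1}$, $\beta = \underline{0}$, and then identify the resulting sum of $p$-adic specializations with Katz's Eisenstein series via \Cref{p-adic specialization is Katz Eisenstein series} evaluated at the point of $\Mcal$ defined by $\B$.

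First we reduce to the case where $\Gamma$ fixes $\B_{(n)}[p^n]$ and is totally positive. Choose a finite index subgroup $\Gamma' \leq \Gamma \cap \Ofr^{\times,+}$ acting trivially on $\B_{(n)}[lp^n]$, where $n$ is such that $H$ factors through $(\Ofr/p^n\Ofr)^2$. The left-hand side scales by $[\Gamma:\Gamma']$ when passing to $\Gamma'$. For the $\Ofr^{\times,+}$-part this is \Cref{measure in small subgps}, and for the index-$2$ part it is the definition of $\mu_{\Gamma,\B}$ for non-totally-positive $\Gamma$. Since $\ts^{\underline{k}}H$ is $\Gamma$-equivariant (by parity $k$), integrating over $G/H(\Gamma')$ against the pulled-back function gives $[\Gamma:\Gamma']$ times the integral over $G/H(\Gamma)$. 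The right-hand side scales by the same factor, by \Cref{Katz Eis series in smaller subgps}. So it suffices to treat $\Gamma'$.

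Now apply \Cref{measure at loc const functs general} with $x = e$, $x' = e$, $\alpha+\underline 1 = \underline k$, $\beta = \underline 0$. This expresses $\Omega_p^{-\underline k}\int \ts^{\underline k}H\,d\mu_{\Gamma,\B}(f,e)$ as
\[\sum_{s'} P_BH(s')\,\varphi_{p,B_{(n)}}(EK^{\underline 0,\underline k-\underline 1}_{\Gamma,B_{(n)}}(f',s'))(\omega(B_{(n)})^{[\underline k]}).\]
By linearity this sum equals $\varphi_{p,B_{(n)}}(EK^{\underline 0,\underline k - \underline 1}_{\Gamma, B_{(n)}}(f', P_BH))(\omega(B_{(n)})^{[\underline k]})$. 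Here $f'$ is the pullback of $f$ to $\B_{(n)}[l]$ via the $\Gamma(l)$-structure. This uses that $V^n$ is an isomorphism on prime-to-$p$ torsion, which is compatible with the level structure on $\A_{(n)}$ used in \Cref{p-adic specialization is Katz Eisenstein series}.

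Next, let $\xi\colon \Spec(\C_p)\to\Mcal_{\C_p}$ classify $B$, so that $B_{(n)} = \xi^*\A_{\C_p,(n)}$. We pull back both sides of \Cref{p-adic specialization is Katz Eisenstein series} along $\xi$, using three facts:
\begin{enumerate}[(i)]
\item the compatibility of $\varphi_p$ with pullback to points (the diagram following \Cref{p-adic specialization def});
\item base change of the EK classes for $\A_{(n)}$, which is \Cref{EK stable under base change Frob};
\item the compatibility of $(V^n_\A)^*$ with $\xi^*$, as in the proof of \Cref{EK classes comparison full level case}.
\end{enumerate}
Together with $P_\A H = P_B H$ (\Cref{Fourier both types agree}), this turns the pullback of the right-hand side of \Cref{p-adic specialization is Katz Eisenstein series} into the expression above. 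The pullback of the left-hand side is $\Omega_{p,B}^{-\underline k}\widehat G_{k,Hf}(B)$, where $\widehat{G}_{k,Hf}(B)$ means evaluation at the test object $B$, i.e.\ with the canonical basis. The basis $\omega(\A)$ there is chosen so that it pulls back to $\omega(\B)$; the period $\Omega_p$ is then the same on both sides, and cancels.

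The main obstacle is the sign and normalization bookkeeping. We need to check that the $\ts^{-\underline 1}$ twist in \Cref{p-adic Eisenstein measure} exactly converts Kings--Sprang's moment of $\ts^{\alpha}\st^\beta$ into $\ts^{\alpha+\underline 1}$. We also need the comparison of $\Omega_p$ for $\omega(B)$ versus $\omega(B_{(n)})$: this holds because $V^n$ is étale, so the period is unchanged by \Cref{p-adic periods def}. Finally, the restriction to $\Gamma'$ must not change $P_BH$ or $f'$. Once these are verified, multiplying through by $(-1)^{g(g-2k-1)/2}/\sqrt{d_F}$ yields the claimed equality.
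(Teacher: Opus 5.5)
Your proposal is correct and follows the paper's proof essentially step by step. You reduce to a totally positive finite-index subgroup fixing $B_{(n)}[lp^n]$ using the definition of $\mu_{\Gamma,\B}$, \Cref{measure in small subgps} and \Cref{Katz Eis series in smaller subgps}, and then combine \Cref{measure at loc const functs general} (with $x=e$, $\alpha=\underline{k}-\underline{1}$, $\beta=\underline{0}$) with \Cref{p-adic specialization is Katz Eisenstein series} evaluated at $B$. There are two harmless imprecisions: $[\Gamma:\Gamma\cap\Ofr^{\times,+}]$ is a power of $2$ rather than $2$, and you need not (and in general cannot) choose the global basis $\omega(\A)$ to restrict to $\omega(\B)$; since both sides are homogeneous of weight $\underline{k}$ in the basis, the period cancels for any choice.
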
 

\begin{proof}

	Let $\Gamma' := \Gamma \cap \Ofr^{\times, +}$ and $\Gamma'_n \leq \Gamma'$ be a finite index subgroup which fixes $B_{(n)}[p^n]$. We claim that it is enough to show this equality for $\Gamma'_n$. Indeed, using our definition of $\mu_{\Gamma, \B}(f, e)$ together with \Cref{measure in small subgps}, we see that
	$$\int_{G/H(\Gamma)} \ts^{\underline{k}}H d\mu_{\Gamma, \B}(f,e) = \frac{1}{[\Gamma : \Gamma'_n]} \int_{G/H(\Gamma'_n)} \ts^{\underline{k}}H d\mu_{\Gamma'_n, \B}(f,e).$$
	On the other hand, \Cref{Katz Eis series in smaller subgps} shows that $\widehat{G}_{k,Hf, \Gamma} = [\Gamma : \Gamma'_n]^{-1}\widehat{G}_{k,Hf, \Gamma'_n}$, which proves our claim.
	
	Then, the formula of \Cref{measure at loc const functs general} shows that
	$$\frac{1}{\Omega_p^{\underline{k}}} \int_{G/H(\Gamma'_n)} \ts^{\underline{k}} H d\mu_{\Gamma'_n, \B}(f,e) = \sum_{s' \in B_{(n)}[p^n]} P_BH(s') \varphi_{p, B_{(n)}}(EK^{\underline{0}, \underline{k} - \underline{1}}_{\Gamma'_n, B_{(n)}}(f, s'))(\omega(B_{(n)})^{[\underline{k}]}).$$
	Since the pair $(f,e)$ can be obtained via pullback from the universal abelian scheme, our desired equality thus follows by evaluating the expression of \Cref{p-adic specialization is Katz Eisenstein series} at $B$. Note that we need to pass to the finite index subgroup $\Gamma'_n$, since if there are elements of norm $-1$ in $\Gamma$, then $\underline{k}$ may not be of critical type for $\Gamma$, case in which we cannot define the Eisenstein-Kronecker classes for the pair $(\alpha, \beta) = (\underline{k} - \underline{1}, \underline{0})$. 

\end{proof}

If we assume that $l = 1$, so that we don't have any $\Gamma(l)$-level structure, we again obtain a comparison result by using the function $\widetilde{f}_{\B, [b_1]} \in R[\D]^{0,\Gamma}$ which we constructed before. 

\begin{corollary}\label{measure is Katz Eisenstein series no full}

	Let $R = \Ofr_{\C_p}$, and let $H \colon (\Ofr \otimes_\Z \Z_p)^2 \to \Afr_p$ be as above. For any pair of integers $b_1$, $b_2$ defining ideals $\bfr_i := b_i\Ofr$ as in \Cref{b f ideals notation} which are coprime to $p$ we have an equality
	$$\frac{(-1)^{\frac{g(g - 2k - 1)}{2}}}{\sqrt{d_F}}\int_{G/H(\Gamma)} \ts^{\underline{k}}H d\mu_{\Gamma, \B}(\widetilde{f}_{\B, [b_1]},e) = $$
	$$ = (b_1^gb_2^{kg} \widehat{G}_{k,[b_2]^*H, \Gamma} - b_1^g\widehat{G}_{k,H, \Gamma} + b_1^{kg}\widehat{G}_{k, [b_1]^*H, \Gamma} - (b_1b_2)^{kg}\widehat{G}_{k, [b_1b_2]^*H, \Gamma})(B),$$
	with $\widetilde{f}_{\B, [b_1]}\in \Ofr_{\C_p}[\D]^{0,\Gamma}$ the function from \Cref{f for comparison with Katz}.

\end{corollary}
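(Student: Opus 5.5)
The proof will follow that of \Cref{measure is Katz Eisenstein series affine}, with \Cref{p-adic specialization is Katz no full} playing the role of \Cref{p-adic specialization is Katz Eisenstein series}. First we reduce to a small subgroup. Let $\Gamma' := \Gamma \cap \Ofr^{\times,+}$, and let $\Gamma'_n \leq \Gamma'$ be a finite index subgroup fixing $B_{(n)}[p^n]$ (hence acting trivially on $\A[p^n]$ after pullback, as required in \Cref{distribution relation Eisenstein complex}). By the definition of $\mu_{\Gamma,\B}$ for non-totally-positive $\Gamma$ together with \Cref{measure in small subgps}, the left-hand side for $\Gamma$ equals $[\Gamma:\Gamma'_n]^{-1}$ times the corresponding integral for $\Gamma'_n$. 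Here we use that $\widetilde{f}_{\B,[b_1]}$ is invariant under any subgroup, since it is built only from $1_{e}$ and the constant section. On the right-hand side, \Cref{Katz Eis series in smaller subgps}, passed to the $p$-adic limit in \Cref{p-adic Eisenstein series def}, gives the same factor $[\Gamma:\Gamma'_n]^{-1}$ for each of the four terms. It therefore suffices to treat $\Gamma'_n$, for which $\underline{k}$ is of critical type, so the classes $EK^{\underline{0},\underline{k}-\underline{1}}$ are defined.

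For $\Gamma'_n$ we apply \Cref{measure at loc const functs general} with $\alpha=\underline{k}-\underline{1}$, $\beta=\underline{0}$ and $x=e$, taking the lift $x'=e$. This gives
$$\frac{1}{\Omega_p^{\underline{k}}}\int \ts^{\underline{k}}H\,d\mu_{\Gamma'_n,\B}(\widetilde{f}_{\B,[b_1]},e)=\sum_{s'}P_BH(s')\,\varphi_{p,B_{(n)}}\bigl(EK^{\underline{0},\underline{k}-\underline{1}}_{\Gamma'_n,B_{(n)}}(f',s')\bigr)(\omega(B_{(n)})^{[\underline{k}]}),$$
where $f'$ is the pullback of $\widetilde{f}_{\B,[b_1]}$ under $V^n_\B$. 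We must check that $f' = \widetilde{f}_{\B_{(n)},[b_1]}$. Since $b_1b_2$ is coprime to $p$, $V^n_\B$ restricts to an isomorphism $\B_{(n)}[b_1b_2]\cong\B[b_1b_2]$ preserving $e$ and the decomposition $\bfr=\bfr_1\bfr_2$, and the definition of $\widetilde{f}$ is intrinsic. Hence the identity holds. The right-hand side is then the expression $\sum_{s'} P_BH(s')\,EK(\widetilde f_{[b_1]},s') = EK(\widetilde f_{[b_1]},P_BH)$ after $V^n$-pullback.

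The sum is then identified with the pullback to $B$ of the universal expression in \Cref{p-adic specialization is Katz no full}, after multiplying by $\Omega_p^{\underline{k}}$. Three compatibilities enter here. By \Cref{EK stable under base change Frob}, together with the compatibility of $\varphi_p$ with pullback to points, the universal class $(V^n_{\A})^*EK(\widetilde{f}_{[b_1]},P_{\A}H)$ pulls back to the pointwise one; \Cref{Fourier both types agree} gives $P_BH=P_\A H$ at the point; and $\Omega_p$ is unchanged under the étale isogeny $V^n$. Multiplying through by $\Omega_p^{\underline{k}}$ and the constant $(-1)^{g(g-2k-1)/2}/\sqrt{d_F}$ then yields the claim.

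We expect the main obstacle to be the identification of $f'$ with the universal $\widetilde{f}_{[b_1]}$ on $\A_{(n)}$, together with the base change of the non-$\Gamma$-equivariant point $\xi\colon\Spec(R)\to\Mcal$ in the $l=1$ setting. The first point should follow from the étaleness of $V^n$ on prime-to-$p$ torsion. The second is handled by \Cref{EK stable under base change Frob}, whose proof does not use any $\Gamma(l)$-level structure.
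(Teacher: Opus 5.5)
Your proposal is correct and is essentially the paper's own proof. The paper likewise reruns the argument of \Cref{measure is Katz Eisenstein series affine}: reduce to a totally positive $\Gamma'_n$ fixing $B_{(n)}[p^n]$, apply \Cref{measure at loc const functs general}, and evaluate \Cref{p-adic specialization is Katz no full} at $B$. It names as the only new point that $\widetilde{f}_{\B,[b_1]}$ pulls back along $B_{(n)} \to B \to \B$ to $\widetilde{f}_{B_{(n)},[b_1]}$, and says this ``follows from its construction''; your argument that $V^n_\B$ gives an $\Ofr$-linear isomorphism $\B_{(n)}[b_1b_2]\cong\B[b_1b_2]$ preserving $e$ makes that explicit.
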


\begin{proof}

	The proof goes in the same way as \Cref{measure is Katz Eisenstein series affine} by instead using \Cref{p-adic specialization is Katz no full}. We remark that one needs to check that the pullback of $\widetilde{f}_{\B, [b_1]}$ under the maps
	$$\begin{tikzcd}
	B_{(n)} \arrow{r}{V^n_B} & B \arrow{r} & \B.
	\end{tikzcd}$$
	is equal to $\widetilde{f}_{B_{(n)}, [b_1]}$, but this follows from its construction.

\end{proof}

\subsubsection{The universal measure and its comparison to Katz's measure}

Since we now want to extend the measures $\mu_{\Gamma, \B}(f,x)$ to the universal case, we return to our moduli setup. 

\begin{notation}

	Let $R_0$ be a $p$-adic ring which is a $\Ofr_{F^{\text{Gal}}}[d_F^{-1}]$-algebra and such that $\cfr$ is coprime to $R_0$. Write $\Mcal := \Mcal(\cfr, \Gamma(l), \Gamma_{00}(p^\infty))_{R_0}$ for the Hilbert moduli scheme over $R_0$ of $(\Gamma(l), \Gamma_{00}(p^\infty))$-level, and $\A \to \Mcal$ for the universal abelian scheme. Recall our notation for the formal completions $\widehat{\Mcal}$, $\widehat{\A}_p$ from \Cref{formal compl Hilbert mod scheme notation} and \Cref{formal compl special fiber notation}.

\end{notation}

For any profinite group $G'$, consider the following sheaf of $\Ocal_{\widehat{\Mcal}}$-modules:
$$\text{Meas}(G', \Ocal_{\widehat{\Mcal}}) := \varprojlim_{U} (\Ocal_{\widehat{\Mcal}} \otimes_{R_0} R_0[G'/U]),$$
where $U$ goes over the open normal subgroups of $G'$. Set $G' = G/H(\Gamma)$. Then, for any point $\Spf(R) \to \widehat{\Mcal}$ defining a $(\Gamma(l), \Gamma_{00}(p^\infty))$-test object $\B/R$, we can see the measure $\mu_{\Gamma, \B}(f,x)$ as an element of $\text{Meas}(G/H(\Gamma), \Ocal_{\widehat{\Mcal}})(\Spf(R))$. This allows us to glue these measures to obtain a measure taking values in the ring
$$V := V(\cfr, \Gamma(l), R_0) = H^0(\widehat{\Mcal}, \Ocal_{\widehat{\Mcal}})$$
of $p$-adic $\cfr$-Hilbert modular forms over $R_0$.

\begin{definition}

	Let $(f,x)$ be a pair associated to $\A$ as in \Cref{pairs f x of EK classes general} for the morphisms $\varphi = [\ffr]$ and $\delta = [\bfr]$, and assume that $\Gamma$ acts trivially on $\Ofr/l\Ofr$. The $p$-adic measure
	$$\mu_{\Gamma, \A}(f,x) \in \text{Meas}(G/H(\Gamma), \Ocal_{\widehat{\Mcal}})(\widehat{\Mcal}) = \text{Meas}(G/H(\Gamma), V)$$
	obtained by gluing the measures $\mu_{\Gamma, \B}(f_\B, x_\B)$ for some finite cover by formal affines of $\widehat{\Mcal}$ is the \emph{universal Eisenstein series associated to $(f,x)$}.

\end{definition}

\begin{remark}

	Note that by construction, this $p$-adic measure is uniquely determined by the following property: for any $(\Gamma(l), \Gamma_{00}(p^\infty))$-test object $\B / R_0$, the map
	$$\textnormal{Meas}(G / H(\Gamma), V) \to \textnormal{Meas}(G / H(\Gamma), R_0)$$ 
	given by evaluation at $\B$ sends $\mu_{\Gamma, \A}(f, x)$ to $\mu_{\Gamma, \B}(f_\B, x_\B)$.

\end{remark}

To finish, we will show how our measure recovers the measures $\mu_{KE}(f)$ that we defined in \Cref{Katz Eisenstein measure - Katz def}, by using the explicit description from \Cref{measure is Katz Eisenstein series affine}. 

\begin{theorem}\label{measures agree}

	Let $R_0 = \Ofr_{\C_p}$, and $f \colon (\Ofr/l\Ofr)^2 \to \Afr_p$ be a function such that $\sum_{t \in (\Ofr/l\Ofr)^2} f(t) = 0$. Moreover, assume that $\Gamma$ fixes $\Ofr/l\Ofr$. Then, we have an equality
	$$\frac{(-1)^{\frac{g(g+1)}{2}}}{\sqrt{d_F}} \cdot \mu_{\Gamma, \A}(f, e) = (\textnormal{inv} \times \textnormal{id})_*\mu_{\Gamma, KE}(f),$$
	where $\textnormal{inv} \colon (\Ofr \otimes_\Z \Z_p)^\times \to (\Ofr \otimes_\Z \Z_p)^\times$ is the inversion map $x \mapsto x^{-1}$.

\end{theorem}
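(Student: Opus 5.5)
We compare the two measures by evaluating them on a suitable class of test functions and at all test objects. Both sides lie in $\textnormal{Meas}(G/H(\Gamma), V)$ with $V = H^0(\widehat{\Mcal}, \Ocal_{\widehat{\Mcal}})$. By the uniqueness remark following the definition of $\mu_{\Gamma, \A}(f,x)$, and because Katz's measure is also defined through $p$-adic modular forms (i.e.\ rules on test objects), it suffices to prove the equality after evaluating at every $(\Gamma(l), \Gamma_{00}(p^\infty))$-test object $\B/R$ over $R = \Ofr_{\C_p}$. Since $V$ is $p$-adically separated, the $q$-expansion principle (or, more directly, the fact that a $p$-adic modular form over $\Ofr_{\C_p}$ is determined by its values on such test objects) then gives the universal statement.

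Next we reduce to a dense family of test functions. Locally constant functions are dense in $C(G/H(\Gamma), R)$. Moreover, any locally constant $\Gamma$-invariant function can be written as a finite $\Afr_p$- (or $\Ofr_{\C_p}$-)linear combination of functions of the form $\ts^{\underline{k}-\underline{1}}\cdot \widetilde H$ with $k=1$, i.e.\ simply locally constant functions $H_0$ on $G$. For these we set $H := \ts^{-\underline{1}}\cdot H_0\circ(\textnormal{inv}\times\textnormal{id})$, or the appropriate variant. So it is enough to check
$$\frac{(-1)^{\frac{g(g+1)}{2}}}{\sqrt{d_F}}\int_{G/H(\Gamma)} F\, d\mu_{\Gamma,\B}(f,e)=\int_{G/H(\Gamma)} F\circ(\textnormal{inv}\times\textnormal{id})\, d\mu_{\Gamma,KE}(f)(\B)$$
for locally constant $\Gamma$-invariant $F$ with values in $\Afr_p$. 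Extending by $\Ofr_{\C_p}$-linearity and continuity then finishes the argument.

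For such $F$, the right-hand side is by \Cref{Katz Eisenstein measure - Katz def} equal to $\widehat{G}_{1,H'f,\Gamma}(\B)$, where
$$H'(x,y) = N(x)^{-1}F(x,y) = \ts^{-\underline{1}}(x,y)\,F(x,y).$$
This $H'$ is locally constant, supported on the units, and has parity $1$, since $F$ is $\Gamma$-invariant and $N(\gamma^{-1})^{-1}=N(\gamma)$. On the left we write $F = \ts^{\underline{1}}\cdot H'$ and apply \Cref{measure is Katz Eisenstein series affine} with $k=1$ and $H = H'$. This requires first passing to a subgroup $\Gamma$ fixing $\B_{(n)}[l]$, which is harmless because $\Gamma$ acts trivially on $\Ofr/l\Ofr$ and the $\Gamma(l)$-structure on $\B_{(n)}$ is induced from that on $\B$ via the étale $V^n_\B$. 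The result is
$$\frac{(-1)^{\frac{g(g-3)}{2}}}{\sqrt{d_F}}\int \ts^{\underline{1}}H'\,d\mu_{\Gamma,\B}(f,e)=\widehat G_{1,H'f,\Gamma}(\B).$$
The sign $(-1)^{g(g-3)/2}$ agrees with $(-1)^{g(g+1)/2}$, since the two exponents differ by $2g$.

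The main obstacle is bookkeeping of the inversion and twist conventions. We must check that $\ts^{-\underline{1}}$ together with the definition $H'(x,y)=N(x)^{-1}H(x^{-1},y)$ matches exactly the pushforward $(\textnormal{inv}\times\textnormal{id})_*$. Concretely, $\int F\,d(\textnormal{inv}\times\textnormal{id})_*\mu_{KE} = \widehat G_{1,(F\circ(\textnormal{inv}\times \textnormal{id}))'f}$, and $(F\circ(\textnormal{inv}\times\textnormal{id}))'(x,y) = N(x)^{-1}F(x,y)$. This is precisely the $H'$ above, so the identity closes. We also need the $\Gamma$-invariance and parity conventions (\Cref{convention Gamma action} versus the diagonal action in \Cref{Katz Eisenstein measure - Katz def}) to be compatible under $\textnormal{inv}\times\textnormal{id}$, which is exactly what converts the action $(\gamma^{-1}x,\gamma y)$ into the diagonal one. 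Finally, the density and continuity step uses that both measures are $V$-linear functionals and that $H\mapsto \widehat G_{1,Hf}$ is continuous, as in \Cref{p-adic Eisenstein series def}.
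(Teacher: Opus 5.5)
Your overall strategy is the paper's: evaluate at test objects over $\Ofr_{\C_p}$, use \Cref{measure is Katz Eisenstein series affine} with $k=1$ on one side, unwind \Cref{Katz Eisenstein measure - Katz def} through $(\mathrm{inv}\times\mathrm{id})_*$ on the other, and conclude by density. However, the step where you apply the corollary fails.

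You take $F$ locally constant, set $H'(x,y)=N(x)^{-1}F(x,y)$, and assert that $H'$ is locally constant. It is not. The map $x\mapsto N(x)$ is a continuous but non-locally-constant character of $(\Ofr\otimes_\Z\Z_p)^\times$; on $\Z_p^\times$ it is $x\mapsto x^g$. So $N(x)^{-1}F$ fails to be locally constant wherever $F\neq 0$. But \Cref{measure is Katz Eisenstein series affine}, via \Cref{measure at loc const functs general} and \Cref{p-adic specialization is Katz Eisenstein series}, only computes integrals of $\ts^{\underline k}H$ with $H$ locally constant. Constancy modulo $p^n$ is used to define $P_BH$ on $B_{(n)}[p^n]$ and $G_{k,Hf}$ at finite level. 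Hence your proposal never actually computes $\int F\,d\mu_{\Gamma,\B}(f,e)$ for the test functions you chose. Your later computation $(F\circ(\mathrm{inv}\times\mathrm{id}))'(x,y)=N(x)^{-1}F(x,y)$ is correct, but it is not the issue.

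The fix is to reverse the roles, as the paper does. Test on functions $\ts^{\underline 1}H$ with $H$ locally constant, $\Gamma$-equivariant and $\Afr_p$-valued. Then the corollary applies directly with $k=1$. On Katz's side, the factor $N(x)^{-1}$ in the definition cancels $\ts^{\underline 1}$, giving exactly $\widehat G_{1,Hf,\Gamma}$.

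Density of this family follows because multiplication by $\ts^{\underline 1}$ is a homeomorphism of $C(G/H(\Gamma),\Ofr_{\C_p})$: it is unit-valued on $G$, with continuous inverse $\ts^{-\underline 1}$. Combine this with the density of locally constant functions and of $\Afr_p$ in $\Ofr_{\C_p}$. For $\ts^{\pm\underline 1}$ to descend to $G/H(\Gamma)$, the paper first reduces to $\Gamma\subset\Ofr^{\times,+}$. This reduction uses that both measures satisfy $\mathrm{pr}_{\Gamma',*}\mu_{\Gamma'}=[\Gamma:\Gamma']\mu_\Gamma$, by \Cref{measure in small subgps} and \Cref{Katz Eis series in smaller subgps}.

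Alternatively, you could keep your family of locally constant $F$. You would then extend the corollary from locally constant to continuous $H$, using $p$-adic continuity in $H$ of both sides. That is an additional argument your proposal does not supply.
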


\begin{proof}
	
	For simplicity, write $\mu_\Gamma := \mu_{\Gamma, \A}(f,e)$ and $\mu'_{\Gamma} := (\textnormal{inv} \times \textnormal{id})_*\mu_{\Gamma, KE}$. Note that $\text{inv} \times \text{id}$ induces an isomorphism $G/\overline{\Gamma} \xrightarrow{\sim} G / H(\Gamma)$, and thus $\mu'_\Gamma$ is a measure in $\text{Meas}(G/H(\Gamma), V)$. Moreover, for any finite index subgroup $\Gamma' \leq \Gamma$, we have equalities
	$$\text{pr}_{\Gamma', *}\mu_{\Gamma'} = [\Gamma : \Gamma']\mu_\Gamma, \quad \text{pr}_{\Gamma', *}\mu'_{\Gamma'} = [\Gamma : \Gamma']\mu'_\Gamma,$$
	with $\text{pr}_\Gamma \colon G/H(\Gamma') \twoheadrightarrow G/H(\Gamma)$ the projection map. We may thus assume that $\Gamma$ is contained in $\Ofr^{\times, +}$ for the rest of this proof, and we omit it from the notation in these measures, writing now $\mu$ and $\mu'$.
	
	In order to show that the two measures are equal, we will compare them at all functions $\ts^{\underline{1}}H$, where $H$ is of the following form. 
	
	\begin{itemize}
	
		\item[($\star$)] Let $H \colon G \to \Ofr_{\C_p}$ be a $\Gamma$-equivariant and locally constant function whose image is contained in $\Afr_p$. 
		
	\end{itemize}
	Therefore, the product $\ts^{\underline{1}}H$ defines a continuous, $\Gamma$-equivariant function $G \to \Ofr_{\C_p}$.
	
	Since $V$ is an $\Ofr_{\C_p}$-algebra, we may use the isomorphism of \Cref{measure on intermediate algebra} to write
	$$\text{Meas}(G/H(\Gamma), V) \cong \Hom_{\Ofr_{\C_p}}(C(G/H(\Gamma), \Ofr_{\C_p}), V).$$
	We may thus characterize our measures by their values at continuous functions $G/H(\Gamma) \to \Ofr_{\C_p}$. We then show that, for any function $H$ as in ($\star$), 
	$$\int_{G/H(\Gamma)} \ts^{\underline{1}}H d\mu \quad \text{and} \quad \int_{G/H(\Gamma)} \ts^{\underline{1}}H d\mu'$$ 
	are equal up to the determined constants and pullbacks. Since these two integrals are valued in the ring of $p$-adic Hilbert modular forms $V$, it is enough to compare the value of both of them at every $(\Gamma(l), \Gamma_{00}(p^\infty))$-test object $\B$ defined over $\Ofr_{\C_p}$.
	
	By construction, for the measure $\mu$ we may just compute directly the value of the measure $\mu_{\Gamma, \B}(f_\B, e_\B)$ at $\ts^{\underline{1}}H$, and thus using \Cref{measure is Katz Eisenstein series affine} we find that
	$$\frac{(-1)^{\frac{g(g+1)}{2}}}{\sqrt{d_F}}\left( \int_{G / H(\Gamma)} \ts^{\underline{1}}H d\mu \right)(\B) = \widehat{G}_{1,Hf, \Gamma}(B).$$
	On the other hand, by \Cref{Katz Eisenstein measure - Katz def}, we have that for any continuous $\Gamma$-equivariant function $H' \colon G \to \Ofr_{\C_p}$,
	$$\int_{G/\overline{\Gamma}} H'd \mu_{\Gamma, KE}(f) = \widehat{G}_{1, \ts^{-\underline{1}}(\text{inv} \times \text{id})^*H'f, \Gamma}.$$
	Therefore, by definition of the pushforward $(\text{inv} \times \text{id})_*$, we can see that
	$$\left( \int_{G/H(\Gamma)} \ts^{\underline{1}}H d\mu' \right)(\B)= \widehat{G}_{1, Hf, \Gamma}(\B).$$
	
	This shows our desired equality at all functions of the form $\ts^{\underline{1}}H$, with $H$ as in ($\star$). We now claim that these functions are dense in $C(G/H(\Gamma), \Ofr_{\C_p})$, which would show the equality between $\mu$ and $\mu'$. First, we clearly have that the map given by multiplication by $\ts^{\underline{1}}$,
	$$\ts^{\underline{1}} \cdot (-) \colon C(G/H(\Gamma), \Ofr_{\C_p}) \to C(G/H(\Gamma), \Ofr_{\C_p}),$$
	is a homeomorphism. Then, since the locally constant functions are dense in the topological space $C(G/H(\Gamma), \Ofr_{\C_p})$, the set of functions 
	$$\left\{ \ts^{\underline{1}} \cdot H_0 \mid H_0 \colon G/H(\Gamma) \to \Ofr_{\C_p}\ \text{locally constant} \right\}$$ 
	is also dense. Lastly, since $\Afr_p \subset \Ofr_{\C_p}$ is dense, we can also approximate continuous functions in $\Ofr_{\C_p}$ by those with values in $\Afr_p$, which finishes our proof.
	
\end{proof}

Despite the fact that this result gives back some of the generalized measures $\mu_{\Gamma, KE}(f)$, this does not directly recover an equality between our universal measure $\mu_{\Gamma, \A}(f,e)$ and the measure $\mu_{\Gamma, KE}$ of \cite{Katz1978}. Indeed, for this we would have to take $f$ to be the characteristic function of $([0], [0])$, which does not satisfy the trace zero hypothesis. Instead, to make this comparison we will set $l = 1$ and use \Cref{measure is Katz Eisenstein series no full}.

Recall that for any integer $m$ which was coprime to $p$, we had a map
$$[m] \colon (\Ofr \otimes_\Z \Z_p)^2 \to (\Ofr \otimes_\Z \Z_p)^2, \quad (x,y) \mapsto (mx, y/m).$$
Pullback by this map induced the functions $[m]^*H$, as well as a pushforward map of measures (see \Cref{maps of measures def}):
$$[m]_* \colon \text{Meas}(G/H(\Gamma), R) \to \text{Meas}(G/H(\Gamma),R),$$
which is precisely given by $\int_{G/H(\Gamma)} H d([m]_*\mu) := \int_{G/H(\Gamma)} ([m]^*H)d\mu$. 

\begin{theorem}\label{measures agree no full}

	Let $R_0 = \Ofr_{\C_p}$. Fix principal ideals $\bfr_1 = b_1\Ofr, \bfr_2 = b_2\Ofr$ with $b_1, b_2 \in \Z$ as in \Cref{b f ideals notation}. If we choose $\widetilde{f}_{[b_1]} = \widetilde{f}_{\B,[b_1]}$ as in \Cref{f for comparison with Katz} for the ideals $\bfr_1$, $\bfr_2$, we have an equality
	$$\frac{(-1)^{\frac{g(g+1)}{2}}}{\sqrt{d_F}} \cdot \mu_{\Gamma, \A}(\widetilde{f}_{[b_1]},e) = b_1^g(1 - [b_1]_*)(b_2^g[b_2]_* - 1)(\textnormal{inv} \times \textnormal{id})_*\mu_{\Gamma, KE}.$$

\end{theorem}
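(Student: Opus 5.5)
We follow the proof of \Cref{measures agree} almost verbatim, replacing \Cref{measure is Katz Eisenstein series affine} by \Cref{measure is Katz Eisenstein series no full}.

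\textbf{Step 1: reduction to totally positive units.} Set $\mu := \mu_{\Gamma, \A}(\widetilde{f}_{[b_1]},e)$ and $\mu' := (\textnormal{inv} \times \textnormal{id})_*\mu_{\Gamma, KE}$. Both sides are compatible with passing to finite index subgroups $\Gamma' \leq \Gamma$ with the same factor $[\Gamma:\Gamma']$. For the left side this is \Cref{measure in small subgps} together with the definition for general $\Gamma$. For the right side it follows from \Cref{Katz Eis series in smaller subgps}, once we note that $[m]_*$ commutes with $\textnormal{pr}_{\Gamma',*}$, since $[m]$ commutes with the $\Gamma$-action. So we may assume $\Gamma \leq \Ofr^{\times,+}$.

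\textbf{Step 2: test functions and evaluation at test objects.} We test both measures against $\ts^{\underline{1}}H$ with $H$ as in $(\star)$, i.e.\ locally constant, $\Gamma$-equivariant, and $\Afr_p$-valued on $G$; such an $H$ has parity $1$ after the twist. We evaluate the resulting elements of $V$ at an arbitrary $(\Gamma_{00}(p^\infty))$-test object $\B/\Ofr_{\C_p}$, recalling that $l=1$. By the characterizing property of $\mu_{\Gamma,\A}$ and \Cref{measure is Katz Eisenstein series no full} with $k=1$, the left side equals
\[
\bigl(b_1^g b_2^{g}\widehat{G}_{1,[b_2]^*H} - b_1^g\widehat{G}_{1,H} + b_1^{g}\widehat{G}_{1,[b_1]^*H} - (b_1b_2)^{g}\widehat{G}_{1,[b_1b_2]^*H}\bigr)(B).
\]
Here we must use that $(\widetilde{f}_{[b_1]})_\B = \widetilde{f}_{\B,[b_1]}$, which holds because the construction in \Cref{f for comparison with Katz} commutes with base change. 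We also check that the sign $(-1)^{g(g-2k-1)/2}$ at $k=1$ equals $(-1)^{g(g-3)/2}=(-1)^{g(g+1)/2}$, since the exponents differ by the even number $2g$.

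\textbf{Step 3: the right-hand side.} Expanding $b_1^g(1-[b_1]_*)(b_2^g[b_2]_*-1)$ gives
\[
b_1^g b_2^g [b_2]_* - b_1^g + b_1^g[b_1]_* - b_1^gb_2^g[b_1b_2]_*,
\]
where we use that $[b_1][b_2]=[b_1b_2]$. Integrating $\ts^{\underline{1}}H$ against $[m]_*\mu'$ means integrating $[m]^*(\ts^{\underline{1}}H) = m^g\,\ts^{\underline{1}}\,[m]^*H$ against $\mu'$, since $\ts^{\underline{1}}(mx)=N(m)\ts^{\underline{1}}(x)=m^g\ts^{\underline{1}}(x)$. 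As in the proof of \Cref{measures agree}, $\int \ts^{\underline{1}}H'\,d\mu' = \widehat{G}_{1,H'}$, now with $f=\mathbbm{1}_{([0],[0])}$, which is the trivial function since $l=1$. Hence the right side becomes
\[
b_1^gb_2^g\cdot b_2^g\widehat{G}_{1,[b_2]^*H} - b_1^g\widehat{G}_{1,H} + b_1^g\cdot b_1^g\widehat{G}_{1,[b_1]^*H} - b_1^gb_2^g(b_1b_2)^g\widehat{G}_{1,[b_1b_2]^*H}.
\]

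\textbf{Main obstacle.} Comparing the two expressions term by term, the scalars do not obviously agree: one side has $b_2^g$ where the other has $b_1^gb_2^{2g}$. The main work is therefore bookkeeping of the normalizations, and I would do it first. One must pin down how $[m]^*$ acts after the substitution $H'(x,y)=N(x)^{-1}H(x^{-1},y)$ and the inversion $\textnormal{inv}\times\textnormal{id}$. Under inversion, $[m]$ on the $x$-coordinate becomes division by $m$, and the factor $N(x)^{-1}$ contributes a further $m^{\pm g}$. In addition, the dictionary $\widehat{G}_{1,[m]^*H}$ appearing in \Cref{p-adic specialization is Katz no full} must be matched with the pushforward convention of $[m]$ on $G/H(\Gamma)$.

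I expect these powers $m^{\pm g}$ to cancel exactly against the discrepancies above, giving
\[
b_1^g b_2^g[b_2]_* \leftrightarrow b_1^gb_2^{g}\widehat G_{[b_2]^*H},\qquad b_1^g[b_1]_*\leftrightarrow b_1^g\widehat G_{[b_1]^*H},
\]
and so on. This matching is where sign and exponent errors are most likely. Once the identity holds on all $\ts^{\underline{1}}H$, the density argument at the end of \Cref{measures agree} (multiplication by $\ts^{\underline{1}}$ is a homeomorphism, locally constant $\Afr_p$-valued functions are dense) concludes the proof.
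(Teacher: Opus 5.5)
Your route is the natural adaptation of the proof of \Cref{measures agree}, with \Cref{measure is Katz Eisenstein series no full} at $k=1$ used in place of \Cref{measure is Katz Eisenstein series affine}. Steps 1--3 are correct, including the sign check. The gap is the step you defer: the cancellation you anticipate does not happen, because your Step 3 is already the complete bookkeeping. Unwinding the definitions for $H$ as in $(\star)$ gives $\int\ts^{\underline 1}H\,d([m]_*\mu')=\int K\,d\mu_{\Gamma,KE}$ with $K(x,y)=N(mx^{-1})H(mx^{-1},y/m)$. Katz's substitution then gives $K'(x,y)=N(x)^{-1}K(x^{-1},y)=m^{g}H(mx,y/m)$. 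The two inversions undo each other, and $N(x)^{-1}$ cancels the $N(x)$ coming from $\ts^{\underline 1}$. The only surviving power of $m$ is the $m^g$ from $\ts^{\underline 1}(mx)=m^g\ts^{\underline 1}(x)$. Hence $\int\ts^{\underline 1}H\,d([m]_*\mu')=m^g\,\widehat G_{1,[m]^*H,\Gamma}$, with no further $m^{\pm g}$ anywhere. With the conventions as written ($[m](x,y)=(mx,y/m)$ and $\int K\,d([m]_*\mu)=\int[m]^*K\,d\mu$), the right-hand side of the theorem evaluated at $\ts^{\underline 1}H$ is exactly your Step 3 expression. That is a different linear combination of the $\widehat G_{1,[m]^*H,\Gamma}$ from your Step 2 expression, and nothing in your argument reconciles the two. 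So the proposal does not prove the displayed identity.

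What your computation does establish is the following. Matching coefficients, the operator on $\mu'$ that reproduces Step 2 is $b_1^g[b_2]_*-b_1^g+[b_1]_*-[b_1b_2]_*=(b_1^g-[b_1]_*)([b_2]_*-1)$, which gives
\[
\frac{(-1)^{\frac{g(g+1)}{2}}}{\sqrt{d_F}}\,\mu_{\Gamma,\A}(\widetilde f_{[b_1]},e)=(b_1^g-[b_1]_*)([b_2]_*-1)(\textnormal{inv}\times\textnormal{id})_*\mu_{\Gamma,KE}.
\]
For $\Gamma\le\Ofr^{\times,+}$, $\ts^{\underline 1}$ descends to $G/H(\Gamma)$ and satisfies $\ts^{\underline 1}\cdot[m]_*\nu=m^g\,[m]_*(\ts^{\underline 1}\cdot\nu)$. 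So this identity is equivalent to the statement's operator $b_1^g(1-[b_1]_*)(b_2^g[b_2]_*-1)$ applied to the untwisted measures $\ts^{\underline 1}\cdot\mu_{\Gamma,\A}(\widetilde f_{[b_1]},e)$ and $\ts^{\underline 1}\cdot(\textnormal{inv}\times\textnormal{id})_*\mu_{\Gamma,KE}$; the latter integrates $H$ to $\widehat G_{1,H,\Gamma}$. Equivalently, it is the stated identity with $[m]_*$ renormalized as $m^{-g}[m]_*$. The paper states this theorem without a separate proof, so to complete yours you must carry out this comparison explicitly and say which normalization of $[m]_*$ you are proving, rather than asserting that the powers of $m$ cancel.
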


\printindex[notation]

\bibliographystyle{alpha}
\bibliography{Thesis} 

\end{document}